\documentclass[12pt]{amsart}
\usepackage{amsmath,amsfonts,amsthm,amscd,amssymb}
\usepackage{tipa}
\usepackage{stmaryrd}
\newtheorem{theorem}{Theorem}
\newcommand{\beq}{\begin{eqnarray}}

\newcommand{\eeq}{\end{eqnarray}}
\newcommand{\R}{\mathbb{R}}
\newcommand{\RR}{\mathcal{R}}
\newcommand{\tr}{\mathrm{tr}}

\newcommand{\tf}{\mathrm{tf}}

\newcommand{\stl}{\stackrel}
\newcommand{\slashd}{\ipaclap{\textipa{d}}{\textipa{/}}}

\newcommand{\bkill}{\Box_{\mathcal{K},g}}
\newtheorem{proposition}[theorem]{Proposition}
\newtheorem{lemma}[theorem]{Lemma}

\newtheorem{corollary}[theorem]{Corollary}
\newtheorem{definition}[theorem]{Definition}
\newtheorem{remark}[theorem]{Remark}
\setlength{\oddsidemargin}{0.25in}
\setlength{\evensidemargin}{0.25in} 
\setlength{\textwidth}{6in}
\setlength{\topmargin}{-0in} 
\setlength{\textheight}{8.5in}
\numberwithin{equation}{section}
\numberwithin{theorem}{section}
\usepackage{amsmath}
\usepackage{amssymb}
\usepackage{latexsym}
\begin{document}
\bibliographystyle{amsalpha} 
\title[Asymptotics]{Asymptotics of the self-dual\\ [2pt] deformation complex}
\author{Antonio G. Ache}
\author{Jeff A. Viaclovsky}
\address{Department of Mathematics, University of Wisconsin, Madison, WI 53706}
\email{ache@math.wisc.edu, jeffv@math.wisc.edu}
\thanks{Research partially supported by NSF Grants DMS-0804042 and DMS-1105187}
\date{January 4, 2012}
\begin{abstract} 
We analyze the indicial roots of the self-dual 
deformation complex on a cylinder $(\R \times Y^3, dt^2 + g_Y)$, 
where $Y^3$ is a space of constant curvature. 
An application is the optimal decay rate of solutions on a
self-dual manifold with cylindrical ends having cross-section $Y^3$. 
We also resolve a conjecture of Kovalev-Singer in the 
case where $Y^3$ is a hyperbolic rational homology $3$-sphere, and show that there are
infinitely many examples for which the conjecture is 
true, and infinitely many examples for which the conjecture is false.
Applications to gluing theorems are also discussed. 
\end{abstract}
\maketitle
\setcounter{tocdepth}{1}
\tableofcontents

\section{Introduction}

Let $(M^{4},g)$ be a four-dimensional Riemannian manifold and let 
$Rm$ denote the Riemannian curvature tensor of $g$. 
Recall that $Rm$ admits an orthogonal decomposition of the form
\begin{align}
\label{rm4}
Rm=W+\frac{1}{2}E\varowedge g+\frac{1}{24}R g\varowedge g,
\end{align}
where $W$ is the Weyl tensor, $E$ is the traceless Ricci tensor of $g$, 
$R$ is the scalar curvature,  and $\varowedge$ is the Kullkarni-Nomizu product.
If $(M^4, g)$ is oriented, there is a further
decomposition of \eqref{rm4}. 
The Hodge-$*$ operator associated to $g$ acting on $2$-forms is a mapping
 $*:\Lambda^{2} \mapsto\Lambda^{2}$ satisfying $*^2 = Id$, 
and $\Lambda^{2}$ admits a decomposition of the form 
\begin{align}\label{2formsplusminus}
\Lambda^{2}=\Lambda^{2}_{+} \oplus\Lambda^{2}_{-},
\end{align}
where $\Lambda^{2}_{\pm}$ are the $\pm 1$ eigenspaces of 
$\displaystyle{*|}_{\Lambda^{2}}$. Sections of 
$\Lambda^2_+$ and $\Lambda^2_-$ are called self-dual and 
anti-self-dual $2$-forms, respectively. 
  The curvature tensor can be viewed as an operator 
$\mathcal{R}: \Lambda^2 \rightarrow \Lambda^2$, and we let 
$\mathcal{W}$ and $\mathcal{E}$ denote the operators associated to 
the Weyl and traceless Ricci tensors, respectively. With respect to the 
decomposition \eqref{2formsplusminus}, the full curvature operator decomposes as
\begin{align}
\label{block}
{\mathcal R}=
\left(
\mbox{
\begin{tabular}{c|c}
&\\
$\mathcal{W}^+ + \frac{R}{12} I $&$ \frac{1}{2} \mathcal{E} \pi_- $\\ &\\
\cline{1-2}&\\
$\frac{1}{2} \mathcal{E} \pi_+$ & $\mathcal{W}^- + \frac{R}{12} I$\\&\\
\end{tabular}
} \right),
\end{align}
where $\pi_{\pm}$ is the projection onto 
$\Lambda^2_{\pm}$, and the 
self-dual and anti-self-dual Weyl tensors are defined by 
$\mathcal{W}^{\pm} = \pi_{\pm} \mathcal{W} \pi_{\pm}$. 

\begin{definition}{\em
 Let $(M^4,g)$ be an oriented four-manifold. 
Then $g$ is called {\em{self-dual}} 
if $\mathcal{W}^- = 0$, and $g$ is called {\em{anti-self-dual}} 
if $\mathcal{W}^+ = 0$. In either case $g$ is said to be 
{\em{half-conformally-flat}}. }
\end{definition}
By reversing orientation, a self-dual metric becomes an 
anti-self-dual metric, so without loss of generality, we 
will only consider self-dual metrics. 

 Since Poon's example of a $1$-parameter family of self dual 
metrics on $\mathbb{CP}^2 \# \mathbb{CP}^2$ in 1988 \cite{Poon}, 
there has been an explosion of examples of self-dual 
metrics on various four-manifolds. We do not attempt to 
give a complete history here, but only mention a few 
results closely related to our results in this paper.  In 1989, 
Donaldson and Friedman developed a twistor space gluing procedure 
which invoked many non-trivial results in algebraic geometry 
\cite{DonaldsonFriedman}.
In 1991, Floer produced examples on $n \#  \mathbb{CP}^2$
by an analytic gluing procedure \cite{Floer}. 
Then in 2001, Kovalev and Singer 
generalized Floer's analytic gluing result to cover the case of gluing 
orbifold self-dual metrics~\cite{KovalevSinger}. 
We will describe the relation of our 
work with these prior works in more detail below,
but first will state our main results. 

Since the SD condition is conformally invariant, we are 
free to conformally change an end to obtain different 
types of asymptotics. For simplifying computations, the most useful 
type of geometry is that of cylindrical ends:
\begin{definition}
\label{ALCdef}
{\em
Let $(Y^{3},g_Y)$ be a compact $3$-manifold with constant curvature.
A complete Riemannian manifold $(M^4,g)$ 
is called {\em{asymptotically cylindrical}} 
or {\em{AC}} with cross-section $Y$ of order $\tau$ if 
there exists a diffeomorphism 
$\psi : M \setminus K \rightarrow  \R_+ \times Y$ 
where $K$ is a subset of $M$ containing all other ends, 
satisfying
\begin{align}
\label{eqgdfac1in}
(\psi_* g)_{ij} &= (g_C)_{ij} + O( e^{-\tau t}),\\
\label{eqgdfac2in}
\ \partial^{|k|} (\psi_*g)_{ij} &= O( e^{-\tau t}),
\end{align}
for any partial derivative of order $k$, as
$t \rightarrow \infty$, 
where $g_C = dt^2 + g_{Y}$ is the product cylindrical metric. 
}  
\end{definition}

 Self-dual metrics have a rich obstruction theory. If 
$(M,g)$ is a self-dual four-manifold, the deformation complex is given by 
\begin{align}
\Gamma(T^*M) \overset{\mathcal{K}_g}{\longrightarrow} 
\Gamma(S^2_0(T^*M))  \overset{\mathcal{D}}{\longrightarrow}
\Gamma(S^2_0(\Lambda^2_-)),
\end{align}
where $\mathcal{K}_g$ is the conformal Killing operator defined 
by 
\begin{align}
( \mathcal{K}_g(\omega))_{ij} = \nabla_i \omega_j + \nabla_j \omega_i - 
\frac{1}{2} (\delta \omega) g, 
\end{align}
with $\delta \omega = \nabla^i \omega_i$, 
and $\mathcal{D} = (\mathcal{W}^-)_g'$ is the linearized anti-self-dual Weyl curvature 
operator. This complex is then wrapped-up into a single 
operator 
\begin{align}
F :\Gamma(S^2_0(T^*M)) \longrightarrow \Gamma(S^2_0(\Lambda^2_-))
\oplus \Gamma(S^2_0(T^*M)),
\end{align}
defined by 
\begin{align}
F (h) = ( \mathcal{D} h, 2 \delta h),
\end{align}
and $(\delta h)_j = \nabla^i h_{ij}$. 
This operator is mixed order elliptic of order $(2,(0,1))$  
in the sense of Douglis-Nirenberg \cite{DN}, with 
formal $L^2$-adjoint 
 \begin{align}
F^* :\Gamma(S^2_0(\Lambda^2_-))
\oplus \Gamma(S^2_0(T^*M)) \longrightarrow\Gamma(S^2_0(T^*M)),
\end{align}
given by 
\begin{align}
F^* ( Z, \omega) = \mathcal{D}^* Z - \mathcal{K}_g \omega. 
\end{align}
\begin{definition}{\em
The {\em{indicial roots}} of $F$ are those complex numbers $\lambda$ for which 
there is a solution $h$ of $F(h) = 0$ such that the components of $h$ 
have the form $e^{\lambda t} p(y,t)$ where $p$ is a polynomial in $t$ with 
coefficients in $C^{\infty}(Y)$. The indicial roots of $F^*$ are 
defined analogously for pairs $(Z, \omega)$.}
\footnote{Our definition of indicial roots differs from that in \cite{LockhartMcOwen} 
by a factor of $\sqrt{-1}$.}
\end{definition}

We will first determine the indicial roots of $F^*$. The indicial 
roots of $F$ can then be obtained by using an index theorem, 
as we will show below. One could equivalently first analyze
the indicial roots of $F$, however, for purposes of computation 
it turns out to be somewhat easier to completely analyze the cokernel (although
the computations are in principle equivalent). 

\subsection{Spherical cross-section}

Our first result deals with cross-section $Y$ having 
constant positive curvature. In Theorem \ref{s3thm2} below, we 
determine {\em{all}} indicial 
roots of $F^*$, but for simplicity we only state the following here in the introduction:
\begin{theorem}
\label{s3thm}
Let $M$ be  $\R \times S^{3}/ \Gamma$ with 
product metric $g = dt^2 + g_{S^3/\Gamma}$, where $g_{S^3/\Gamma}$ is a 
metric of constant curvature $1$.  
Let $\mathcal{I}^*$ denote the set of indicial roots  of $F^*$. 
If $\beta \in \mathcal{I}^*$ satisfies $|Re(\beta)| < 2$ 
then $\beta = 0$ or $\beta = \pm 1$. 
In these cases, the corresponding solutions are of the form $(0,\omega)$,
where $\omega$ is dual to a conformal Killing field 
(that is, $\mathcal{K}_g \omega = 0$). Consequently, 

\begin{itemize}
\item Case (0): $0 \in \mathcal{I}^*$, and the corresponding solutions are given by $(0,dt)$, or
$(0,\omega_0)$ for $\omega_0$ dual to a Killing field on $S^3/\Gamma$.

\item Case (1): $\pm 1 \in \mathcal{I}^*$ if and only if $\Gamma$ is trivial. 
In this case, the corresponding solutions are given by $(0,\omega)$, 
where $\omega$ is given by $e^{\pm t} ( \phi dt \mp d\phi)$ where $\phi$ 
is a lowest nontrivial eigenfunction of $\Delta_{S^3}$ with eigenvalue $3$.
\end{itemize}
\end{theorem}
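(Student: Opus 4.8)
The plan is to exploit the fact that, although $F^{*}$ is a coupled mixed-order system, the composition $FF^{*}$ decouples. Since the deformation sequence is a complex we have $\mathcal{D}\circ\mathcal{K}_{g}=0$, and taking formal adjoints — noting that $\mathcal{K}_{g}^{*}=-2\delta$ with $(\delta h)_{j}=\nabla^{i}h_{ij}$ as in the text — gives $\delta\circ\mathcal{D}^{*}=0$; hence a curvature-free computation yields
\begin{align*}
FF^{*}(Z,\omega)=\bigl(\,\mathcal{D}\mathcal{D}^{*}Z\,,\,-2\,\delta\mathcal{K}_{g}\omega\,\bigr).
\end{align*}
In particular every solution of $F^{*}(Z,\omega)=0$ satisfies $\mathcal{D}\mathcal{D}^{*}Z=0$ and $\delta\mathcal{K}_{g}\omega=0$ separately, so $\mathcal{I}^{*}$ is contained in the union of the indicial roots of the fourth-order elliptic self-adjoint operator $\mathcal{D}\mathcal{D}^{*}$ on $S^{2}_{0}(\Lambda^{2}_{-})$ and those of the second-order elliptic self-adjoint operator $\delta\mathcal{K}_{g}$ on $T^{*}M$. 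The first step is simply to record this reduction.

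The second step is to show that the $Z$-component of any solution vanishes when $|\mathrm{Re}(\beta)|<2$. Here I would use that $g_{C}=dt^{2}+g_{S^{3}/\Gamma}$ is conformally flat — it is conformal, via $r=e^{t}$, to the flat metric on $\R^{4}\setminus\{0\}$ — so $\mathcal{W}^{-}=0$ on the background and every Weitzenb\"ock curvature term reduces to an explicit constant. Separating variables ($Z=e^{\beta t}\psi(y)$, together with its polynomial-in-$t$ generalizations at repeated exponents) and expanding $\psi$ into $\Gamma$-invariant eigensections of the relevant tensor Laplacian on $S^{2}_{0}(\Lambda^{2}_{-})\cong S^{2}_{0}(T^{*}(S^{3}/\Gamma))$ — including the low-lying pieces built from $1$-forms and functions on $S^{3}$ — the equation $\mathcal{D}\mathcal{D}^{*}Z=0$ becomes, mode by mode, a constant-coefficient ODE whose characteristic exponents are the indicial roots. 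One then checks directly that such a $Z$ cannot occur in an $F^{*}$-solution with exponent in the strip: either $\mathcal{D}\mathcal{D}^{*}$ has no indicial roots there at all, or, for any it does have, $\mathcal{D}^{*}Z$ is not of the form $\mathcal{K}_{g}\omega$ and so the coupling $\mathcal{D}^{*}Z=\mathcal{K}_{g}\omega$ forced by $F^{*}(Z,\omega)=0$ fails (equivalently: a pure-gauge, divergence-free, $\mathcal{D}$-closed symmetric $2$-tensor with exponent in $|\mathrm{Re}(\beta)|<2$ must vanish). Hence $Z=0$, and then $F^{*}(0,\omega)=-\mathcal{K}_{g}\omega=0$, i.e.\ $\omega$ is dual to a conformal Killing field of $(\R\times S^{3}/\Gamma,\,g_{C})$.

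The third step is to classify those conformal Killing fields and their exponents. Conformal Killing fields are conformally invariant, and a conformal Killing field of $S^{3}/\Gamma\times\R$ lifts to a $\Gamma$-invariant conformal Killing field of $S^{3}\times\R\cong\R^{4}\setminus\{0\}$; by Liouville's theorem the latter are the restrictions of $\mathfrak{so}(5,1)$: the dilation $x\cdot\partial_{x}$, the rotations $x_{i}\partial_{j}-x_{j}\partial_{i}$, the translations $\partial_{i}$, and the special conformal fields. Writing each in the coordinates $(t,y)$ and taking the metric dual with respect to $g_{C}$ gives the dilation $\mapsto dt$ (exponent $0$); the rotations $\mapsto$ the $1$-forms dual to Killing fields of $S^{3}/\Gamma$ (exponent $0$); the translations $\mapsto e^{-t}(\phi\,dt+d\phi)$ (exponent $-1$); and the special conformal fields $\mapsto e^{+t}(\phi\,dt-d\phi)$ (exponent $+1$), where $\phi$ is a lowest nontrivial eigenfunction of $\Delta_{S^{3}}$ with eigenvalue $3$ (the restriction of a linear coordinate function). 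Only the exponents $0$ and $\pm1$ occur, all inside the strip; the exponent-$0$ solutions always descend to $\R\times S^{3}/\Gamma$, while the exponent-$(\pm1)$ ones descend exactly when the $\phi$'s are $\Gamma$-invariant, i.e.\ when $\Gamma$ acts trivially on $\R^{4}$, i.e.\ (since $\Gamma$ acts freely on $S^{3}$) when $\Gamma=\{1\}$. This yields Case $(0)$ and Case $(1)$ exactly as stated.

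The principal obstacle is the second step: producing a workable Weitzenb\"ock formula for $\mathcal{D}^{*}$ on a conformally flat four-manifold and carrying out the representation-theoretic bookkeeping for the rank-five bundle $S^{2}_{0}(\Lambda^{2}_{-})$ over $S^{3}$, with careful tracking of multiplicities and of the non-semisimple, polynomial-in-$t$ contributions, so that one can be certain no relevant indicial root enters the critical strip. By contrast, the conformal-Killing-field analysis used in the remainder of the second step and in the third step is the classical description of conformal Killing fields on the round sphere and is essentially routine.
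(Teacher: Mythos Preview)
Your decoupling observation $FF^{*}(Z,\omega)=(\mathcal{D}\mathcal{D}^{*}Z,\,-2\delta\mathcal{K}_{g}\omega)$ is correct, and the paper exploits the $\omega$-half of it just as you do: applying $\delta$ to $\mathcal{D}^{*}Z=\mathcal{K}_{g}\omega$ gives $\square_{\mathcal{K},g}\omega\equiv\delta\mathcal{K}_{g}\omega=0$, and Section~\ref{mixedsec} solves this by separation of variables, finding that all non-conformal-Killing solutions have $|\mathrm{Re}(\beta)|\geq\sqrt{5}$. Your third step, classifying the conformal Killing fields via $\mathfrak{so}(5,1)$ on $\R^{4}\setminus\{0\}$, is correct and gives the same answer as the paper (which instead reads them off from the ODE analysis on the cylinder).

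Where the paper differs is in the treatment of $Z$. Rather than passing to the fourth-order operator $\mathcal{D}\mathcal{D}^{*}$ and then checking the coupling afterward, the paper works directly with the second-order equation $\mathcal{D}^{*}Z=\mathcal{K}_{g}\omega$. The key device is the Hodge-type splitting $S^{2}_{0}(T^{*}Y)=\mathrm{Ker}(\delta_{Y})\oplus\mathrm{Im}(\mathcal{K}_{g_{Y}})$ (Proposition~\ref{kiprop}), refined by the Hodge decomposition on $1$-forms, which sorts $Z$ into three types $f(t)\mathcal{K}_{g_{Y}}(d\phi)$, $f(t)\mathcal{K}_{g_{Y}}(\omega)$, $f(t)B$. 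On each type, $\mathcal{D}^{*}$ acts by an explicit second-order constant-coefficient ODE in $t$ (Propositions~\ref{adjtype3} and~\ref{sec5prop}), and one reads off directly that every exponent satisfies $|\mathrm{Re}(\beta)|\geq 2$. The mixed solutions with $\mathcal{K}_{g}\omega\neq 0$ are then built from the already-classified $\omega$'s (Proposition~\ref{sec6prop}). This buys quadratic characteristic polynomials instead of quartics, and the coupling condition is built in from the outset rather than imposed after the fact. Your route through $\mathcal{D}\mathcal{D}^{*}$ would work, but the quartic ODEs acquire extra roots (those $Z$ with $\mathcal{D}^{*}Z\neq 0$ lying in $\mathrm{Ker}(\mathcal{D})\setminus\mathrm{Im}(\mathcal{K}_{g})$) which must then be discarded by hand; and the Weitzenb\"ock bookkeeping you anticipate for the rank-five bundle is no lighter than the direct computation. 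Your own parenthetical ``equivalently'' is in fact the cleaner formulation and is essentially what the paper carries out.
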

\begin{remark}
\label{rmk1}
{\em The indicial roots $\beta \in \mathcal{I}^*$ satisfying 
$|Re(\beta)| \geq 2$ fall into two classes. The indicial roots in the 
first class are integers and the corresponding solutions are of the form $(Z,0)$; 
these are Cases (2) and (3) in Theorem \ref{s3thm2}. The indicial roots in 
the other class have non-zero imaginary part, and the corresponding solutions are 
of the form $(Z,\omega)$ with $Z$ nontrivial and $\mathcal{K}_g(\omega) \neq 0$;
these are Cases (4) and (5) in Theorem~\ref{s3thm2}.
}
\end{remark}
We can also completely characterize the indicial roots of the forward 
operator $F$. 
This follows from the above determination of the 
cokernel indicial roots, together with the index theorem of Lockhart and McOwen; 
it turns out that these are the same.
We will describe all kernel elements explicitly below in Theorem \ref{kerlong}, 
but for purposes of brevity in the introduction we 
only state here the following theorem 
which generalizes a well-known result of Floer \cite{Floer}. 
In order to state the theorem, we define the symmetric 
product of $1$-forms $\omega_1$ and $\omega_2$ by 
\begin{align}
\omega_1 \odot \omega_2 = \omega_1 \otimes \omega_2+ \omega_2 \otimes \omega_1. 
\end{align}
\begin{theorem}
\label{kershort}
 Let $M$ be  $\R \times S^{3}/ \Gamma$ with product metric 
$g = dt^2 + g_{S^3/ \Gamma}$, where $g_{S^3/ \Gamma}$ is a metric of
constant curvature $1$, 
and let $\mathcal{I}$ denote the set of indicial roots  of $F$.
Then $\mathcal{I} = \mathcal{I}^*$. 
For $\beta = 0 \in \mathcal{I}$,
the corresponding solutions of $F(h) = 0$ are given by 
\begin{align}
\label{ker0}
span \{  3 dt \otimes dt - g_{S^3},dt \odot \omega_0  \},
\end{align}
where $\omega_0$ is a dual to a Killing field on $S^3/ \Gamma$. 

Next, $\beta = \pm 1 \in \mathcal{I}$ if and only if $\Gamma$ is trivial. 
In this case, the corresponding kernel elements are given by 
\begin{align}
\label{ker1}
h_{\phi} = p(t) \phi ( 3 dt \otimes dt - g_{S^3}) + q(t) (dt \odot d\phi), 
\end{align}
where $p(t) = C_3 e^t - C_4 e^{-t}$ and $q(t) = C_3 e^t + C_4 e^{-t}$, 
for some constants $C_3$ and $C_4$, and $\phi$ is a lowest nonconstant eigenfunction of 
$\Delta_{S^3}$. 

 Morever, solutions in \eqref{ker0} and \eqref{ker1} are in the 
image of the conformal Killing operator. 
All other indicial roots $\beta \in \mathcal{I}$ satisfy 
$|Re(\beta)| \geq 2$. 
\end{theorem}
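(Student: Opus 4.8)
The plan is to derive everything about $F$ from the already‐completed analysis of the cokernel operator $F^*$ by duality, and then to identify the kernel elements at the low indicial roots by producing explicit conformal–Killing preimages.

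To prove $\mathcal{I} = \mathcal{I}^*$, note first that for each fixed $\lambda$ substituting $e^{\lambda t}$ into $F$ yields an elliptic operator $\widehat{F}(\lambda)$ on the closed $3$-manifold $Y = S^3/\Gamma$ of Fredholm index zero (the Douglis--Nirenberg structure forces a little care with the two weights on the target, but this is standard); hence $\lambda \in \mathcal{I}$ iff $\widehat{F}(\lambda)$ is non-injective, equivalently (index zero) non-surjective, equivalently $\widehat{F}(\lambda)^* = \widehat{F^*}(-\overline{\lambda})$ is non-injective, i.e. iff $-\overline{\lambda} \in \mathcal{I}^*$. To promote this to an equality of multiplicity functions $N_F(\lambda) = N_{F^*}(-\overline{\lambda})$ I would combine it with the Lockhart--McOwen index theorem for $F$ and for $F^*$ and the Fredholm duality $\mathrm{ind}(F_\delta) = -\mathrm{ind}(F^*_{-\delta})$ on exponentially weighted cylinder spaces. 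Finally, since $dt^2 + g_{S^3/\Gamma}$ is invariant under $t \mapsto -t$ and $F, F^*$ have real coefficients, the sets $\mathcal{I}$, $\mathcal{I}^*$ and their multiplicity functions are each invariant under $\lambda \mapsto -\lambda$ and $\lambda \mapsto \overline{\lambda}$; hence $\mathcal{I} = -\overline{\mathcal{I}^*} = \mathcal{I}^*$ with matching multiplicities. By Theorem \ref{s3thm} this gives the last assertion of the statement at once, and it records the multiplicities $N_F(0) = 1 + k$ and, for $\Gamma$ trivial, $N_F(\pm 1) = 4$, where $k$ is the dimension of the Killing algebra of $S^3/\Gamma$ and $4 = \dim \ker(\Delta_{S^3} + 3)$.

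For the kernel itself, I would use that the deformation sequence is a complex, so $\mathcal{D} \circ \mathcal{K}_g = 0$ and therefore $F(\mathcal{K}_g \omega) = (0, 2\delta \mathcal{K}_g \omega)$; it then suffices to exhibit indicial $1$-forms $\omega$ with $\delta \mathcal{K}_g \omega = 0$ whose images are the tensors in \eqref{ker0}--\eqref{ker1}. Since $dt$ is parallel on the cylinder one computes $\mathcal{K}_g(2t\, dt) = 3\, dt \otimes dt - g_{S^3}$, and since a Killing field $\omega_0$ of $S^3/\Gamma$ remains Killing on $\mathbb{R} \times S^3/\Gamma$ one gets $\mathcal{K}_g(t\, \omega_0) = dt \odot \omega_0$; this handles $\beta = 0$. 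For $\beta = \pm 1$ (so $\Gamma$ trivial), taking $\phi$ with $\Delta_{S^3}\phi + 3\phi = 0$ and using the Obata identity $\nabla^2 \phi = -\phi\, g_{S^3}$, one can compute $\mathcal{K}_g$ of the indicial $1$-forms $e^t \phi\, dt$ and $t\, e^t(\phi\, dt - d\phi)$ in closed form, and the combination $\omega = e^t \bigl[ \tfrac{3+t}{2}\, \phi\, dt - \tfrac{t}{2}\, d\phi \bigr]$ satisfies $\mathcal{K}_g \omega = e^t \bigl[ \phi(3\, dt \otimes dt - g_{S^3}) + dt \odot d\phi \bigr]$, which is the $C_4 = 0$ member of \eqref{ker1}; the $C_3 = 0$ member is its image under $t \mapsto -t$. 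That the second component of $F$ annihilates each of these tensors is a short direct check --- and is in any case forced by the duality above --- so all of \eqref{ker0}--\eqref{ker1} lie in $\ker F \cap \mathrm{image}(\mathcal{K}_g)$.

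These tensors span spaces of dimension $1 + k$ at $\beta = 0$ and of dimension $4$ at each of $\beta = \pm 1$, which by the multiplicity count above are exactly $N_F(0)$ and $N_F(\pm 1)$; hence no further indicial solutions occur at these roots --- in particular none carrying a nontrivial polynomial in $t$ --- and \eqref{ker0}--\eqref{ker1} exhaust the kernel. The hard part is precisely this multiplicity bookkeeping: the plain adjoint relation only matches $\mathcal{I}$ and $\mathcal{I}^*$ as sets, and it takes the Lockhart--McOwen index theorem, the Fredholm duality $\mathrm{ind}(F_\delta) = -\mathrm{ind}(F^*_{-\delta})$, and the $t \mapsto -t$ and reality symmetries of the product cylinder, used together, to conclude that the full multiplicity (Jordan) structure of $F$ at $\lambda$ coincides with that of $F^*$ at $-\overline{\lambda}$. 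Once that is in place, the remaining steps are elementary identities on $S^3$.
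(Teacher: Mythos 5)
Your proposal is correct in substance and follows essentially the same route as the paper: the paper's proof of Theorem \ref{kerlong} likewise transfers the complete determination of $\mathcal{I}^*$ (Theorem \ref{s3thm2}) to $F$ via the Lockhart--McOwen index theorem (matching the dimensions of generalized, polynomial-in-$t$ solution spaces root by root), and then exhibits the kernel at $\beta=0,\pm1$ as explicit images of the conformal Killing operator; your potentials $2t\,dt$, $t\,\omega_0$, and $e^t\bigl[\tfrac{3+t}{2}\phi\,dt-\tfrac{t}{2}d\phi\bigr]$ are exactly the paper's formulas \eqref{case01} and \eqref{case01'} (with $C_3=1$, $C_4=0$), and your dimension count $1+k$ at $0$ and $4$ at each of $\pm1$ agrees with the paper's. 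Your indicial-family duality $\widehat{F}(\lambda)^*=\widehat{F^*}(-\overline{\lambda})$ together with index zero on the closed odd-dimensional cross-section is a somewhat more self-contained justification of the set-level statement than the paper's bare citation, but it is the same mechanism.

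One step is wrongly justified as written, though it is easily repaired. You invoke invariance of $g=dt^2+g_{S^3/\Gamma}$ under $t\mapsto -t$ to get symmetry of $\mathcal{I}$ and $\mathcal{I}^*$ under $\lambda\mapsto-\lambda$. But $(t,y)\mapsto(-t,y)$ is orientation-\emph{reversing} on the $4$-manifold, and $F$ is built from $\mathcal{D}=(\mathcal{W}^-)'$, which is not preserved under orientation reversal (it is exchanged with $(\mathcal{W}^+)'$); one cannot in general compensate with an orientation-reversing isometry of the cross-section, since $S^3/\Gamma$ need not admit one for nontrivial $\Gamma$. The symmetry you need, namely $\mathcal{I}^*=-\overline{\mathcal{I}^*}$ (with multiplicities), should instead be read off directly from the explicit list in Theorem \ref{s3thm2} (all roots there occur in $\pm$ pairs, and reality of the coefficients gives conjugation symmetry); in fact for the roots relevant here ($0$ and $\pm1$) your duality $N_F(\lambda)=N_{F^*}(-\overline{\lambda})$ already suffices without any reflection symmetry, since $N_{F^*}(+1)=N_{F^*}(-1)=4$ is part of the explicit cokernel computation. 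With that substitution the argument closes as you describe.
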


\begin{remark}{\em As in Remark \ref{rmk1}, the indicial roots $\beta \in \mathcal{I}$ satisfying 
$|Re(\beta)| \geq 2$ fall into two classes. The indicial roots in the 
first class are integers and the 
corresponding solutions are not in the image of the conformal Killing operator;
these are Cases (2) and (3) in Theorem \ref{kerlong}. The indicial 
roots in the other class have non-zero imaginary part, and the corresponding 
solutions {\em{are}} in the image of the conformal Killing operator;
these are Cases (4) and (5) in Theorem \ref{kerlong}.
}
\end{remark}
A corollary is the optimal result:
\begin{corollary}
\label{t1s3}
Let $(M^4,g)$ be the the cylinder $\R \times Y^3$, 
where $Y^3 = S^3/\Gamma$ with
$\Gamma \subset SO(4)$ a finite subgroup acting freely 
on $S^3$ with product metric $g = dt^2 + g_Y$, where $g_Y$ is a 
metric of constant curvature $1$. 
\begin{enumerate}
\item[(a)]
Let $(Z, \omega)$ be a solution of $\mathcal{D}^*Z = \mathcal{K}_g \omega$. 
If $Z = o(e^{2|t|})$ and $\omega = o(e^{2|t|})$ as  
$|t| \rightarrow \infty$ then  $Z = 0$, and 
$\omega$ is 
dual to a conformal Killing field.
\item[(b)]
 Let $h$ be a solution of $\mathcal{D} h = 0$ and $\delta h =0$. 
If $h = o(e^{2|t|})$ as  $|t| \rightarrow \infty$ then
$h$ can be written as a linear combination of elements in \eqref{ker0} and \eqref{ker1}. 
\end{enumerate}
\end{corollary}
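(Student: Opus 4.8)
\emph{Proof proposal.} The plan is to derive both statements directly from the indicial root computations of Theorems~\ref{s3thm} and \ref{kershort}, using the standard structure theory for translation‑invariant elliptic systems on a cylinder. Since $g = dt^2 + g_Y$ is a product metric, the operators $F$ and $F^*$ have coefficients independent of $t$, so the Lockhart--McOwen theory \cite{LockhartMcOwen} (equivalently, the $b$‑calculus) applies: any solution of $F^*(Z,\omega) = 0$ on $\R \times Y$ whose components are $O(e^{a|t|})$ for some $a$ — which holds in our situation since $Z,\omega = o(e^{2|t|})$ — is a finite linear combination of exponential‑polynomial solutions $e^{\beta t} p(y,t)$, with $\beta$ an indicial root of $F^*$ and $p$ a polynomial in $t$ with coefficients in $C^\infty(Y)$, and only roots with $|\mathrm{Re}(\beta)| \le a$ can occur (distinct roots contribute summands with distinct growth at $\pm\infty$, so there is no cancellation). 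We may take $(Z,\omega)$ smooth by elliptic regularity for the deformation complex.

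First I would carry this out for part (a). Because $(Z,\omega) = o(e^{2|t|})$ as $|t|\to\infty$, no summand $e^{\beta t}p(y,t)$ with $\mathrm{Re}(\beta) = 2$ can appear — even a $t$‑independent $p$ produces a term of exact size $e^{2t}$ as $t\to+\infty$ — and similarly none with $\mathrm{Re}(\beta) = -2$, nor any with $|\mathrm{Re}(\beta)| > 2$. Hence $(Z,\omega)$ is a finite linear combination of indicial solutions of $F^*$ with $|\mathrm{Re}(\beta)| < 2$. By Theorem~\ref{s3thm} every such solution has the form $(0,\omega)$ with $\omega$ dual to a conformal Killing field; since the conformal Killing fields form a vector space, we conclude $Z = 0$ and $\omega$ is dual to a conformal Killing field, which is (a). Part (b) is identical with $F$ in place of $F^*$: $h = o(e^{2|t|})$ forces $h$ to be a finite linear combination of indicial solutions of $F$ with $|\mathrm{Re}(\beta)| < 2$, and by Theorem~\ref{kershort} — which identifies $\mathcal{I}$ with $\mathcal{I}^*$ and lists exactly these solutions — each such solution lies in the span of the elements displayed in \eqref{ker0} and \eqref{ker1}. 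Hence $h$ is a linear combination of elements of \eqref{ker0} and \eqref{ker1}.

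The only delicate point is the invocation of the structure theorem, and in particular the observation that the hypothesis $o(e^{2|t|})$ — rather than $O(e^{2|t|})$ — is precisely what excludes the borderline roots with $\mathrm{Re}(\beta) = \pm 2$, so that the enumeration of indicial roots in the open strip $|\mathrm{Re}(\beta)| < 2$ provided by Theorems~\ref{s3thm} and \ref{kershort} furnishes a complete list of the possible summands. Everything else is bookkeeping from results already established; the substantive work is contained in the indicial root computations rather than in this corollary.
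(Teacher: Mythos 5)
Your argument is correct and is essentially the paper's proof: the paper deduces Corollary \ref{t1s3} directly from Theorem \ref{s3thm} (and Theorem \ref{kershort}), with the translation-invariant structure theory on the cylinder left implicit, which you have simply spelled out via the Lockhart--McOwen/mode-decomposition argument together with the observation that the $o(e^{2|t|})$ hypothesis excludes the borderline roots with $|\mathrm{Re}(\beta)| = 2$. No gaps; your write-up just makes explicit the bookkeeping the paper compresses into ``follows immediately.''
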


Standard analysis in weighted spaces then 
implies the following corollary for AC manifolds with spherical 
cross-section:
\begin{corollary}
\label{t1ac}
 Let $(M^4,g)$ be self-dual and asymptotically cylindrical
with cross-section $(Y^3 = S^3/\Gamma, g_Y)$ with  
$\Gamma \subset SO(4)$ a finite subgroup acting freely 
on $S^3$, where $g_Y$ is a metric of constant curvature $1$. 
\begin{enumerate}
\item[(a)]
Let $(Z, \omega)$ be a solution of $\mathcal{D}^*Z = \mathcal{K}_g \omega$. 
If $Z = o(e^{2t})$ and $\omega = o(e^{2t})$ then $\omega$ is 
dual to a conformal Killing field, 
and $Z = O( e^{-2t})$ as $t \rightarrow \infty$. 
\item[(b)]
 Let $h$ be a solution of $\mathcal{D} h = 0$ and $\delta h =0$. 
If $h = o(e^{2|t|})$ as  $|t| \rightarrow \infty$ then
$h$ has an asymptotic expansion with leading term 
as in \eqref{ker0} or \eqref{ker1}. 
\end{enumerate}
\end{corollary}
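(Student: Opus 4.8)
\medskip
\noindent\emph{Proof strategy.}\ The plan is to deduce both parts from the indicial root computations of Theorems~\ref{s3thm}--\ref{kershort} (in the sharper forms of Theorems~\ref{s3thm2} and \ref{kerlong}) by the now-standard asymptotic analysis of asymptotically translation-invariant elliptic systems on cylindrical ends, in the spirit of Lockhart--McOwen~\cite{LockhartMcOwen}. First I would fix, on each end modelled on $\R_+ \times Y$ with $Y = S^3/\Gamma$, weighted H\"older spaces $C^{k,\alpha}_\delta$ with norm weighted by $e^{\delta t}$ as $t \to \infty$, and observe that by \eqref{eqgdfac1in}--\eqref{eqgdfac2in} the operators $F$ and $F^*$ differ on such an end from their translation-invariant models on the product cylinder by coefficients that are $O(e^{-\tau t})$ together with all derivatives. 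Since $F$ (hence $F^*$) is Douglis--Nirenberg elliptic, interior Schauder estimates on unit cylinders $[t-1,t+1]\times Y$ together with this decay give, for a solution of $F(h)=0$ or $F^*(Z,\omega)=0$, uniform $C^{k,\alpha}$ control in terms of the supremum on that cylinder, for every $k$; in particular a solution that is $o(e^{2t})$ lies in $C^{k,\alpha}_{2-\varepsilon}$ for every $\varepsilon > 0$.

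The key input is the expansion lemma: if $u$ solves the homogeneous equation and $u \in C^{k,\alpha}_{\delta_0}$, then for any $\delta_1 < \delta_0$ that is not the real part of an indicial root one can write $u = u_{\mathrm{as}} + v$, where $u_{\mathrm{as}}$ is a finite sum of model solutions $e^{\lambda t}p(t,y)$ over the indicial roots $\lambda$ with $\delta_1 < \mathrm{Re}(\lambda) < \delta_0$, and $v \in C^{k,\alpha}_{\delta_1}$. For part (a), apply this with $u = (Z,\omega)$, $F^*u = 0$, $\delta_0 = 2 - \varepsilon$, and $\delta_1$ slightly below $-2$. By Theorem~\ref{s3thm} (equivalently the complete list of Theorem~\ref{s3thm2}), every indicial root of $F^*$ with real part in $(-2,2)$ lies in $\{-1,0,1\}$, with $\pm 1$ occurring only when $\Gamma$ is trivial, and each corresponding model solution has the form $(0,\omega)$ with $\omega$ dual to a conformal Killing field of the product metric. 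Hence $u_{\mathrm{as}} = (0,\omega_\infty)$ with $\omega_\infty$ conformal Killing for $g_C$, so $\omega$ is, to leading order, dual to one of the conformal Killing fields listed in Theorem~\ref{s3thm}, while $Z = v_Z \in C^{k,\alpha}_{-2+\varepsilon'}$ for every $\varepsilon' > 0$. Continuing the expansion across the indicial root at $\mathrm{Re}(\lambda) = -2$ (Cases (2)--(3) of Theorem~\ref{s3thm2}, all of type $(Z,0)$) and using the polynomial-free form of those model solutions upgrades this to $Z = O(e^{-2t})$, the claimed optimal rate; and when the leading term of $\omega$ is bounded or decaying, the complex identity $\mathcal{D} \circ \mathcal{K}_g = 0$ permits an integration by parts against $\mathcal{K}_g\omega$ that forces $\mathcal{K}_g\omega = 0$ identically.

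Part (b) is identical with $F$ in place of $F^*$: Theorem~\ref{kershort} gives $\mathcal{I} = \mathcal{I}^*$, so the indicial roots of $F$ with $|\mathrm{Re}(\lambda)| < 2$ are again $\{0,\pm 1\}$, with model solutions spanned by \eqref{ker0} for $\lambda = 0$ and by \eqref{ker1} for $\lambda = \pm 1$ (when $\Gamma$ is trivial). The expansion lemma with $\delta_0 = 2 - \varepsilon$ and $\delta_1$ slightly below $-2$ then gives $h = h_{\mathrm{lead}} + O(e^{(-2+\varepsilon')t})$ on the end, where $h_{\mathrm{lead}}$ is a linear combination of the fields in \eqref{ker0} and \eqref{ker1}; this is exactly the asserted asymptotic expansion. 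Running the argument at each cylindrical end with cross-section $S^3/\Gamma$ handles the $|t| \to \infty$ formulation.

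The main obstacle is the passage from the exact product cylinder, on which Theorems~\ref{s3thm}--\ref{kershort} are stated, to the genuine AC manifold inside the expansion lemma: one must check that the $O(e^{-\tau t})$ discrepancy between $F$ (resp.\ $F^*$) and its model is harmless across the range of weights being crossed --- that no indicial root has real part in the relevant half-open interval and that $\tau$ is large enough, or else iterate, picking up only strictly lower-order corrections. For the mixed-order Douglis--Nirenberg system $F$ this is the delicate but standard parametrix and Fredholm package of Lockhart--McOwen; granting it, together with the indicial-root tables of Theorems~\ref{s3thm2} and \ref{kerlong}, both statements follow. A secondary point is the borderline improvement to the sharp exponent $e^{-2t}$ in (a), which rests on the explicit polynomial-free description of the model solutions at $\mathrm{Re}(\lambda) = -2$.
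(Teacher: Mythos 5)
Your proposal is correct and follows essentially the same route as the paper: the paper's proof of Corollary \ref{t1ac} is precisely to combine the cylinder indicial-root classification (Theorem \ref{s3thm}, via Corollary \ref{t1s3}) with the standard Lockhart--McOwen weighted-space argument that solutions on an asymptotically cylindrical end admit expansions whose leading terms are model solutions at the indicial roots, which is what you spell out. The extra details you supply (weighted H\"older spaces, the $O(e^{-\tau t})$ comparison with the translation-invariant model, the polynomial-free roots at real part $-2$, and the integration by parts giving $\mathcal{K}_g\omega = 0$) are exactly the content the paper leaves implicit under ``standard argument.''
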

In Section 8, we apply Corollary \ref{t1ac} 
to fix a gap in the proof of a key step in
 the main gluing result in \cite{KovalevSinger}. 

To state the next result, we require the following definition. 
\begin{definition}
\label{ALEdef}
{\em
 A complete Riemannian manifold $(M^4,g)$ 
is called {\em{asymptotically locally 
Euclidean}} or {\em{ALE}} of order $\tau$ if it has finitely 
many ends, and for each end there exists a finite subgroup 
$\Gamma \subset SO(4)$ 
acting freely on $S^3$ and a 
diffeomorphism 
$\psi : M \setminus K \rightarrow ( \mathbf{R}^4 \setminus B(0,R)) / \Gamma$ 
where $K$ is a subset of $M$ containing all other ends, 
and such that under this identification, 
\begin{align}
\label{eqgdfale1in}
(\psi_* g)_{ij} &= \delta_{ij} + O( r^{-\tau}),\\
\label{eqgdfale2in}
\ \partial^{|k|} (\psi_*g)_{ij} &= O(r^{-\tau - k }),
\end{align}
for any partial derivative of order $k$, as
$r \rightarrow \infty$, where $r$ is the distance to some fixed basepoint.  
}
\end{definition}
It is known that any self-dual ALE metric is ALE of order $2$,
after a possible change of coordinates at infinity. ALE of 
any order $\tau < 2$ was first shown by \cite{TV}, 
while ALE of order exactly $2$ was shown in \cite{Streets},
see also \cite{Chen, AcheViaclovsky}. This order is 
optimal, so without loss of generality we will assume 
that all ALE spaces are ALE of order $2$. 

We also have the following optimal decay result for self-dual ALE spaces:
\begin{theorem}
\label{t1a}
Let $(M,g)$ be self-dual and asymptotically locally Euclidean. 
\begin{enumerate}
\item[(a)]
Any solution of $\mathcal{D}^* Z = \mathcal{K}_g \omega$ satisfying 
$Z = o(1)$ and $\omega = o(r^{-1})$ 
must satisfy $\omega = 0$ and $Z = O( r^{-4})$ as $r \rightarrow \infty$.
\item[(b)]
Any solution of $\mathcal{D} h = 0$ and $\delta h =0$
satisfying $h = o(1)$ must satisfy $Z = O( r^{-2})$ as $r \rightarrow \infty$.
\end{enumerate}
\end{theorem}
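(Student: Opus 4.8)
The plan is to deduce Theorem~\ref{t1a} from the asymptotically cylindrical case, Corollary~\ref{t1ac}, by conformally changing the ALE ends into cylindrical ends. On an end modeled on $(\mathbf{R}^4 \setminus B(0,R))/\Gamma$, set $t = \log r$, so that the flat metric becomes $\delta = e^{2t}(dt^2 + g_{S^3/\Gamma})$. Choose a smooth positive function $u$ on $M$ with $u = r^{-1}$ on each end and put $\hat g = u^2 g$. Since $g$ is ALE of order $2$, i.e. $\partial^{|k|}(g - \delta) = O(r^{-2-k})$ in asymptotic coordinates, the metric $\hat g = r^{-2}g$ equals $dt^2 + g_{S^3/\Gamma} + O(e^{-2t})$ near infinity, with all derivatives decaying at the corresponding rate: the extra factor $r^{-2}$ converts the $O(r^{-2})$ error of an order-$2$ ALE metric into an $O(e^{-2t})$ error once one passes to cylindrical coordinates. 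Thus $(M,\hat g)$ is self-dual and asymptotically cylindrical of order $2$ with cross-section $S^3/\Gamma$, and Corollary~\ref{t1ac} applies to it.

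I would then transfer solutions using the conformal covariance of the self-dual deformation complex. The operators $\mathcal{D} = (\mathcal{W}^-)_g'$ and $\mathcal{K}_g$, hence $F$ and $F^*$, are conformally covariant, so for suitable fixed exponents the assignments $(Z,\omega) \mapsto (u^{a}Z,\, u^{b}\omega)$ and $h \mapsto u^{c}h$ take solutions of $\mathcal{D}_g^* Z = \mathcal{K}_g \omega$, respectively of $\mathcal{D}_g h = 0$ and $\delta_g h = 0$, on $(M,g)$ to solutions of the corresponding equations on $(M,\hat g)$. Using that a covariant $p$-tensor $T$ satisfies $|T|_{\hat g} = u^{-p}|T|_g$, together with $u = r^{-1} = e^{-t}$ and the values of $a,b,c$ forced by the covariance, one checks that the hypotheses $Z = o(1)$, $\omega = o(r^{-1})$ of part (a), and $h = o(1)$ of part (b), become $o(e^{2t})$ decay at each cylindrical end for the rescaled objects --- exactly the hypotheses of Corollary~\ref{t1ac}(a), (b). That corollary then yields that the rescaled $\omega$ is dual to a conformal Killing field of $\hat g$ and that the rescaled $Z$ is $O(e^{-2t})$ in part (a), and that the rescaled $h$ has an asymptotic expansion with leading term as in \eqref{ker0} or \eqref{ker1} in part (b).

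Undoing the conformal change, the decay statements transfer back to give $Z = O(r^{-4})$ in part (a) and $h = O(r^{-2})$ in part (b). For the vanishing of $\omega$ in part (a), observe that a conformal Killing field of $\hat g = u^2 g$ is also one of $g$, and on an ALE end any conformal Killing field of $g$ is asymptotic to a conformal Killing field of the flat model $\mathbf{R}^4/\Gamma$ --- a combination of translations, rotations, the dilation, and special conformal transformations that descend to the quotient; by the explicit list of indicial roots in Theorem~\ref{s3thm}, these (occurring at the indicial roots $0$ and $\pm 1$) are precisely the solutions of the form $(0,\omega)$, and none of their leading-order behaviors decays. Hence the requirement $\omega = o(r^{-1})$ --- note that $o(1)$ alone would not suffice, since the translational fields, present when $\Gamma$ is trivial, correspond to a $\omega$ of size comparable to $r^{-1}$ --- annihilates the entire asymptotic expansion of $\omega$, and $\omega \equiv 0$ follows by unique continuation for the overdetermined elliptic conformal Killing operator.

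I expect the main obstacle to be the bookkeeping of conformal weights: one must pin down $a,b,c$ from the conformal covariance of $F$ and $F^*$ and verify that the ALE decay thresholds in both the hypotheses and the conclusions line up exactly with the cylindrical thresholds appearing in Corollary~\ref{t1ac}. The only genuinely new geometric input beyond Corollary~\ref{t1ac} is the classification of conformal Killing fields of $\mathbf{R}^4/\Gamma$ used to force $\omega \equiv 0$; and in part (b) one should additionally confirm that any growing \eqref{ker1}-type contribution, which occurs only when $\Gamma$ is trivial, remains consistent with the stated $O(r^{-2})$ bound after the conformal rescaling (or is excluded by a pairing argument, using that $F$ and $F^*$ have the same indicial roots, Theorem~\ref{kershort}).
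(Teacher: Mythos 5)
Your reduction hinges on the claim that $F$ and $F^*$ themselves are conformally covariant, so that $(Z,\omega)\mapsto(u^{a}Z,u^{b}\omega)$ and $h\mapsto u^{c}h$ carry solutions of the ALE equations to solutions of the cylindrical ones. That is the gap: $\mathcal{D}$, $\mathcal{D}^*$ and $\mathcal{K}_g$ are each conformally covariant, but with \emph{different} weights, so neither $F=(\mathcal{D},2\delta)$ nor $F^*(Z,\omega)=\mathcal{D}^*Z-\mathcal{K}_g\omega$ is. Concretely, with $\hat g=r^{-2}g$ one has $\mathcal{D}^*_{\hat g}Z=r^{2}\mathcal{D}^*_{g}Z$ (same $Z$), while $\mathcal{K}_{\hat g}(r^{-2}\omega)=r^{-2}\mathcal{K}_{g}\omega$; hence a solution of $\mathcal{D}^*_gZ=\mathcal{K}_g\omega$ only gives $\mathcal{D}^*_{\hat g}Z=r^{4}\,\mathcal{K}_{\hat g}(r^{-2}\omega)$, and the factor $r^{4}=e^{4t}$ cannot be absorbed by any fixed power rescaling, so Corollary~\ref{t1ac}(a) never applies to the transferred pair. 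Likewise the gauge $\delta_g h=0$ is not conformally invariant, so the hypotheses of Corollary~\ref{t1ac}(b) are not verified for any $u^{c}h$. The paper's proof avoids exactly this: for (a) it first kills $\omega$ on the ALE space itself --- taking the divergence of $\mathcal{D}^*Z=\mathcal{K}_g\omega$ gives $\square_{\mathcal{K}}\omega=0$ (because $\delta\,\mathcal{D}^*\equiv 0$), an integration by parts (this is where $\omega=o(r^{-1})$ is used, via the boundary term, not because translations decay: the $1$-forms dual to translations have $|\omega|_g=O(1)$, not $O(r^{-1})$) gives $\mathcal{K}_g\omega=0$, and a decaying conformal Killing field on an ALE space vanishes --- and only then invokes the covariance of the single operator $\mathcal{D}^*$ to pass to $\hat g$ and apply Corollary~\ref{t1ac}.

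For (b) there is a second gap even if the transfer were available: Corollary~\ref{t1ac}(b) allows a leading term of type \eqref{ker1}, which grows like $e^{t}$ in the cylindrical norm and corresponds to $|h|_g\sim r^{-1}$ on the ALE end; this would contradict, not yield, the asserted $O(r^{-2})$, and your suggestion that it is ``consistent after rescaling'' or removable ``by a pairing argument'' is not an argument. The paper proves (b) by a different mechanism altogether: $h$ satisfies the linearized Bach equation $\mathcal{D}^*\mathcal{D}h=0$, whose leading part is $\Delta^2$ on the ALE end, and the absence of an $r^{-1}$ term in the expansion is quoted from \cite[Proposition 2.2]{AcheViaclovsky}, which gives $h=O(r^{-2})$. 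Some version of such an input (or the integration-by-parts step in (a)) is unavoidable; it cannot be extracted from Corollary~\ref{t1ac} by conformal bookkeeping alone.
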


\subsection{Hyperbolic cross-section}
We first define
\begin{align}
H^1_{C} (Y) = \{ B \in S^2_0(T^*Y) \ | \ d^{\nabla} B = 0, tr(B) = 0\},
\end{align}
where $(d^{\nabla}B)_{klj}$ is given by 
$(d^{\nabla}B)_{klj}=\nabla_{k}B_{lj}-\nabla_{l}B_{kj}$,
to be the vector space of traceless Codazzi tensor fields. 
For the case of hyperbolic cross-section, we have the following:
\begin{theorem}
\label{hypthm}
Let $(M^4,g)$ be the the cylinder $\R \times Y^3$, 
where $(Y^3,g_Y)$ is compact and hyperbolic with constant curvature $-1$,
with product metric $g = dt^2 + g_Y$, and let $\mathcal{I}^*$
denote the set of indicial roots of $F^*$. Then there exists an 
$\epsilon > 0$ such that if $\beta \in \mathcal{I}^*$ with 
$| Re(\beta)| < \epsilon$ then $\beta = 0$. 
The corresponding kernel of $F^*$ has dimension 
\begin{align}
\label{hypdc}
1 + b_1(Y) + 2 \dim (H^1_C(Y)). 
\end{align}
The corresponding kernel of $F$ has the same dimension
and is spanned by 
\begin{align}
\{ 3 dt \otimes dt - g_Y,  dt \odot \omega, cos(t)\cdot B, \sin(t) \cdot B\},
\end{align}
where $\omega$ is any harmonic $1$-form $\omega$,
and $B$ is any traceless Codazzi tensor on $Y^3$.
\end{theorem}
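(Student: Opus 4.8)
\emph{Sketch of the proof.} The plan is to reduce the indicial problem on the cylinder to a family of finite-dimensional problems indexed by the spectral data of Laplace-type operators on $Y^{3}$, paralleling the analysis of the spherical cross-section but with the cross-sectional curvature $-1$ in place of $+1$. Along $\R \times Y^{3}$ the splitting $T^{*}M = \R\, dt \oplus T^{*}Y$ induces an orthogonal decomposition $\Gamma(S^{2}_{0}(T^{*}M)) \cong C^{\infty}(Y) \oplus \Gamma(T^{*}Y) \oplus \Gamma(S^{2}_{0}(T^{*}Y))$, whose three summands carry the tensors $\varphi\,(3\, dt \otimes dt - g_{Y})$, the tensors $dt \odot \theta$, and the tensors valued in $S^{2}_{0}(T^{*}Y)$, respectively; likewise $\Lambda^{2}_{-}(M) \cong T^{*}Y$, so $S^{2}_{0}(\Lambda^{2}_{-}) \cong S^{2}_{0}(T^{*}Y)$. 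In these coordinates $\mathcal{D}$, $\mathcal{D}^{*}$, $\mathcal{K}_{g}$, and $\delta$ become matrices of operators in $\partial_{t}$ whose coefficients are assembled from the exterior derivative, codifferential, connection, and curvature of $Y^{3}$; this computation coincides with the one in the spherical case and does not see the sign of the cross-sectional curvature. Substituting components of the form $e^{\beta t} p(y,t)$ and expanding the $y$-dependence via the Hodge decomposition of $1$-forms, the York decomposition of $S^{2}_{0}(T^{*}Y)$ into a conformal-Killing part and a transverse-traceless part, and the eigenspaces of the relevant Hodge and Lichnerowicz Laplacians, the system $F^{*}(Z,\omega)=0$ decouples mode by mode into a polynomial constraint $P(\beta,\mu)=0$ relating $\beta$ to the associated eigenvalue $\mu$; the indicial roots are exactly the $\beta$ satisfying such a constraint for some $\mu$ in the spectrum.

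The heart of the matter is to show that once $|Re(\beta)|$ is below a suitable $\epsilon>0$ the only contributing modes are the degenerate ones. Each $P(\beta,\cdot)$ is, modulo lower-order terms, quadratic in $\beta$ with $\beta^{0}$-coefficient an affine function of $\mu$, so $\beta \to 0$ confines $\mu$ to a bounded interval, after which one eliminates all but the distinguished eigenvalues using Weitzenb\"ock--Bochner estimates on the negatively curved $Y^{3}$. On functions, the Laplacian has positive first eigenvalue, leaving only the constant mode. On $1$-forms, the Weitzenb\"ock formula relating $\nabla^{*}\nabla$ to the Hodge Laplacian and to $\mathrm{Ric}_{Y} = -2 g_{Y}$ singles out the harmonic $1$-forms, which here play the role of the Killing $1$-forms in the spherical case (where instead $b_{1}(Y)=0$). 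On $S^{2}_{0}(T^{*}Y)$, the Lichnerowicz Laplacian together with the fact that on a space of constant curvature $-1$ every traceless Codazzi tensor is automatically transverse-traceless and an eigentensor of $\nabla^{*}\nabla$ with a single eigenvalue determined by the curvature isolates exactly the subspace $H^{1}_{C}(Y)$. The sign of the curvature is essential at this last step: for cross-sectional curvature $+1$ the corresponding Codazzi eigenspace vanishes, which is why the spherical theorems contain no analogue of the $2\dim(H^{1}_{C}(Y))$ term.

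Once the surviving modes are identified, the rest is routine. Integrating the scalar ODEs yields precisely the stated solutions: the constant-function mode gives $3\, dt \otimes dt - g_{Y} = 2\mathcal{K}_{g}(t\, dt)$, which $\mathcal{D}$ annihilates because $\mathcal{W}^{-}$ is diffeomorphism- and conformally invariant; the harmonic-$1$-form mode gives $dt \odot \omega$; and the Codazzi mode gives the pair $\cos(t)\, B$, $\sin(t)\, B$. For each of these, $\delta h = 0$ is immediate from the product structure, using that $dt$ and $g_{Y}$ are parallel and that a traceless Codazzi tensor is divergence-free, and $\mathcal{D} h = 0$ follows from the explicit form of $\mathcal{D}$ along the cylinder. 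The span thus has dimension $1 + b_{1}(Y) + 2\dim(H^{1}_{C}(Y))$, and the corresponding kernel of $F^{*}$ has the same dimension: the indicial operators of $F$ and $F^{*}$ are formal adjoints under $\beta \mapsto -\bar{\beta}$, so the small indicial roots of the two operators coincide and the kernel dimensions agree, which one also recovers from the Lockhart--McOwen index theorem exactly as in the spherical case.

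The main obstacle is the $S^{2}_{0}(T^{*}Y)$ step in the second paragraph: producing a Weitzenb\"ock estimate sharp enough on the negatively curved cross-section to rule out any spurious small eigenvalue solving $P(\beta,\mu)=0$ near $\beta=0$, while correctly matching the borderline eigenspace with $H^{1}_{C}(Y)$. A secondary subtlety is that $\dim H^{1}_{C}(Y)$ is a genuine invariant of the hyperbolic $3$-manifold and is not explicitly computable, so --- unlike the spherical case, where every indicial root can be listed --- the conclusion takes the form of a gap $\epsilon$ together with the dimension formula.
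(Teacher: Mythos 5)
Your proposal is correct and follows essentially the same route as the paper: decompose $F^*(Z,\omega)=0$ into spectral modes on $Y$ (the paper's type I--III tensors and type (a)--(b) one-forms), solve the resulting indicial ODEs, isolate the small-real-part modes by eigenvalue estimates on the hyperbolic cross-section, verify the explicit kernel elements of $F$ directly from Theorem \ref{mainprop}, and match kernel and cokernel dimensions via the Lockhart--McOwen index theorem. The ``main obstacle'' you flag on $S^2_0(T^*Y)$ is exactly Koiso's estimate, quoted as Lemma \ref{eigIII}(b) in the paper: for divergence-free traceless eigentensors on a compact hyperbolic $Y^3$ one has $\lambda \ge 3$, with equality precisely for traceless Codazzi tensors, which follows from the Bochner-type inequality $\int_Y |\nabla_i h_{jk}-\nabla_j h_{ik}|^2\, dV \ge 0$.
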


\begin{remark}{\em The element $(0,dt)$ is in the cokernel, 
which accounts for the $1$ in \eqref{hypdc}.
The other cokernel elements in case $b_1(Y) \neq 0$ 
arise from non-trivial harmonic $1$-forms, 
and those in case
$H^1_C(Y) \neq \{0\}$ of course arise from non-trivial traceless Codazzi tensor 
fields. 
These elements are written down explicitly in 
Section \ref{adjointeq}, see Propositions \ref{adjtype3}(b)
and \ref{sec5prop}(b). We only note here that the nontrivial 
solutions in this case satisfy $Z = O(1)$ as $|t| \rightarrow \infty$ or are 
periodic in $t$.
}
\end{remark}
We define\footnote{Note that this definition is more general than the definition 
in \cite[Section 4.2.1]{KovalevSinger}
in that we allow solutions which have polynomial growth in $t$.} 
\begin{align*}
H^2_+( \R \times Y^3) = \{ &Z \in S^2_0(\Lambda^2_-) \ | \ \mathcal{D}^* Z = 0
\mbox{ and } Z = O(e^{\epsilon |t|}) \\
& \mbox{ as } |t| \rightarrow \infty \mbox{ for every } \epsilon > 0 \}.
\end{align*}
In \cite[Conjecture 4.11]{KovalevSinger}, 
it was conjectured that $H^2_+( \R \times Y^3)  = \{0\}$ for any hyperbolic 
rational homology $3$-sphere. 
Theorem \ref{hypthm} shows that this is true if and only if $Y^3$ does not admit 
any non-trivial traceless Codazzi tensor field.
Using this, and some examples of certain hyperbolic $3$-manifolds 
of \cite{Kapovich,DeBlois}, we obtain infinitely many examples for
which the conjecture is 
true, and infinitely many examples for which the conjecture is false:
\begin{theorem} 
\label{hrex} Let $(Y^3,g_Y)$ be a hyperbolic rational homology $3$-sphere,
with $g_Y$ of constant curvature $-1$, 
and $M = \R \times Y^3$ with the product metric $g = dt^2 + g_Y$. 
Then $H^2_+(\R \times Y^3) = \{0\}$ if and only if $Y^3$ admits no 
non-trivial traceless Codazzi tensor fields. 
Furthermore, there are infintely many hyperbolic rational homology $3$-spheres
satisfying $H^2_+(\R \times Y^3)= \{0\}$, and infinitely many 
satisfying  $H^2_+ ( \R \times Y^3) \neq \{0\}$.
\end{theorem}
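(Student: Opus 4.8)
The plan is to deduce Theorem \ref{hrex} directly from Theorem \ref{hypthm} together with an analysis of the spaces $H^2_+(\R \times Y^3)$ and $H^1_C(Y)$, plus input from the literature on hyperbolic $3$-manifolds. The first step is the equivalence. By Theorem \ref{hypthm}, there is an $\epsilon > 0$ such that every indicial root $\beta$ of $F^*$ with $|Re(\beta)| < \epsilon$ equals $0$, and the corresponding kernel of $F^*$ is spanned by $(0,dt)$, by pairs coming from harmonic $1$-forms, and by pairs $(Z,\omega)$ coming from traceless Codazzi tensors. Since $Y^3$ is a rational homology $3$-sphere we have $b_1(Y) = 0$, so the harmonic $1$-form contributions vanish. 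An element $Z \in H^2_+(\R \times Y^3)$ is by definition a solution of $\mathcal{D}^* Z = 0$ with $Z = O(e^{\epsilon|t|})$ for every $\epsilon > 0$; such a $Z$ gives a solution $(Z,0)$ of $F^*(Z,0) = 0$ (note $\mathcal{K}_g(0) = 0$) whose growth rate is slower than any exponential, hence its indicial-root content lies in the strip $|Re(\beta)| < \epsilon$. By Theorem \ref{hypthm} the only such root is $\beta = 0$, and the $\beta = 0$ cokernel elements with trivial $1$-form part are exactly those built from traceless Codazzi tensors (and the constant element $Z=0$ paired with $dt$, which has $Z = 0$). Therefore $H^2_+(\R \times Y^3) = \{0\}$ precisely when $H^1_C(Y) = \{0\}$, i.e.\ when $Y^3$ admits no non-trivial traceless Codazzi tensor field. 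This is the first assertion.

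For the existence of infinitely many examples of each type, the key geometric fact is a correspondence between traceless Codazzi tensors on a hyperbolic $3$-manifold $Y^3$ and infinitesimal deformations or special structures: on a constant-curvature background, a traceless Codazzi tensor $B$ satisfies $\nabla_k B_{lj} = \nabla_l B_{kj}$ and $\tr(B) = 0$, and one shows (this is standard, going back to work on hyperbolic geometry and the second fundamental form of Fuchsian-type deformations) that the space $H^1_C(Y)$ is naturally identified with a cohomology group detecting whether $Y^3$ bounds or arises from bending/quasi-Fuchsian-type data. For the negative side of the conjecture, I would invoke the examples of Kapovich \cite{Kapovich} and DeBlois \cite{DeBlois} of hyperbolic rational homology $3$-spheres which contain closed embedded totally geodesic surfaces, or more precisely for which the relevant deformation/cohomology group is non-trivial: each such surface (or each such class) produces a non-trivial traceless Codazzi tensor, hence $H^2_+ \neq \{0\}$, and the cited constructions yield infinitely many mutually non-isometric such $Y^3$. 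For the positive side, one takes a sequence of hyperbolic rational homology spheres that are "generic" in the appropriate sense — e.g.\ obtained by Dehn surgery on a fixed knot or link complement, so that by Thurston's hyperbolic Dehn surgery theorem all but finitely many surgeries are hyperbolic rational homology spheres, and a Baire-category or rigidity argument shows that infinitely many of them carry no non-trivial traceless Codazzi tensor — again appealing to \cite{Kapovich,DeBlois} for the precise constructions quoted in the introduction.

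The main obstacle, and the step requiring the most care, is the translation between the analytic object $H^1_C(Y)$ (traceless Codazzi tensors) and a topological/geometric invariant of $Y^3$ that can be controlled in families and shown to be non-zero for infinitely many $Y$ and zero for infinitely many others. The equivalence part is essentially a bookkeeping consequence of Theorem \ref{hypthm}; the genuinely new content is organizing the hyperbolic-geometry examples so that the Codazzi condition is verifiably satisfied (for the non-vanishing examples) or verifiably obstructed (for the vanishing examples). I would handle the non-vanishing case by exhibiting $B$ explicitly from a totally geodesic surface or an immersed minimal/incompressible surface as in the cited references, and the vanishing case by a rigidity/genericity argument showing that the Dehn-filling parameters for which $H^1_C(Y) \neq \{0\}$ form a proper analytic subvariety, leaving infinitely many good fillings. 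Throughout, the finiteness/discreteness of the bad set follows from the properness and ellipticity already established for $F$ and $F^*$ together with the index theorem of Lockhart–McOwen invoked earlier in the paper.
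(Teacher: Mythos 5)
Your first step is correct and is exactly the paper's argument: since $Y^3$ is a rational homology sphere, $b_1(Y)=0$, so the harmonic $1$-form contributions in Theorem \ref{hypthm} drop out and the indicial-root analysis gives $H^2_+(\R\times Y^3)=\{0\}$ if and only if $H^1_C(Y)=\{0\}$. The gap is in the second half, the production of infinitely many examples of each type, which is where the real content of Theorem \ref{hrex} lies.

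You have conflated the roles of the two cited constructions and, more seriously, you have no actual argument for the vanishing side. In the paper, the examples of Kapovich \cite{Kapovich} (infinitely many $(p,q)$-surgeries on a hyperbolic $2$-bridge knot with $H^1(\Gamma,\mathfrak{g})=\{0\}$, where $\mathfrak{g}$ is the Lie algebra of $O(4,1)$ acting via the adjoint representation) are the ones used to get $H^2_+(\R\times Y^3)=\{0\}$, while the examples of DeBlois \cite{DeBlois} (hyperbolic rational homology spheres containing closed \emph{embedded totally geodesic} surfaces) are the ones used to get $H^2_+\neq\{0\}$; you cite both for the totally geodesic surfaces. The non-vanishing side does work essentially as you suggest, via \cite[Theorem 2]{Lafontaine}, which converts an embedded totally geodesic surface into a non-trivial traceless Codazzi tensor --- but an ``immersed minimal/incompressible surface'' would not obviously do, so that looser phrasing is not a substitute. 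For the vanishing side, your proposed Baire-category / ``proper analytic subvariety of bad Dehn-filling parameters'' argument is not a proof: nothing you have written shows that the set of fillings with $H^1_C(Y)\neq\{0\}$ lies in such a subvariety, and the ellipticity and Lockhart--McOwen index theory established earlier in the paper say nothing about how $H^1_C$ varies in a family of hyperbolic fillings. The missing ingredient is the algebraic bridge between Codazzi tensors and conformally flat deformation theory: by \cite[Lemma 6]{Lafontaine} there is an injection $H^1_C(Y)\hookrightarrow H^1(\Gamma,\mathfrak{g})$, so Kapovich's rigidity theorem $H^1(\Gamma,\mathfrak{g})=\{0\}$ immediately forces $H^1_C(Y)=\{0\}$ for infinitely many surgeries on a hyperbolic $2$-bridge knot (these have $p\geq 2$, hence are rational homology spheres, and all but finitely many are hyperbolic by Thurston's Dehn surgery theorem). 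You gesture at an ``identification'' of $H^1_C(Y)$ with some cohomology group detecting bending data, but you never pin it down, and without this injection the vanishing examples do not follow from anything in your sketch.
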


We also have the following application to AC manifolds with 
hyperbolic cross-section:
\begin{corollary}
\label{hypthm2}
 Let $(M^4,g)$ be self-dual and asymptotically cylindrical with 
cross-section $(Y^3,g_Y)$ a hyperbolic rational homology $3$-sphere 
with $g_Y$ of constant curvature $-1$, satisfying $H^1_C(Y) = \{ 0 \}$. 
\begin{enumerate}
\item[(a)]
Let $(Z, \omega)$ be a solution of $\mathcal{D}^*Z = \mathcal{K}_g \omega$. 
Then there exists a constant $\epsilon > 0$, such that if
$(Z, \omega)$ solves  $\mathcal{D}^*Z = \mathcal{K}_g \omega$
and satisfies $Z = o(e^{\epsilon |t|})$ and $\omega = o(e^{\epsilon |t|})$ as  
$t \rightarrow \infty$ 
then  $\omega$ is dual to a conformal Killing field and
$Z = o(e^{-\epsilon |t|})$ as $t \rightarrow \infty$. 

\item[(b)] Let $h$ be a solution of 
$\mathcal{D} h = 0$ and $\delta h = 0$. 
Then there exists a constant $\epsilon > 0$, such that if
$h = o(e^{\epsilon |t|})$ as $|t| \rightarrow \infty$, then 
$h$ admits an expansion 
\begin{align} 
h = c \cdot( dt \otimes dt - 3 g_Y) + O(e^{-\epsilon |t|})
\end{align}
for some constant $c$ as  $|t| \rightarrow \infty$.
\end{enumerate}
\end{corollary}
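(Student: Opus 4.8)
The plan is to deduce both parts from Theorem~\ref{hypthm} by the standard weighted--space (Lockhart--McOwen) analysis near the cylindrical end, in direct analogy with the passage from the cylinder case to Corollary~\ref{t1ac}(b). Fix $\epsilon$ as in Theorem~\ref{hypthm}, shrunk so that $\pm\epsilon$ are not real parts of indicial roots of $F$ or $F^{*}$ and $\epsilon$ is strictly below the modulus of every nonzero such root. Since $Y$ is a rational homology sphere we have $b_{1}(Y)=0$, so $Y$ carries no nontrivial harmonic $1$--forms, and $H^{1}_{C}(Y)=\{0\}$ by hypothesis; hence Theorem~\ref{hypthm} tells us that on $\R\times Y$ the unique indicial root of $F$ (resp.\ $F^{*}$) in the strip $|Re(\beta)|<\epsilon$ is $\beta=0$, with null space spanned by $3\,dt\otimes dt-g_{Y}$ (resp.\ by $(0,dt)$).

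For part (b), $h$ solves $F(h)=(\mathcal{D}h,2\delta h)=0$, and $F$ is mixed--order elliptic in the sense of Douglis--Nirenberg \cite{DN}, asymptotic along the end to the translation--invariant operator $F_{\infty}$ on $\R\times Y$. Interior elliptic estimates on unit balls of the end (which has bounded geometry) promote the pointwise hypothesis $h=o(e^{\epsilon|t|})$ to the same bound on all covariant derivatives, so $h$ lies in the weighted Sobolev spaces $W^{k,2}_{\mu}$ of sections growing at most like $e^{\mu t}$ for every $k$ and every $\mu<\epsilon$. Since $\beta=0$ is the only indicial root of $F_{\infty}$ with $-\epsilon<Re(\beta)<\epsilon$, with null space $\mathrm{span}\{3\,dt\otimes dt-g_{Y}\}$, the Lockhart--McOwen asymptotic expansion theorem \cite{LockhartMcOwen}, applied to the homogeneous solution $h$, yields
\begin{align*}
h \;=\; c\,\big(3\,dt\otimes dt-g_{Y}\big)\;+\;h_{1},\qquad h_{1}=O(e^{-\epsilon|t|}),
\end{align*}
as $|t|\to\infty$, for some constant $c$, which is the stated expansion.

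For part (a), $(Z,\omega)$ solves $F^{*}(Z,\omega)=\mathcal{D}^{*}Z-\mathcal{K}_{g}\omega=0$. Because $F^{*}$ is only underdetermined elliptic, I would work with the elliptic operator $FF^{*}$: from $\mathcal{D}^{*}Z=\mathcal{K}_{g}\omega$ and the complex relation $\mathcal{D}\circ\mathcal{K}_{g}=0$ one obtains $\mathcal{D}\mathcal{D}^{*}Z=0$, hence $FF^{*}(Z,\omega)=0$. As in part (b), $(Z,\omega)\in W^{k,2}_{\mu}$ for all $\mu<\epsilon$; the indicial roots of $(FF^{*})_{\infty}=F_{\infty}F^{*}_{\infty}$ in $|Re(\beta)|<\epsilon$ are just $\beta=0$, and the only polynomially bounded $\beta=0$ solution is a constant multiple of $(0,dt)$: a leading term $P_{0}(t)$ satisfies $F^{*}_{\infty}P_{0}\in\ker F_{\infty}\cap\mathrm{im}\,F^{*}_{\infty}=\{0\}$, and the $\beta=0$ null space of $F^{*}_{\infty}$ is $\mathrm{span}\{(0,dt)\}$ by Theorem~\ref{hypthm}. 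Thus the expansion theorem gives $(Z,\omega)=c\,(0,dt)+(Z_{1},\omega_{1})$ with $(Z_{1},\omega_{1})=O(e^{-\epsilon t})$ as $t\to\infty$; in particular $Z=O(e^{-\epsilon t})$ (hence $\mathcal{D}^{*}Z=O(e^{-\epsilon t})$) and $\omega=c\,dt+O(e^{-\epsilon t})$. Next, using $\mathcal{K}_{g}^{*}\mathcal{D}^{*}=(\mathcal{D}\mathcal{K}_{g})^{*}=0$, I would integrate the pointwise identity $|\mathcal{D}^{*}Z|^{2}=\langle\mathcal{D}^{*}Z,\mathcal{K}_{g}\omega\rangle$ over a compact exhaustion of $M$ and move $\mathcal{K}_{g}$ onto $\mathcal{D}^{*}Z$: the interior integrand becomes $\langle\mathcal{K}_{g}^{*}\mathcal{D}^{*}Z,\omega\rangle=0$, while the boundary term over $\{t=T\}\times Y$ is a contraction of $\mathcal{D}^{*}Z$ with $\omega$ over the slice, hence $O(e^{-\epsilon T})\cdot O(1)\cdot\mathrm{Vol}(Y)\to 0$. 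Therefore $\int_{M}|\mathcal{D}^{*}Z|^{2}=0$, so $\mathcal{D}^{*}Z=0$ and $\mathcal{K}_{g}\omega=\mathcal{D}^{*}Z=0$, i.e.\ $\omega$ is dual to a conformal Killing field. Finally $(Z,0)$ solves $F^{*}(Z,0)=0$ and lies in $W^{k,2}_{-\epsilon}$; since there are no indicial roots with real part in $(-\epsilon_{0},0)$, where $-\epsilon_{0}$ is the next root below $0$ and $\epsilon_{0}>\epsilon$ by our choice of $\epsilon$, a second application of the expansion theorem improves this to $Z=O(e^{(-\epsilon_{0}+\eta)t})$ for every $\eta>0$, hence $Z=o(e^{-\epsilon|t|})$ as $t\to\infty$.

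The step I expect to require the most care is analytic rather than geometric: installing the Lockhart--McOwen Fredholm and expansion machinery in the mixed--order Douglis--Nirenberg framework for $F$ and $FF^{*}$, matching the pointwise $o(e^{\epsilon|t|})$ growth conditions to the correct weighted Sobolev exponents, and --- if $M$ has ends besides the $Y$--cylindrical one --- localizing the asymptotic expansion to that end and checking that only the $Y$--end contributes to the integration by parts above. All of the geometric content is already encoded in Theorem~\ref{hypthm}.
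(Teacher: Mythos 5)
Your overall route is the paper's: the published proof of this corollary is literally ``Theorem \ref{hypthm} plus the standard Lockhart--McOwen weighted-space expansion argument'', and your part (b) and the integration by parts in part (a) are a reasonable fleshing-out of that sentence. However, one step in part (a) is wrong as written, and it is exactly the step the argument hinges on. You pass to the elliptic operator $FF^{*}$ and claim that its only polynomially bounded indicial solutions at $\beta=0$ are multiples of $(0,dt)$, justified by ``$F^{*}_{\infty}P_{0}\in\ker F_{\infty}\cap\mathrm{im}\,F^{*}_{\infty}=\{0\}$''. On the cylinder this intersection is \emph{not} zero: the identity $3\,dt\otimes dt-g_{Y}=\mathcal{K}_{g}(2t\,dt)$ (cf.\ \eqref{case01}, which uses only the product structure and so holds for hyperbolic cross-section as well) together with Theorem \ref{hypthm} shows that $3\,dt\otimes dt-g_{Y}$ lies in $\ker F_{\infty}$ and equals $F^{*}_{\infty}(0,-2t\,dt)$, hence lies in $\mathrm{im}\,F^{*}_{\infty}$. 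Consequently $(0,t\,dt)$ is a polynomially bounded $\beta=0$ solution of $(FF^{*})_{\infty}$ which is \emph{not} in $\ker F^{*}_{\infty}$. This is not a pedantic point: a leading term proportional to $(0,t\,dt)$ is precisely what would make $\omega$ grow linearly, make $\mathcal{K}_{g}\omega=\mathcal{D}^{*}Z$ fail to decay, and destroy both the conformal-Killing conclusion and the boundary-term estimate in your integration by parts.

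The step can be repaired, but it needs an argument you have not supplied: use that $(Z,\omega)$ satisfies the exact equation $F^{*}(Z,\omega)=0$, not merely $FF^{*}(Z,\omega)=0$. Since $F^{*}-F^{*}_{\infty}$ has exponentially decaying coefficients along the end, the $\beta=0$ block of the expansion must itself satisfy $F^{*}_{\infty}=0$: applying $F^{*}_{\infty}$ to a polynomially bounded indicial term produces another rate-$0$ indicial term, which cannot be cancelled by the exponentially decaying remainder. Only then does Theorem \ref{hypthm} (with $b_{1}(Y)=0$ and $H^{1}_{C}(Y)=\{0\}$) pin the leading term down to $c\,(0,dt)$; equivalently one may quote the classification of mixed solutions (Propositions \ref{sec5prop} and \ref{sec6prop} with $\kappa=-1$), which shows no solution of $\mathcal{D}^{*}Z=\mathcal{K}_{g}\tilde{\omega}$ on the cylinder has $\tilde{\omega}$-part asymptotic to $t\,dt$. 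With that correction the rest of your argument (the expansion in (b), the integration by parts and the second expansion in (a)) goes through and matches the paper's intended proof. Two minor remarks: the trace-free leading term $3\,dt\otimes dt-g_{Y}$ you produce in (b) is the one consistent with Theorem \ref{hypthm} (the coefficients in the corollary as stated appear transposed), and, as you yourself flag, the global integration by parts requires the growth hypotheses on every end, a point the paper also leaves implicit.
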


\subsection{Flat cross-section}
Finally, in the case that $(Y^3, g)$ is a flat torus, we have the following:
\begin{theorem}
\label{flatthm}
Let $(M^4,g)$ be the the cylinder $\R \times Y^3$, 
where $(Y^3,g_Y)$ is compact and flat, with product metric 
$g = dt^2 + g_{Y}$, and let $\mathcal{I}$
denote the set of indicial roots of $F$. Then there exists an 
$\epsilon > 0$ such that if $\beta \in \mathcal{I}$ with 
$| Re(\beta)| < \epsilon$ then $\beta = 0$. 
The corresponding kernel of $F$ has dimension $14$ 
and is spanned by 
\begin{align}
\{ 3 dt \otimes dt - g_Y, dt \odot \omega, B, tB \},
\end{align}
where $\omega$ is any parallel $1$-form and $B$ 
is any parallel traceless symmetric $2$-tensor on~$Y^3$.

The corresponding cokernel of $F$ has dimension $14$ and is spanned by 
\begin{align}
\{ (0, dt), (0, \omega_0), (Z, 0), (tZ,0) \}, 
\end{align}
where $\omega_0$ is a parallel $1$-form, and $Z$ is any parallel 
section of $S^2_0(\Lambda^2_-)$. 
\end{theorem}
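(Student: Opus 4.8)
The plan is to carry out a separation-of-variables analysis on the product cylinder $\R \times Y^3$ exactly as in the spherical and hyperbolic cases, exploiting that $F$ and $F^*$ have $t$-independent coefficients. On a flat torus $Y^3$ the relevant bundles $S^2_0(T^*M)$ and $S^2_0(\Lambda^2_-)\oplus S^2_0(T^*M)$ are all trivial, and the tangential operators reduce to constant-coefficient operators in the torus directions. Writing a candidate indicial solution in the form $e^{\lambda t} p(y,t)$, the equation $F(h)=0$ (and $F^*(Z,\omega)=0$) becomes, after Fourier-expanding in the flat directions $y$, a family of ODE systems in $t$ indexed by the lattice of torus frequencies $\xi$. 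The indicial roots are precisely the values of $\lambda$ for which one of these ODE systems has a polynomial-in-$t$ solution; one then reads off which $\xi$ contribute roots with $|Re(\lambda)|<\epsilon$.

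First I would record the decomposition $F^*(Z,\omega) = \mathcal{D}^* Z - \mathcal{K}_g\omega$ and note that, because the metric is a product and $Y$ is flat, $\mathcal{D}^*$ and $\mathcal{K}_g$ commute with the parallel frame; harmonic analysis on the flat torus replaces $\Delta_Y$ by $|\xi|^2$. The zero-frequency sector ($\xi=0$) gives purely $t$-dependent solutions: here $\omega$ ranges over parallel $1$-forms on $Y$ together with $dt$, contributing $(0,dt)$ and $(0,\omega_0)$, while $Z$ ranges over parallel sections of $S^2_0(\Lambda^2_-)$, contributing $(Z,0)$ and — because a double indicial root at $\lambda=0$ appears — $(tZ,0)$; this double root is exactly what forces the polynomial factor in the definition of indicial root to be genuinely used. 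For the nonzero-frequency sectors I would show, by the same spectral estimates used to prove Theorems~\ref{s3thm} and~\ref{hypthm}, that every indicial root $\lambda$ satisfies $|Re(\lambda)|\geq c|\xi| \geq c\cdot\lambda_1(Y)^{1/2}$, so choosing $\epsilon$ smaller than this uniform lower bound kills all of them. Counting dimensions: $b_1(Y)=3$ parallel $1$-forms plus the single $dt$ gives $4$, and $\dim S^2_0(\Lambda^2_-)=5$ parallel sections each contributing a two-dimensional ($Z$ and $tZ$) piece gives $10$, totalling $14$ for the cokernel. For the kernel of $F$, the same analysis (or the Lockhart--McOwen index theorem, which gives $\dim\ker = \dim\mathrm{coker}$ for these balanced indicial roots, as invoked for Theorem~\ref{kershort}) yields dimension $14$: the parallel deformations $B$ and $tB$ for $B$ parallel traceless symmetric, the conformal factor $3dt\otimes dt - g_Y$, and $dt\odot\omega$ for $\omega$ parallel.

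To finish I would verify directly that the listed tensors actually lie in the kernel/cokernel: that $3dt\otimes dt - g_Y$ is (a constant multiple of) $\mathcal{K}_g$ applied to $t\,dt$ hence trace-free-divergence-free and annihilated by $\mathcal{D}$; that $dt\odot\omega$ with $\omega$ parallel is likewise conformal-Killing-generated; that a parallel traceless symmetric $B$ on the flat $Y$ has $\mathcal{D}B=0$ and $\delta B=0$, and that $tB$ does too since $\delta(tB)$ picks up only a $dt\odot B$-type term which is itself divergence-free and ASD-Weyl-flat after symmetrization (this is the computation most likely to need care). Symmetrically, one checks $\mathcal{D}^*Z=0$ and $\mathcal{D}^*(tZ)=0$ for $Z$ a parallel section of $S^2_0(\Lambda^2_-)$, using that $\mathcal{D}^*$ has no zeroth-order term that couples $Z$ to the $t$-polynomial on a flat product. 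The main obstacle I anticipate is the bookkeeping in the nonzero-frequency sectors: one must confirm that the various first-order couplings between the "$Z$-part" and the "$\omega$-part" of $F^*$ (the off-diagonal $\mathcal{E}\pi_\pm$-type terms vanish since $Y$ is flat, but $\mathcal{D}^*$ still mixes components of $Z$) do not conspire to produce a small-real-part root at some nonzero $\xi$ — equivalently, that the characteristic polynomial of each ODE system has all roots bounded away from the imaginary axis by a $\xi$-uniform constant. This is exactly parallel to the spectral gap argument in the spherical case, where constant curvature played the analogous role, so I expect it to go through with the flat Bochner formulas substituted in.
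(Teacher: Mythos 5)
Your proposal is correct and follows essentially the same route as the paper: the paper's proof of this theorem is precisely the specialization to $\kappa=0$ of the eigenmode decomposition and ODE analysis of Propositions~\ref{sec5prop} and \ref{sec6prop} (which on $T^3$ is your Fourier expansion, with all nonzero-frequency roots equal to $\pm\sqrt{\mu_j}$, $\pm\sqrt{\nu_j}$, $\pm\sqrt{\lambda_j}$ and hence bounded away from the imaginary axis), combined with the Lockhart--McOwen index theorem and direct verification of the listed kernel elements, exactly as in Theorem~\ref{kerlong}. One small correction to your verification step: $\delta(tB)$ does not produce a $dt\odot B$-type term at all — since $tB$ has no $dt$-components and $B$ is parallel, $\delta(tB)=t\,\delta_Y B=0$ outright, and $\mathcal{D}(tB)=0$ follows from Theorem~\ref{mainprop} because $\slashd B=0$ and $\Delta_{g_Y}B=0$ for parallel $B$.
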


\begin{remark}{\em One can easily use our computations to 
explicitly determine {\em{all}} indicial roots in the case of flat 
cross-section $Y^3 = T^3$. However, in the interest of 
brevity this is omitted. }
\end{remark}
\subsection{Remarks and outline of the paper}
We next give a brief outline of the paper. 
Sections \ref{asdsec} and \ref{linweylcyl} will be concerned with 
the derivation of the linearized anti-self-dual Weyl tensor 
in separated variables. In these sections, there is  
overlap with computations in Floer's paper \cite{Floer}. 
However, the main formula given in Floer for $(\mathcal{W}^-)'$ at a
cylindrical metric is incorrect \cite[Proposition 5.1]{Floer}
(in addition to mistakes in the coefficients, 
Floer's formula omits crucial terms involving the trace component 
$h_{00}$).
The correct formula (which moreover holds for any 
cross-section $Y^3$ with constant curvature) is given in Theorem \ref{mainprop}. 
Section~\ref{slashdsec} contains required formulas for a 
Dirac-type operator, as well as some necessary eigenvalue computations. 
Section~\ref{adjointeq} contains the core analysis of the 
kernel of $\mathcal{D}^*$.  The analysis 
in Section~\ref{mixedsec} is necessary to determine the possibilities 
for the $1$-form $\omega$ appearing in the adjoint equation. 
The proofs of all the main theorems are then completed in Section 
~\ref{proofsec}. 
In Section~\ref{gluesec}, we discuss the application of our results to 
gluing theorems. 

Finally, the Appendix contains the derivation 
of a crucial formula relating the square of the Dirac 
operator to the linearized Einstein equation on the 
cross-section. In the case of spherical cross-section, 
Floer writes down such a formula \cite[Lemma 5.1]{Floer}, 
but which has errors in the coefficients. 
The correct formula (which moreover holds for general 
cross-section $Y$) is given in Corollary \ref{Floercor}. 

\subsection{Acknowledgements}
The authors would like to thank Claude LeBrun for several 
discussions about the paper \cite{LeBrunMaskit}, and the 
relation with the gluing theorems given in \cite{KovalevSinger}. 
We would also like to thank Richard Kent for crucial help with the 
hyperbolic examples in Theorem \ref{hrex}.

\section{The anti self-dual part of the curvature tensor}
\label{asdsec}
Let $(M^4, g)$ be an orientable $4$-manifold. As mentioned in the introduction, 
according to the decomposition 
\begin{align}
Rm= W^+ + W^- + \frac{1}{2}E\varowedge g+\frac{1}{24}R_{g}g\varowedge g,
\end{align}
we have the associated curvature operators are written as
\begin{align}\label{curvatureoperatordecomposition}
\RR=\mathcal{W}^{+}+\mathcal{W}^{-}+\mathcal{E}+\frac{1}{24}\mathcal{S} ,
\end{align}
where
\begin{align}\label{preservetype}
\left(\mathcal{W}^{\pm}+\frac{1}{24}\mathcal{S}\right):\Lambda^{2}_{\pm}\left(T^{*}M\right)\mapsto\Lambda^{2}_{\pm}\left(T^{*}M\right),
\end{align}
and 
\begin{align}\label{reversetype}
\mathcal{E}:\Lambda^{2}_{\pm}\left(T^{*}M\right)\mapsto\Lambda^{2}_{\mp}\left(T^{*}M\right).
\end{align}

Some basic properties of the tensors $W^{\pm}$ and of the curvature operators $\mathcal{W}^{\pm}$  are the following
\begin{enumerate}
\item Viewed as a $(1,3)$ tensor, for any $C^{2}$ function $f$,  $W^{\pm}(e^{-2f}g)=W^{\pm}(g)$.
\item Letting $\mathcal{C}:S^{2}\left(\Lambda^{2}\left(T^{*}M\right)\right)\mapsto S^{2}\left(T^{*}M\right)$ be the \emph{Ricci Contraction Map} defined by
$\left(\mathcal{C}U\right)_{ab}=g^{cd}U_{acbd}$, 
then 
\begin{align}\label{zeroriccicontraction}
\mathcal{C}W^{\pm}=0.
\end{align}
\item Both $\mathcal{W}^{+}$ and $\mathcal{W}^{-}$ are traceless. 
\end{enumerate} 

We note that our convention is that if $P_{ijkl}$ is a tensor satisfying 
$P_{ijkl} = -P_{jikl} = -P_{ijlk} = P_{klij}$, then the associated operator 
$\mathcal{P} : \Lambda^2 \rightarrow \Lambda^2$ is given by 
\begin{align}
\left(\mathcal{P}\omega\right)_{ij}=\frac{1}{2}\sum_{k,l}P_{ijkl}\omega_{kl}.
\end{align}

\subsection{The anti self-dual part of the Weyl tensor as a bilinear form}\label{wey}

Consider a warped product metric on $M = \mathbb{R} \times Y^3$ of the form 
\begin{align}\label{eqwey1}
g=dt^2+g_{Y},
\end{align}
Where $g_{Y}$ is a smooth metric on $Y$, possibly depending on $t$. 
Our ultimate goal is a formula for the 
linearized anti-self-dual Weyl curvature $\mathcal{D}$, which maps from 
\begin{align}
\mathcal{D} : S^2_0 (T^*M) \rightarrow S^2_0 ( \Lambda^2_-). 
\end{align}
Using the decomposition 
$T^* M = \{dt\} \oplus T^*Y$, we have
\begin{align}\label{s2r4}
S^2 (T^* M) = S^2 ( dt) \oplus ( dt \odot T^*Y) \oplus S^2 (T^* Y), 
\end{align}
which we will write as 
\begin{align}
\tilde{h} = h_{00} dt\otimes dt + ( \alpha \otimes dt + dt \otimes \alpha) 
+ h.
\end{align}
Next, we have 
\begin{align}
\Lambda^2 ( dt \oplus T^* Y ) = (\Lambda^1(dt) \otimes \Lambda^1 (T^*Y)) 
 \oplus \Lambda^2( T^* Y).
\end{align}
Given an orientation, we then have 
\begin{align}
\Lambda^2 ( dt \oplus T^* Y) = (\Lambda^1(dt) \otimes \Lambda^1 (T^*Y)) 
 \oplus \Lambda^1( T^* Y).
\end{align}
Under this decomposition, the self-dual forms correspond to 
\begin{align} 
dt \wedge \alpha + \tilde{*} \alpha, 
\end{align}
while the anti-self-dual forms correspond to 
\begin{align}
dt \wedge \alpha - \tilde{*} \alpha, 
\end{align}
where $\tilde{*}$ is the Hodge-$*$ operator on $Y$. 
Consequently, we have the identification
\begin{align}
S^2_0 ( \Lambda^2_-) = S^2_0 ( T^* Y),
\end{align}
and we can therefore view $\mathcal{D}$ as a mapping 
\begin{align}
\mathcal{D} : S^2_0(T^*M) \rightarrow  S^2_0 ( T^* Y).
\end{align}
In order to proceed, we must first 
write down $\mathcal{W}^-$, considered as an element of $S^2_0 ( T^* Y)$.
\begin{proposition}
\begin{align}\label{eqwey2}
(\mathcal{W}^{-})_{ij}=\Phi_{ij}-\Psi_{ij}+\Omega_{ij},
\end{align}
where 
\begin{align}
\Phi_{ij}
&=\tf(R_{0i0j}(g)),\label{phi}\\
\Psi_{ij}
&=\tf\left(\mathrm{Sym}_{ij}\left(\sum_{k,l}\epsilon_{ikl}R_{0jkl}(g)\right)\right),\label{psi}\\
\Omega_{ij}
&=\frac{1}{4}\tf\left(\sum_{k,l}\sum_{p,q}\epsilon_{ikl}\epsilon_{jpq}R_{klpq}(g)\right)\label{Omega},
\end{align}
where the symbols $\epsilon_{ijk}$ are the components of the 
volume element defined by 
\begin{align}\label{epsilon}
e_{i}\wedge e_{j}\wedge e_{k}=\epsilon_{ijk}e_{1}\wedge e_{2}\wedge e_{3},
\end{align}
$\mathrm{Sym}$ in \eqref{psi} denotes the \emph{Symmetrization Operator} given by 
\begin{align}
\mathrm{Sym}_{ij}(F)=\frac{1}{2}\left(F_{ij}+F_{ji}\right),
\end{align}
and for $h\in S^{2}\left(T^{*}Y\right)$,  $\tf(h)$ 
denotes the traceless component of $h$ with respect to the metric $g_Y$.
\end{proposition}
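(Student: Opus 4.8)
The plan is to start from the block decomposition of the curvature operator in \eqref{block}, which tells us that on an oriented four-manifold the anti-self-dual Weyl operator is $\mathcal{W}^- = \pi_- \mathcal{W} \pi_- = \pi_- \mathcal{R} \pi_- - \frac{R}{12} I$, and then translate this operator identity on $\Lambda^2_- \to \Lambda^2_-$ into a symmetric bilinear form on $T^*Y$ via the identification $S^2_0(\Lambda^2_-) = S^2_0(T^*Y)$ described just above the Proposition. Concretely, I would fix the local orthonormal frame $e_0 = dt$, $e_1,e_2,e_3$ adapted to the warped product, note that the anti-self-dual $2$-forms are spanned by $\eta_i := dt \wedge e_i - \tilde{*} e_i = e_0 \wedge e_i - \tfrac12 \epsilon_{ikl} e_k \wedge e_l$, and then compute $\langle \mathcal{R} \eta_j, \eta_i\rangle$ using the rule $(\mathcal{P}\omega)_{ij} = \tfrac12 P_{ijkl}\omega_{kl}$ stated at the end of Section \ref{asdsec}. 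Expanding the pairing bilinearly produces exactly four groups of terms: a purely ``temporal'' piece $R_{0i0j}$, two ``mixed'' pieces of the form $\epsilon_{jkl} R_{0ikl}$ and $\epsilon_{ikl} R_{0jkl}$ whose sum is the symmetrization appearing in $\Psi$, and a purely ``spatial'' piece $\tfrac14 \epsilon_{ikl}\epsilon_{jpq} R_{klpq}$, which is $\Omega$. The signs $+\Phi$, $-\Psi$, $+\Omega$ come from the signs in $\eta_i = dt\wedge e_i - \tilde{*}e_i$ and must be tracked carefully.

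Next I would account for the trace-free projections and the subtraction of $\frac{R}{12}I$. The key observation is that under the identification $S^2_0(\Lambda^2_-) \cong S^2_0(T^*Y)$, a multiple of the identity on $\Lambda^2_-$ corresponds to a multiple of $g_Y$ on $S^2(T^*Y)$, i.e.\ to a pure-trace term; hence passing to the traceless part $\tf(\cdot)$ with respect to $g_Y$ automatically kills both the $\frac{R}{12}I$ contribution and the trace of $\mathcal{W}$ (recall $\mathcal{W}^-$ is traceless by property (3) in Section \ref{asdsec}). This is why the final formula can be written purely in terms of $\tf$ applied to the three curvature expressions without any explicit $R$-term: the scalar curvature has been absorbed. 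I would make this precise by checking that $\langle (\mathcal{W}^- + \frac{R}{12}I)\eta_i, \eta_i\rangle$ summed over $i$ reproduces $\frac{R}{4}$ (the trace of the restriction of $\mathcal{R}$ to $\Lambda^2_-$), consistent with tracelessness of $\mathcal{W}^-$, so that taking $\tf$ throughout is both legitimate and forced.

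The remaining ingredient is the translation between the operator pairing $\langle \mathcal{W}^- \eta_j, \eta_i\rangle$ and the components $(\mathcal{W}^-)_{ij}$ of the associated element of $S^2(T^*Y)$: since the $\eta_i$ form a frame for $\Lambda^2_-$ (with a fixed norm depending on our conventions), the matrix of the operator in this frame, after normalization, is exactly the symmetric $2$-tensor in question. I would spell out the normalization constant $|\eta_i|^2$ and verify that the resulting coefficients are $1$ on $\Phi$, $-1$ on $\Psi$, and $\tfrac14$ on $\Omega$, matching \eqref{phi}--\eqref{Omega}; the factor $\tfrac14$ in $\Omega$ is the square of the $\tfrac12$ appearing in $\tilde{*}e_i = \tfrac12\epsilon_{ikl}e_k\wedge e_l$, while the absence of an extra $\tfrac12$ in $\Phi$ reflects that $e_0\wedge e_i$ is already a unit decomposable form.

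The main obstacle I anticipate is purely bookkeeping: keeping the $\epsilon$-contractions, the factors of $\tfrac12$ from $(\mathcal{P}\omega)_{ij} = \tfrac12 P_{ijkl}\omega_{kl}$, and the sign conventions for $\tilde{*}$ and for the orientation \eqref{epsilon} all mutually consistent, so that the cross terms genuinely combine into the symmetrization $\mathrm{Sym}_{ij}$ in \eqref{psi} with the correct overall sign, rather than cancelling or doubling. A secondary subtlety is confirming that $\mathrm{Sym}_{ij}$ is actually needed in $\Psi$ — i.e.\ that $\epsilon_{ikl}R_{0jkl}$ is not already symmetric in $i,j$ for a general (possibly $t$-dependent) warped product — which is exactly the point where Floer's omission of the $h_{00}$-type terms, alluded to in the outline, originates; here, however, we are still at the level of the background metric, so it suffices to observe that the Bianchi identities do not force symmetry and hence the explicit symmetrization must be retained.
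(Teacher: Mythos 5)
Your proposal is correct and follows essentially the same route as the paper: the paper likewise identifies $\Lambda^2_-$ with $\Omega^1(Y)$ (writing a $2$-form as $dt\wedge\pi^*\xi+\pi^*(\tilde{*}\eta)$ and restricting to $\xi=-\eta$, which is your frame $dt\wedge e_i-\tilde{*}e_i$), computes $\langle\mathcal{R}\omega,\omega'\rangle$ to get exactly the $\alpha_{ij}-(\beta_{ij}+\beta_{ji})+\gamma_{ij}$ structure with the same $\tfrac12$ and $\tfrac14$ factors, and disposes of the scalar-curvature block by noting it is pure trace and hence killed by $\tf$. The only cosmetic difference is that you quote the block form \eqref{block} while the paper uses the equivalent decomposition \eqref{curvatureoperatordecomposition}--\eqref{preservetype}.
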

\begin{proof}
Given $g$ as in \eqref{eqwey1} and considering the decomposition \eqref{curvatureoperatordecomposition}, we conclude from \eqref{curvatureoperatordecomposition},\eqref{preservetype} and \eqref{reversetype} that for any $\omega,\omega^{\prime}\in\Lambda^{2}_{-}\left(M\right)$
\begin{align*}
\left\langle \left(\mathcal{W}^{-}+\frac{1}{24}\mathcal{S}\right)\omega,\omega^{\prime}\right\rangle=\langle\RR\omega,\omega^{\prime}\rangle.
\end{align*}
In order to compute $\langle\RR\omega,\omega^{\prime}\rangle$ we use the isomorphism between $\Omega^{1}\left(Y\right)\oplus\Omega^{1}\left(Y\right)$ and $\Lambda^{2}\left(\R\times Y\right)$ given as follows: any $2$-form $\omega$ in $\Lambda^{2}\left(\R\times Y\right)$ can be written uniquely as 
\begin{align}\label{isomorphism}
\omega=dt\wedge\pi^{*}\left(\xi\right)+\pi^{*}\left(\tilde{*}\eta\right), 
\end{align}
where $\pi$ is the projection map $\pi:\R\times Y\mapsto Y$, $\xi$ and $\eta$ are $1$-forms in $\Omega^{1}(Y)$ and $\tilde{*}$ is the Hodge-$*$ operator with respect to the metric $g_Y$. Given a local orthonormal oriented basis $\{e_{1},e_{2},e_{3}\}$ of $\Gamma\left(TY\right)$, the operator $\tilde{*}:\Omega^{1}(Y)\mapsto\Omega^{2}(Y)$ takes the form
\begin{align}\label{hodgeS3}
\tilde{*}\left(\zeta\right)_{ij}=\sum_{k=1}^{3}\epsilon_{ijk}\zeta_{k}.
\end{align}
 Using $i,k,l$ to denote indices in $\{1,2,3\}$ we see that the $1$-forms $\xi$, $\eta$ in \eqref{isomorphism} can be written in coordinates as
\begin{align*}
\xi_{k}=\omega_{0k},~
\eta_{i}=\frac{1}{2}\sum_{j,k}\epsilon_{ijk}\omega_{jk}=
\sum_{j<k}\epsilon_{ijk}\omega_{jk},
\end{align*} 
Moreover, in these coordinates, the Hodge-$*$ operator can be computed as
\begin{align}\label{hodgeomega}
*\omega=dt\wedge\pi^{*}\left(\eta\right)+\pi^{*}\left(\tilde{*}\xi\right),
\end{align} 
therefore,  $\omega\in\Lambda^{2}_{\pm}\left(T^{*}Y\right)$ if and only if $\xi=\pm\eta$. Given $2$-forms $\omega,\omega^{\prime}\in\Lambda^{2}\left(M\right)$, let us write  $\omega=dt\wedge\pi^{*}(\xi)+\pi^{*}(\tilde{*}\eta)$ and $\omega^{\prime}=dt\wedge\xi^{\prime}+\pi^{*}(\tilde{*}\eta^{\prime})$ , then we can express $\langle\RR\omega,\omega^{\prime}\rangle$ as
\begin{align}\label{generalcurvatureoperator}
\langle \RR \omega,\omega^{\prime}\rangle=\alpha_{ij}\xi_{i}\xi^{\prime}_{j}+\beta_{ij}\xi_{i}\eta^{\prime}_{j}+\beta_{ji}\eta_{i}\xi^{\prime}_{j}+\gamma_{ij}\eta_{i}\eta^{\prime}_{j}
\end{align}
where clearly
\begin{align}\label{curvatureoperatorcomponents}
\alpha_{ij}=R_{0i0j}(g),~ \beta_{ij}=\frac{1}{2}\sum_{k,l}\epsilon_{jkl}R_{0ikl}(g),~\gamma_{ij}=\frac{1}{4}\sum_{k,l}\sum_{p,q}\epsilon_{ikl}\epsilon_{jpq}R_{klmn}(g).
\end{align}
If now $*\omega=-\omega$ and $*\omega^{\prime}=-\omega^{\prime}$, from \eqref{generalcurvatureoperator} and \eqref{hodgeomega} we obtain
\begin{align}
\langle \RR \omega,\omega^{\prime}\rangle
&=\left(\alpha_{ij}-\left(\beta_{ij}+\beta_{ji}\right)+\gamma_{ij}\right)\xi_{i}\xi^{\prime}_{j}.
\end{align}
This shows that with the isomorphism defined by \eqref{isomorphism}, we can identify the map $\left(\mathcal{W}^{-}+\frac{1}{24}\mathcal{S}\right):\Lambda^{2}_{-}(T^*M)\mapsto\Lambda^{2}_{-}(T^*M)$ with a bilinear form in $S^{2}\left(T^{*}Y\right)$ such that in the local orthonormal basis  $\{e_{1},e_{2},e_{3}\}$ has components
\begin{align}
\alpha_{ij}-\left(\beta_{ij}+\beta_{ji}\right)+\gamma_{ij}.
\end{align}
with $\alpha_{ij},\beta_{ij},\gamma_{ij}$ given by \eqref{curvatureoperatorcomponents}.
Since the scalar curvature operator $\mathcal{S}$ contributes a pure-trace term, 
we are done. 
\end{proof}
We next give a more detailed description of some of the terms appearing in (\ref{eqwey2}) and for that purpose we will make use of the following notation:  
\begin{itemize}
\item All letters $i,j,k,l,\ldots$ will denote non-zero indices. 
\item Given $H\in S^{2}\left(\Lambda^{2}\left(M\right)\right)$, by $c_{Y}H$ we will mean the map defined as
\beq
\left(c_{Y}H\right)_{jk}=g_Y^{il}H_{ijlk},\nonumber
\eeq  
\item By $\tr_Y(c_{Y}H)$ we will mean $g^{ij}_{Y}(c_{Y}H)_{ij}$,
\item We will use $\dot{g}_Y*\dot{g}_Y$ to denote linear combinations of contractions of $\dot{g}_Y\otimes\dot{g}_Y$ using the metric $g_Y$.
\end{itemize}
\begin{proposition}\label{hodge}
We have the identity
\begin{align*}
\frac{1}{4}\sum_{k,l,u,v}\epsilon_{ikl}\epsilon_{juv}H_{kluv}=-\left(c_{Y}H-\frac{1}{2}\tr_Y\left(c_{Y}H\right)g_Y\right)_{ij}.
\end{align*} 
\end{proposition}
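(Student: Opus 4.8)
The plan is to treat this as a purely pointwise algebraic identity and work in a local oriented orthonormal frame $\{e_1,e_2,e_3\}$ for $TY$, so that $\epsilon_{ikl}$ is the ordinary Levi-Civita symbol, $g_Y^{ij}=\delta_{ij}$, and the index raising in the definition of $c_Y H$ is trivial. In three dimensions the product of two Levi-Civita symbols has the determinant form
\[
\epsilon_{ikl}\,\epsilon_{juv}=\det\begin{pmatrix}\delta_{ij} & \delta_{iu} & \delta_{iv}\\ \delta_{kj} & \delta_{ku} & \delta_{kv}\\ \delta_{lj} & \delta_{lu} & \delta_{lv}\end{pmatrix},
\]
and expanding along the first row writes the left-hand side, after multiplying by $\tfrac14$ and summing against $H_{kluv}$, as a sum of three groups of partial contractions of $H$, headed respectively by $\delta_{ij}$, $\delta_{iu}$, and $\delta_{iv}$.

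Next I would evaluate the three groups separately, using only the symmetries available for an arbitrary element of $S^2(\Lambda^2(M))$, namely antisymmetry in each pair of indices and the pair-exchange symmetry $H_{kluv}=H_{uvkl}$. The $\delta_{ij}$-group reduces, after using antisymmetry in the second pair, to $\tfrac12\,\delta_{ij}\sum_{k,l}H_{klkl}=\tfrac12\,(g_Y)_{ij}\,\tr_Y(c_Y H)$. In each of the $\delta_{iu}$- and $\delta_{iv}$-groups one sets the repeated index equal to $i$, and the two resulting sub-terms are partial traces of $H$ over a ``mixed'' pair of slots (positions $\{1,3\}$, $\{1,4\}$, $\{2,3\}$, or $\{2,4\}$). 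Using antisymmetry and pair-exchange, each of these equals $\pm(c_Y H)_{ij}$, and a short sign count shows that each of these two groups contributes $-\tfrac12\,(c_Y H)_{ij}$. Adding the three contributions gives $\tfrac12\,(g_Y)_{ij}\,\tr_Y(c_Y H)-(c_Y H)_{ij}$, which is exactly $-\big(c_Y H-\tfrac12\,\tr_Y(c_Y H)\,g_Y\big)_{ij}$.

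The only delicate point — the ``main obstacle,'' such as it is — is the bookkeeping in the middle step: one must recognize that several superficially different partial contractions of $H$ all coincide, up to sign, with $c_Y H$ (and likewise that $\sum_{k,l}H_{klkl}=\tr_Y(c_Y H)$), and then verify that the signs combine so that the two off-diagonal groups reinforce rather than cancel. It is worth noting that no Bianchi-type identity is used anywhere, consistent with the hypothesis that $H$ is an arbitrary symmetric bilinear form on $\Lambda^2$.
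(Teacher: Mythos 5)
Your proof is correct, but it takes a genuinely different route from the paper's. The paper evaluates the left-hand side directly, case by case in an orthonormal frame: for $i=j$ only the single component $H_{pqpq}$ (with $\{i,p,q\}=\{1,2,3\}$) survives, and it is rewritten as $\tfrac12\tr_Y(c_YH)-(c_YH)_{ii}$; for $i\neq j$ only $\epsilon_{ijp}\epsilon_{jip}H_{jpip}=-(c_YH)_{ij}$ survives, and $(g_Y)_{ij}=0$ makes the trace term invisible — no $\epsilon$--$\delta$ identity is invoked. You instead expand $\epsilon_{ikl}\epsilon_{juv}$ as the $3\times 3$ determinant of Kronecker deltas and contract against $H$, using only antisymmetry in each pair and the pair-exchange symmetry. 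The step you left as a ``short sign count'' does check out: the $\delta_{iu}$-group is $-\tfrac14\bigl(\sum_l H_{jlil}-\sum_k H_{kjik}\bigr)$, and since $\sum_l H_{jlil}=(c_YH)_{ij}$ while $\sum_k H_{kjik}=-(c_YH)_{ij}$, it equals $-\tfrac12(c_YH)_{ij}$; the $\delta_{iv}$-group gives the same, and together with $\tfrac12(g_Y)_{ij}\tr_Y(c_YH)$ from the $\delta_{ij}$-group this yields exactly the right-hand side. What your approach buys is uniformity (no case split on $i=j$ versus $i\neq j$) and it makes transparent that only the algebraic symmetries of an arbitrary $H\in S^2(\Lambda^2)$ enter, with no curvature identities; what the paper's approach buys is brevity, since in three dimensions each fixed pair $(i,j)$ leaves essentially one independent component of $H$ in the sum, so the identity can be read off almost immediately.
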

\begin{proof} Suppose $i=j$, and let $p,q$ with $p<q$ be indices such that $\{1,2,3\}=\{i,p,q\}$
\begin{align*}
\frac{1}{4}\sum_{k,l}\sum_{u,v}\epsilon_{ikl}\epsilon_{juv}H_{kluv}=\frac{1}{4}\sum_{k,l}\sum_{u,v}\epsilon_{ikl}\epsilon_{iuv}H_{klmn}=H_{pqpq}.
\end{align*} 
Note that the trace of $c_{Y}H$ is given by
\begin{align*}
\frac{1}{2}\tr_Y\left(c_{Y}H\right)=H_{pqpq}+H_{ipip}+H_{iqiq}.
\end{align*}
Therefore
\begin{align*}
H_{pqpq}&=\frac{1}{2}\tr_Y(c_{Y}H)-(H_{ipi}+H_{iqiq})=\frac{1}{2}\tr_Y(c_{Y}H)-(c_{Y}H)_{ii}\\
&=\frac{1}{2}\tr_Y(c_{Y}H)\delta_{ii}-(c_{Y}H)_{ii}=-\left(c_{Y}H-\frac{\tr_Y(c_{Y}H)}{2}g_Y\right)_{ii}.
\end{align*}
If now $i\ne j$ and $p$ is such that $\{1,2,3\}=\{i,j,p\}$, we have
\begin{align*}
&\frac{1}{4}\sum_{k,l}\sum_{u,v}\epsilon_{ikl}\epsilon_{juv}H_{kluv}
=\epsilon_{ijp}\epsilon_{jip}H_{jpjp}\nonumber\\
&= -H_{jpip}=-\left(c_{Y}H\right)_{ij}
=-\left(c_{Y}H-\frac{\tr_Y\left(c_{Y}H\right)}{2}g_Y\right)_{ij},
\end{align*}
and the claim follows.
\end{proof}
We will also need to compute the Christoffel symbols and components of the curvature tensor of $g$ in terms of the metric $g_Y$: 
\begin{proposition}\label{christoffel}
The Christoffel symbols of the metric $g=dt^2+g_Y$ are given by
\begin{align*}
\Gamma_{i0}^{k}(g)&=\frac{1}{2}g_Y^{kl}\left(\dot{g}_Y\right)_{il},~
\Gamma_{ij}^{0}(g)=-\frac{1}{2}\left(\dot{g}_Y\right)_{ij},~\Gamma_{ij}^{k}(g)=\Gamma_{ij}^{k}(g_Y),\\
\Gamma_{00}^{k}(g)&=\Gamma_{0i}^{0}(g)=0.
\end{align*}
For the components of the curvature tensor we have 
\begin{align*}
R_{0ij}^{0}&=-\frac{1}{2}\left(\ddot{g}_{Y}\right)_{ij}+(\dot{g}_Y*\dot{g}_Y)_{ij},\\
R^{k}_{ijl}(g)&=R^{k}_{ijl}(g_Y)+(\dot{g}_Y*\dot{g}_Y)^k_{ijl}.
\end{align*}
In particular, if $g_Y$ is independent of $t$, then 
\begin{align}
\Gamma_{\alpha\beta}^{\gamma}(g)=\left\{
\begin{array}{ll}
0 & \text{if any}~\alpha,\beta,\gamma~\text{equals}~0\\ 
\Gamma_{\alpha\beta}^{\gamma}(g_{Y})& \text{otherwise}
\end{array}
\right.,
\end{align}
and consequently
\begin{align}\label{curvaturecomponentscylindrical}
R_{\alpha\beta\mu}^{\nu}=\left\{
\begin{array}{ll}
0&\text{if any}~\alpha,\beta,\mu,\nu~\text{equals}~0\\
R_{\alpha\beta\mu}^{\nu}(g_{Y})&\text{otherwise}
\end{array}
\right..
\end{align}
\end{proposition}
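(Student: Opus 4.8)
The proof is a direct computation, and I would organize it into two stages, the second being essentially bookkeeping once the first is done.

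\textbf{Stage 1: the Christoffel symbols.} I would start from the Koszul formula
\begin{align}
\Gamma^{\gamma}_{\alpha\beta}(g) = \tfrac{1}{2}\, g^{\gamma\mu}\big( \partial_{\alpha} g_{\beta\mu} + \partial_{\beta} g_{\alpha\mu} - \partial_{\mu} g_{\alpha\beta} \big),
\end{align}
and plug in the block form of $g = dt^2 + g_Y$: namely $g_{00}=1$, $g_{0i}=0$, $g_{ij}=(g_Y)_{ij}$, with inverse $g^{00}=1$, $g^{0i}=0$, $g^{ij}=g_Y^{ij}$, and the only surviving derivative carrying the index $0$ being $\partial_0 g_{ij} = (\dot{g}_Y)_{ij}$. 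The block structure forces the contracted index $\mu$ to be spatial when $\gamma$ is spatial and to equal $0$ when $\gamma=0$, so each index type collapses at once: $\Gamma^k_{00}$ and $\Gamma^0_{0i}$ only see derivatives of $g_{00}$ or $g_{0i}$ (all zero); $\Gamma^k_{i0}$ picks up $\tfrac12 g_Y^{kl}\partial_0 g_{il} = \tfrac12 g_Y^{kl}(\dot{g}_Y)_{il}$; $\Gamma^0_{ij}$ picks up $-\tfrac12 \partial_0 g_{ij} = -\tfrac12(\dot{g}_Y)_{ij}$; and $\Gamma^k_{ij}(g)$ involves only spatial components of $g_Y$ and their spatial derivatives, hence equals $\Gamma^k_{ij}(g_Y)$. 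This is exactly the first block of formulas.

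\textbf{Stage 2: the curvature.} I would then substitute these into the standard expression (in the curvature convention of the paper)
\begin{align}
R^{\delta}_{\ \alpha\beta\gamma}(g) = \partial_{\alpha}\Gamma^{\delta}_{\beta\gamma} - \partial_{\beta}\Gamma^{\delta}_{\alpha\gamma} + \Gamma^{\delta}_{\alpha\sigma}\Gamma^{\sigma}_{\beta\gamma} - \Gamma^{\delta}_{\beta\sigma}\Gamma^{\sigma}_{\alpha\gamma}.
\end{align}
For $R^{0}_{\ 0ij}(g)$ the only term free of any factor of $\dot{g}_Y$ (or higher $t$-derivative) is $\partial_0 \Gamma^0_{ij} = -\tfrac12(\ddot{g}_Y)_{ij}$; every remaining term is a product of two Christoffel symbols, and by Stage 1 any Christoffel symbol of $g$ carrying a $0$ in one of its slots is a multiple of $\dot{g}_Y$ while the others are the purely spatial $\Gamma(g_Y)$, so each such term lies in the schematic class $(\dot{g}_Y*\dot{g}_Y)_{ij}$. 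For the purely spatial component $R^{k}_{\ ijl}(g)$, the contributions built only from $\Gamma(g_Y)$ and its spatial derivatives reassemble precisely into $R^{k}_{\ ijl}(g_Y)$, and the only leftover terms are the cross products $\Gamma^k_{i0}\Gamma^0_{jl} - \Gamma^k_{j0}\Gamma^0_{il}$, again of the form $(\dot{g}_Y*\dot{g}_Y)^{k}_{\ ijl}$; there is no stray $\partial_0$ term because in this component every differentiation is with respect to a spatial index.

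\textbf{The $t$-independent case, and the expected difficulty.} When $\dot{g}_Y=0$, Stage 1 gives $\Gamma^{\gamma}_{\alpha\beta}(g)=0$ whenever any of $\alpha,\beta,\gamma$ equals $0$, and $\Gamma^k_{ij}(g)=\Gamma^k_{ij}(g_Y)$ otherwise. Feeding this into the curvature formula: if any of $\delta,\alpha,\beta,\gamma$ equals $0$, then in each of the four terms either a Christoffel symbol with a $0$-slot occurs (so the term vanishes) or one differentiates a $t$-independent quantity with respect to $t$ (again zero, after checking the two $\partial$-terms), so $R^{\delta}_{\ \alpha\beta\gamma}(g)=0$; and if all four indices are spatial, the formula is literally the one defining $R(g_Y)$. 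I do not anticipate a genuine obstacle: nothing beyond the Koszul and curvature formulas is used. The only mildly delicate point is organizing Stage 2 so that the quadratic-in-$\dot{g}_Y$ error terms are correctly absorbed into the notation $\dot{g}_Y*\dot{g}_Y$ — which is exactly the device that frees one from recording the (here irrelevant) explicit coefficients.
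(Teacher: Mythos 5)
Your proposal is correct, and it is exactly the "straightforward computation" the paper invokes without writing out: the Koszul formula applied to the block metric, the coordinate formula for the curvature, and absorption of the quadratic terms into the $\dot{g}_Y*\dot{g}_Y$ notation. Nothing differs in substance from the paper's (omitted) argument.
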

\begin{proof} The proof follows from a straightforward computation.
\end{proof}
We can now write out a more convenient expression for $\Omega_{ij}$ in (\ref{eqwey2})
\begin{proposition}\label{omega} The term $\Omega_{ij}$ in \eqref{eqwey2} has the form
\begin{align}
\label{omform}
\Omega_{ij}=\left(-E(g_Y)+\dot{g}_Y*\dot{g}_Y\right)_{ij},
\end{align}
where $E(g_Y)$ is the traceless Ricci tensor of $g_Y$. 
\end{proposition}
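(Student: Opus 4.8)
The plan is to combine the purely algebraic identity of Proposition~\ref{hodge} with the curvature formulas of Proposition~\ref{christoffel}; no analytic input is needed. First I would feed the Riemann tensor of $g$ into Proposition~\ref{hodge}: taking $H_{kluv} = R_{kluv}(g)$ with all four indices in $\{1,2,3\}$, so that $H$ is a section of $S^2(\Lambda^2)$ as required, the double sum occurring in the definition \eqref{Omega} of $\Omega_{ij}$ becomes
\begin{align*}
\frac{1}{4}\sum_{k,l,u,v}\epsilon_{ikl}\epsilon_{juv}R_{kluv}(g)
= -\left( c_{Y} Rm(g) - \frac{1}{2}\,\tr_Y\big( c_{Y} Rm(g)\big)\, g_Y \right)_{ij}.
\end{align*}
Since $\Omega_{ij}$ is by definition the $g_Y$-traceless part $\tf$ of the left-hand side, and the term $\frac{1}{2}\tr_Y(c_{Y} Rm(g))\, g_Y$ is pure trace, it is annihilated by $\tf$; hence $\Omega_{ij} = -\tf\big((c_{Y} Rm(g))_{ij}\big)$.

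Next I would identify $c_{Y} Rm(g)$ using Proposition~\ref{christoffel}. By definition $(c_{Y} Rm(g))_{jk} = g_Y^{il}R_{ijlk}(g)$, a contraction over the $Y$-directions only, and since $g = dt^2 + g_Y$ the $Y$-block of $g^{-1}$ is exactly $g_Y^{-1}$. By the warped-product formula $R^{k}_{ijl}(g) = R^{k}_{ijl}(g_Y) + (\dot{g}_Y*\dot{g}_Y)^{k}_{ijl}$, lowering an index and contracting with $g_Y$ gives $c_{Y} Rm(g) = \mathrm{Ric}(g_Y) + \dot{g}_Y*\dot{g}_Y$, where $c_{Y} Rm(g_Y)$ is the \emph{full} Ricci tensor of the three-dimensional metric $g_Y$ (since $g_Y$ is supported entirely on $Y$), and $\dot{g}_Y*\dot{g}_Y$ again stands for a linear combination of $g_Y$-contractions of $\dot{g}_Y\otimes\dot{g}_Y$. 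Taking $\tf$, and using that the traceless part of $\mathrm{Ric}(g_Y)$ is by definition $E(g_Y)$ while the traceless part of a $\dot{g}_Y*\dot{g}_Y$ term is still of that schematic type, yields $\Omega_{ij} = -E(g_Y)_{ij} + (\dot{g}_Y*\dot{g}_Y)_{ij}$, which is \eqref{omform}.

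The one place where care is required is the sign and index-ordering bookkeeping: one must verify that, with the convention fixed after \eqref{epsilon} and in the definition of $c_{Y}$, the contraction $g_Y^{il}R_{ijlk}$ reproduces $+\mathrm{Ric}(g_Y)$ rather than its negative, which I would check using the pair symmetry $R_{ijkl} = R_{klij}$ (together with the first Bianchi identity if needed). Apart from this the proof is a two-line consequence of Propositions~\ref{hodge} and~\ref{christoffel}, the essential point being that the correction to the $Y$-components of $Rm(g)$ arising from the warped-product structure is quadratic in $\dot{g}_Y$ and is therefore absorbed into the $\dot{g}_Y*\dot{g}_Y$ notation.
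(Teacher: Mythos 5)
Your proposal is correct and follows essentially the same route as the paper: apply Proposition~\ref{hodge} with $H = Rm(g)$, use Proposition~\ref{christoffel} to identify the $Y$-contraction $c_Y Rm(g)$ with $Ric(g_Y)$ up to $\dot g_Y * \dot g_Y$ terms, and then take the traceless part. The only cosmetic difference is that you discard the pure-trace term $\tfrac{1}{2}\tr_Y(c_Y Rm(g))\, g_Y$ immediately under $\tf$, whereas the paper computes it explicitly before taking the traceless part; the content is identical.
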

\begin{proof} Recall that $\Omega_{ij}$ is given by
\begin{align}
\Omega_{ij}=\tf\left(\frac{1}{4}\sum_{k,l,m,n}\epsilon_{ikl}\epsilon_{jmn}R_{klmn}(g)\right),
\end{align}
and from Proposition \ref{hodge} we must have
\begin{align}
\Omega_{ij}=-\tf\left(c_{Y}Rm(g)-\frac{1}{2}\tr_Y(c_{Y}Rm(g))g_Y\right).
\end{align}
With the expressions obtained for the components of $Rm$ in Proposition \ref{christoffel} we have
\begin{align*}
-\left(c_{Y}Rm(g)\right)_{ij}=\left(-Ric(g_Y)+\dot{g}_Y*\dot{g}_Y\right)_{ij},
\end{align*}
and then
\begin{align*}
\tr_Y\left(c_{Y}Rm(g)\right)=R_{g_Y}+\dot{g}_Y*\dot{g}_Y,
\end{align*}
which implies \eqref{omform}.
\end{proof}


\section{Linearization of $W^{-}$ at a cylindrical metric}\label{linweylcyl}

\noindent
Consider the cylindrical metric 
\begin{align}
g=dt^2+g_{Y}\label{alpha3},
\end{align}
defined on $M = \R \times Y$, where $g_Y$ is a fixed metric of constant 
curvature $\kappa = +1, 0,$ or $-1$. We note that $g$ is locally 
conformally flat, and therefore is self-dual.
We are interested in studying the linearization of $W^{-}$ at $g$. Given 
$\tilde{h}\in S^{2}(M)$, we consider a path of metrics $g(\epsilon)$ 
with $\epsilon\in (-\delta,\delta)$ for some $\delta>0$ satisfying $g(0)=g$ and 
$g'(0) = \tilde{h}$. 
The linearization of $W^{-}$ at $g$ in the direction of $\tilde{h}$ is the map
\begin{align*}
\left(W^{-}\right)^{\prime}_{g}(\tilde{h})=\frac{\partial}{\partial\epsilon}W^{-}(g_{\epsilon})|_{\epsilon=0}.
\end{align*}
We next define a Dirac-type operator: 
\begin{definition}{\em
Let $\{e_{1},e_{2},e_{3}\}$ be a local orthonormal basis of $\Gamma\left(TY\right)$.
Then, for any $h\in S^{2}(T^{*}Y)$ the operator $\slashd h$ is given in these coordinates by
\begin{align}
(\slashd h)_{ij}=\mathrm{Sym}_{ij}\left(\sum_{k,l}\epsilon_{ikl}d^{\nabla}h_{klj}\right),
\end{align}
where $(d^{\nabla}h)_{klj}$ is given by $(d^{\nabla}h)_{klj}=\nabla_{k}h_{lj}-\nabla_{l}h_{kj}$ .
}
\end{definition}
We also recall the conformal Killing operator:
\begin{definition}{\em
For an $n$-dimensional manifold $(M^n, g)$, 
the \emph{conformal Killing operator} with respect to the metric $g$ is the map
$\mathcal{K}_g:\Lambda^{1}(T^*M)\rightarrow S^2_0(T^*M)$,
given by
\begin{align*}
\mathcal{K}_{g}(\tilde{\omega})= \mathcal{L}_g (\tilde{\omega})- \frac{2}{n}(\delta\tilde{\omega})g,
\end{align*}
where $ \mathcal{L}_g$ is the Lie derivative operator.
}
\end{definition}
In cylindrical coordinates, a tensor $\tilde{h}\in S^{2}(M)$ can be decomposed as
\begin{align*}
\tilde{h}=h_{00}dt\otimes dt+\alpha\odot dt+h,
\end{align*} 
where $h_{00}\in\Lambda^{0}(M)$, $\alpha\in\Lambda^{1}(T^{*}Y)$ and $h\in S^{2}(T^{*}Y)$, so we will use the notation $\tilde{h}=\{h_{00},\alpha,h\}$. The main result of this section is the following

\begin{theorem}\label{mainprop}
For the cylindrical metric given by  $g= dt^2 + g_{Y}$,  the linearization $\left(W^{-}\right)^{\prime}_{g}(\tilde{h})$ with $tr_g(\tilde{h}) = 0$, is given by
\begin{align}
\left(W^{-}\right)^{\prime}(h_{00},\alpha,h)=
\frac{1}{2}\mathcal{K}_{g_{Y}}\left(-\frac{1}{2}dh_{00}+\dot{\alpha}-*d\alpha\right)-\frac{1}{2}\tf(\ddot{h})+\frac{1}{2}{\slashd}\dot{h}-E^{\prime}(h)\label{linweylricci},
\end{align}
where $E^{\prime}(h)$ is the linearization of the traceless Ricci tensor at $g_{Y}$. Equivalently, after computing $E^{\prime}(h)$ explicitly, $\left(W^{-}\right)^{\prime}_{g}(\tilde{h})$ is given by 
\begin{align}
\left(W^{-}\right)^{\prime}(h_{00},\alpha,h)=\frac{1}{2}\mathcal{K}_{g_{Y}}\left(-\frac{1}{2}dh_{00}-\delta_{Y} h+\dot{\alpha}-*d\alpha+\frac{1}{2}d\tr_{Y}(h)\right)\nonumber\\
-\frac{1}{2}\tf(\ddot{h})- \kappa \cdot \tf(h)+\frac{1}{2}{\slashd}\dot{h}
+\frac{1}{2}\Delta_{g_{Y}} \tf(h)
.\label{linweyldelta}
\end{align}
where $\Delta_{g_{Y}}$ is the rough laplacian on $S^{2}(T^{*}Y)$, and 
$(\delta_Yh)_j = \nabla_Y^i h_{ij}$ is the divergence.
\end{theorem}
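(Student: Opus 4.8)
The plan is to compute $(W^-)'_g(\tilde h)$ directly from the bilinear-form expression \eqref{eqwey2}, namely $(\mathcal{W}^-)_{ij} = \Phi_{ij} - \Psi_{ij} + \Omega_{ij}$, by linearizing each of the three pieces along the path $g(\epsilon)$ with $g(0) = dt^2 + g_Y$ and $\dot g(0) = \tilde h = \{h_{00}, \alpha, h\}$. The key simplification is that at $\epsilon = 0$ the cross-section metric is static, so Proposition \ref{christoffel} in its ``$g_Y$ independent of $t$'' form applies to the base point, and all the messy $\dot g_Y * \dot g_Y$ terms in Propositions \ref{christoffel} and \ref{omega} drop out at first order except where they are hit by the $\epsilon$-derivative; one must track that a variation $\tilde h$ of $g$ induces $\dot g_Y(\epsilon)$-type quantities even though $g_Y$ itself is static at $\epsilon = 0$. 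Concretely: $h$ is the variation of $g_Y$, $\alpha$ is the variation of the mixed block, and $h_{00}$ is the variation of the $dt^2$ coefficient.

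First I would linearize $\Omega_{ij} = (-E(g_Y) + \dot g_Y * \dot g_Y)_{ij}$ from Proposition \ref{omega}. The term $\dot g_Y * \dot g_Y$ is quadratic in a quantity vanishing at $\epsilon=0$, hence contributes nothing; so $\Omega'_{ij} = -E'(h)_{ij}$, the linearized traceless Ricci tensor of $g_Y$ in direction $h$. This is where the $-\kappa \cdot \tf(h) + \tfrac12 \Delta_{g_Y}\tf(h)$ and the gauge term $\tfrac12 \mathcal{K}_{g_Y}(-\delta_Y h + \tfrac12 d\tr_Y h)$ of \eqref{linweyldelta} come from, using the standard Lichnerowicz-type formula for $E'$ on a constant-curvature $3$-manifold; passing between \eqref{linweylricci} and \eqref{linweyldelta} is exactly substituting this explicit formula. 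Second, I would linearize $\Phi_{ij} = \tf(R_{0i0j}(g))$. From Proposition \ref{christoffel}, $R_{0ij}^{\,0}(g) = -\tfrac12(\ddot g_Y)_{ij} + (\dot g_Y * \dot g_Y)_{ij}$; differentiating in $\epsilon$, the $\ddot g_Y$ term becomes $-\tfrac12 \ddot h_{ij}$ (second $t$-derivative of the variation $h$), while the $\dot g_Y * \dot g_Y$ term linearizes to something involving $\dot h$, $\dot\alpha$, $dh_{00}$ — this is the source of the $-\tfrac12 \tf(\ddot h)$ term and part of the $\mathcal{K}_{g_Y}$ term with $-\tfrac12 dh_{00} + \dot\alpha$. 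One must be careful that $R_{0i0j}$ with mixed (downstairs) indices brings in $g_{00} = 1 + \epsilon h_{00} + \cdots$, so an extra $h_{00}$-contribution appears, and the mixed variation $\alpha$ enters through the off-diagonal Christoffel symbols. Third, I would linearize $\Psi_{ij} = \tf(\mathrm{Sym}_{ij}(\sum_{k,l}\epsilon_{ikl}R_{0jkl}(g)))$. At $\epsilon=0$, $R_{0jkl}$ vanishes by \eqref{curvaturecomponentscylindrical}, so $\Psi'$ is governed entirely by the $\epsilon$-derivative of $R_{0jkl}$; computing that derivative in terms of $\nabla^{g_Y}$ of the variation tensors produces precisely $\tfrac12 \slashd \dot h$ together with the $-*d\alpha$ contribution to the conformal Killing term, via the definition of $\slashd$ and $d^\nabla$.

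Having linearized all three pieces, I would assemble $(W^-)' = \Phi' - \Psi' + \Omega'$, collect the pure-divergence / exact / coclosed-$1$-form contributions into a single application of $\tfrac12\mathcal{K}_{g_Y}$ acting on $-\tfrac12 dh_{00} + \dot\alpha - *d\alpha$ (for \eqref{linweylricci}) and then expand $E'(h)$ to get \eqref{linweyldelta}. The main obstacle I anticipate is bookkeeping: correctly identifying which first-order terms coming from the quadratic $\dot g_Y * \dot g_Y$ expressions survive (they do, because $\partial_\epsilon(\dot g_Y * \dot g_Y)$ at $\epsilon=0$ need not vanish once one remembers $\dot g_Y$ here abbreviates $\partial_t$ of a $t$-dependent metric, and the variation can be $t$-dependent — actually at the base point $g_Y$ is $t$-static so $\dot g_Y|_{\epsilon=0}=0$ and these genuinely drop, but the analogous cross-terms from lowering the index with $g_{00}$ and from the mixed block $\alpha$ do not), and in organizing the many $\nabla^{g_Y}$-derivative terms into the compact operators $\slashd$ and $\mathcal{K}_{g_Y}$. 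A secondary subtlety is the repeated use of the traceless projection $\tf$ with respect to $g_Y$, since $\tr_g \tilde h = 0$ means $h_{00} + \tr_{g_Y} h = 0$, which must be used to simplify trace terms consistently. Verifying that the final expression is genuinely trace-free and that no $h_{00}$ or $\alpha$ terms have been dropped — the precise error in Floer's formula the paper flags — is the key correctness check at the end.
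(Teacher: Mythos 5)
Your treatment of the purely tangential piece $\{0,0,h\}$ is exactly the paper's Proposition \ref{linweyltangential}: along the path $g_\epsilon=dt^2+(g_Y+\epsilon h)$ the metric stays in the warped-product class, the identity \eqref{eqwey2} holds for every $\epsilon$, and differentiating $\Phi$, $\Psi$, $\Omega$ (with the quadratic $\dot g_Y * \dot g_Y$ terms dropping since $\dot g_Y|_{\epsilon=0}=0$, and with the vanishing of the background $R_{0ikl}$ and $R_{0i0j}$ killing the variation of $\epsilon_{jkl}$ and of $\tf$) gives $-\tfrac12\tf(\ddot h)+\tfrac12\slashd\dot h-E'(h)$. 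The gap is in how you propose to handle $h_{00}$ and $\alpha$. The expression \eqref{eqwey2}, with components \eqref{phi}--\eqref{Omega}, is derived only for metrics of the form $dt^2+g_Y(t)$: it presupposes that $\partial_t$ is unit and orthogonal to $Y$, that $\{e_i\}$ is $g_Y$-orthonormal, and it uses the metric-dependent identification of $\Lambda^2_-$ with $T^*Y$ through $\tilde*$. A variation with $h_{00}\neq0$ or $\alpha\neq0$ leaves this class, so there is no identity along the path to differentiate; plugging linearized curvature components (with the $g_{00}$ factors and off-diagonal Christoffel symbols you mention) into \eqref{phi}--\eqref{Omega} omits the first-order change of the adapted orthonormal frame, of the anti-self-dual projection, and of the isomorphism $S^2_0(\Lambda^2_-)\cong S^2_0(T^*Y)$. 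Those correction terms pair the variation of the $\Lambda^2_-$ subspace against the \emph{background} curvature operator, which is not zero on the cylinder (for $\kappa\neq0$ it has nonzero scalar part and nonzero traceless Ricci block $\mathcal{E}$ coupling $\Lambda^2_+$ to $\Lambda^2_-$), so they do not drop automatically the way the $W^-(g)=0$ terms do; ignoring them is precisely the kind of bookkeeping error that produced the missing $h_{00}$ terms in Floer's formula. Moreover, you assert rather than derive that $\Psi'$ produces the $-\tilde*d\alpha$ contribution and that everything assembles into $\tfrac12\mathcal{K}_{g_Y}\bigl(-\tfrac12 dh_{00}+\dot\alpha-\tilde*d\alpha\bigr)$; no mechanism for this is given.

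The missing idea, which is how the paper actually proceeds, is to never linearize \eqref{eqwey2} in the $h_{00}$ and $\alpha$ directions at all. Conformal invariance gives $\mathcal{D}\bigl(v(dt^2+g_Y)\bigr)=0$, so $\{h_{00},0,h\}$ is equivalent to the tangential variation $h-h_{00}g_Y$ (Proposition \ref{conformal}, with the conformal variation formula for $E$ supplying the $-\tfrac14\mathcal{K}_{g_Y}(dh_{00})$ term); and diffeomorphism plus conformal invariance gives $\mathcal{D}(\mathcal{K}_g(\tilde\omega))=0$ (identity \eqref{alpha6}), so choosing $\omega$ with $\dot\omega=\alpha$ and subtracting $\mathcal{K}_g(\omega)$ reduces $\{0,\alpha,0\}$ to already-treated variations; the identity $\slashd\,\mathcal{L}_{g_Y}(\omega)=\mathcal{K}_{g_Y}(\tilde*d\omega)$ from Proposition \ref{slashdK} then yields $\tfrac12\mathcal{K}_{g_Y}(\dot\alpha-\tilde*d\alpha)$ (Proposition \ref{0a0}). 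If you insist on the direct route, you must first extend the bilinear-form expression for $\mathcal{W}^-$ to general lapse--shift metrics (or work with $\pi_-W\pi_-$ and the full linearized Weyl tensor, using $W(g)=0$ to discard the $\pi_-'$ terms) and carry the frame and projection variations explicitly; as written, your plan for the $h_{00}$ and $\alpha$ components does not justify the formula \eqref{linweylricci}.
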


The remainder of the section will be concerned with the 
proof of Theorem \ref{mainprop}.

\subsection{Conformal Killing operator and $\slashd$}\label{ckos} 
The operator $\slashd$ enjoys the following properties:
\begin{proposition}\label{aleph2}
For the operator $\slashd$, we have
\begin{align}
&\slashd : S^{2}(T^{*}Y) \rightarrow S_{0}^{2}(T^{*}Y), \\
&\slashd \left(ug_{Y}\right)=0 ~\mbox{for any}~ u\in C^{2}(Y)\label{slashconformal}, \\
&\slashd :  S^{2}_0(T^{*}Y) \rightarrow S_{0}^{2}(T^{*}Y) \mbox{ is formally self-adjoint}. 
\end{align}
\end{proposition}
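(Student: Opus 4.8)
The plan is to establish the three assertions of Proposition \ref{aleph2} in order, since each builds on a concrete coordinate description of $\slashd$.

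\textbf{Step 1: $\slashd$ lands in symmetric traceless tensors.} By definition $(\slashd h)_{ij} = \mathrm{Sym}_{ij}(\sum_{k,l}\epsilon_{ikl}(d^\nabla h)_{klj})$, so symmetry is built in. For the trace, I would compute $g_Y^{ij}(\slashd h)_{ij} = g_Y^{ij}\mathrm{Sym}_{ij}(\sum_{k,l}\epsilon_{ikl}(\nabla_k h_{lj} - \nabla_l h_{kj}))$. After symmetrizing, the contraction $\sum_{i,k,l}\epsilon_{ikl}\nabla_k h_{li}$ is a full contraction of the totally antisymmetric $\epsilon$ against an expression whose relevant index pair $(k,i)$ on $\nabla_k h_{li}$ is not antisymmetric; more precisely one pairs the two $\epsilon$-free slots and uses that $\sum_{i,l}\epsilon_{ikl}(\text{symmetric in } i,l) = 0$. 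So the trace vanishes identically. This gives the first displayed line.

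\textbf{Step 2: $\slashd(u g_Y) = 0$.} Here I would simply substitute $h_{lj} = u (g_Y)_{lj}$. Then $(d^\nabla h)_{klj} = (\nabla_k u)(g_Y)_{lj} - (\nabla_l u)(g_Y)_{kj}$, so $\sum_{k,l}\epsilon_{ikl}(d^\nabla h)_{klj} = \sum_{k,l}\epsilon_{ikl}(\nabla_k u)(g_Y)_{lj} - \sum_{k,l}\epsilon_{ikl}(\nabla_l u)(g_Y)_{kj} = \sum_{k}\epsilon_{ikj}\nabla_k u - \sum_{l}\epsilon_{ilj}\nabla_l u = 2\sum_k \epsilon_{ikj}\nabla_k u$ — wait, one must track the sign: the second sum is $\sum_l \epsilon_{ilj}\nabla_l u$ with the roles of the summed indices relabeled, and $\epsilon_{ikl} \to \epsilon_{ikj}$ upon setting $l=j$ versus $\epsilon_{ilj}$ upon setting $k=j$; these differ by the antisymmetry of $\epsilon$ in its last two slots, giving $+\sum_k \epsilon_{ikj}\nabla_k u - (-\sum_l \epsilon_{ijl}\nabla_l u)$. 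In any case the result is antisymmetric in $(i,j)$ (it is essentially $(\tilde * du)_{ij}$ up to a constant), so applying $\mathrm{Sym}_{ij}$ annihilates it. This proves \eqref{slashconformal}.

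\textbf{Step 3: formal self-adjointness on $S^2_0(T^*Y)$.} This is the substantive part and I expect it to be the main obstacle, though it is still routine. For $h, k \in S^2_0(T^*Y)$ compactly supported (or on closed $Y$), I would write $\int_Y \langle \slashd h, k\rangle\, dV = \int_Y \sum_{i,j}(\slashd h)_{ij} k^{ij}\, dV$, expand $(\slashd h)_{ij}$ using the definition, use the symmetry of $k$ to drop the $\mathrm{Sym}_{ij}$, obtaining an integral of $\sum \epsilon_{ikl}(\nabla_k h_{lj} - \nabla_l h_{kj}) k^{ij}$, and integrate by parts to move $\nabla_k$ onto $k^{ij}$. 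Since $\nabla \epsilon = 0$ and $g_Y$ is parallel, the only terms produced are of the form $\sum \epsilon_{ikl} h_{lj}\nabla_k k^{ij}$, and after relabeling indices and exploiting the antisymmetry $\epsilon_{ikl} = -\epsilon_{kil}$ together with the symmetry of $h$ and $k$, one matches this against $\int_Y \langle h, \slashd k\rangle\, dV$. The bookkeeping of index relabelings and $\epsilon$-antisymmetries is the only place care is needed; no curvature terms appear because differentiation falls only on the tensor fields and not on $\epsilon$ or the metric. One should note that tracelessness is not actually needed for self-adjointness as an operator to $S^2_0$, but it is consistent with the mapping property from Step 1. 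This completes the proof of all three items.
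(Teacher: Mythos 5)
Your proposal is correct and follows essentially the same route as the paper's proof: a direct orthonormal-frame computation showing the trace vanishes by pairing the antisymmetric $\epsilon$ against the symmetric slots of $h$, the observation that $\slashd(ug_Y)$ reduces to a term proportional to $\epsilon_{ijk}\nabla_k u$ which is killed by $\mathrm{Sym}_{ij}$, and a single integration by parts (with $\epsilon$ and $g_Y$ parallel, so no curvature terms) plus index relabeling for formal self-adjointness. The small sign wobbles in your Steps 1 and 2 do not affect the argument, since in each case the decisive point --- antisymmetry against a symmetric pair --- is stated correctly.
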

\begin{proof} For the first property, in an orthonormal basis
\begin{align*}
\tr_{Y}\left(\slashd h\right)&=\sum_{i,j}\sum_{k,l}\delta_{ij}\mathrm{Sym}_{ij}\left(\epsilon_{ikl}d^{\nabla}h_{klj}\right)
=\sum_{i=1}^{3}\sum_{k,l\ne i}\epsilon_{ikl}\left(\nabla_{k}h_{li}-\nabla_{l}h_{ki}\right)\\
&=\sum_{i=1}^{3}\sum_{k,l\ne i}\epsilon_{ikl}\nabla_{k}h_{li}-\sum_{i=1}^{3}\sum_{k,l\ne i}\epsilon_{ilk}\nabla_{l}h_{ki}=0.
\end{align*}

 For \eqref{slashconformal}, 
in an orthonormal basis and using that $g_{Y}$ is parallel we have
\begin{align*}
\slashd\left(ug_{Y}\right)_{ij}&=\sum_{k,l}\mathrm{Sym}_{ij}\left(\epsilon_{ikl}d^{\nabla}\left(ug_{Y}\right)_{klj}\right)\\
&=\mathrm{Sym}_{ij}\left(\sum_{k,l}\left(\epsilon_{ikl}(\nabla_{k}u)(g_{Y})_{lj}-\epsilon_{ikl}(\nabla_{l}u)(g_{Y})_{kj}\right)\right)\\
&=\mathrm{Sym}_{ij}\left(\sum_{k=1}^{3}\epsilon_{ikj}\nabla_{k}u-\sum_{l=1}^{3}\epsilon_{ijl}\nabla_{l}u\right)
=-2\mathrm{Sym}_{ij}\left(\sum_{k=1}^{3}\epsilon_{ijk}\nabla_{k}u\right),
\end{align*}
and since $\epsilon_{ijk}\nabla_{k}u$ is skew-symmetric in $i,j$, it follows that $\slashd\left(ug_{Y}\right)=0$.  

 Finally, let $h,h^{\prime}$ be elements in $S^{2}(T^{*}Y)$, then in an orthonormal basis we have
\begin{align*}
&\int_{Y}\langle \slashd h, h^{\prime}\rangle dV_{g_{Y}}=\frac{1}{2}\sum_{i,j}\sum_{k,l}\int_{Y}\epsilon_{ikl}\left(\nabla_{k}h_{lj}-\nabla_{l}h_{kj}\right)h^{\prime}_{ij}dV_{g_{Y}}\nonumber\\
&=\sum_{i,j}\sum_{k,l}\int_{Y}\epsilon_{ikl}\nabla_{k}h_{lj}h^{\prime}_{ij}dV_{g}
=-\sum_{i,j}\sum_{k,l}\int_{Y}\epsilon_{ikl}h_{lj}\nabla_{k}h^{\prime}_{ij}dV_{g_{Y}}\\
&=\sum_{i,j}\sum_{k,l}\int_{Y}\epsilon_{lki}\nabla_{k}h^{\prime}_{ij}h_{lj}dV_{g_{Y}}
=\int_{Y}\langle h,\slashd h^{\prime}\rangle dV_{g_{Y}}.
\end{align*}
\end{proof}
For the operators $\slashd,\mathcal{K}_{g}$ and $\mathcal{D}$ we have 
\begin{proposition}\label{slashdK} The operators $\slashd$ and $\mathcal{K}_{g}$ satisfy the following identities
\begin{align}
\slashd \mathcal{L}_{g_Y} (\omega) &=\mathcal{K}_{g_{Y}}(\tilde{*}d\omega)\label{alpha5},\\
\mathcal{D}\left(\mathcal{K}_{g_{Y}}(\tilde{\omega})\right)&=0 ~\text{for any}~\tilde{\omega}\in\Lambda^{1}(T^{*}M).\label{alpha6}
\end{align}
\end{proposition}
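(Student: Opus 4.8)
\textbf{Proof proposal for Proposition \ref{slashdK}.}

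The plan is to establish the two identities \eqref{alpha5} and \eqref{alpha6} by essentially independent arguments, both reducing to computations already available in the excerpt. For \eqref{alpha5}, I would start from the definitions: $\mathcal{L}_{g_Y}\omega$ is the symmetrized covariant derivative $(\mathcal{L}_{g_Y}\omega)_{lj} = \nabla_l \omega_j + \nabla_j \omega_l$ (using that $g_Y$ is parallel, the Lie derivative of the metric along the dual vector field has this form), and I would plug this into $(\slashd h)_{ij} = \mathrm{Sym}_{ij}(\sum_{k,l}\epsilon_{ikl} d^\nabla h_{klj})$ with $h = \mathcal{L}_{g_Y}\omega$. The key step is to expand $d^\nabla(\mathcal{L}_{g_Y}\omega)_{klj} = \nabla_k(\nabla_l\omega_j + \nabla_j\omega_l) - \nabla_l(\nabla_k\omega_j + \nabla_j\omega_k)$, group the terms into a pure second-derivative antisymmetrization (which produces curvature terms via the Ricci identity) and a term of the form $\nabla_k\nabla_j\omega_l - \nabla_l\nabla_j\omega_k$; then, contracting against $\epsilon_{ikl}$, the antisymmetric second derivatives $\nabla_{[k}\nabla_{l]}$ turn into curvature contractions which, on a constant-curvature $Y^3$, simplify dramatically. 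I expect the upshot to be that $\slashd \mathcal{L}_{g_Y}\omega$ equals $\mathcal{K}_{g_Y}$ applied to $\tilde *d\omega$, where $d\omega$ is the $2$-form $(\nabla_k\omega_l - \nabla_l\omega_k)$ and $\tilde *$ converts it to the $1$-form $\eta_i = \tfrac12\sum_{k,l}\epsilon_{ikl}(\nabla_k\omega_l - \nabla_l\omega_k)$ exactly as in \eqref{hodgeS3}; I would need to track the trace-correction term in $\mathcal{K}_{g_Y}$ against the $\frac{2}{n}(\delta\,\cdot\,)g_Y$ piece and use Proposition \ref{aleph2}\eqref{slashconformal} to discard pure-trace contributions on the left.

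For \eqref{alpha6}, the cleanest route is conceptual rather than computational: $\mathcal{K}_{g_Y}(\tilde\omega)$, for a $1$-form $\tilde\omega$ on $M$, is (the traceless part of) the Lie derivative of $g$ along the dual vector field — more precisely, since $g = dt^2 + g_Y$ is a product and $g_Y$ has constant curvature, $\mathcal{K}_{g_Y}$ produces an infinitesimal conformal change of the metric $g$ on $M$, and $W^-$ — hence its linearization $\mathcal{D}$ — is a conformal invariant by property (1) listed at the start of Section \ref{asdsec}. More carefully, I would argue that $\mathcal{K}_g(\tilde\omega)$ (the conformal Killing operator on the four-manifold) lies in the kernel of $\mathcal{D}$ because deforming $g$ by $\mathcal{K}_g(\tilde\omega)$ amounts to pulling back $g$ by a diffeomorphism composed with a conformal rescaling, under both of which $W^-$ transforms tensorially/invariantly, so its linearization vanishes on such directions; then I would check that $\mathcal{K}_{g_Y}(\tilde\omega)$, viewed inside $S^2_0(T^*M)$ via the decomposition \eqref{s2r4}, differs from $\mathcal{K}_g(\tilde\omega)$ only by terms that are already manifestly annihilated by $\mathcal{D}$ (for instance, pure $dt\otimes dt$ or trace pieces, which the identification $S^2_0(\Lambda^2_-) = S^2_0(T^*Y)$ and the structure of Theorem \ref{mainprop} handle).

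The main obstacle I anticipate is \eqref{alpha5}: it is a genuine tensor identity requiring careful bookkeeping of the $\epsilon$-symbols and the Ricci identity on $Y^3$, and one must be vigilant that the constant-curvature hypothesis is used correctly to collapse the curvature terms — on a general $3$-manifold the analogous identity would carry extra Ricci-tensor corrections, so the statement as written is specific to $\kappa$ constant. A secondary subtlety is the normalization: the factor conventions in $\mathcal{K}_{g_Y}$ (which carries the $-\frac{2}{n}(\delta\tilde\omega)g$ term with $n = 3$ here), in $\mathrm{Sym}_{ij}$, and in $\tilde *$ must be reconciled so that no stray constant appears; I would fix these by testing on a single explicit $\omega$, e.g. an eigen-$1$-form on the round $S^3$, if the abstract computation leaves any ambiguity. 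For \eqref{alpha6} the only real care needed is to make the conformal-invariance argument precise at the linearized level and to confirm the map $\mathcal{K}_{g_Y} : \Lambda^1(T^*M) \to S^2_0(T^*M)$ does land where claimed after the splitting \eqref{s2r4}; this I expect to be routine given property (1) of Section \ref{asdsec}.
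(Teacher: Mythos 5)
Your overall route is the paper's. For \eqref{alpha5} the paper argues exactly as you outline: substitute $h=\mathcal{L}_{g_Y}(\omega)$ into $\slashd$, commute covariant derivatives, use the algebraic Bianchi identity, and use constant curvature — the curvature contribution reduces to $-2\kappa\sum_{k}(\epsilon_{ikj}-\epsilon_{ijk})\omega_{k}$, which is skew in $i,j$ and dies under $\mathrm{Sym}_{ij}$ — leaving $2\,\mathrm{Sym}_{ij}\big(\nabla_{j}(\tilde{*}d\omega)_{i}\big)=(\mathcal{L}_{g_Y}(\tilde{*}d\omega))_{ij}$; then, because $\slashd$ takes values in $S^{2}_{0}(T^{*}Y)$, the trace correction comes for free and one may replace $\mathcal{L}_{g_Y}$ by $\mathcal{K}_{g_Y}$ (this tracelessness of the image is the relevant part of Proposition \ref{aleph2}, rather than \eqref{slashconformal}, which concerns trace parts of the input). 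For \eqref{alpha6} the paper uses precisely your conceptual argument: diffeomorphism invariance gives $\mathcal{D}(\mathcal{L}_{g}(\tilde{\omega}))=\mathcal{L}_{X}W^{-}(g)$ with $X$ dual to $\tilde{\omega}$, and this vanishes because the cylinder is locally conformally flat, so $W^{-}(g)=0$ — note it is this vanishing of the background tensor, not mere tensoriality, that kills the Lie-derivative directions, a point your sketch glosses over — while conformal invariance gives $\mathcal{D}(fg)=0$; together, $\mathcal{D}\circ\mathcal{K}_{g}=0$.

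The one step you add that would not survive is the attempted reconciliation of $\mathcal{K}_{g_Y}(\tilde{\omega})$ with $\mathcal{K}_{g}(\tilde{\omega})$. By \eqref{kform} the difference contains the mixed piece $(\dot{\omega}+df)\odot dt$, and $\mathcal{D}$ does \emph{not} annihilate mixed pieces in general: Proposition \ref{0a0} gives $\mathcal{D}(\{0,\alpha,0\})=\tfrac{1}{2}\mathcal{K}_{g_Y}(\dot{\alpha}-\tilde{*}d\alpha)$, which is generically nonzero. Fortunately the step is unnecessary: the subscript in \eqref{alpha6} is a slip of notation ($\mathcal{K}_{g_Y}$ cannot literally be applied to a $1$-form on $M$), and the identity actually proved in the paper and used later is $\mathcal{D}\circ\mathcal{K}_{g}=0$ for the four-dimensional conformal Killing operator — which is exactly what your main argument establishes.
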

\begin{proof} 
Identity (\ref{alpha5}) is a consequence of the following computation: let 
$h=\mathcal{L}_{g_{Y}}(\omega)$ then
\begin{align}
\left(\slashd h\right)_{ij}
&=\mathrm{Sym}_{ij}\left(\sum_{k,l}\epsilon_{ikl}\left(\nabla_{k}h_{lj}-\nabla_{l}h_{kj}\right)\right)\nonumber\\
&=\mathrm{Sym}_{ij}\left(\sum_{k,l}\epsilon_{ikl}\left(\nabla_{k}\nabla_{l}\omega_{j}+\nabla_{k}\nabla_{j}\omega_{l}-\nabla_{l}\nabla_{k}\omega_{j}-\nabla_{l}\nabla_{j}\omega_{k}\right)\right).\label{alpha9}
\end{align} 
Commuting covariant derivatives in \eqref{alpha9} we obtain
\begin{align}\label{alpha9_1}
\left(\slashd h\right)_{ij}=\mathrm{Sym}_{ij}\left(\sum_{k,l}\epsilon_{ikl}\left(\nabla_{j}\nabla_{k}\omega_{l}-\nabla_{j}\nabla_{l}\omega_{k}-R_{klj}^{p}\omega_{p}-R_{kjl}^{p}\omega_{p}+R_{ljk}^{p}\omega_{p}\right)\right).
\end{align}
Note that $-R_{klj}^{p}-R_{kjl}^{p}+R_{ljk}^{p}=-2R^{p}_{klj}$ by the algebraic Bianchi identity, so \eqref{alpha9_1} becomes
\begin{align}
\left(\slashd h\right)_{ij}=\mathrm{Sym}_{ij}\left(\sum_{k,l}\epsilon_{ikl}\left(\nabla_{j}\nabla_{k}\omega_{l}-\nabla_{j}\nabla_{l}\omega_{k}\right)-2\sum_{k,l}\epsilon_{ikl}R^{p}_{klj}\omega_{p}\right).
\end{align}
Since $g_{Y}$ has constant sectional curvature equal to $\kappa$
\begin{align*}
&-2\sum_{k,l}\epsilon_{ikl}R_{klj}^{p} \omega_p=-2 \kappa \sum_{k,l}\epsilon_{ikl}\left(\delta_{k}^{p}\delta_{lj}-\delta_{l}^{p}\delta_{kj}\right)\omega_{p}\\
&=-2 \kappa \left(\sum_{k,l}\epsilon_{ikl}\omega_{k}\delta_{lj}-\sum_{k,l}\epsilon_{ikl}\omega_{l}\delta_{kj}\right)
=-2 \kappa \left(\sum_{k}\left(\epsilon_{ikj}-\epsilon_{ijk}\right)\omega_{k}\right).
\end{align*}
Since $\epsilon_{ikj}-\epsilon_{ijk}$ is skew-symmetric in $i,j$ we obtain
\begin{align*}
\left(\slashd h\right)_{ij}&=\mathrm{Sym}_{ij}\left(\sum_{k,l}\epsilon_{ikl}\left(\nabla_{j}\nabla_{k}\omega_{l}-\nabla_{j}\nabla_{l}\omega_{k}\right)\right)\\
&=\mathrm{Sym}_{ij}\left(\nabla_{j}\left(\sum_{k,l}\epsilon_{ikl}\left(\nabla_{k}\omega_{l}-\nabla_{l}\omega_{k}\right)\right)\right)\\
&=2\mathrm{Sym}_{ij}\left(\nabla_{j}\left(\tilde{*}d\omega\right)_{i}\right)=\nabla_{j}\left(\tilde{*}d\omega\right)_{i}+\nabla_{i}(\tilde{*}d\omega)_{j}\\
&=\left(\mathcal{L}_{g_Y}(\tilde{*}d\omega)\right)_{ij}.
\end{align*}
Since $\slashd h$ is traceless, we actually obtain 
$\slashd \mathcal{L}_{g_Y}(\omega) =\mathcal{K}_{g_Y}(\tilde{*}d\omega)$ as needed. 
For proving (\ref{alpha6}), we note that by diffeomorphism invariance of $W^{-}$ and since $g$ is locally conformally flat we have $\mathcal{D}(\mathcal{L}_g(\tilde{\omega}))=0$ for any 1-form $\tilde{\omega}\in\Lambda^{1}(T^{*}M)$. By the conformal invariance of 
$W^{-}$, we have $\mathcal{D}(fg)=0$ for any $f\in C^{\infty}(M)$, therefore the composition of $\mathcal{D}$ and $\mathcal{K}_{g}$ is zero. 
\end{proof}

\subsection{The case of no radial components}
We first compute $\left(W^{-}\right)^{\prime}(\tilde{h})$ assuming that $\tilde{h}$ has no radial components, i.e. $\tilde{h}$ has the form $\tilde{h}=\{0,0,h\}$. 
\begin{proposition}\label{linweyltangential} The linearization of $W^{-}$ at $g=dt^2+g_{Y}$ in the direction $\tilde{h}=\{0,0,h\}$ is 
\begin{align}
\left(W^{-}\right)^{\prime}(\tilde{h})=
-\frac{1}{2}\tf(\ddot{h})+\frac{1}{2}\left(\slashd\dot{h}\right)-E_{g_{Y}}^{\prime}(h).
\end{align}
\end{proposition}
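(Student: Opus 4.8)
The plan is to linearize the pointwise formula $(\mathcal{W}^-)_{ij} = \Phi_{ij} - \Psi_{ij} + \Omega_{ij}$ of \eqref{eqwey2} along the path $g(\epsilon) = dt^2 + \big(g_Y + \epsilon h\big)$, where $g_Y + \epsilon h$ is viewed as a (possibly $t$-dependent) family of metrics on $Y$, so that $g(\epsilon)$ is a warped product to which Proposition \ref{christoffel} applies with cross-section metric depending on $t$. The key structural observation is that at $\epsilon = 0$ the cross-section $g_Y$ is $t$-independent, hence $\dot g_Y = 0$ there; therefore (i) every term of schematic type $\dot g_Y * \dot g_Y$ has vanishing $\epsilon$-derivative at $\epsilon = 0$, and (ii) $R_{0i0j}(g)$ and the mixed components $R_{0jkl}(g)$ vanish at $\epsilon = 0$, so $\Phi_{ij}$ and $\Psi_{ij}$ vanish there as well (and $\Omega_{ij} = -E(g_Y) = 0$ since $g_Y$ has constant curvature, consistent with $g$ being locally conformally flat). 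Because of (ii), the $\epsilon$-derivative of the traceless projection $\tf$ (which depends on $g_Y$) acts against the vanishing values of $\Phi$ and $\Psi$ and so contributes nothing; we may commute $\partial_\epsilon$ freely past $\tf$.

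With this in hand, the three terms are linearized separately. For $\Phi_{ij} = \tf(R_{0i0j}(g))$, Proposition \ref{christoffel} gives $R_{0i0j}(g) = -\frac12(\ddot g_Y)_{ij} + (\dot g_Y * \dot g_Y)_{ij}$; differentiating at $\epsilon = 0$ replaces $\ddot g_Y$ by $\ddot h$ and annihilates the quadratic term, so $\Phi'(\tilde h) = -\frac12 \tf(\ddot h)$. For $\Omega_{ij}$, Proposition \ref{omega} already identifies it as $(-E(g_Y) + \dot g_Y * \dot g_Y)_{ij}$, whose linearization at the cylinder is $-E'_{g_Y}(h)$ by definition of the linearized traceless Ricci tensor.

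The term requiring the most care is $\Psi_{ij} = \tf\big(\mathrm{Sym}_{ij}(\sum_{k,l}\epsilon_{ikl} R_{0jkl}(g))\big)$, because the mixed curvature component $R_{0jkl}(g)$ of the warped product is not listed in Proposition \ref{christoffel}. I would compute it directly from the Christoffel symbols given there — equivalently, from the Codazzi equation for the slices $\{t\}\times Y$, whose second fundamental form is $-\frac12\dot g_Y$ — to get $R_{0jkl}(g) = -\frac12\big(\nabla_k \dot g_{Y,lj} - \nabla_l \dot g_{Y,kj}\big) + (\text{terms quadratic in } \dot g_Y) = -\frac12 (d^\nabla \dot g_Y)_{klj} + (\dot g_Y * \dot g_Y)$. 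Differentiating at $\epsilon = 0$ then gives $\partial_\epsilon R_{0jkl}|_0 = -\frac12 (d^\nabla \dot h)_{klj}$, and since $\mathrm{Sym}_{ij}(\sum_{k,l}\epsilon_{ikl}(d^\nabla \dot h)_{klj}) = (\slashd \dot h)_{ij}$ is already traceless, $\Psi'(\tilde h) = -\frac12 \slashd \dot h$.

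Finally, assembling $(\mathcal{W}^-)' = \Phi' - \Psi' + \Omega'$ produces $-\frac12\tf(\ddot h) + \frac12 \slashd \dot h - E'_{g_Y}(h)$, which is the claimed identity. The main obstacle is the bookkeeping in the $\Psi$ term: deriving the mixed component $R_{0jkl}$ from Proposition \ref{christoffel} with exactly the right index ordering and sign so that it matches the $d^\nabla$ and $\epsilon$-symbol conventions built into the definition of $\slashd$; once that is pinned down, the rest is a one-line differentiation of the formulas in Propositions \ref{christoffel} and \ref{omega}.
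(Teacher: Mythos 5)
Your proposal is correct and follows essentially the same route as the paper: a term-by-term linearization of \eqref{eqwey2} along a purely spherical path, using Proposition \ref{christoffel} and Proposition \ref{omega}, with the observations that the $\dot{g}_Y * \dot{g}_Y$ terms, the variation of the $\epsilon$-symbol, and the variation of $\tf$ all drop out because $\dot{g}_Y$, $R_{0i0j}$, and $R_{0jkl}$ vanish at the background cylinder. The only (immaterial) difference is in the $\Psi$ term, where the paper invokes the general formula \eqref{linriem} for the linearized Riemann tensor at the cylinder, whereas you compute the exact mixed component $R_{0jkl}$ of $dt^2+g_Y(t)$ via the Codazzi equation and then differentiate; both give $\Psi'(\tilde h)=-\tfrac{1}{2}\slashd\dot h$ and hence the stated identity.
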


\begin{proof} 
We start by linearizing the component $\Omega_{ij}$ in \eqref{eqwey2}. 
Note that \begin{align}\label{quadraticzero}
\frac{\partial}{\partial\epsilon}\left(\dot{g}_Y(\epsilon)*\dot{g}_Y(\epsilon)\right)|_{\epsilon=0}=0,
\end{align}
for any variation which is purely spherical, that is, a variation which only 
deforms the cross-section metric on $Y$.  From Proposition \ref{omega} and  \eqref{quadraticzero} 
it is clear that for $\tilde{h}=\{0,0,h\}$ we have
\begin{align}\label{linOmega}
\Omega_{ij}^{\prime}(\tilde{h})=-E^{\prime}(h).
\end{align}
For the term $\Phi_{ij}$ in \eqref{eqwey2}, we consider a purely spherical deformation $g_{\epsilon}$ of $g$ in the direction of $h$ so that from \eqref{phi} we have
\begin{align*}
\Phi_{ij}(g_{\epsilon})=\tf_{g_Y(\epsilon)}\left(R_{0i0j}(dt^2 +g_Y(\epsilon))\right),
\end{align*}
and from Proposition \ref{christoffel}
\begin{align}
R_{0i0j}(g_{\epsilon})=\left(-\frac{1}{2}\ddot{g_{Y}}(\epsilon)+\dot{g}_Y(\epsilon)*\dot{g}_Y(\epsilon)\right)_{ij},
\end{align}
then 
\begin{align}\label{linPhi}
\left(\Phi^{\prime}_{g}\right)_{ij}(\tilde{h})=\frac{\partial}{\partial\epsilon}\left(-\tf_{g_Y}(\ddot{g}_{Y}(\epsilon))+\dot{g}_Y(\epsilon)*\dot{g}_Y(\epsilon)\right)|_{\epsilon=0}=-\frac{1}{2}\tf_{g_{Y}}\ddot{h}.
\end{align}
Finally, for the components $\Psi_{ij}$ we recall that we can express 
$\Psi_{ij}(dt^2+g_Y)$ as 
\begin{align*}
\Psi_{ij}=\mathrm{Sym}_{ij}\left(\sum_{k,l}\epsilon_{jkl}(g_Y)R_{0ikl}(dt^2+g_Y)\right).
\end{align*}
Note that taking the tracefree part is not necessary, see Proposition \ref{aleph2}. 
Before linearizing $\epsilon_{jkl}(g_Y)R_{0ikl}(dt^2+g_Y)$, we note that if we evaluate $\Psi_{ij}$ along a purely spherical deformation $g_{\epsilon}$ of $g$ in the direction of $h$, the symbol $\epsilon_{jkl}$ may depend on $g_Y(\epsilon)$ and so we must write
\begin{align*}
\Psi_{ij}(g_{\epsilon})=\mathrm{Sym}_{ij}\left(\sum_{k,l}\epsilon_{jkl}(g_Y(\epsilon))R_{0ikl}(g_{\epsilon})\right),
\end{align*}
however, since $R_{0ijk}(dt^2+g_{Y})=0$ for all choices of $i,j,k$ as seen in \eqref{curvaturecomponentscylindrical}, we conclude that the linearization of $\Psi_{ij}$ in the direction $\tilde{h}=\{0,0,h\}$ is
\begin{align*}
\mathrm{Sym}_{ij}\left(\sum_{k,l}\epsilon_{jkl}\left(R^{\prime}_{g}\right)_{0ikl}(\tilde{h})\right).
\end{align*}     
Linearizing $Rm$ at $g$ in the direction of $\tilde{h}$, and using 
Proposition \ref{christoffel}, we obtain 
\begin{align}\label{linriem}
\left(R^{\prime}_{g}\right)_{0ikl}(\tilde{h})=\frac{1}{2}\left(\nabla_{0}\nabla_{l}\tilde{h}_{ik}-\nabla_{0}\nabla_{k}\tilde{h}_{il}-\nabla_{i}\nabla_{l}\tilde{h}_{0k}+\nabla_{i}\nabla_{k}\tilde{h}_{0l}\right).
\end{align}
It is easy to see that
\begin{align*}
\nabla_{0}\nabla_{k}\tilde{h}_{il}=\nabla_{k}\dot{h}_{il},  \mbox{ and }
\nabla_{i}\nabla_{k}\tilde{h}_{0l} =\nabla_{i}\nabla_{l}h_{0j}=0,
\end{align*}
so we have proved
\begin{align}
\begin{split}
\label{linPsi}
\Psi_{ij}^{\prime}(\tilde{h})&=\mathrm{Sym}_{ij}\left(\sum_{k,l}\epsilon_{jkl}\left(R^{\prime}_{g}\right)_{0ikl}(\tilde{h})\right)\\
&=-\frac{1}{2}\mathrm{Sym}_{ij}\left(\sum_{k,l}\epsilon_{jkl}\left(\nabla_{k}\dot{h}_{il}-\nabla_{l}\dot{h}_{ik}\right)\right) =-\frac{1}{2}(\slashd\dot{h})_{ij}.
\end{split}
\end{align}
The proposition follows from combining  \eqref{linOmega}, \eqref{linPhi} and \eqref{linPsi}.
\end{proof}

\subsection{The case of conformal variations}\label{confvariations}
Using conformal invariance, we next extend the formula in Proposition \ref{linweyltangential}
to tensors of the form $\{h_{00}, 0, h\}$. 
\begin{proposition}\label{conformal}
The linearization of $W^{-}$ at $g$ in the direction $\tilde{h} = \{h_{00},0,h\}$ is 
\begin{align*}
\mathcal{D}(h_{00}dt\otimes dt+h)=\mathcal{D}(h)-\frac{1}{2}\left(\nabla^{2}_{Y}h_{00}-\frac{1}{3}(\Delta_{g_{Y}}h_{00}) g_Y\right).
\end{align*}
\end{proposition}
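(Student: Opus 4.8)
The plan is to exploit the conformal invariance of $W^{-}$ to reduce the computation to the purely tangential case treated in Proposition \ref{linweyltangential}. Viewed as a $(1,3)$-tensor, $W^{-}$ satisfies $W^{-}(e^{-2f}g) = W^{-}(g)$, and, as observed in the proof of Proposition \ref{slashdK}, linearizing this gives $\mathcal{D}(fg) = 0$ for every $f \in C^{\infty}(M)$.

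First I would split
\begin{align*}
h_{00}\, dt\otimes dt + h = h_{00}\, g + \bigl(h - h_{00}\, g_Y\bigr),
\end{align*}
so that the given tensor $\tilde h = \{h_{00},0,h\}$ differs from the tangential tensor $\{0,0,h - h_{00} g_Y\}$ only by the conformal direction $h_{00}\, g$. By linearity of $\mathcal{D}$ together with $\mathcal{D}(h_{00} g) = 0$ this yields
\begin{align*}
\mathcal{D}\bigl(h_{00}\, dt\otimes dt + h\bigr) = \mathcal{D}(h) - \mathcal{D}(h_{00}\, g_Y),
\end{align*}
where the two terms on the right denote the linearizations of $W^{-}$ in the tangential directions $\{0,0,h\}$ and $\{0,0,h_{00} g_Y\}$. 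Next I would apply Proposition \ref{linweyltangential} to $\{0,0,h_{00} g_Y\}$; this is legitimate even though $h_{00}$ may depend on $t$, since $g_Y$ is $t$-independent. In that formula the term $\tf\bigl(\ddot{(h_{00} g_Y)}\bigr) = \tf(\ddot h_{00}\, g_Y)$ vanishes because $g_Y$ is pure trace, and $\slashd\bigl(\dot{(h_{00} g_Y)}\bigr) = \slashd(\dot h_{00}\, g_Y) = 0$ by \eqref{slashconformal}, so $\mathcal{D}(h_{00} g_Y) = - E'_{g_Y}(h_{00} g_Y)$.

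It then remains only to evaluate the linearization of the traceless Ricci tensor of $g_Y$ in the conformal direction $h_{00} g_Y$. Using the standard first variation of $Ric$ under a conformal change on the $3$-manifold $Y$ one gets $Ric'_{g_Y}(u g_Y) = -\tfrac12 \nabla^{2}_{Y} u - \tfrac12 (\Delta_{g_Y} u) g_Y$, and extracting the $g_Y$-traceless part in dimension three gives $E'_{g_Y}(u g_Y) = -\tfrac12\bigl(\nabla^{2}_{Y} u - \tfrac13 (\Delta_{g_Y} u) g_Y\bigr)$ — which equals the full linearization of the traceless Ricci tensor here, since $g_Y$ is Einstein and $u g_Y$ is pure trace. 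Taking $u = h_{00}$ and substituting back produces
\begin{align*}
\mathcal{D}\bigl(h_{00}\, dt\otimes dt + h\bigr) = \mathcal{D}(h) - \frac{1}{2}\Bigl(\nabla^{2}_{Y} h_{00} - \tfrac13 (\Delta_{g_Y} h_{00}) g_Y\Bigr),
\end{align*}
as claimed. Everything here is routine; the only points requiring care are the sign conventions (the rough Laplacian $\Delta_{g_Y}$, and the factor produced by writing $u g_Y = 2(u/2)g_Y$ in the conformal variation) and checking that the ``$E'$'' appearing above is the same operator as in Proposition \ref{linweyltangential}. I do not anticipate any genuine obstacle.
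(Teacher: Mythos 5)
Your argument is correct and is essentially the paper's own proof: both split off the conformal piece via $h_{00}\,dt\otimes dt+h = h_{00}\,g + (h-h_{00}g_Y)$, use $\mathcal{D}(fg)=0$ together with Proposition \ref{linweyltangential} and \eqref{slashconformal} to reduce to $-E'_{g_Y}(h_{00}g_Y)$, and then evaluate that conformal variation. The only cosmetic difference is that you compute $E'_{g_Y}(u g_Y)$ from the linearized Ricci tensor and take the traceless part (your justification that this equals $E'$ is valid since $g_Y$ is Einstein and $ug_Y$ is pure trace), whereas the paper differentiates the exact conformal transformation formula for $E$ along $g_s=e^{su}g_Y$; both yield $-\tfrac12\bigl(\nabla^2_Y u-\tfrac13(\Delta_{g_Y}u)g_Y\bigr)$.
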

\begin{proof}Since the cylinder is locally conformally flat, 
for any $C^{2}$ function $v$ we have
\begin{align*}
\mathcal{D}\left(v\left(dt^2+g_{Y}\right)\right)=0,
\end{align*}
therefore
\begin{align*}
&\mathcal{D}\left(h_{00}dt\otimes dt+h\right)
=\mathcal{D}\left(h_{00}\left(dt^2+g_{Y}\right)-h_{00}g_{Y}+h\right)\\
&=\mathcal{D}(h-h_{00}g_{Y})=\mathcal{D}(h)-\mathcal{D}(h_{00}g_{Y}).
\end{align*}  
Since $h_{00}g_{Y}$ is a scalar tensor we have by Corollary \ref{linweyltangential} and \eqref{slashconformal}
\begin{align} \label{linricciconformal}
\mathcal{D}(h_{00}g_{Y})=-E_{g_{Y}}^{\prime}\left(h_{00} g_{Y}\right)=-E^{\prime}_{g_{Y}}(h_{00}g_{Y}).
\end{align}
Next, consider a  path $\{g_{s}\}$ of metric on $Y$ given by 
$g_{s}=e^{su}g_{Y}$, then $g_{0}=g_{Y}$ and $\partial_{s}g_{s}|_{s=0}=ug_{Y}$.
Since $g_Y$ is Einstein, a standard formula for conformal changes gives
\begin{align*}
E(g_{s})&=-\frac{s}{2}\left(\nabla^{2}_{g_{Y}}u-\frac{1}{3}(\Delta_{g_{Y}}u)g_Y\right)
+\frac{s^{2}}{4}\left(du\otimes du-\frac{1}{3}|\nabla_{g_{Y}} u|^{2}g_{Y}\right).
\end{align*}
Differentiating at $s=0$, we obtain
\begin{align*}
E^{\prime}_{g_{Y}}(ug_{Y})=-\frac{1}{2}\left(\nabla^{2}_{g_{Y}}u-\frac{1}{3}(\Delta_{g_{Y}}u)g_Y\right),
\end{align*}
and the proposition follows. 
\end{proof}

\subsection{Completion of proof of Theorem \ref{mainprop}}\label{mixedcomponents}

Consider now a variation $\tilde{h}$ of the form $\tilde{h}=\{0,\alpha,0\}$. 
\begin{proposition}\label{0a0}
The linearization of $W^{-}$ at $g$ in the direction $ \{0,\alpha,0\}$ is 
given by
\begin{align}\label{linweylmixed}
\mathcal{D}(\{0,\alpha,0\})
&=\frac{1}{2}\mathcal{K}_{g_{Y}}\left(\dot{\alpha}-\tilde{*}d\alpha\right).
\end{align}
\end{proposition}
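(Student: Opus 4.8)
The plan is to compute $\left(W^{-}\right)^{\prime}_{g}(\{0,\alpha,0\})$ directly from the bilinear-form expression \eqref{eqwey2}, namely $\mathcal{W}^{-}_{ij} = \Phi_{ij} - \Psi_{ij} + \Omega_{ij}$, by linearizing each of the three pieces in the direction $\tilde{h} = \alpha \otimes dt + dt \otimes \alpha$ (with $h_{00}=0$ and no tangential part). First I would observe that $\Omega_{ij}$ depends only on the components $R_{klpq}(g)$ with purely tangential indices, and by Proposition \ref{christoffel} (specifically \eqref{curvaturecomponentscylindrical}) the linearization of these curvature components in a direction with only mixed $0$--$i$ components vanishes; so $\Omega'_{ij}(\{0,\alpha,0\}) = 0$. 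Thus everything comes from $\Phi$ and $\Psi$, i.e. from the curvature components $R_{0i0j}$ and $R_{0jkl}$, linearized at $g$ in the direction of a $dt\odot\alpha$ term.

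Next I would use the general linearization formula for the Riemann tensor (as in \eqref{linriem}) with $\tilde{h}_{0k} = \alpha_k$, $\tilde{h}_{ij} = 0$, $\tilde{h}_{00}=0$, and simplify using $\partial_t \alpha = \dot\alpha$ together with the fact that $g_Y$ is $t$-independent and all Christoffel symbols with a $0$ index vanish (Proposition \ref{christoffel}). For $\Phi'$ this gives $\mathrm{tf}\big((R'_{g})_{0i0j}(\{0,\alpha,0\})\big)$, which after the covariant derivatives are sorted out reduces to a symmetrized first covariant derivative of $\dot\alpha$ on $Y$, i.e. a multiple of $\mathcal{L}_{g_Y}(\dot\alpha)$ up to trace — so it contributes $\tfrac{1}{2}\mathcal{K}_{g_Y}(\dot\alpha)$. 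For $\Psi'$ I would compute $\mathrm{Sym}_{ij}\big(\sum_{k,l}\epsilon_{jkl}(R'_g)_{0ikl}(\{0,\alpha,0\})\big)$; here $(R'_g)_{0ikl}$ produces terms of the schematic form $\nabla_k\nabla_l\alpha_i - \nabla_l\nabla_k\alpha_i$ plus terms that are second covariant derivatives in the $i$ slot, and contracting against $\epsilon_{jkl}$ turns the commutator term into a curvature-times-$\alpha$ expression (evaluated via constant sectional curvature $\kappa$) while the remaining terms assemble into $\nabla_j$ of $(\tilde *d\alpha)_i$ after using $(\tilde *d\alpha)_i = \tfrac12\epsilon_{ikl}(\nabla_k\alpha_l - \nabla_l\alpha_k)$. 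The curvature term will be skew in $i,j$ by the same mechanism as in the proof of Proposition \ref{slashdK} (the combination $\epsilon_{ikj}-\epsilon_{ijk}$ is antisymmetric), so it drops under $\mathrm{Sym}_{ij}$, and $\Psi'$ collapses to $\tfrac12 \mathcal{L}_{g_Y}(\tilde *d\alpha)$, hence to $\tfrac12\mathcal{K}_{g_Y}(\tilde*d\alpha)$ since $\mathcal{W}^-$ is automatically tracefree.

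Assembling $\Phi' - \Psi' + \Omega' = \tfrac12\mathcal{K}_{g_Y}(\dot\alpha) - \tfrac12\mathcal{K}_{g_Y}(\tilde *d\alpha) + 0 = \tfrac12\mathcal{K}_{g_Y}(\dot\alpha - \tilde * d\alpha)$ gives exactly \eqref{linweylmixed}. I expect the main obstacle to be the bookkeeping in the $\Psi'$ computation: correctly tracking which second-covariant-derivative terms survive, commuting derivatives to expose the Riemann curvature, and verifying that the resulting $\kappa$-proportional term is indeed symmetric-traceless-annihilated — this is precisely the kind of step where Floer's original formula went wrong, so care with signs and with the $\epsilon$-contraction identities (and the algebraic Bianchi identity, as in \eqref{alpha9_1}) is essential. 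An alternative, lighter route I would keep in mind as a sanity check: since $\{0,\alpha,0\} = \tfrac12\big(\mathcal{K}_g(\alpha^\flat\text{-type 1-form on }M)\big) + (\text{remainder})$, one might try to recognize $dt\odot\alpha$ partially as a Lie derivative $\mathcal{L}_g$ of a $1$-form on $M$ and invoke \eqref{alpha6} to kill that part, reducing the computation to a smaller piece; but carrying this out cleanly still requires the explicit curvature linearization, so I would treat it only as a consistency check rather than the primary argument.
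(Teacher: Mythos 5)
Your computational sketch does arrive at the correct pieces ($\Omega'=0$, $\Phi'=\tfrac12\mathcal{K}_{g_Y}(\dot\alpha)$, $\Psi'=\tfrac12\mathcal{K}_{g_Y}(\tilde{*}d\alpha)$), but as written it rests on a step that is not justified: formula \eqref{eqwey2} is derived only for metrics of the split form \eqref{eqwey1}, $g=dt^2+g_Y(t)$, whereas the perturbed metric $g+\epsilon\, dt\odot\alpha$ leaves this class. Linearizing \eqref{eqwey2} termwise, with the frame, the $\epsilon$-symbols, and the identification $S^2_0(\Lambda^2_-)\cong S^2_0(T^*Y)$ all held fixed, is therefore not automatically the same as computing $\mathcal{D}(\{0,\alpha,0\})$: a mixed perturbation moves the splitting $\Lambda^2=\Lambda^2_+\oplus\Lambda^2_-$ and the identification of anti-self-dual forms with $1$-forms on $Y$ at first order, and since \eqref{eqwey2} is expressed through the \emph{full} curvature (whose traceless Ricci and scalar blocks are nonzero on the cylinder, unlike $W(g)=0$), these first-order changes can in principle contribute. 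Concretely, the discrepancy between your naive linearization and $\mathcal{D}(\{0,\alpha,0\})$ is, modulo pure trace, the $\Lambda^2_-$--$\Lambda^2_-$ block of $-\tfrac12\, E(g)\varowedge h$ (the terms $E'(h)\varowedge g$ and the scalar-curvature terms are type-reversing or pure trace). For $h=dt\odot\alpha$ this block happens to vanish, because $E(g)=-2\kappa\, dt\otimes dt+\tfrac{\kappa}{2}g$, $(dt\otimes dt)\varowedge(dt\odot\alpha)=0$, and $g\varowedge h$ reverses type for traceless $h$; but this verification (or some equivalent accounting of the variation of the ASD projection) is precisely the missing step. Without it, ``compute directly from \eqref{eqwey2}'' is a gap, and it is exactly the kind of place where the errors you are trying to guard against creep in.

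Note that the paper's proof is arranged to avoid this issue entirely: choosing $\omega$ with $\dot{\omega}=\alpha$, the tensor $\{0,\alpha,0\}-\mathcal{K}_{g}(\omega)$ has the form $\{h_{00},0,h\}$, so by \eqref{alpha6} the mixed case reduces to Propositions \ref{linweyltangential} and \ref{conformal}, after which \eqref{alpha5} produces the $\tilde{*}d\alpha$ term; no use of \eqref{eqwey2} off the split class is ever required. The ``alternative, lighter route'' you mention at the end is in fact this argument, and I would promote it to the primary proof (or else insert the justification above for applying \eqref{eqwey2} to a perturbation with mixed components, after which your $\Phi'$, $\Psi'$, $\Omega'$ computations go through as sketched).
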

\begin{proof}
Choose $\omega$ so that $\dot{\omega}=\alpha$. In this case the conformal Killing operator equals
\begin{align*}
\mathcal{K}_{g}(\omega)=\left\{-\frac{1}{2}\delta_{Y}\omega,\alpha,
\mathcal{K}_{g_{Y}}\left(\omega\right)+\left(\frac{1}{6}\delta_{Y}\omega\right)g_{Y}\right\}.
\end{align*}
We write
\begin{align*}
\mathcal{D}(\{0,\alpha,0\})&=\mathcal{D}(\{0,\alpha,0\}-\mathcal{K}_{g}(\omega))\\
&=\mathcal{D}\left(\{0,\alpha,0\}-\left\{-\frac{1}{2}\delta_{Y}\omega,\alpha, \mathcal{K}_{g_{Y}}\left(\omega\right)+\left(\frac{1}{6}\delta_{Y}\omega\right)g_{Y}\right\}\right)\\
&=\mathcal{D}\left(\frac{1}{2}\delta_{Y}\omega,0,-\mathcal{K}_{g_{Y}}\left(\omega\right)-\left(\frac{1}{6}\delta_{Y}\omega\right)g_{Y}\right).
\end{align*} 
Recall that for any $C^{2}$ function $u$ we have 
$\mathcal{D}\left(u dt^2\right)=\mathcal{D}\left(-u g_{Y}\right)$, using 
\eqref{linricciconformal} we obtain
\begin{align}
\mathcal{D}(\{0,\alpha,0\})=
\mathcal{D}\left(-\mathcal{K}_{g_{Y}}\left(\omega\right)-\left(\frac{1}{6}\delta_{Y}\omega\right)g_{Y}-\left(\frac{1}{2}\delta_{Y}\omega\right)g_{Y}\right)
=\mathcal{D}\left(-\mathcal{L}_{g_Y}(\omega) \right) 
\label{Dmixedcomponents}
\end{align}
From Corollary \ref{linweyltangential} and from \eqref{alpha5} and \eqref{Dmixedcomponents} we obtain
\begin{align}\label{floertrick}
\mathcal{D}(\{0,\alpha,0\})=\mathcal{K}_{g_{Y}}\left(\frac{1}{2}\ddot{\omega}\right)-\frac{1}{2}\slashd \mathcal{L}_{g_Y}(\dot{\omega}),
\end{align}
and since $\dot{\omega}=\alpha$, we have
\begin{align}
\ddot{\omega}&=\dot{\alpha},\label{omegadoubledot}\\
\slashd \mathcal{L}_{g_Y}(\dot{\omega})
&=\mathcal{K}_{g_{Y}}(\tilde{*}d\dot{\omega})=\mathcal{K}_{g_{Y}}(\tilde{*}d\alpha),\label{confkillingdotomega}
\end{align}
so from \eqref{floertrick}, \eqref{omegadoubledot} and \eqref{confkillingdotomega} we obtain
\eqref{linweylmixed}.
\end{proof}
With \eqref{linweylmixed} we are ready to prove Theorem \ref{mainprop}.
\begin{proof}[Proof of Theorem \ref{mainprop}]
Combining Corollary \ref{linweyltangential}, Proposition \ref{conformal} and \eqref{linweylmixed} we obtain \eqref{linweylricci}. 
In order to prove \eqref{linweyldelta}  we linearize $E$ at $g_{Y}$ in the direction of $h$  
\begin{align*}
E^{\prime}(h)&=\left(Ric(g)-\frac{1}{3}R_{g}g\right)_{g_{Y}}^{\prime}(h)
=Ric^{\prime}_{g_{Y}}(h)-\frac{1}{3}R_{g_{Y}}^{\prime}(h)g_{Y}-\frac{1}{3}R_{g_{Y}}h.
\end{align*}
The linearization of $Ric$ is
\begin{align}\label{linricci}
\left(Ric^{\prime}_{g_{Y}}(h)\right)_{ij}=-\frac{1}{2}\Delta_{L} h_{ij}-\frac{1}{2}\nabla^{2}_{ij}\tr(h)+\frac{1}{2}\left(\nabla_{i}\delta_{j}h+\nabla_{j}\delta_{i}h\right),
\end{align}
where $\Delta_{L}h$ is the \emph{Lichnerowicz} Laplacian given by
\begin{align}\label{lichnerowicz}
\Delta_{L}h_{ij}=\Delta_{g_{Y}} h_{ij}+2R_{iljp}h^{lp}-R_{i}^{p}h_{jp}-R_{j}^{p}h_{ip}.
\end{align}
Since $g_{Y}$ has constant sectional curvature $\kappa$, $\Delta_{L}$ can be computed as
\begin{align}
\Delta_{L}h_{ij}&=\Delta_{g_{Y}}h_{ij}+2 \kappa \left(\left(g_{Y}\right)_{ij}\left(g_{Y}\right)_{lp}-\left(g_{Y}\right)_{ip}\left(g_{Y}\right)_{lj}\right)h^{lp}-2 \kappa \delta_{i}^{p}h_{jp}
-2\kappa\delta_{j}^{p}h_{ip}\nonumber\\
&=\Delta_{g_{Y}} h_{ij}+2\kappa\tr_{Y}(h)\left(g_{Y}\right)_{ij}-2 \kappa h_{ij}
-4 \kappa h_{ij}\nonumber\\
&=\Delta_{g_{Y}} h_{ij}-6 \kappa\tf(h)_{ij}.\label{lichnerowiczsect1}
\end{align}
On the other hand, the linearization of $R_{g}$ is
\begin{align}\label{linscalar}
R^{\prime}(h)=-\Delta_{g_{Y}}\tr(h)+\delta_{Y}\delta_{Y} h-\langle Ric(g_{Y}),h\rangle_{g_Y}.
\end{align}
and 
\begin{align}\label{scalarsect1}
\frac{1}{3}(R_{g_{Y}}h-\langle Ric(g_{Y}),h\rangle_{g_Y} g_{Y})=2 \kappa\tf(h).
\end{align}
Combining \eqref{linricci} and \eqref{linscalar}, we conclude that $E^{\prime}_{g_{Y}}(h)$ is given by
\begin{align}
E^{\prime}_{g_{Y}}(h)&=-\frac{1}{2}\left(\Delta_{L}h+\nabla^{2}\tr(h)-\frac{2}{3}\Delta_{g_{Y}}\tr_{Y}(h)g_{Y}\right)\nonumber\\
&+\frac{1}{2}\left(\mathcal{L}_{g_{Y}}(\delta_Y h)-\frac{1}{3}(\delta_{Y}\delta_{Y} h)g_{Y}\right)-\frac{1}{3}(R_{g_{Y}}h-\langle Ric(g_{Y}),h\rangle_{g_Y}),\label{lintracelessricci}
\end{align}
and using \eqref{lichnerowiczsect1} and \eqref{scalarsect1}, we finally obtain
\begin{align}\label{lintracelessriccisect1}
E^{\prime}_{g_{Y}}(h)=-\frac{1}{2}\left(\Delta_{g_{Y}} \tf(h)+\stl{\circ}{\nabla^{2}}\tr_Y(h)\right)+\frac{1}{2}\mathcal{K}_{g_{Y}}\left(\delta_{Y} h\right) +\kappa \cdot \tf(h),
\end{align}
where $\stl{\circ}{\nabla^{2}}$ denotes the traceless Hessian operator.
From \eqref{linweylricci} and \eqref{lintracelessriccisect1}, \eqref{linweyldelta} follows easily.
\end{proof}

\section{Some properties of $\slashd$} 
\label{slashdsec}

In this section we derive several useful identities for the operator $\slashd$ introduced in Section \ref{linweylcyl} apart from those proved in Subsection \ref{ckos}.
First, we have a crucial formula for the square of $\slashd$:
\begin{proposition}
\label{squaredprop}
The operator  $\slashd^{2}:S^{2}(T^{*}Y)\mapsto S^{2}_{0}(T^{*}Y)$ is given by 
\begin{align}
\label{squared}
\slashd^{2}h=-4\Delta_{g_{Y}}\tf(h)-2\stl{\circ}{\nabla^2} \tr_{Y}(h)+3\mathcal{K}_{g_{Y}}(\delta_{Y} h)+12 \kappa \cdot \tf(h).
\end{align}
\end{proposition}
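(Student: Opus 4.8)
The plan is to prove \eqref{squared} by a direct index computation, carrying the variable $\kappa$ throughout and using only at the very end that $g_Y$ has constant sectional curvature $\kappa$. First I would simplify $\slashd$ itself: relabelling $k\leftrightarrow l$ in the $\nabla_l h_{kj}$ term of the definition and using the antisymmetry of $\epsilon$ gives $(\slashd h)_{ij}=\mathrm{Sym}_{ij}\big(2\sum_{k,l}\epsilon_{ikl}\nabla_k h_{lj}\big)$. Writing $A_{ij}:=\sum_{k,l}\epsilon_{ikl}\nabla_k h_{lj}$ we then have
\begin{align*}
(\slashd h)_{ij}=A_{ij}+A_{ji},\qquad
(\slashd^{2}h)_{ij}=\mathrm{Sym}_{ij}\Big(2\sum_{k,l}\epsilon_{ikl}\nabla_k(\slashd h)_{lj}\Big),\qquad
(\slashd h)_{lj}=\sum_{p,q}\epsilon_{lpq}\nabla_p h_{qj}+\sum_{p,q}\epsilon_{jpq}\nabla_p h_{ql}.
\end{align*}
Substituting the last expression into the middle one splits $\slashd^2h$ into two families of terms: one in which the two $\epsilon$-symbols share the summed index $l$, and one in which they do not.

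For the first family I would contract the repeated index with the three-dimensional identity $\sum_l\epsilon_{ikl}\epsilon_{lpq}=\delta_{ip}\delta_{kq}-\delta_{iq}\delta_{kp}$; this collapses those terms into a multiple of the rough Laplacian $\Delta_{g_Y}h$ together with the ``non-commuting'' term $\sum_k\nabla_k\nabla_i h_{kj}$. For the second family, where no index is shared, I would instead use the determinant identity $\epsilon_{ikl}\epsilon_{jpq}=\det(\delta_{\bullet\bullet})$ for the two index triples; expanding the $3\times3$ determinant produces six terms which, after the imposed $\delta$-contractions, reduce to further copies of $\Delta_{g_Y}h$, to the Hessian $\nabla^2\tr_Y(h)$, to the double divergence $(\delta_Y\delta_Y h)g_Y$, to $\mathcal{L}_{g_Y}(\delta_Y h)$, and again to $\sum_k\nabla_k\nabla_i h_{kj}$.

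The remaining step is to put $\sum_k\nabla_k\nabla_i h_{kj}$ into recognizable form by commuting covariant derivatives: $\nabla_k\nabla_i h_{kj}=\nabla_i\nabla_k h_{kj}+[\nabla_k,\nabla_i]h_{kj}$, so that $\sum_k\nabla_k h_{kj}=(\delta_Y h)_j$ yields, after the outer symmetrization, a $\mathcal{L}_{g_Y}(\delta_Y h)$ contribution, while the commutator — evaluated with the curvature convention $R_{abcd}=\kappa(g_{ad}g_{bc}-g_{bd}g_{ac})$ already fixed in the proof of Proposition \ref{slashdK} — contributes the $\kappa\cdot\tf(h)$ term. Assembling all pieces, and rewriting $\mathcal{L}_{g_Y}(\delta_Y h)=\mathcal{K}_{g_Y}(\delta_Y h)+\tfrac{2}{3}(\delta_Y\delta_Y h)g_Y$, $\Delta_{g_Y}h=\Delta_{g_Y}\tf(h)+\tfrac13(\Delta_{g_Y}\tr_Y h)g_Y$, and $\nabla^2\tr_Y h=\stl{\circ}{\nabla^2}\tr_Y h+\tfrac13(\Delta_{g_Y}\tr_Y h)g_Y$, all the pure-trace contributions cancel — as they must, since $\slashd^2h$ is traceless by Proposition \ref{aleph2} — and what survives is exactly \eqref{squared}.

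The main obstacle is the bookkeeping in the second family: the uncontracted product $\epsilon_{ikl}\epsilon_{jpq}$ forces the determinant expansion into six separate terms, each requiring its own simplification, and then every curvature term coming from the commutators must be tracked with signs consistent with the paper's conventions. One streamlining worth mentioning is that $\slashd(u\,g_Y)=0$ for any function $u$ by \eqref{slashconformal}, so $\slashd^{2}h=\slashd^{2}(\tf(h))$; it therefore suffices to establish $\slashd^{2}h=-4\Delta_{g_Y}h+3\mathcal{K}_{g_Y}(\delta_Y h)+12\kappa h$ for traceless $h$ and then re-expand for general $h$, which reduces the number of pure-trace terms one carries through the computation.
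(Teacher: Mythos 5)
Your proposal is correct and follows essentially the same route as the paper's Appendix~\ref{appendix} proof: expand $\slashd^{2}h$ in a local orthonormal frame into the same four double sums, reduce the products of $\epsilon$-symbols, commute covariant derivatives using the constant-curvature form of $Rm$ to generate the $\kappa\cdot\tf(h)$ terms, and reassemble into $\Delta_{g_Y}\tf(h)$, $\stl{\circ}{\nabla^2}\tr_Y(h)$ and $\mathcal{K}_{g_Y}(\delta_Y h)$. The only differences are organizational: you evaluate the $\epsilon$-products with the contraction/determinant identities where the paper argues case by case ($i=j$ versus $i\neq j$), and your reduction to traceless $h$ via $\slashd(u\,g_Y)=0$ is a valid shortcut the paper does not use.
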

\begin{proof} The proof is moved to Appendix \ref{appendix}.
\end{proof}
Next, we have
\begin{proposition}\label{deltaslashd}
For any $h\in S^{2}(T^{*}Y)$ we have $\delta_{Y}\left(\slashd h\right)=\tilde{*}d\delta_{Y}h.$
\end{proposition}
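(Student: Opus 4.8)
The plan is to prove the identity $\delta_Y(\slashd h) = \tilde{*} d \delta_Y h$ by a direct computation in a local orthonormal frame, starting from the definition $(\slashd h)_{ij} = \mathrm{Sym}_{ij}\big( \sum_{k,l} \epsilon_{ikl} (\nabla_k h_{lj} - \nabla_l h_{kj}) \big)$ and applying the divergence $\nabla^i$. First I would expand $\nabla^i (\slashd h)_{ij}$ using the symmetrization: this produces two groups of terms, one where the divergence index $i$ contracts against an $\epsilon$-index and one where it contracts against the free index of $d^\nabla h$. I would exploit the antisymmetry $\epsilon_{ikl} = -\epsilon_{kil}$ and relabel summation indices to collapse as many terms as possible.

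The key step will be commuting covariant derivatives: after expanding, one encounters second derivatives like $\nabla^i \nabla_k h_{lj}$ that need to be reordered into $\nabla_k \nabla^i h_{lj}$ (so as to recognize $\delta_Y h$) at the cost of curvature terms. Since $g_Y$ has constant sectional curvature $\kappa$, these curvature terms take the explicit form $R_{iklj} = \kappa(g_{il}g_{kj} - g_{ij}g_{kl})$ (in the frame, $\delta_{il}\delta_{kj} - \delta_{ij}\delta_{kl}$), and I expect them to cancel among themselves after contracting against $\epsilon_{ikl}$ and symmetrizing in $i,j$ — the same mechanism already used in the proof of \eqref{alpha5} in Proposition \ref{slashdK}, where the Bianchi identity plus constant curvature forces the curvature contribution to be skew in the free indices and hence killed by $\mathrm{Sym}$. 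I would also use the second Bianchi/commutation identities as needed; the fact that $\tilde{*}d$ of a $1$-form $\zeta$ has components $(\tilde{*}d\zeta)_{ij} = \sum_k \epsilon_{ijk}(\nabla_k \zeta \text{-type terms})$, wait — more precisely $\tilde{*}$ sends $1$-forms to $2$-forms via \eqref{hodgeS3}, so $\tilde{*} d \delta_Y h$ has components $\sum_{k} \epsilon_{ijk} (d \delta_Y h)_{?}$; I will need to track that $d\delta_Y h$ is a $1$-form here (more care: $\delta_Y h$ is a $1$-form, $d(\delta_Y h)$ is a $2$-form, and $\tilde *$ of that $2$-form... actually $\delta_Y(\slashd h)$ is a $1$-form, so I should read the right-hand side as $\tilde * (d(\delta_Y h))$ where $d(\delta_Y h)$ is a $2$-form and $\tilde *$ maps $2$-forms to $1$-forms). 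I would be careful that in components $\big(\tilde{*}(d\delta_Y h)\big)_i = \tfrac12 \sum_{j,k} \epsilon_{ijk} (\nabla_j (\delta_Y h)_k - \nabla_k (\delta_Y h)_j)$, and match this against the surviving term of $\nabla^i(\slashd h)_{ij}$.

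The main obstacle will be bookkeeping: keeping the $\epsilon$-index contractions, the symmetrization $\mathrm{Sym}_{ij}$, and the Ricci-identity curvature terms all organized so that the cancellations are visible rather than buried. I expect that once the curvature terms are shown to vanish (by the constant-curvature computation as in Proposition \ref{slashdK}) and one uses $\nabla^i h_{lj}$ reassembled as $(\delta_Y h)_l$ after appropriate relabeling, the remaining expression is exactly $(\tilde{*} d \delta_Y h)_j$ up to a combinatorial factor that I would fix by checking the $\epsilon\epsilon$ contraction identities. A sanity check I would run along the way: both sides should annihilate $h = u g_Y$ (since $\slashd(u g_Y) = 0$ by \eqref{slashconformal}, and $\delta_Y(u g_Y) = -du$, so the right-hand side becomes $\tilde{*} d(-du) = 0$), and both sides should be consistent under $h \mapsto \mathcal{L}_{g_Y}\omega$, giving a useful partial verification.
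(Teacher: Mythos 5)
Your plan follows essentially the same route as the paper's proof: expand $\sum_j\nabla_j(\slashd h)_{ij}$ in an orthonormal frame into the two groups of terms produced by the symmetrization, commute covariant derivatives using the constant-curvature form of the curvature tensor, observe that the curvature contributions cancel via skew/symmetric contractions against $\epsilon$, and identify the surviving term with $(\tilde{*}d\delta_Y h)_i$ — indeed, in the paper the second group of terms vanishes entirely by exactly the $\epsilon$-antisymmetry relabeling you propose. The only slip is cosmetic: with the paper's convention $(\delta_Y h)_j=\nabla^i h_{ij}$ one has $\delta_Y(ug_Y)=du$ rather than $-du$, which does not affect your sanity check since $\tilde{*}d$ of an exact form vanishes either way.
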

\begin{proof}
In a local orthonormal basis we have
\begin{align}\label{divslashd}
\left(\delta_{Y}\left(\slashd h\right)\right)_{i}&=\sum_{j=1}^{3}\nabla_{j}(\slashd h)_{ij}\nonumber\\
&=\sum_{j=1}^{3}\sum_{k,l}\epsilon_{ikl}\nabla_{j}\nabla_{k}h_{lj}+\sum_{j=1}^{3}\sum_{p,q}\epsilon_{jpq}\nabla_{j}\nabla_{p}h_{qi}.
\end{align}
Commuting covariant derivatives we have
\begin{align*}
&\sum_{j=1}^{3}\sum_{k,l}\epsilon_{ikl}\nabla_{j}\nabla_{k}h_{lj}=\sum_{j=1}^{3}\sum_{k,l}\epsilon_{ikl}\left(\nabla_{k}\nabla_{j}h_{lj}-R_{jkl}^{s}h_{sj}-R^{s}_{jkj}h_{ls}\right)\\
&=\sum_{j=1}^{3}\sum_{k,l}\epsilon_{ikl}\left(\nabla_{k}\nabla_{j}h_{lj} +
\kappa \big( - h_{jj}g_{kl}+h_{kj}g_{jl}-h_{lj}g_{kj}+h_{lk}g_{jj} \big)\right)\\
&=\left(\tilde{*}d\delta_{Y}h\right)_{i} + \kappa \sum_{k,l}\left(\sum_{j=1}^{3} 
\left\{-h_{jj}\epsilon_{ikl}g_{kl}+\epsilon_{ikl}h_{kl}-\epsilon_{ikl}h_{lk}+3\epsilon_{ikl}h_{lk}
\right\} \right).
\end{align*}
Since all terms in the sum consist of a term skew-symmetric in $k$ and $l$ 
times a term symmetric in $k$ and $l$, the sum is zero,
so we obtain
\begin{align}\label{sumikl}
\sum_{j=1}^{3}\sum_{k,l}\epsilon_{ikl}\nabla_{j}\nabla_{k}h_{lj}=\left(\tilde{*}d\delta_{Y}h\right)_{i}.
\end{align}
We also have
\begin{align*}
&\sum_{j=1}^{3}\sum_{p,q}\epsilon_{jpq}\nabla_{j}\nabla_{p}h_{qi}=\sum_{j=1}^{3}\sum_{p,q}\epsilon_{jpq}\left(\nabla_{p}\nabla_{j}h_{qi}-R^{s}_{jpq}h_{si}-R^{s}_{jpi}h_{qs}\right)\\
&=\sum_{j=1}^{3}\sum_{p,q}\epsilon_{jpq}\left(\nabla_{p}\nabla_{j}h_{qi}
+ \kappa \big( -h_{ij}g_{pq}+h_{pi}g_{qj}+h_{qj}g_{pi}-h_{pq}g_{ji} \big)\right),
\end{align*}
and clearly the last 4 terms sum to zero. So we have
\begin{align*}
\sum_{j=1}^{3}\sum_{p,q}\epsilon_{jpq}\nabla_{j}\nabla_{p}h_{qi} 
= \sum_{j=1}^{3}\sum_{p,q}\epsilon_{jpq}\nabla_{p}\nabla_{j}h_{qi}
\end{align*}
By reindexing $j$ and $p$ on the right hand side, we obtain 
\begin{align*}
\sum_{j=1}^{3}\sum_{p,q}\epsilon_{jpq}\nabla_{j}\nabla_{p}h_{qi} 
= \sum_{j=1}^{3}\sum_{p,q}\epsilon_{pjq}\nabla_{j}\nabla_{p}h_{qi}
= - \sum_{j=1}^{3}\sum_{p,q}\epsilon_{jpq}\nabla_{j}\nabla_{p}h_{qi},
\end{align*}
so this sum vanishes. 
Combining this with \eqref{divslashd} and \eqref{sumikl}, the proposition then follows.
\end{proof}

\begin{corollary} For any $h\in S^{2}(T^{*}Y)$ we have
\begin{align}
\slashd\Delta_{g_{Y}}h=\Delta_{g_{Y}}\slashd h.
\end{align}
\end{corollary}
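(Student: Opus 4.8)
The plan is to derive the identity purely formally from the expression for $\slashd^{2}$ in Proposition \ref{squaredprop}, using the trivial fact that $\slashd$ commutes with $\slashd^{2}$. I would first collect three elementary reductions, each immediate from results already proved. (i) Since $g_{Y}$ is parallel, $\Delta_{g_{Y}}h$ and $\Delta_{g_{Y}}\tf(h)$ differ by a pure-trace tensor, and $h$ and $\tf(h)$ differ by a pure-trace tensor; hence by \eqref{slashconformal}, $\slashd(\Delta_{g_{Y}}\tf(h)) = \slashd(\Delta_{g_{Y}}h)$ and $\slashd(\tf(h)) = \slashd(h)$. (ii) For $u \in C^{\infty}(Y)$, $\stl{\circ}{\nabla^2}u$ differs from $\nabla^{2}u = \tfrac{1}{2}\mathcal{L}_{g_{Y}}(du)$ by a pure-trace term, so by \eqref{slashconformal} and \eqref{alpha5}, $\slashd(\stl{\circ}{\nabla^2}u) = \tfrac{1}{2}\slashd\,\mathcal{L}_{g_{Y}}(du) = \tfrac{1}{2}\mathcal{K}_{g_{Y}}(\tilde{*}\,d(du)) = 0$. (iii) For any $1$-form $\xi$, $\mathcal{K}_{g_{Y}}(\xi)$ differs from $\mathcal{L}_{g_{Y}}(\xi)$ by a pure-trace term, so $\slashd(\mathcal{K}_{g_{Y}}(\xi)) = \mathcal{K}_{g_{Y}}(\tilde{*}\,d\xi)$ by \eqref{slashconformal} and \eqref{alpha5}; taking $\xi = \delta_{Y}h$ and invoking Proposition \ref{deltaslashd} (which gives $\delta_{Y}\slashd h = \tilde{*}\,d\,\delta_{Y}h$) yields $\slashd(\mathcal{K}_{g_{Y}}(\delta_{Y}h)) = \mathcal{K}_{g_{Y}}(\tilde{*}\,d\,\delta_{Y}h) = \mathcal{K}_{g_{Y}}(\delta_{Y}\slashd h)$.

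Next I would apply $\slashd$ to the identity of Proposition \ref{squaredprop}. Using (i)--(iii), every term on the right except the rough-Laplacian term is either annihilated or carried to the corresponding term for $\slashd h$, giving $\slashd^{3}h = -4\,\slashd(\Delta_{g_{Y}}h) + 3\,\mathcal{K}_{g_{Y}}(\delta_{Y}\slashd h) + 12\kappa\,\slashd h$. On the other hand, since $\slashd h \in S^{2}_{0}(T^{*}Y)$ has vanishing trace, applying Proposition \ref{squaredprop} directly to $\slashd h$ gives $\slashd^{2}(\slashd h) = -4\,\Delta_{g_{Y}}(\slashd h) + 3\,\mathcal{K}_{g_{Y}}(\delta_{Y}\slashd h) + 12\kappa\,\slashd h$. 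Equating these two expressions for $\slashd^{3}h = \slashd^{2}(\slashd h)$ and cancelling the common $\mathcal{K}_{g_{Y}}$ and $\kappa$ terms leaves $\slashd(\Delta_{g_{Y}}h) = \Delta_{g_{Y}}(\slashd h)$, as claimed.

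No real obstacle arises: the only content is the bookkeeping in (i)--(iii), verifying that every term of the $\slashd^{2}$ formula other than $-4\Delta_{g_{Y}}\tf$ behaves compatibly under a further application of $\slashd$. All of this rests solely on \eqref{slashconformal}, \eqref{alpha5}, Proposition \ref{deltaslashd}, and the parallelism of $g_{Y}$, so no computation beyond the excerpt is required. (One could instead attempt a direct proof by commuting covariant derivatives inside $\slashd$ and $\Delta_{g_{Y}}$ and using constancy of the curvature, but the route through $\slashd^{2}$ is considerably shorter.)
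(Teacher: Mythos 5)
Your proposal is correct and follows essentially the same route as the paper's proof: apply $\slashd$ to the formula of Proposition \ref{squaredprop} (using \eqref{slashconformal}, \eqref{alpha5}, and Proposition \ref{deltaslashd} to handle the trace, Hessian, and $\mathcal{K}_{g_{Y}}(\delta_{Y}h)$ terms), then apply Proposition \ref{squaredprop} directly to the traceless tensor $\slashd h$ and compare the two expressions for $\slashd^{3}h$. Your reductions (i)--(iii) are exactly the cancellations the paper records, just spelled out in more detail.
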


\begin{proof} From \eqref{squared} we have
\begin{align*}
\slashd^{3}h=-4\slashd\Delta_{g_{Y}}\tf{h}-2\slashd\stackrel{\circ}{\nabla^2}\tr_{Y}(h)+3\slashd\mathcal{K}_{g_{Y}}(\delta_{Y}h)+12 \kappa \cdot \slashd(\tf(h)),
\end{align*} 
and clearly
\begin{align}
-4\slashd\Delta_{g_{Y}}\tf{h}&=-4\slashd\Delta_{g_{Y}}h,\\
\slashd\stackrel{\circ}{\nabla^2}\tr_{Y}(h)&=\frac{1}{2}\slashd\mathcal{K}_{g_{Y}}(d\tr_{g_{Y}}h)=0,\\
3\slashd\mathcal{K}_{g_{Y}}(\delta_{Y}h)&=3\mathcal{K}_{g_{Y}}(\tilde{*}d\delta_{Y}h)=3\mathcal{K}_{g_{Y}}(\delta_{Y}\slashd h),\\
 \slashd(\tf (h))&=\slashd h.
\end{align}
On the other hand we have
\begin{align*}
\slashd^{3}h&=\slashd^{2}\slashd h=-4\Delta_{g_{Y}}\slashd h-2\stl{\circ}{\nabla^{2}}\tr_{Y}(\slashd h)+3\mathcal{K}_{g_{Y}}(\delta_{Y} \slashd h)+12 \kappa \cdot \slashd h\\
 &=-4\Delta_{g_{Y}}\slashd h+3\mathcal{K}_{g_{Y}}(\delta_{Y} \slashd h)+12 \kappa \cdot \slashd h,
 \end{align*}
and this proves the claim.
\end{proof}
For the next lemma we will use $\Delta_{H}$ to denote the Hodge-Laplacian 
on $\Lambda^{1}(T^{*}Y)$  which is given by 
\begin{align*}
\Delta_{H}\omega&=-d\delta_{Y}\omega-\delta_{Y}d\omega\\
&=d\tilde{*}d\tilde{*}\omega-\tilde{*}d\tilde{*}d\omega,
\end{align*} 
which is related to the rough Laplacian on $1$-forms by the Weitzenb\"ock 
formula
\begin{align}
\Delta_{g_Y} = - \Delta_H + 2 \kappa.
\end{align}
\begin{lemma}\label{commutator} The operator $\mathcal{K}_{g_{Y}}(\omega)$ satisfies 
\begin{align*}
\delta_{Y}\mathcal{K}_{g_{Y}}(\omega)&=\Delta_{g_{Y}}\omega+\frac{1}{3}\cdot d\delta_{Y}\omega +2 \kappa \omega\\
&=-\Delta_{H}\omega+\frac{1}{3}\cdot d\delta_{Y} \omega +4 \kappa \omega,
\end{align*}
and also 
\begin{align*}
\Delta_{g_{Y}}\mathcal{K}_{g_{Y}}(\omega)&=\mathcal{K}_{g_{Y}}\left(\left(\Delta_{g_{Y}}
+4 \kappa \right)\omega\right)\\
&=\mathcal{K}_{g_{Y}}\left(\left(-\Delta_{H}+6 \kappa \right)\omega\right).
\end{align*}
\end{lemma}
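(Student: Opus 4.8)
The plan is to prove both identities by direct computation in a local orthonormal frame, commuting covariant derivatives and exploiting that $g_Y$ has constant sectional curvature $\kappa$, exactly as in the proof of Proposition~\ref{deltaslashd}. For the first identity, I would start from the definition $\mathcal{K}_{g_{Y}}(\omega)_{ij} = \nabla_i \omega_j + \nabla_j \omega_i - \tfrac{2}{3}(\delta_Y \omega)(g_Y)_{ij}$, apply $\delta_Y = \nabla^i$, and collect terms: the term $\nabla^i \nabla_i \omega_j$ gives the rough Laplacian $\Delta_{g_Y}\omega$, the term $\nabla^i \nabla_j \omega_i$ needs one commutation of derivatives, producing $\nabla_j \delta_Y \omega$ plus a Ricci term $R_{jk}\omega^k = 2\kappa\,\omega_j$ (since $Ric(g_Y) = 2\kappa\, g_Y$ in dimension $3$), and the trace term contributes $-\tfrac{2}{3}\nabla_j \delta_Y \omega$. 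Combining, $\delta_Y \mathcal{K}_{g_Y}(\omega) = \Delta_{g_Y}\omega + \tfrac{1}{3} d\delta_Y \omega + 2\kappa\,\omega$, and the Weitzenböck relation $\Delta_{g_Y} = -\Delta_H + 2\kappa$ stated just above converts this to the second line.

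For the second identity, I would compute $\Delta_{g_Y}\bigl(\mathcal{K}_{g_Y}(\omega)\bigr)_{ij}$ by applying the rough Laplacian $\nabla^k\nabla_k$ to $\nabla_i\omega_j + \nabla_j\omega_i - \tfrac{2}{3}(\delta_Y\omega)(g_Y)_{ij}$. The pure-trace piece is immediate since $\nabla^k\nabla_k(\delta_Y\omega) = \Delta_{g_Y}(\delta_Y\omega) = \delta_Y(\Delta_{g_Y}\omega)$ — the Laplacian commutes with divergence on $1$-forms in constant curvature (or just note $\delta_Y$ commutes with $\Delta_H$ and use Weitzenböck). For the symmetrized piece I need to commute $\nabla^k\nabla_k$ past $\nabla_i$ applied to $\omega_j$; each commutation produces curvature terms of the schematic form $Rm * \nabla\omega$ and $(\nabla Rm) * \omega$, but in constant curvature $\nabla Rm = 0$ and $R_{iklj} = \kappa(g_{ij}g_{kl} - g_{il}g_{kj})$, so these collapse to explicit multiples of $\nabla_i\omega_j$, $\nabla_j\omega_i$, $(g_Y)_{ij}\delta_Y\omega$, and $\nabla_i\nabla_j(\mathrm{tr})$-type terms. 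The key bookkeeping point is that everything should reorganize into $\mathcal{K}_{g_Y}$ applied to $(\Delta_{g_Y} + 4\kappa)\omega$; the $4\kappa$ shift is precisely the curvature correction coming from commuting two derivatives past the symmetrized gradient. The Weitzenböck substitution then yields the $-\Delta_H + 6\kappa$ form.

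The main obstacle I anticipate is the second identity: keeping track of the several curvature terms that arise from commuting $\nabla^k\nabla_k$ past $\nabla_i$ and past $\nabla_j$ (there are contributions hitting the free index $i$ or $j$ and contributions hitting the contracted index), and verifying that after using $R_{iklj} = \kappa(g_{ij}g_{kl}-g_{il}g_{kj})$ and the symmetry in $i,j$ they assemble cleanly into $\mathcal{K}_{g_Y}$ of the shifted $1$-form rather than leaving a stray Hessian or divergence term. A cleaner alternative I would consider, to avoid the raw index computation, is the following: $\mathcal{K}_{g_Y}$ intertwines the action of the Laplacian up to a curvature shift essentially because $\mathcal{K}_{g_Y}$ is a first-order operator with constant-coefficient symbol relative to a parallel frame, and any such operator between bundles with parallel connections commutes with the rough Laplacian up to curvature terms that, in the constant-curvature setting, act as scalars on each bundle — so one only needs to identify the two scalars ($2\kappa$ on $1$-forms versus $2\kappa$-plus-correction on $S^2_0$) by a symbol/eigenvalue argument or by testing on a single well-chosen $\omega$. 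But unless that abstract route can be made rigorous quickly, I would fall back on the direct computation, which is routine if tedious.
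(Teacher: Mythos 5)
Your overall route is the right one, and in fact it is all the paper itself offers: the paper's ``proof'' of this lemma is a one-line appeal to a straightforward computation with a citation to the appendix of \cite{Streets}, so a direct frame computation with commutations in constant curvature is exactly the intended argument. Your treatment of the first identity is complete and correct: the single commutation gives the Ricci term $2\kappa\,\omega$, the trace part contributes $-\tfrac{2}{3}d\delta_Y\omega$, and Weitzenb\"ock converts to the $\Delta_H$ form.

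There is, however, one concrete error in your sketch of the second identity: the claim that $\Delta_{g_{Y}}(\delta_{Y}\omega)=\delta_{Y}(\Delta_{g_{Y}}\omega)$ on $1$-forms in constant curvature is false for $\kappa\neq 0$. The correct statement (which actually follows from your own parenthetical remark, carried out carefully) is
\begin{align*}
\delta_{Y}\left(\Delta_{g_{Y}}\omega\right)=\Delta_{g_{Y}}\left(\delta_{Y}\omega\right)+2\kappa\,\delta_{Y}\omega ,
\end{align*}
since $\delta_{Y}$ commutes with $\Delta_{H}$, but the Weitzenb\"ock correction is $2\kappa$ on $1$-forms and $0$ on functions. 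This matters for precisely the bookkeeping you flagged as the main obstacle: commuting the rough Laplacian past the symmetrized gradient yields
\begin{align*}
\Delta_{g_{Y}}\mathcal{L}_{g_Y}(\omega)=\mathcal{L}_{g_Y}\left(\Delta_{g_{Y}}\omega\right)+4\kappa\,\mathcal{L}_{g_Y}(\omega)-4\kappa\left(\delta_{Y}\omega\right)g_{Y},
\end{align*}
and the pure-trace term $-4\kappa(\delta_{Y}\omega)g_{Y}$ is cancelled only when the trace part of $\mathcal{K}_{g_{Y}}$ is handled with the $2\kappa$ correction above: the leftover trace coefficient is $\left(-4+\tfrac{4}{3}+\tfrac{8}{3}\right)\kappa\,\delta_{Y}\omega=0$. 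If you instead use your stated equality, you are left with a stray $-\tfrac{4}{3}\kappa(\delta_{Y}\omega)g_{Y}$ and the identity $\Delta_{g_{Y}}\mathcal{K}_{g_{Y}}(\omega)=\mathcal{K}_{g_{Y}}\left(\left(\Delta_{g_{Y}}+4\kappa\right)\omega\right)$ appears to fail by a pure-trace term. So the fix is local and easy, but as written the second half of your computation would not close up; with the corrected commutation it does, and the Weitzenb\"ock substitution then gives the $-\Delta_{H}+6\kappa$ form as you say.
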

\begin{proof} Both identities follow from straightforward computations, see for example  \cite[Appendix]{Streets}.
\end{proof}
\begin{corollary}\label{slashdconstant} If $h$ is a divergence-free 
eigentensor of $\Delta_{g_{Y}}$ with eigenvalue $-\lambda$ in $S_{0}^{2}(T^{*}Y)$ 
which  satisfies  $\Delta_{g_Y}(h)=-\lambda\cdot h$ then  
$\slashd h=\pm 2\sqrt{\lambda+3 \kappa}\cdot h$  and if $h$ has the form $h=\mathcal{K}_{g_{Y}}(\omega)$ with $\Delta_{H}\omega=\nu\omega$ and $\delta_{Y}\omega=0$  then 
$\slashd h=\pm \sqrt{\nu}\cdot h$. 
Both signs occur if $(Y^3,g_Y)$ admits an orientation-reversing isometry
(which is always true for $S^3$ or $\kappa = 0$). 
\end{corollary}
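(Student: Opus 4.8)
The plan is to split the proof into two independent cases according to the form of $h$, and then verify the claim about both signs occurring separately.

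\textbf{Case 1: $h$ is a divergence-free eigentensor with $\Delta_{g_Y} h = -\lambda h$.} Here $h$ is automatically trace-free (a divergence-free eigentensor in $S^2_0(T^*Y)$ is already stated to be trace-free) and $\delta_Y h = 0$, so Proposition \ref{squaredprop} collapses dramatically: the $\stl{\circ}{\nabla^2}\tr_Y(h)$ term vanishes since $\tr_Y(h) = 0$, the $\mathcal{K}_{g_Y}(\delta_Y h)$ term vanishes since $\delta_Y h = 0$, and we are left with $\slashd^2 h = -4\Delta_{g_Y} h + 12\kappa \cdot h = 4\lambda h + 12\kappa h = 4(\lambda + 3\kappa) h$. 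So $h$ is an eigentensor of $\slashd^2$ with eigenvalue $4(\lambda + 3\kappa)$. Now I would argue that since $\slashd$ is formally self-adjoint on $S^2_0(T^*Y)$ (Proposition \ref{aleph2}), and $\slashd$ commutes with $\Delta_{g_Y}$ (the Corollary just above), the finite-dimensional eigenspace $V_\lambda$ of $\Delta_{g_Y}$ restricted to trace-free divergence-free tensors is $\slashd$-invariant; decomposing $V_\lambda$ into eigenspaces of the self-adjoint operator $\slashd|_{V_\lambda}$, every eigenvalue $\mu$ satisfies $\mu^2 = 4(\lambda + 3\kappa)$, hence $\mu = \pm 2\sqrt{\lambda + 3\kappa}$. (One should also check $\lambda + 3\kappa \geq 0$ so the square root is real, which follows because $\slashd$ is a real self-adjoint operator so $\mu^2 \geq 0$; or cite a Lichnerowicz-type lower bound.) This shows $\slashd h = \pm 2\sqrt{\lambda + 3\kappa}\, h$ when $h$ is additionally chosen to be a $\slashd$-eigenvector, which one may always do after replacing $h$ by its projections onto the two $\slashd$-eigenspaces.

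\textbf{Case 2: $h = \mathcal{K}_{g_Y}(\omega)$ with $\Delta_H \omega = \nu\omega$ and $\delta_Y \omega = 0$.} Again $h \in S^2_0(T^*Y)$ and, since $\delta_Y \omega = 0$, the conformal Killing operator reduces to $\mathcal{K}_{g_Y}(\omega) = \mathcal{L}_{g_Y}(\omega)$ (the trace-correction term $\tfrac{2}{3}(\delta_Y\omega)g_Y$ drops). From Proposition \ref{slashdK}, $\slashd \mathcal{L}_{g_Y}(\omega) = \mathcal{K}_{g_Y}(\tilde{*}d\omega)$, so $\slashd h = \mathcal{K}_{g_Y}(\tilde{*}d\omega)$. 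Iterating, $\slashd^2 h = \slashd\mathcal{K}_{g_Y}(\tilde{*}d\omega)$. Here I need a small auxiliary fact: if $\delta_Y\omega = 0$ then $\tilde{*}d\omega$ is also co-closed (indeed $\delta_Y\tilde{*}d\omega = \pm\tilde{*}d\tilde{*}\tilde{*}d\omega = \pm\tilde{*}dd\omega = 0$ up to signs on the $3$-manifold), so I can apply the same identity again: $\slashd^2 h = \mathcal{K}_{g_Y}(\tilde{*}d\tilde{*}d\omega)$. Since $\delta_Y\omega = 0$, $\Delta_H\omega = -\delta_Y d\omega$, and on co-closed $1$-forms one has $\tilde{*}d\tilde{*}d\omega = -\delta_Y d\omega = \Delta_H\omega = \nu\omega$ (again modulo an overall sign convention that I'd pin down from \eqref{hodgeS3}). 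Hence $\slashd^2 h = \mathcal{K}_{g_Y}(\nu\omega) = \nu h$. By the same self-adjointness/invariance argument as in Case 1 — the eigenspace of $\Delta_H$ on co-closed $1$-forms is finite-dimensional and $\slashd$ acts self-adjointly on the corresponding space of $h$'s — we conclude $\slashd h = \pm\sqrt{\nu}\, h$ after projecting $h$ onto $\slashd$-eigenspaces. (This also needs $\nu \geq 0$, which holds since $\Delta_H$ is a nonnegative operator.)

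\textbf{Both signs occur.} If $(Y^3,g_Y)$ admits an orientation-reversing isometry $\phi$, then $\phi$ changes the sign of the volume form, hence of $\epsilon_{ijk}$ and therefore of $\slashd$: one has $\slashd(\phi^* h) = -\phi^*(\slashd h)$ for all $h$. Meanwhile $\phi^*$ commutes with $\Delta_{g_Y}$ and $\Delta_H$ and preserves the trace-free, divergence-free, and $\mathcal{K}_{g_Y}$-image conditions. So if $h$ is a $\slashd$-eigentensor with eigenvalue $+2\sqrt{\lambda+3\kappa}$ (resp. $+\sqrt{\nu}$) lying in a fixed $\Delta$-eigenspace, then $\phi^* h$ is a nonzero eigentensor in the same $\Delta$-eigenspace with eigenvalue $-2\sqrt{\lambda+3\kappa}$ (resp. $-\sqrt{\nu}$); thus whenever one sign is realized, so is the other. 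For $S^3$ (or any flat torus, $\kappa = 0$) such an isometry exists — e.g.\ a reflection — completing the claim. \emph{The main obstacle} I anticipate is bookkeeping the sign and self-duality conventions: verifying on a $3$-manifold that $\delta_Y\tilde{*}d\omega = 0$ and $\tilde{*}d\tilde{*}d\omega = \Delta_H\omega$ on co-closed forms with the paper's conventions \eqref{hodgeS3}, and confirming that the eigenvalue from $\slashd^2$ in Case 2 is exactly $\nu$ and not $\nu$ times an extra constant; a direct check using $\slashd\mathcal{L}_{g_Y}(\eta) = \mathcal{K}_{g_Y}(\tilde{*}d\eta)$ twice, as above, should settle it cleanly.
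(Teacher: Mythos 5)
Your argument is correct, and its overall skeleton (show $\slashd^2 h = c^2 h$, deduce the eigenvalues $\pm c$ using that $\slashd$ preserves the relevant finite-dimensional space and is formally self-adjoint, then get both signs from an orientation-reversing isometry changing the sign of $\slashd$) is the same as the paper's; in particular your Case 1 is exactly the paper's computation via Proposition \ref{squaredprop}, and your ``both signs'' argument is the paper's verbatim. Where you genuinely diverge is Case 2: the paper stays with Proposition \ref{squaredprop}, feeding in $\Delta_{g_Y}\mathcal{K}_{g_Y}(\omega)=(6\kappa-\nu)\mathcal{K}_{g_Y}(\omega)$ and $\delta_Y\mathcal{K}_{g_Y}(\omega)=(4\kappa-\nu)\omega$ from Lemma \ref{commutator} and watching the curvature terms cancel to leave $\slashd^2 h=\nu h$, whereas you iterate Proposition \ref{slashdK}: since $\delta_Y\omega=0$ gives $\mathcal{K}_{g_Y}(\omega)=\mathcal{L}_{g_Y}(\omega)$, you get $\slashd h=\mathcal{K}_{g_Y}(\tilde{*}d\omega)$, and because $\tilde{*}d\omega$ is again co-closed, a second application plus $(\tilde{*}d)^2\omega=\Delta_H\omega=\nu\omega$ yields $\slashd^2h=\nu h$. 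With the paper's conventions ($\delta_Y=-d^*$ on $1$-forms, $\Delta_H=dd^*+d^*d$) the signs you flagged do work out, and indeed the identity $(\tilde{*}d)^2=\Delta_H$ on co-closed forms is precisely the paper's Corollary \ref{stard}; your route has the advantage of exposing the exact mechanism (it even shows $\slashd\mathcal{K}_{g_Y}(\omega)=\mathcal{K}_{g_Y}(\tilde{*}d\omega)$, so combined with $\tilde{*}d\omega=\pm\sqrt{\nu}\,\omega$ one could skip the squaring altogether), while the paper's route treats Cases 1 and 2 uniformly through the single formula \eqref{squared}. Two small points you handled implicitly but should keep in mind: the $\slashd$-invariance of the spaces you diagonalize on uses Propositions \ref{aleph2} and \ref{deltaslashd} (preservation of trace-free and divergence-free) in Case 1, and in Case 2 the invariance of $\mathcal{K}_{g_Y}$ of the co-closed $\nu$-eigenspace follows from the very identity you derived; and, as you and the paper both implicitly do, the conclusion $\slashd h=\pm c\,h$ is to be read after splitting $h$ into its $\pm c$ components, since a general $\Delta$-eigentensor need not itself be a $\slashd$-eigentensor.
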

\begin{proof}
Note that in either of the above cases  
we must have $\slashd^{2} h=c^{2}\cdot h$  for some constant $c$. To see this, if $\Delta_{g_{Y}}h=-\lambda h$ and $\delta_{Y}h=0$ 
with $\lambda>0$ then by Proposition \ref{squaredprop} we have
\begin{align}
\slashd^{2}h=-4\cdot \Delta_{g_{Y}}h+12 \kappa \cdot h=(4\lambda+12 \kappa)\cdot h.
\end{align}
In this case $c^{2}=(4\lambda+12\kappa)$. For the second case, 
from Lemma \ref{commutator} we have the identities 
\begin{align}
\Delta_{g_{Y}}h=(6\kappa -\nu)\cdot\mathcal{K}_{g_{Y}}(\omega)
=(6 \kappa-\nu)\cdot h,
\end{align}
and 
\begin{align}
\delta_{g_{Y}}\mathcal{K}_{g_{Y}}(\omega)&=(4 \kappa-\nu)\cdot\mathcal{K}_{g_{Y}}(\omega)\\
&=(4\kappa-\nu)\cdot h.
\end{align}
Since we also have 
\begin{align}
\tr_{Y}(h)=0,
\end{align}
we easily obtain from Proposition \ref{squaredprop} 
\begin{align}
\slashd^{2}h=\nu \cdot h,
\end{align}
so in this case $c^{2}=\nu$. 

In both cases, observe that if we fix an eigenvalue $\upsilon$ of $\Delta_{g_{Y}}$ on 
$S^{2}_{0}(T^{*}Y)$ then the eigenspace 
$E_{\upsilon}=\{h\in S^{2}_{0}(T^{*}Y):\Delta_{g_{Y}}h=-\upsilon\cdot h\}$ 
is not necessarily $SO(3)$-irreducible, and decomposes 
into $E_{\upsilon} = A^{+}_{c} \oplus A^{-}_{c}$, where
$A^{\pm}_{c}=\{h\in S^{2}_{0}(T^{*}Y):\slashd h=\pm c\cdot h\}$ and 
are $SO(3)$-invariant.
To see this, given any eigentensor, writing the equation 
$\slashd^{2}h=c^{2}\cdot h$  as
\begin{align}
\left(\slashd+c\cdot I\right)\left(\slashd-c\cdot I\right)h=0,
\end{align}
we conclude that either $+c$ or $-c$ occurs as an eigenvalue of $\slashd$. 
If $Y^3$ admits an orientation-reversing isometry,
then since the operator $\slashd$ changes sign
under reversal of orientation, pulling an eigentensor back along an 
orientation-reversing isometry shows that both $A^{+}_{c}$ and $A^{-}_{c}$ 
are nontrivial and of the same dimension. 
\end{proof}

\begin{corollary}
\label{stard}
If $\omega$ is an eigenform of the Hodge Laplacian on $1$-forms 
with eigenvalue $\nu$ satisfying $\delta_{Y}\omega=0$ then 
$\tilde{*}d\omega=\pm \sqrt{\nu}\cdot\omega$. 
Both signs occur if $(Y^3,g_Y)$ admits an orientation-reversing isometry
(which is always true for $S^3$ or $\kappa = 0$). 
\end{corollary}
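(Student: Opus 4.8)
The plan is to follow verbatim the template of the second case in the proof of Corollary \ref{slashdconstant}, with the operator $\slashd$ on symmetric $2$-tensors replaced by $T := \tilde{*}\,d$ acting on $1$-forms on $Y^{3}$. First I would record the two elementary facts about the Hodge star in dimension three that drive the argument: $\tilde{*}^{2} = \mathrm{Id}$ (on both $1$- and $2$-forms), and on $2$-forms one has $\delta_{Y} = \tilde{*}\,d\,\tilde{*}$, so that the Weitzenb\"ock formula $\Delta_{H}\omega = -d\delta_{Y}\omega - \delta_{Y}d\omega$ can be rewritten, on a coclosed $1$-form, entirely in terms of $T$.

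Next I would show that $T$ preserves the space of coclosed $1$-forms and squares to a scalar on each Hodge eigenspace. For the first point, if $\delta_{Y}\omega = 0$ then
\begin{align*}
\delta_{Y}(T\omega) = -\tilde{*}\,d\,\tilde{*}(\tilde{*}\,d\omega) = -\tilde{*}\,d(d\omega) = 0 ,
\end{align*}
using $\tilde{*}^{2} = \mathrm{Id}$ and $d^{2} = 0$. For the second, if moreover $\Delta_{H}\omega = \nu\omega$, then $\delta_{Y}d\omega = \nu\omega$ (with $\nu \geq 0$ the non-negative Hodge eigenvalue, normalized as in Corollary \ref{slashdconstant}, since pairing with $\omega$ gives $\nu = \langle d\omega, d\omega\rangle / \langle \omega,\omega\rangle$ up to the sign convention for $\Delta_{H}$), and therefore
\begin{align*}
T^{2}\omega = \tilde{*}\,d\,\tilde{*}(d\omega) = \delta_{Y}(d\omega) = \nu\omega .
\end{align*}
Here I would also note that $T$ is formally self-adjoint on $1$-forms: integrating by parts and using that $\tilde{*}$ is an isometry with $\tilde{*}^{2}=\mathrm{Id}$, one gets $\int_{Y}\langle \tilde{*}\,d\omega, \eta\rangle\,dV = \int_{Y}\langle \omega, \tilde{*}\,d\eta\rangle\,dV$, which confirms $\nu \geq 0$ so that $\sqrt{\nu}$ is real. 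Factoring $(T - \sqrt{\nu})(T + \sqrt{\nu}) = T^{2} - \nu = 0$ on the $\nu$-eigenspace, and observing that $T$ commutes with $\Delta_{H}$ on coclosed forms so the eigenspace decomposes into the $\pm\sqrt{\nu}$-eigenspaces of $T$, we conclude that any coclosed $1$-form in it which is also a $T$-eigenform satisfies $\tilde{*}\,d\omega = \pm\sqrt{\nu}\,\omega$, exactly as in Corollary \ref{slashdconstant}.

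For the final assertion I would argue as in that corollary: if $\phi\colon Y^{3}\to Y^{3}$ is an orientation-reversing isometry, then $\phi^{*}$ commutes with $d$, $\delta_{Y}$ and $\Delta_{H}$ but anticommutes with $\tilde{*}$, since $\tilde{*}$ depends on the orientation; hence $\phi^{*}T = -T\phi^{*}$. Thus if $\omega$ is a coclosed eigenform with $\tilde{*}\,d\omega = \sqrt{\nu}\,\omega$, then $\phi^{*}\omega$ is a nonzero coclosed eigenform with the same Hodge eigenvalue and $\tilde{*}\,d(\phi^{*}\omega) = -\sqrt{\nu}\,\phi^{*}\omega$, so both signs are realized. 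Finally $S^{3}$ admits an orientation-reversing isometry (an ambient reflection), and every flat torus $\R^{3}/\Lambda$ admits one (the map $x\mapsto -x$), which gives the parenthetical statement.

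I do not expect a serious obstacle: step (ii) is a two-line index computation with the dimension-three Hodge star, and everything else is a direct transcription of the corresponding step for $\slashd$. The only point requiring care is keeping the Hodge-star and codifferential sign conventions consistent so that the eigenvalue $\nu$ is non-negative and $\sqrt{\nu}$ is real.
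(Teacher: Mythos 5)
Your proposal is correct and follows essentially the same route as the paper: square the operator $\tilde{*}d$ on a coclosed eigenform to get $(\tilde{*}d)^2\omega=\nu\omega$, then factor $(\tilde{*}d-\sqrt{\nu})(\tilde{*}d+\sqrt{\nu})$ and pull back by an orientation-reversing isometry (which anticommutes with $\tilde{*}$) to realize both signs, exactly as in the paper's appeal to the argument of Corollary \ref{slashdconstant}. The extra checks you include (coclosedness preserved, formal self-adjointness of $\tilde{*}d$, nonnegativity of $\nu$) are harmless refinements of the same argument.
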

\begin{proof}
Obviously, since $\delta_{Y}\omega=0$, then 
\begin{align}
(\tilde{*}d)^2 \omega = \tilde{*}d\tilde{*}d = -\delta_{Y} d \omega = \Delta_H \omega = 
\nu \cdot \omega.
\end{align}
Using a similar argument as in Corollary \ref{slashdconstant},
we conclude that $\tilde{*}d\omega=\pm \sqrt{\nu}\cdot\omega$, 
with both signs occurring 
on $S^3$.  
\end{proof}

\section{The adjoint of $\mathcal{D}$}\label{adjointeq}
The adjoint operator will map from 
\begin{align}
\mathcal{D}^* : S^2_0 ( \Lambda^2_-) \rightarrow S^2_0 (T^*M),
\end{align}
and using the decompositions in Subsection \ref{wey} we will think of this as 
\begin{align}\label{adjtdecomp}
\mathcal{D}^* :S^2_0 ( T^* Y) \rightarrow 
 S^2 ( \nu) \oplus ( \nu \odot T^*(Y)) \oplus S^2 (T^* Y).
\end{align}

\begin{proposition} The adjoint operator is given by 
\begin{align}
\mathcal{D}^{*}Z&= \bigg\{   -\frac{1}{2} \delta_{Y}^2 Z, \frac{1}{2} \delta_{Y} \dot{Z} 
+ \frac{1}{2}\delta_{H} \tilde{*} \delta_{Y} Z , - \frac{1}{2} \ddot{Z} - \kappa Z 
- \frac{1}{2} \slashd \dot{Z}
+ \frac{1}{2} \Delta_{g_{Y}} Z\nonumber\\
& - \frac{1}{2} \mathcal{L}_{g_Y}(\delta_{Y} Z) +\frac{1}{2}(\delta_{Y}^{2}Z)g_{Y} \bigg\}.\label{adjoint}
\end{align}
Where $\delta_{H}$ is the Hodge divergence on forms 
given by $\delta_{H}=d^{*}$.
\end{proposition}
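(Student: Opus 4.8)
The plan is to read $\mathcal{D}^*$ directly off the explicit formula \eqref{linweyldelta} for $\mathcal{D}=(W^{-})'$ in Theorem \ref{mainprop} by integrating by parts. Fix an arbitrary compactly supported $Z\in S^2_0(T^*Y)\cong S^2_0(\Lambda^2_-)$ and an arbitrary variation $\tilde h=\{h_{00},\alpha,h\}$, expand the $L^2(\R\times Y)$ pairing $\langle \mathcal{D}\tilde h,Z\rangle$ using \eqref{linweyldelta}, and move every derivative (both $\partial_t$ and the covariant derivatives on $Y$) off of $\tilde h$ onto $Z$; the boundary terms in $t$ vanish by compact support. Matching the result against $\langle \tilde h,\mathcal{D}^*Z\rangle$ and reading off the coefficients of $h_{00}$, $\alpha$, and $h$ then produces the three slots of \eqref{adjoint}. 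The point is that \eqref{linweyldelta} is already arranged so that each term is built from operators with elementary formal adjoints.

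The auxiliary adjoint identities I would invoke are: (i) on traceless $Z$, $\langle \mathcal{K}_{g_Y}\eta,Z\rangle_Y=-2\langle \eta,\delta_Y Z\rangle_Y$ (integrate $\langle \mathcal{L}_{g_Y}\eta,Z\rangle_Y$ by parts; the pure-trace correction in $\mathcal{K}_{g_Y}$ pairs to zero with $Z$); (ii) $\slashd$ is formally self-adjoint (the integration by parts in Proposition \ref{aleph2} does not use tracelessness), it commutes with $\partial_t$, and it kills pure-trace tensors by \eqref{slashconformal}; (iii) the formal adjoint of $\delta_Y:S^2(T^*Y)\to\Lambda^1(T^*Y)$ is $-\tfrac12\mathcal{L}_{g_Y}$; (iv) on functions $d^*=-\delta_Y$; (v) since $\tilde{*}$ is an isometry with $\tilde{*}\tilde{*}=\mathrm{Id}$ on $\Lambda^1(T^*Y)$ in dimension three, $\langle \tilde{*}d\alpha,\psi\rangle_Y=\langle d\alpha,\tilde{*}\psi\rangle_Y=\langle \alpha,\delta_H\tilde{*}\psi\rangle_Y$. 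Applying (i) first converts the $\mathcal{K}_{g_Y}(\cdots)$ term of \eqref{linweyldelta} into $-\big\langle -\tfrac12 dh_{00}-\delta_Y h+\dot\alpha-\tilde{*}d\alpha+\tfrac12 d\,\mathrm{tr}_Y(h),\ \delta_Y Z\big\rangle$, and then (ii)--(v) distribute each summand into the correct slot: $dh_{00}$ gives $-\tfrac12\delta_Y^2 Z$ in the $h_{00}$-slot and $d\,\mathrm{tr}_Y(h)$ gives $\tfrac12(\delta_Y^2 Z)g_Y$ in the $h$-slot; $\delta_Y h$ gives $-\tfrac12\mathcal{L}_{g_Y}(\delta_Y Z)$ in the $h$-slot; $\dot\alpha$ and $\tilde{*}d\alpha$ give (before the normalization correction below) $\delta_Y\dot Z$ and $\delta_H\tilde{*}\delta_Y Z$ in the $\alpha$-slot. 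The remaining terms $-\tfrac12\tf(\ddot h)-\kappa\,\tf(h)+\tfrac12\slashd\dot h+\tfrac12\Delta_{g_Y}\tf(h)$ involve only $h$; since $Z$ is traceless one drops all the $\tf$'s, and integrating by parts in $t$ together with self-adjointness of $\slashd$ and of $\Delta_{g_Y}$ yields exactly $-\tfrac12\ddot Z-\kappa Z-\tfrac12\slashd\dot Z+\tfrac12\Delta_{g_Y}Z$ in the $h$-slot, reproducing \eqref{adjoint}.

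The one genuine subtlety — and the step I would be most careful about — is the normalization of the $L^2$ inner product in the middle summand $dt\odot T^*Y$ of \eqref{s2r4}. Because $\alpha\odot dt=\alpha\otimes dt+dt\otimes\alpha$, one has $\langle \alpha\odot dt,\alpha'\odot dt\rangle_{S^2(T^*M)}=2\langle \alpha,\alpha'\rangle_Y$, so when writing $\langle \mathcal{D}\tilde h,Z\rangle=h_{00}\,(\mathcal{D}^*Z)_{0}+2\langle \alpha,(\mathcal{D}^*Z)_{\alpha}\rangle_Y+\langle h,(\mathcal{D}^*Z)_{h}\rangle_Y$ the coefficient extracted from the $\alpha$-pairing must be halved; this is precisely the source of the factors $\tfrac12$ in front of $\delta_Y\dot Z$ and $\delta_H\tilde{*}\delta_Y Z$ in \eqref{adjoint}. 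Apart from tracking this factor of $2$ and the sign conventions relating $d^*$ to $\delta_Y$ and the Hodge-star identities on $Y^3$, the computation is routine integration by parts.
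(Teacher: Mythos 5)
Your proposal is correct and follows essentially the same route as the paper: pairing $\langle \mathcal{D}\tilde h, Z\rangle$ using the formula \eqref{linweyldelta} of Theorem \ref{mainprop}, integrating by parts in $t$ and on $Y$, using that $Z$ is traceless and that $\slashd$ is formally self-adjoint, and reading off the three slots. In particular, the normalization point you flag—that $\langle \alpha\odot dt,\xi\odot dt\rangle = 2\langle\alpha,\xi\rangle_Y$, which produces the factors $\tfrac12$ in the middle slot of \eqref{adjoint}—is exactly the subtlety the paper's proof records.
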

\begin{proof} 
Let $Z\in S^{2}_{0}(T^{*}Y)$, from the decomposition \eqref{s2r4} we can see $ S^{2}_{0}(T^{*}Y)$ as embedded in $S^{2}(T^{*}M)$, so taking $\langle,\rangle$ to be the inner product induced by the cylindrical metric $g$ on $S^{2}(T^{*}M)$  we observe that  since $Z$ is traceless with respect to $g_{Y}$ we have for any $\tilde{h}=h_{00}dt\otimes dt+ dt \odot \alpha +h$,
\begin{align}
\langle \mathcal{D}\tilde{h},Z\rangle=\left\langle\frac{1}{2}
\mathcal{L}_{g_Y}\left(-\frac{1}{2}dh_{00}-\delta_{Y} h+\dot{\alpha}-\tilde{*}d\alpha+\frac{1}{2}d\tr_{Y}(h)\right), Z\right\rangle\nonumber\\
+\left\langle -\frac{1}{2}\ddot{h}-\kappa h+\frac{1}{2}\slashd\dot{h}, Z\right\rangle
+\left\langle\frac{1}{2}\Delta_{g_{Y}} h ,Z\right\rangle.
\end{align}
Formal integration by parts then yields 
\begin{align*}
\int_{0}^{\infty}\int_{Y}\langle \mathcal{D}\tilde{h},Z\rangle dtdV_{g_{Y}}
&=-\frac{1}{2}\int_{0}^{\infty}\left( \left(h_{00} \cdot \delta^{2}_{Y}Z\right)+\langle\alpha,\delta_{Y}\dot{Z}+\delta_{H}\tilde{*}\delta_{Y}Z\rangle \right)dtdV_{g_{Y}}\\
&+\int_{0}^{\infty}\int_{Y} \Big\langle h,\frac{1}{2}\left(\delta^{2}_{Y}Z\right)g_{Y} \Big\rangle dtdV_{g_{Y}}\\
&-\int_{0}^{\infty}\int_{Y} \left( \Big\langle h,\frac{1}{2}\mathcal{L}_{g_Y}(\delta_{Y}Z)-\frac{1}{2}\ddot{Z}-\kappa Z \Big\rangle+\frac{1}{2}\langle \slashd\dot{h},Z\rangle \right)dtdV_{g_{Y}}.
\end{align*}
Note that by the inner product
\begin{align*}
\langle\alpha,\delta_{Y}\dot{Z}+\delta_{H}\tilde{*}\delta_{Y}Z\rangle,
\end{align*}
we mean the usual inner product on 1-forms, however, using the decomposition in~\eqref{adjtdecomp}, we identify a 1-form $\xi$ with the tensor
\begin{align*}
\{0,\xi,0\}=\xi\otimes dt+dt\otimes \xi,
\end{align*}
so we obtain
\begin{align*}
\langle\alpha,\delta_{Y}\dot{Z}+\delta_{H}\tilde{*}\delta_{Y}Z\rangle=\frac{1}{2}\langle\{0,\alpha,0\},\{0,\delta_{Y}\dot{Z}+\delta_{H}\tilde{*}\delta_{Y}Z,0\}\rangle.
\end{align*}
Finally, the proposition follows using that $\slashd $ is formally self-adjoint. 
\end{proof}

\begin{proposition}
\label{kiprop}
We have the decompositions
\begin{align}\label{s2ts3}
S^2_0( T^* Y) =  Ker( \delta_{Y}) \oplus Im ( \mathcal{K}_{g_{Y}}), 
\end{align}
and
\begin{align}
\label{hodge2}
\Lambda^{1}(T^{*}Y) = Im(d) \oplus Ker(d^*).
\end{align}
\end{proposition}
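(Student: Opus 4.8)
The plan is to prove the two orthogonal decompositions in Proposition \ref{kiprop} by the standard elliptic splitting argument on a compact manifold, treating each statement separately.

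\textbf{The first decomposition.} I would first show that $\mathrm{Im}(\mathcal{K}_{g_Y})$ and $\mathrm{Ker}(\delta_Y)$ are $L^2$-orthogonal inside $S^2_0(T^*Y)$: if $\delta_Y B = 0$ and $C = \mathcal{K}_{g_Y}\omega$, then $\int_Y \langle B, \mathcal{K}_{g_Y}\omega\rangle\, dV_{g_Y} = \int_Y \langle \mathcal{K}_{g_Y}^* B, \omega\rangle\, dV_{g_Y}$, and since $\mathcal{K}_{g_Y}^* B$ is a constant multiple of $\delta_Y B$ (the formal adjoint of the conformal Killing operator is, up to scale, the divergence on traceless tensors), this vanishes. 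For the spanning statement I would invoke the fact that $\mathcal{K}_{g_Y}$ is an overdetermined elliptic operator of finite-dimensional kernel (the conformal Killing fields) on the compact manifold $Y$, so $\mathcal{K}_{g_Y}^*\mathcal{K}_{g_Y}$ is a self-adjoint elliptic operator on $\Lambda^1(T^*Y)$ with closed range; standard Hodge-theory/Fredholm arguments give $\Lambda^1(T^*Y) = \mathrm{Ker}(\mathcal{K}_{g_Y}) \oplus \mathrm{Im}(\mathcal{K}_{g_Y}^*)$, hence $\mathrm{Im}(\mathcal{K}_{g_Y}^*\mathcal{K}_{g_Y}) = \mathrm{Im}(\mathcal{K}_{g_Y}^*)$ restricted appropriately, and then $S^2_0(T^*Y) = \mathrm{Im}(\mathcal{K}_{g_Y}) \oplus \mathrm{Ker}(\mathcal{K}_{g_Y}^*)$. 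Finally, one identifies $\mathrm{Ker}(\mathcal{K}_{g_Y}^*) = \mathrm{Ker}(\delta_Y)$ on traceless symmetric $2$-tensors, using the explicit formula $\mathcal{K}_{g_Y}^* = -2\delta_Y$ on $S^2_0$.

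\textbf{The second decomposition.} This is simply the standard Hodge decomposition on $1$-forms restricted to its first-order pieces: for a compact oriented Riemannian $3$-manifold, $\Lambda^1(T^*Y) = \mathrm{Im}(d) \oplus \mathrm{Im}(\delta_H) \oplus \mathcal{H}^1$, where $\mathcal{H}^1$ is the space of harmonic $1$-forms; grouping gives $\Lambda^1(T^*Y) = \mathrm{Im}(d) \oplus (\mathrm{Im}(\delta_H) \oplus \mathcal{H}^1) = \mathrm{Im}(d) \oplus \mathrm{Ker}(d^*)$, using that $\mathrm{Ker}(d^*) = \mathrm{Im}(\delta_H) \oplus \mathcal{H}^1$ and that $\mathrm{Im}(d) \perp \mathrm{Ker}(d^*)$ by integration by parts. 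Orthogonality here is immediate: $\int_Y \langle d f, \beta \rangle = \int_Y \langle f, d^*\beta\rangle = 0$ when $d^*\beta = 0$.

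\textbf{Main obstacle.} There is no serious obstacle: both parts are routine consequences of elliptic/Hodge theory on a compact manifold. The only point requiring a little care is the spanning (surjectivity-onto-the-complement) claim in the first decomposition — one must check that $\mathrm{Im}(\mathcal{K}_{g_Y})$ is closed and that its orthogonal complement in $S^2_0(T^*Y)$ is exactly $\mathrm{Ker}(\delta_Y)$ rather than something larger; this follows from ellipticity of $\mathcal{K}_{g_Y}^*\mathcal{K}_{g_Y}$ together with the identification $\mathcal{K}_{g_Y}^* = -2\delta_Y$ on traceless tensors, which is a short computation. I expect the authors' proof to simply cite this as standard (e.g. Berger–Ebin or Besse).
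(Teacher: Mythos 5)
Your proposal is correct and follows essentially the same route as the paper: the first splitting comes from the adjoint identification $\mathcal{K}_{g_Y}^* = -2\delta_Y$ on traceless tensors together with the standard Fredholm (Berger--Ebin type) splitting for an overdetermined elliptic operator, and the second is just the Hodge decomposition with $\mathcal{H}^1$ and $d^*\Lambda^2$ absorbed into $\mathrm{Ker}(d^*)$. The paper simply states these steps more tersely, citing ``standard Fredholm theory'' where you spell out the closed-range and orthogonality details.
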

\begin{proof}
Since $\delta_{Y}$ is the formal adjoint of $-\frac{1}{2}\mathcal{K}_{g_{Y}}$, 
\eqref{s2ts3} follows from standard Fredholm theory. 
The Hodge decomposition theorem says that 
\begin{align}
\label{hodge1}
\Lambda^{1}(T^{*}Y)=\mathcal{H}^{1}(T^{*}Y)\oplus \left(d\Lambda^{0}(T^{*}Y)\right)\oplus\left(d^{*}\Lambda^{2}(T^{*}Y)\right),
\end{align}
where $\mathcal{H}^{1}(T^{*}Y)$ is the space of harmonic 1-forms in $\Lambda^{1}(T^{*}Y)$,
and \eqref{hodge2} follows easily from this
since $\mathcal{H}^{1}(T^{*}Y)$ and $d^{*}\Lambda^{2}(T^{*}Y)$ are both 
contained in $Ker(d^*)$.
\end{proof}
Using this decomposition we obtain: 
\begin{corollary}\label{3types} Any time dependent  $Z\in S^{2}_{0}(T^{*}Y)$ can be written uniquely as an infinite linear combination of elements of three types, namely  
\begin{enumerate}
\item Elements of type {\em{I}}:
\begin{align}\label{typeI}
f(t)\cdot \mathcal{K}_{g_{Y}}(d\phi),
\end{align}
where $\phi$ is an eigenfunction of $\Delta_{H}$ on $\Lambda^{0}\left(T^{*}Y\right)$,
\item Elements of type {\emph{II}}:
\begin{align}\label{typeII}
f(t)\cdot \mathcal{K}_{g_{Y}}(\omega),
\end{align}
where $\omega$ is an eigenform of $\Delta_{H}$ on $\Lambda^{1}\left(T^{*}Y\right)$ satisfying  $\delta_{Y}\omega=0$,
\item Elements of type {\em{III}}:
\begin{align}\label{typeIII}
f(t)\cdot B,
\end{align}
where $B$ is an eigentensor of $\Delta_{g_{Y}}$ on $S^{2}_{0}(T^{*}Y)$ satisfying $\delta_{Y}B=0$. 
\end{enumerate}
In all of the three above cases $f(t)$ denotes a real-valued function.
\end{corollary}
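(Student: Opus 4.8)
The plan is to reduce the statement to the spectral theorem for Laplace-type operators on the compact cross-section $Y^{3}$, treating $Z$ as a $t$-family of elements of $S^{2}_{0}(T^{*}Y)$ and feeding it through the two orthogonal decompositions of Proposition \ref{kiprop}. First I would fix $t$ and split $Z(t)=Z_{\delta}(t)+Z_{K}(t)$ according to \eqref{s2ts3}, with $Z_{\delta}(t)\in\mathrm{Ker}(\delta_{Y})$ and $Z_{K}(t)\in\mathrm{Im}(\mathcal{K}_{g_{Y}})$. Since the $L^{2}$-orthogonal projections onto these summands are $t$-independent, $Z_{\delta}$ and $Z_{K}$ are as regular as $Z$, and all the functions $f(t)$ appearing below will be $L^{2}(Y)$ inner products of these pieces against fixed basis tensors, hence equally regular. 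The terms of type III will come from $Z_{\delta}$ and those of types I and II from $Z_{K}$.

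For $Z_{\delta}$: by Lemma \ref{commutator} one has $\Delta_{g_{Y}}\mathcal{K}_{g_{Y}}(\omega)=\mathcal{K}_{g_{Y}}\big((\Delta_{g_{Y}}+4\kappa)\omega\big)$, so $\mathrm{Im}(\mathcal{K}_{g_{Y}})$ is invariant under the self-adjoint elliptic operator $\Delta_{g_{Y}}$ on $S^{2}_{0}(T^{*}Y)$, and therefore so is its orthogonal complement $\mathrm{Ker}(\delta_{Y})$; hence $\mathrm{Ker}(\delta_{Y})$ admits an $L^{2}$-orthonormal basis of divergence-free eigentensors $B$ of $\Delta_{g_{Y}}$, and expanding $Z_{\delta}(t)$ in it gives a convergent sum of terms $f(t)\cdot B$ as in \eqref{typeIII}. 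For $Z_{K}$: I would write $Z_{K}(t)=\mathcal{K}_{g_{Y}}(\omega(t))$ and apply the Hodge decomposition \eqref{hodge2}, $\omega(t)=d\phi(t)+\eta(t)$ with $\delta_{Y}\eta(t)=0$, so that $Z_{K}(t)=\mathcal{K}_{g_{Y}}(d\phi(t))+\mathcal{K}_{g_{Y}}(\eta(t))$. Expanding $\phi(t)$ in eigenfunctions of $\Delta_{H}$ on $\Lambda^{0}$ produces type I terms \eqref{typeI}; expanding $\eta(t)$ in co-closed eigenforms of $\Delta_{H}$ on $\Lambda^{1}$ (the harmonic $1$-forms occurring as the eigenvalue-$0$ case) produces type II terms \eqref{typeII}; and any basis element annihilated by $\mathcal{K}_{g_{Y}}$ is simply discarded.

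It remains to establish uniqueness, which is the only delicate point, precisely because $\mathcal{K}_{g_{Y}}$ is not injective. Type III pieces are $L^{2}$-orthogonal to types I and II by the splitting \eqref{s2ts3}. To separate types I and II, I would use $\mathcal{K}_{g_{Y}}^{*}=-2\delta_{Y}$ on traceless symmetric $2$-tensors together with Lemma \ref{commutator} to compute, for any function $\phi$ and any $1$-form $\omega$ with $\delta_{Y}\omega=0$,
\begin{align*}
\big\langle \mathcal{K}_{g_{Y}}(d\phi),\, \mathcal{K}_{g_{Y}}(\omega)\big\rangle
&= -2\big\langle d\phi,\, \Delta_{g_{Y}}\omega+\tfrac{1}{3}d\delta_{Y}\omega+2\kappa\omega\big\rangle \\
&= -2\big\langle \phi,\, \delta_{Y}\big(\Delta_{g_{Y}}\omega+2\kappa\omega\big)\big\rangle = 0 ,
\end{align*}
the last equality because $\delta_{Y}\omega=0$ forces $\delta_{Y}\Delta_{g_{Y}}\omega=0$ via the Weitzenb\"ock identity $\Delta_{g_{Y}}=-\Delta_{H}+2\kappa$ on $1$-forms together with $\delta_{Y}\Delta_{H}=\Delta_{H}\delta_{Y}$. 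The same identity shows that eigenpieces of distinct eigenvalue within any one of the three families are mutually orthogonal (within a single eigenspace one fixes an orthonormal basis). Hence the three families are pairwise $L^{2}$-orthogonal, the expansion coefficients are the inner products against the chosen orthonormal bases, and uniqueness follows.

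The main obstacle is therefore not the existence of the expansion, which is immediate from elliptic spectral theory on $Y^{3}$ once \eqref{s2ts3} and \eqref{hodge2} are in hand, but rather checking that the Hodge splitting of $\omega$ survives passage through the non-injective operator $\mathcal{K}_{g_{Y}}$ in an $L^{2}$-orthogonal way -- exactly the computation displayed above -- and keeping track of which $\mathcal{K}_{g_{Y}}$-images degenerate (for instance the gradients of lowest eigenfunctions on $S^3$, which are conformal Killing).
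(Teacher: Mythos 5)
Your argument is correct and is essentially the intended one: the paper derives Corollary \ref{3types} directly from the splittings \eqref{s2ts3} and \eqref{hodge2} together with eigen-expansions on the compact cross-section, which is exactly what you carry out (your extra checks, e.g.\ invariance of $\mathrm{Ker}(\delta_{Y})$ under $\Delta_{g_{Y}}$ and the $L^{2}$-orthogonality of $\mathcal{K}_{g_{Y}}(d\phi)$ against $\mathcal{K}_{g_{Y}}(\omega)$ via $\mathcal{K}_{g_{Y}}^{*}=-2\delta_{Y}$ and Lemma \ref{commutator}, simply make explicit what the paper leaves implicit). The only blemish is an immaterial sign in the integration by parts $\langle d\phi,\xi\rangle=-\langle\phi,\delta_{Y}\xi\rangle$, which does not affect the vanishing you need.
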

From Propositions \ref{divslashd}, \ref{commutator}, and \ref{slashdconstant}, we observe that the image of $\mathcal{D}^{*}$ on an element of type I  has the form
\begin{align}
\mathcal{D}^{*}\left(f(t)\cdot\mathcal{K}_{g_{Y}}(d\phi)\right)=
a_{1}(t)\cdot\phi dt\otimes dt +a_{2}(t)\cdot d\phi\odot dt\nonumber\\
+a_{3}(t)\cdot\mathcal{K}_{g_{Y}}(d\phi)+a_{4}(t) \cdot \phi g_{Y},\label{imtypeI}
\end{align}
where each coefficient $a_{i}$ depends on $f$ and the eigenvalue of $\Delta_{H}$ corresponding to $\phi$. 
On elements of type II  the image of $\mathcal{D}^{*}$ is 
\begin{align}\label{imtypeII}
\mathcal{D}^{*}\left(f(t)\cdot\mathcal{K}_{g_{Y}}(\omega)\right)=b_{1}(t)\cdot\omega\odot dt+b_{2}(t)\cdot\mathcal{K}_{g_{Y}}(\omega),
\end{align} 
where each $b_{i}$ depends on $f$ and the eigenvalue of $\Delta_{H}$ on divergence-free 1-forms corresponding to $\eta$. Finally, on elements of type III we have
\begin{align}\label{imtypeIII}
\mathcal{D}^{*}\left(f(t)\cdot B \right)=\tilde{f}(t)\cdot B,
\end{align} 
where $\tilde{f}$ is determined by $f$ and the eigenvalue of $\Delta_{g_{Y}}$ corresponding to $B$.  
In a similar way to Corollary \ref{3types} one can prove that all elements in $S^{2}_{0}(T^{*}M)$ can be written uniquely as an infinite sum of elements as in the
right hand sides of \eqref{imtypeI}, \eqref{imtypeII} and \eqref{imtypeIII}, so it follows that in order to find the general solution of $\mathcal{D}^{*}Z=0$ it suffices to consider solutions $Z$ of types I, II and III separately. For example, if  $Z$ has the form \eqref{typeI} then writing $\mathcal{D}^{*}Z$ as in \eqref{imtypeI} one sees that in order to obtain $\mathcal{D}^{*}Z=0$ one must solve for $f$ in \eqref{typeI} so that in \eqref{imtypeII} one has  $a_{1}=a_{2}=a_{3}=a_{4}=0$ and in general this amounts to solving an ordinary differential equation on $f$.  We start by considering solutions of type III. For that purpose we use the following: 
\begin{lemma}\label{eigIII}
If $\lambda$ is an eigenvalue of $-\Delta_{g_{Y}}$
on divergence-free sections of $S^{2}_{0}(T^{*}Y)$, then 
\begin{enumerate}
\item[(a)] if $\kappa=1$,  $\lambda\ge 6$,
\item[(b)] if $\kappa=-1$, $\lambda\ge 3$ with equality achieved only for nontrivial Codazzi tensors $h \in S^{2}_{0}(T^{*}Y)$, that is, $d^{\nabla}h = 0$.  
\item[(c)] if $\kappa=0$, $\lambda\ge 0$ with equality for parallel sections in $S^{2}_{0}(T^{*}Y)$.
\end{enumerate}
\end{lemma}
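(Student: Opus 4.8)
The plan is to reduce to a single transverse--traceless (TT) eigentensor and then exploit the square formula for $\slashd$ together with a companion Bochner identity for $d^{\nabla}$. Since $-\Delta_{g_{Y}}$ is self-adjoint and elliptic on divergence-free sections of $S^{2}_{0}(T^{*}Y)$, it suffices to take $h\in S^{2}_{0}(T^{*}Y)$ with $\delta_{Y}h=0$, $\tr_{Y}(h)=0$, $\Delta_{g_{Y}}h=-\lambda h$, and $\|h\|_{L^{2}}=1$.

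The key input is Proposition \ref{squaredprop}: for such a TT tensor all the trace and divergence terms drop out, leaving $\slashd^{2}h=(4\lambda+12\kappa)\,h$. Since $\slashd$ is formally self-adjoint on $S^{2}_{0}(T^{*}Y)$ (Proposition \ref{aleph2}), pairing with $h$ gives $0\le\|\slashd h\|_{L^{2}}^{2}=4\lambda+12\kappa$, hence $\lambda\ge-3\kappa$. For $\kappa=0$ this is the (trivial) bound $\lambda\ge 0$; equality forces $\|\nabla h\|_{L^{2}}^{2}=\lambda=0$, i.e. $h$ is parallel, which is exactly (c). For $\kappa=-1$ it gives $\lambda\ge 3$, with equality precisely when $\slashd h=0$.

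To identify the equality case in (b) I would use the companion identity $\|d^{\nabla}h\|_{L^{2}}^{2}=\tfrac12\|\slashd h\|_{L^{2}}^{2}$ for TT tensors in dimension $3$. The cleanest way to see it is to note that on a $3$-manifold the Hodge star on the first two slots identifies $d^{\nabla}h$ with a $2$-tensor $S$ whose skew part corresponds to the $1$-form $\delta_{Y}h-d(\tr_{Y}h)$ and whose trace vanishes automatically by symmetry of $h$; for TT $h$ both of these vanish, so $S$ is symmetric traceless and equal to $\tfrac12\slashd h$, while $|d^{\nabla}h|^{2}=2|S|^{2}$ pointwise. (Alternatively one gets the same relation by the standard integration by parts $\|d^{\nabla}h\|_{L^{2}}^{2}=2\|\nabla h\|_{L^{2}}^{2}-2\int_{Y}\sum_{k,l,j}\nabla_{k}h_{lj}\,\nabla_{l}h_{kj}=(2\lambda+6\kappa)\|h\|_{L^{2}}^{2}$, obtained by commuting covariant derivatives and using constant curvature together with $\delta_{Y}h=\tr_{Y}h=0$, and then comparing with $\|\slashd h\|_{L^{2}}^{2}=(4\lambda+12\kappa)\|h\|_{L^{2}}^{2}$.) In either case $\slashd h=0\iff d^{\nabla}h=0$, so $\lambda=3$ occurs exactly on nontrivial traceless Codazzi tensors; conversely, for such a tensor $2\lambda+6\kappa=0$ forces $\lambda=3$. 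This is (b).

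The genuinely harder case is (a), $\kappa=+1$, where the argument above only produces the useless bound $\lambda\ge-3$: the curvature term in $\slashd^{2}$ has the wrong sign to help, and no first-order operator on $S^{2}_{0}(T^{*}Y)$ detects the gap by itself. The sharp bound $\lambda\ge 6$ is a feature of the round geometry, and I would deduce it from the $SO(4)$-equivariant decomposition of $L^{2}(S^{2}_{0}(T^{*}S^{3}))$ into rough-Laplacian eigenspaces carried out in the spherical analysis (cf. Theorem \ref{s3thm2}), where one checks directly that the smallest eigenvalue occurring on TT tensors is $6$ (equivalently, this is the classical computation of the Lichnerowicz-Laplacian spectrum on $S^{n}$, which for $n=3$ gives rough-Laplacian eigenvalues $k(k+2)-2$, $k\ge 2$, on TT tensors). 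Since TT eigentensors on $S^{3}/\Gamma$ form a subspace of the $\Gamma$-invariant TT eigentensors on $S^{3}$, the bound descends to every spherical space form. The main obstacle is exactly this harmonic analysis on $S^{3}$; once it is in hand, Proposition \ref{squaredprop} and the TT reduction dispatch the remaining cases with no further work.
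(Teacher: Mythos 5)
Your parts (b) and (c) are fine: for a TT eigentensor Proposition \ref{squaredprop} does give $\|\slashd h\|_{L^2}^2=(4\lambda+12\kappa)\|h\|_{L^2}^2$, and your pointwise identification of the Hodge-dualized $d^{\nabla}h$ with a symmetric traceless tensor equal to $\tfrac12\slashd h$ (so $|d^{\nabla}h|^2=\tfrac12|\slashd h|^2$ for TT $h$) correctly pins the equality case in (b) to Codazzi tensors. This is the same Bochner-type computation the paper uses for (b), merely routed through $\slashd^{2}$ instead of integrating $|d^{\nabla}h|^{2}\ge 0$ directly.

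The genuine gap is in (a). Your claim that the bound $\lambda\ge 6$ cannot be seen by a first-order operator and must come from harmonic analysis on $S^{3}$ is incorrect, and what you propose in its place is not a proof: invoking Theorem \ref{s3thm2} is circular, since Case (2) of that theorem rests on Proposition \ref{adjtype3} and hence on Lemma \ref{eigIII} itself, while invoking ``the classical Lichnerowicz spectrum on $S^{n}$'' simply outsources the only nontrivial point of (a) to an external computation. The paper's argument (following Koiso) is again a first-order Bochner inequality, but with the fully symmetrized covariant derivative rather than $\slashd$ or $d^{\nabla}$: for a unit-norm TT eigentensor on a space form of curvature $\kappa$,
\begin{align*}
0\le \int_{Y} \big| \nabla_{i}h_{jk}+\nabla_{j}h_{ki}+\nabla_{k}h_{ij}\big|^{2}\,dV
= 3\|\nabla h\|_{L^2}^{2}+6\int_{Y}\nabla_{i}h_{jk}\,\nabla_{j}h_{ik}\,dV
= 3\left(\lambda-6\kappa\right)\|h\|_{L^2}^{2},
\end{align*}
where the cross term equals $-3\kappa\|h\|_{L^2}^{2}$ after integrating by parts, commuting derivatives, and using $\delta_{Y}h=0$, $\tr_{Y}h=0$ --- it is exactly the quantity you already computed in your alternative derivation for (b). For $\kappa=1$ this gives $\lambda\ge 6$ at once, on $S^{3}$ and on every quotient $S^{3}/\Gamma$, with no spectral decomposition or descent argument needed. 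So your (a) as written is missing its key step, and the missing step is supplied by the same elementary mechanism you used for (b), applied to the total symmetrization of $\nabla h$.
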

\begin{proof} These are due to Koiso, we only give a brief argument
\cite{Koiso1}. For (a), the inequality
\begin{align}
\int_{S^3} | \nabla_{i} h_{jk} + \nabla_j h_{ki} + \nabla_{k} h_{ij}|^2 dV \geq 0,
\end{align}
easily implies that $\lambda\ge 6$. For (b), the inequality 
\begin{align}
\label{codin}
\int_Y  | \nabla_i h_{jk} - \nabla_j h_{ik} |^2 dV \geq 0,
\end{align}
implies that  $\lambda\ge 3$, with equality exactly for
Codazzi tensors.
Finally, the $\kappa = 0$ case is trivial. 
\end{proof}
The classification of type III solutions is given by the following.
\begin{proposition}\label{adjtype3}
Let $0\le \lambda_{1}<\lambda_{2}<\ldots$, be the eigenvalues of $-\Delta_{g_{Y}}$ on divergence-free tensors in $S^{2}_{0}(T^{*}Y)$ and let 
$\beta_{j}=\sqrt{\lambda_{j}+3\kappa}$. For each eigenvalue  $\lambda_{j}$ there exist trace-free and divergence-free eigentensors $B^{\pm}_{j}$ and $C^{\pm}_{j}$ 
satisfying
\begin{align}
\slashd B_j^{\pm} = \pm \beta_j B_j^{\pm}, \ \ \slashd C_j^{\pm} = \pm \beta_j C_j^{\pm},
\end{align}
such that the general solution of $\mathcal{D}^{*}Z=0$ with $Z$ satisfying
\begin{align}
\delta_{Y}Z=0,\label{divfree}\\
\tr_{Y}Z=0,\label{trfree}
\end{align}
can be written in in the following way:
\begin{itemize}
\item[(a)] If $\kappa=1$ then
\begin{align*}
Z=\sum_{j=1}^{\infty}\left(e^{(\beta_j + 1)t}B^{+}_{j} + e^{(\beta_j - 1)t}C^{+}_{j}
+ e^{(-\beta_j + 1)t}B^{-}_{j} + e^{(-\beta_j - 1)t}C^{-}_{j} \right).
\end{align*}
Letting $\alpha_{j}^{\pm}=\beta_{j}\pm 1$, we have 
$0<|\alpha^{\pm}_{1}|<|\alpha^{\pm}_{2}|<\ldots,$ and $|\alpha_{1}^{\pm}|=2$.
\item[(b)] If $\kappa=-1$
\begin{align*}
Z=\sum_{j=1}^{\infty} \left\{ e^{\beta_{j}t}\left(B^{+}_{j}\cos(t)+C^{+}_{j}\sin(t)\right)
+  e^{-\beta_{j}t}\left(B^{-}_{j}\cos(t)+C^{-}_{j}\sin(t)\right) \right\},
\end{align*}
 with $\beta_{1}=0$ and where $B^{\pm}_{1}$ and $C^{\pm}_1$ are trace-free Codazzi tensors.
\item[(c)] If $\kappa=0$,
\begin{align*}
Z= B_1 + t C_1 + \sum_{j=2}^{\infty} \left( e^{\beta_{j}t} B_j^+ +  t e^{\beta_{j}t} C_j^+ 
+ e^{-\beta_{j}t} B_j^- +  t e^{-\beta_{j}t} C_j^- \right),
\end{align*}
where $B_{1}$ and $C_1$ are parallel sections of $S^{2}_{0}(T^{*}Y)$.
\end{itemize}
\end{proposition}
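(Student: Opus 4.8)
The plan is to separate variables one more time, reducing $\mathcal{D}^{*}Z=0$ to a scalar second-order constant-coefficient ODE, and then read off the three regimes from the sign of $\kappa$.

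Since $Z(t,\cdot)$ lies in $S^{2}_{0}(T^{*}Y)$ and is divergence-free for each $t$, standard elliptic theory together with the Fredholm decomposition \eqref{s2ts3} lets us expand $Z$ in an $L^{2}$-basis of divergence-free eigentensors of $\Delta_{g_{Y}}$ on $S^{2}_{0}(T^{*}Y)$, and by Corollary \ref{slashdconstant} we may in addition take each such eigentensor $B$ to be a $\slashd$-eigentensor, $\slashd B=cB$, where $\Delta_{g_{Y}}B=-\lambda B$ and $c^{2}=4(\lambda+3\kappa)$. It therefore suffices to treat $Z=f(t)\,B$. Substituting into \eqref{adjoint} and using $\delta_{Y}B=0$, the first two components of $\mathcal{D}^{*}Z$ vanish identically and the last two terms of the third component drop out, leaving
\begin{align*}
\mathcal{D}^{*}Z=\Big(-\tfrac{1}{2}\ddot{f}-\kappa f-\tfrac{1}{2}c\,\dot{f}-\tfrac{1}{2}\lambda f\Big)\,B,
\end{align*}
so that $\mathcal{D}^{*}Z=0$ is equivalent to $\ddot{f}+c\,\dot{f}+(\lambda+2\kappa)\,f=0$.

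The key computation is that the characteristic polynomial $r^{2}+cr+(\lambda+2\kappa)$ has discriminant $c^{2}-4(\lambda+2\kappa)=4(\lambda+3\kappa)-4(\lambda+2\kappa)=4\kappa$, which is exactly what creates the qualitatively distinct cases. Writing $\beta_{j}:=\sqrt{\lambda_{j}+3\kappa}$, so that $-c/2=\mp\beta_{j}$ according to the sign of $c$: for $\kappa=1$ the roots are $\mp\beta_{j}\pm 1$, giving the pure exponentials $e^{(\pm\beta_{j}\pm 1)t}$; for $\kappa=-1$ the roots are $\mp\beta_{j}\pm i$, giving $e^{\mp\beta_{j}t}\cos t$ and $e^{\mp\beta_{j}t}\sin t$; and for $\kappa=0$ the polynomial has the double root $\mp\beta_{j}$, giving $e^{\mp\beta_{j}t}$ and $t\,e^{\mp\beta_{j}t}$. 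Sorting, inside each $\Delta_{g_{Y}}$-eigenspace, the two sign-of-$c$ subfamilies into $\{B_{j}^{+},C_{j}^{+}\}$ and $\{B_{j}^{-},C_{j}^{-}\}$ and assigning $B$ and $C$ to the two roots then yields the expansions (a), (b) and (c) respectively. That both the $B_{j}^{\pm}$ and $C_{j}^{\pm}$ families occur — that both signs of $c$ are realized on a given eigenspace — is exactly Corollary \ref{slashdconstant}: for $\kappa=0$ and for $S^{3}/\Gamma$ an orientation-reversing isometry is always available, while for hyperbolic $Y$ one of the two families may be empty for a given $\lambda_{j}$, in which case the corresponding terms simply do not appear.

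Finally, the behavior at the bottom of the spectrum comes from Lemma \ref{eigIII}. For $\kappa=1$ it gives $\lambda_{1}\ge 6$, hence $\beta_{1}\ge 3$, so $\alpha_{j}^{\pm}:=\beta_{j}\pm 1$ obey $0<|\alpha_{1}^{\pm}|<|\alpha_{2}^{\pm}|<\cdots$, with $|\alpha_{1}^{-}|=2$ exactly when $\lambda_{1}=6$. For $\kappa=-1$ it gives $\lambda_{1}\ge 3$, with $\beta_{1}=0$ — so the two subfamilies at $\lambda_{1}$ collapse to one — precisely when the bottom eigentensors are trace-free Codazzi tensors, in which case the $j=1$ term is $B_{1}\cos t+C_{1}\sin t$ with $B_{1},C_{1}\in H^{1}_{C}(Y)$. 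For $\kappa=0$ it gives $\lambda_{1}=0$, $\beta_{1}=0$ with bottom eigentensors parallel, producing the $B_{1}+tC_{1}$ term. I expect the only real care is needed in the bookkeeping that matches each sign of $c$ to the correct root of the characteristic polynomial, and in the degenerate cases $\beta_{1}=0$ for $\kappa\le 0$ where the two solution families coincide; all of the genuinely analytic input — the adjoint formula \eqref{adjoint}, the $\slashd$-eigenvalue $c$, and the spectral bounds — has already been established.
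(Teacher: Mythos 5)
Your proposal is correct and follows essentially the same route as the paper's proof: separate variables to $Z=f(t)B$ with $B$ a divergence-free, trace-free $\slashd$-eigentensor (via Corollary \ref{slashdconstant}), reduce the third component of \eqref{adjoint} to the constant-coefficient ODE $\ddot f + c\dot f + (\lambda+2\kappa)f=0$, and obtain the roots $\mp\beta_j\pm\sqrt{\kappa}$, with Lemma \ref{eigIII} controlling the bottom of the spectrum in each of the cases $\kappa=1,-1,0$. The only cosmetic difference is that you exhibit the discriminant $4\kappa$ explicitly, whereas the paper writes the characteristic roots $\pm\beta\pm\sqrt{\kappa}$ directly; the sign and factor-of-two bookkeeping you flag is handled no more explicitly in the paper than in your argument.
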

\begin{proof}
Let $Z$ be a solution of $\mathcal{D}^{*}Z=0$ of type III, that is, $Z=fB$ with $B$ satisfying \eqref{divfree}, \eqref{trfree} and $\Delta_{g_{Y}}B=-\lambda\cdot B$, then $f$ and $B$ satisfy the equation
\begin{align*}
0=\mathcal{D}^{*}(fB)=\left\{ 0,0,-\frac{1}{2}\ddot{f}B-\frac{1}{2}\dot{f}\slashd B-\left(\kappa+\frac{\lambda}{2}\right)fB\right\},
\end{align*}
 from \eqref{slashdconstant} we have
\begin{align}\label{slashdfb}
\slashd{\dot{Z}}=\slashd\left(\dot{f}\cdot B\right)
=\pm \dot{f}\left( 2\sqrt{\lambda+3 \kappa}\cdot B\right).
\end{align}
It follows that $f$ is a solution of the ordinary differential equation
\begin{align}\label{odef}
- \frac{1}{2} \ddot{f}\pm\sqrt{\lambda+3\kappa}\cdot \dot{f} -\left(\kappa+\frac{\lambda}{2}\right) f =0.
\end{align}
Letting $\beta= \sqrt{\lambda+3\kappa}$, then the characteristic roots of \eqref{odef} are
\begin{align*}
\pm \beta\pm\sqrt{\kappa}.
\end{align*}
The expansions follow from considering the different solutions obtained for 
$\kappa \in \{1, -1, 0\}$, and Lemma \ref{eigIII}.
\end{proof}
We now turn to solutions of $\mathcal{D}^{*}(Z)=0$ with $Z$ of types  \textrm{I} and II. We will need to introduce the operator
\begin{align*}
\square_{\mathcal{K}}:\Lambda^{1}(T^{*}Y)\mapsto\Lambda^{1}(T^{*}Y),
\end{align*}
given by
\begin{align*}
\square_{\mathcal{K}}\eta=\delta_{Y}\mathcal{K}_{g_{Y}}(\eta).
\end{align*}
We have the following
\begin{lemma}\label{lemc}Let $\eta\in\Lambda^{1}(T^{*}Y)$, then
\begin{align}\label{boxK}
\square_{\mathcal{K}} \eta = (\delta_{Y} d + \frac{4}{3} d \delta_{Y} + 4\kappa) \eta. 
\end{align}
Also, if $\eta$ is an eigenform of $\Delta_{H}$ on 1-forms then 
\begin{align*}
\square_{\mathcal{K}}\eta=c\cdot\eta,
\end{align*}
where $c=(-\frac{4}{3}\mu+4\kappa)$ if $\eta=d\phi$ 
and $\Delta_{H}\phi=\mu\phi$ or $c=(4\kappa-\nu)$
if $\Delta_{H}\eta=\nu\eta$ and $\delta_{Y}\eta=0$. 
Moreover, in either case the constant $c$ is
nonzero unless $Z=0$. 
\end{lemma}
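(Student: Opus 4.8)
The plan is to compute $\square_{\mathcal{K}}\eta = \delta_Y \mathcal{K}_{g_Y}(\eta)$ directly from the definition of the conformal Killing operator and then specialize. Recall $\mathcal{K}_{g_Y}(\eta)_{ij} = \nabla_i\eta_j + \nabla_j\eta_i - \frac{2}{3}(\delta_Y\eta)(g_Y)_{ij}$ on the three-manifold $Y$. Taking the divergence, $\big(\delta_Y\mathcal{K}_{g_Y}(\eta)\big)_j = \nabla^i\nabla_i\eta_j + \nabla^i\nabla_j\eta_i - \frac{2}{3}\nabla_j(\delta_Y\eta)$. The first term is $-\Delta_{g_Y}\eta_j$ in the sign convention where $\Delta_{g_Y} = \nabla^i\nabla_i$ (so I must be careful here: the paper's $\Delta_{g_Y}$ is the rough Laplacian $\nabla^i\nabla_i$ acting negatively). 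For the second term I commute derivatives: $\nabla^i\nabla_j\eta_i = \nabla_j\nabla^i\eta_i + R_j{}^p\eta_p = \nabla_j(\delta_Y\eta) + 2\kappa\,\eta_j$, using that $Y$ has constant curvature $\kappa$ so $\mathrm{Ric} = 2\kappa\, g_Y$. Collecting, $\square_{\mathcal{K}}\eta = \Delta_{g_Y}\eta + \frac{1}{3}d\delta_Y\eta + 2\kappa\,\eta$ in the rough-Laplacian convention. To match the stated form \eqref{boxK}, I then rewrite $\Delta_{g_Y}$ in terms of the Hodge Laplacian: on $1$-forms the Weitzenböck formula gives $\Delta_{g_Y} = \nabla^*\nabla$ acting as $-\Delta_H + 2\kappa$ on $1$-forms (as recorded just before Lemma \ref{commutator}), and $-\Delta_H = \delta_Y d + d\delta_Y$. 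Substituting, $\square_{\mathcal{K}}\eta = \delta_Y d\,\eta + d\delta_Y\eta + 2\kappa\eta + \frac{1}{3}d\delta_Y\eta + 2\kappa\eta = (\delta_Y d + \frac{4}{3}d\delta_Y + 4\kappa)\eta$, which is \eqref{boxK}. Alternatively, this identity is exactly $\delta_{Y}\mathcal{K}_{g_Y}$ as computed in Lemma \ref{commutator}, so one may simply quote that lemma.

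Next I specialize to eigenforms of $\Delta_H$. If $\eta = d\phi$ with $\Delta_H\phi = \mu\phi$ (so $\delta_Y d\phi = -\Delta_H\phi = \mu\phi$ — again I must fix the sign convention: $\Delta_H$ on functions is $\delta_Y d$ up to sign, and here $\delta_Y d\phi = \Delta_H\phi$ is the convention being used, consistent with $\Delta_H = -d\delta_Y - \delta_Y d$ meaning on functions $\Delta_H\phi = -\delta_Y d\phi$; I will pin this down from the paper's displayed formula for $\Delta_H$). With the paper's convention $\Delta_H\phi = -\delta_Y d\phi$, applying \eqref{boxK}: $\delta_Y d(d\phi) = 0$ since $d^2 = 0$, and $d\delta_Y(d\phi) = d(-\Delta_H\phi)$... here I need to be attentive, but the upshot the paper wants is $c = -\tfrac{4}{3}\mu + 4\kappa$. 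The cleanest route: $\delta_Y\mathcal{K}_{g_Y}(d\phi)$ — since $d\phi$ is exact, $d\delta_Y d\phi = d(\delta_Y d\phi)$, and $\delta_Y d\phi = \mu\phi$ or $-\mu\phi$ per convention; so $\square_{\mathcal{K}}(d\phi) = \frac{4}{3}d(\delta_Y d\phi) + 4\kappa d\phi = \pm\frac{4}{3}\mu\,d\phi + 4\kappa\,d\phi$, giving $c = -\frac{4}{3}\mu + 4\kappa$ once the sign is matched to the paper's stated answer. If instead $\delta_Y\eta = 0$ with $\Delta_H\eta = \nu\eta$, then $d\delta_Y\eta = 0$ and $\delta_Y d\eta = -\Delta_H\eta = -\nu\eta$ (paper's convention), so $\square_{\mathcal{K}}\eta = -\nu\eta + 4\kappa\eta$, i.e. $c = 4\kappa - \nu$, as claimed.

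Finally, for the nonvanishing of $c$: I must show $c \neq 0$ unless $\eta$ is itself trivial (or, as stated, unless $Z = f(t)\mathcal{K}_{g_Y}(\eta) = 0$). This is where the eigenvalue lower bounds enter. For $\eta = d\phi$ a nonconstant eigenfunction with $\mu > 0$: if $\kappa = 0$ or $\kappa = -1$ then $c = -\frac{4}{3}\mu + 4\kappa < 0$ automatically; if $\kappa = 1$ the concern is $\mu = 3$, the lowest nonzero eigenvalue on $S^3$, which gives $c = -4 + 4 = 0$ — but in that case $d\phi$ is dual to a conformal Killing field, so $\mathcal{K}_{g_Y}(d\phi) = 0$ and hence $Z = 0$, consistent with the statement. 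More generally, $c = 0$ forces $\mathcal{K}_{g_Y}(\eta)$ to lie in $\ker\delta_Y \cap \mathrm{Im}\,\mathcal{K}_{g_Y}$, which by the decomposition \eqref{s2ts3} is $\{0\}$, so $\mathcal{K}_{g_Y}(\eta) = 0$ and $Z = 0$. For the divergence-free case $\eta$ with eigenvalue $\nu$: if $\nu = 0$ then $\eta$ is harmonic and $c = 4\kappa$, which is nonzero when $\kappa = \pm 1$; when $\kappa = 1$ there are no harmonic $1$-forms on $S^3/\Gamma$ anyway, and when $\kappa = 0$ a harmonic (parallel) $\eta$ has $\mathcal{K}_{g_Y}(\eta) = 0$ so again $Z = 0$; if $\nu > 0$ with $\kappa \in \{0,-1\}$ then $c = 4\kappa - \nu < 0$, and if $\kappa = 1$ one uses $c = 0 \Rightarrow \mathcal{K}_{g_Y}(\eta) = 0$ via \eqref{s2ts3} as before. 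I expect the main subtlety — the ``hard part'' — to be precisely this bookkeeping of which eigenvalue/curvature combinations make $c$ vanish, and checking in each such case that the vanishing is harmless because $\mathcal{K}_{g_Y}(\eta)$ is then zero; the argument via the orthogonal decomposition \eqref{s2ts3} handles all cases uniformly and is the slick way to close it.
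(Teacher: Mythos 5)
Your proof is correct, and for the identity \eqref{boxK} and the two eigenvalue computations it follows the paper's route: the paper simply quotes Lemma \ref{commutator} and then kills the appropriate term ($\delta_Y d$ on exact forms, $d\delta_Y$ on co-closed forms), exactly as you do; your direct divergence computation with the Ricci commutation is just a re-derivation of Lemma \ref{commutator}, which you acknowledge. Where you genuinely diverge is the final assertion that $c\neq 0$ unless $Z=0$. The paper argues case by case: for $\kappa=1$ it identifies the only exceptional eigenvalues ($\mu=3$ on closed forms, $\nu=4$ on co-closed forms) and invokes the known fact that the corresponding eigenforms are conformally Killing; for $\kappa=-1$ it notes $c<0$ strictly; for $\kappa=0$ it notes $c=0$ only for parallel forms, where $Z=0$. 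You supplement the same case check with a uniform argument: $c=0$ gives $\delta_Y\mathcal{K}_{g_Y}(\eta)=0$, so $\mathcal{K}_{g_Y}(\eta)\in \mathrm{Ker}(\delta_Y)\cap \mathrm{Im}(\mathcal{K}_{g_Y})$, which is $\{0\}$ by the $L^2$-orthogonality behind \eqref{s2ts3} (equivalently, $\|\mathcal{K}_{g_Y}\eta\|_{L^2}^2=-2c\|\eta\|_{L^2}^2$), hence $\mathcal{K}_{g_Y}(\eta)=0$ and $Z=0$. This buys you independence from the specific spectral facts about $S^3$ (which eigenvalues occur and that their eigenforms are conformally Killing), and it shows more: $c\le 0$ always, with equality exactly when $\eta$ is conformally Killing; the paper's version, by contrast, records explicitly which low eigenvalues are the dangerous ones, information that is reused elsewhere in the indicial-root bookkeeping. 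One small caution: tighten the sign-convention discussion — with the paper's convention $\Delta_H=-d\delta_Y-\delta_Y d$ (so $\delta_Y d\phi=-\mu\phi$ and, for co-closed $\eta$, $\delta_Y d\eta=-\nu\eta$) the constants come out as stated without any "match the sign to the paper's answer" step, and the stray remark that $\nabla^i\nabla_i\eta_j$ equals $-\Delta_{g_Y}\eta_j$ should be deleted, since your own collected formula uses $\Delta_{g_Y}=\nabla^i\nabla_i$ correctly.
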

\begin{proof} The expression \eqref{boxK} for $\mathcal{K}_{g_{Y}}$ is a direct consequence of Lemma \ref{commutator}. Suppose now that $\Delta_{H}\eta=\lambda\cdot\eta$. If $\eta = d\phi$ with $\Delta_{H}\phi=\nu\phi$, then observe that
\begin{align*}
\square_{\mathcal{K}}\eta&=\left(\delta d+\frac{4}{3}d\delta+4\kappa\right)\eta\\
&=\left(\frac{4}{3}d\delta +4\kappa\right)\eta=\left(-\frac{4}{3}\mu+4\kappa\right)\eta.
\end{align*}
If $\delta_{Y} \eta = 0$, then  
\begin{align*}
\square_{\mathcal{K}}\eta&=\left(\delta_{Y}d+\frac{4}{3}d\delta_{Y}+4\kappa\right)\eta\\
&=\left(\delta_{Y}d+4\kappa\right)\eta
=-\Delta_{H}\eta+4\kappa\eta=\left(-\nu+4\kappa\right)\eta.
\end{align*}
Finally, in order to show that $c=0$ does not
occur we note that  when $\kappa=1$, there 
are eigenforms of $\Delta_{H}$ 
corresponding to the eigenvalue $\mu=3$ 
on  closed forms  and to $\nu=4$ on 
co-closed forms and in these cases $c=0$. 
However, for any of these eigenvalues, 
the corresponding eigenforms are
 conformally Killing.  In the hyperbolic case 
 $\kappa=-1$, the constant $c$ is strictly 
 negative for either closed or co-closed 
 eigenforms of the Hodge Laplacian. 
In the flat case $\kappa=0$, the 
constant $c$ equals zero only for parallel forms, 
but in this case $Z= 0$.
\end{proof}
Next, assume that $Z$ is a non-trivial solution of 
$\mathcal{D}^* Z = 0$ with $Z$ of type I or II and
$c\ne 0$, where $c$ is the constant in Lemma
 \ref{lemc}. The first component of \eqref{adjoint} 
yields 
\begin{align}
0= \delta_{Y} (\delta_{Y} Z) = f(t) \cdot \delta_{Y} \square_{\mathcal{K}} \eta
= f(t) \cdot \delta_{Y} ( c \eta). 
\end{align}
Since $Z$ is non-trivial and $c\ne 0$, we conclude 
that $\delta_{Y} \omega = 0$ and hence, solutions 
of type I do \emph{not} occur. Furthermore, we can prove
\begin{proposition}
We have 
\begin{align}\label{domega}
\dot{f} \tilde{*}d \omega = -\nu f  \omega. 
\end{align}
\end{proposition}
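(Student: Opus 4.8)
The plan is to read off \eqref{domega} from the $\nu\odot T^{*}Y$--component of the equation $\mathcal{D}^{*}Z=0$ in \eqref{adjoint}, and then apply the operator $\tilde{*}d$ exactly once.

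\emph{Setup.} We are in the type II case, so $Z=f(t)\cdot\mathcal{K}_{g_{Y}}(\omega)$ with $\Delta_{H}\omega=\nu\omega$, and the first slot of \eqref{adjoint} together with Lemma \ref{lemc} has already forced $\delta_{Y}\omega=0$. Consequently $\square_{\mathcal{K}}$ acts on $\omega$ as multiplication by the constant $c=4\kappa-\nu$, which is nonzero by Lemma \ref{lemc}. Since $\mathcal{K}_{g_{Y}}(\omega)$ is independent of $t$, we have $\dot{Z}=\dot{f}\cdot\mathcal{K}_{g_{Y}}(\omega)$, hence $\delta_{Y}\dot{Z}=\dot{f}\,\square_{\mathcal{K}}\omega=c\dot{f}\,\omega$ and $\delta_{Y}Z=cf\,\omega$.

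\emph{The second component.} On the $3$--manifold $Y$ one has the elementary duality identity $\delta_{H}\tilde{*}\alpha=\tilde{*}d\alpha$ for every $1$--form $\alpha$; applying it with $\alpha=\delta_{Y}Z$ gives $\delta_{H}\tilde{*}\delta_{Y}Z=cf\,\tilde{*}d\omega$. Substituting into the $\nu\odot T^{*}Y$--slot of \eqref{adjoint}, the equation $\mathcal{D}^{*}Z=0$ becomes $\frac{c}{2}(\dot{f}\,\omega+f\,\tilde{*}d\omega)=0$, and since $c\ne 0$ we obtain $\dot{f}\,\omega+f\,\tilde{*}d\omega=0$. Now apply $\tilde{*}d$: using $\delta_{Y}\omega=0$ we have $\tilde{*}d\tilde{*}d\omega=\delta_{H}d\omega=\Delta_{H}\omega=\nu\omega$, so $\dot{f}\,\tilde{*}d\omega+\nu f\,\omega=0$, which is exactly \eqref{domega}.

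\emph{Main obstacle.} The computation itself is short; the one point that needs care is the bookkeeping of Hodge--star signs on the odd--dimensional $Y$, specifically that $\delta_{H}\tilde{*}=+\tilde{*}d$ on $1$--forms and $\tilde{*}d\tilde{*}d=+\delta_{H}d$ on co-closed $1$--forms. With the convention $\delta_{H}=(-1)^{n(p+1)+1}\tilde{*}d\tilde{*}$ on $p$--forms and $n=3$, both of these come out positive, so that \eqref{domega} carries the stated $-\nu$ rather than $+\nu$; everything else is routine.
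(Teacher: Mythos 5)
Your proof is correct and follows essentially the same route as the paper: the paper also extracts the $1$-form slot of \eqref{adjoint}, uses Lemma \ref{commutator}/\ref{lemc} to reduce it to $\dot{f}\,\omega=-f\,\tilde{*}d\omega$ (after cancelling the nonzero constant $4\kappa-\nu$), and then applies $\tilde{*}d$ together with $\delta_Y\omega=0$ and $\Delta_H\omega=\nu\omega$ to obtain \eqref{domega}. Your sign bookkeeping with the Hodge star on the $3$-manifold is consistent with the paper's conventions, so nothing is missing.
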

\begin{proof}
The second component of \eqref{adjoint} yields 
\begin{align}
0=\dot{f}\cdot\delta_{Y}\mathcal{K}_{g_{Y}}(\omega)-f\delta_{Y}\tilde{*}\delta_{Y}\mathcal{K}_{g_{Y}}(\omega),
\end{align}
which by Lemma \ref{commutator} is equivalent to
\begin{align}
\dot{f}(-\nu+4\kappa)\omega+f\delta_{H}\tilde{*}(-\nu+4\kappa)\omega=0
\end{align}
and writing $\delta_{H}$ on 2-forms as $\tilde{*}d\tilde{*}$ 
we obtain
\begin{align}
\label{dotf}
\dot{f}\omega=-f\tilde{*}d\omega,
\end{align}
and after taking $\tilde{*}d$ on both sides we conclude that
\begin{align}
\dot{f}\tilde{*}d\omega=-f\tilde{*}d\tilde{*}d\omega=-fd^{*}d\omega=-\nu f\omega,
\end{align}
as needed.
\end{proof}
\begin{proposition}\label{odetype2}
\label{kdprop} Let $Z = f(t) \cdot \mathcal{K}_{g_Y} \omega$ satisfy 
$\mathcal{D}^* Z = 0$,  
where $\omega$ satisfies $\delta_Y \omega = 0$ and 
\begin{align}
\Delta_{H} \omega &= \nu \cdot \omega.
\end{align}
Then 
\begin{align} 
f(t) = e^{\alpha t}, 
\end{align}
with
\begin{align}
\alpha = \pm ( \sqrt{ \nu} ),
\end{align}
or 
\begin{align}
\label{fttt}
f(t) = c_0,
\end{align}
for a constant $c_0$, if $\omega$ is a non-trivial harmonic $1$-form.
\end{proposition}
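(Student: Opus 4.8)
The strategy is to combine the two identities \eqref{dotf} and \eqref{domega}, both already extracted from the second (mixed) component of $\mathcal{D}^*Z=0$, and play them off against each other. Since the solution $Z = f(t)\cdot\mathcal{K}_{g_Y}\omega$ is non-trivial, neither $f$ nor $\mathcal{K}_{g_Y}\omega$ vanishes identically: indeed $\mathcal{K}_{g_Y}\omega=0$ would say that $\omega$ is dual to a conformal Killing field, forcing $Z\equiv 0$. On the open set where $f\neq 0$, \eqref{dotf} gives $\tilde{*}d\omega = -(\dot f/f)\,\omega$; substituting this into \eqref{domega}, namely $\dot f\,\tilde{*}d\omega = -\nu f\,\omega$, yields $(\dot f)^2 = \nu f^2$ on that set.

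I would then split on $\nu$. If $\nu>0$, the continuous function $\dot f/f$ on $\{f\neq 0\}$ satisfies $(\dot f/f)^2=\nu$ and is therefore locally constant, so $f = c\,e^{\pm\sqrt{\nu}\,t}$ on each connected component; since such an exponential has no zeros, the component cannot have a finite endpoint (at which $f$ would have to vanish), whence $\{f=0\}=\emptyset$ and $f = c_0\,e^{\pm\sqrt{\nu}\,t}$ on all of $\R$. Absorbing $c_0$ into $\omega$ (which remains a co-closed $\Delta_H$-eigenform with the same eigenvalue), we get $f(t)=e^{\alpha t}$ with $\alpha=\pm\sqrt{\nu}$, and $\tilde{*}d\omega=-\alpha\,\omega$, consistent with Corollary \ref{stard}. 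If instead $\nu=0$, then $(\dot f)^2\equiv 0$ on $\{f\neq 0\}$, so $f$ is locally constant there and the same endpoint argument forces $f\equiv c_0\neq 0$; moreover $\tilde{*}d\omega=0$, which together with $\delta_Y\omega=0$ says $\omega$ is harmonic, and it is non-trivial because $Z\neq 0$.

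Finally I would check that the remaining $S^2(T^*Y)$ component of \eqref{adjoint} imposes nothing new. Writing $\tilde{*}d\omega=\epsilon\sqrt{\nu}\,\omega$ with $\epsilon\in\{\pm 1\}$ (so that $f=c_0\,e^{-\epsilon\sqrt{\nu}\,t}$), Lemma \ref{commutator} gives $\delta_Y Z = f(-\nu+4\kappa)\omega$, hence $\delta_Y^2 Z=0$, and, using $\mathcal{L}_{g_Y}\omega=\mathcal{K}_{g_Y}\omega$ (valid since $\delta_Y\omega=0$), $\mathcal{L}_{g_Y}(\delta_Y Z)=f(-\nu+4\kappa)\mathcal{K}_{g_Y}\omega$; likewise $\Delta_{g_Y}Z = f(-\nu+6\kappa)\mathcal{K}_{g_Y}\omega$ by Lemma \ref{commutator}, and $\slashd\dot Z = \dot f\,\mathcal{K}_{g_Y}(\tilde{*}d\omega)=\epsilon\sqrt{\nu}\,\dot f\,\mathcal{K}_{g_Y}\omega$ by \eqref{alpha5}. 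Substituting into the third slot of \eqref{adjoint}, every term is a multiple of $\mathcal{K}_{g_Y}\omega$; the coefficient of $f$ not involving derivatives is $-\kappa+\tfrac12(-\nu+6\kappa)-\tfrac12(-\nu+4\kappa)=0$, so the equation collapses to $\ddot f+\epsilon\sqrt{\nu}\,\dot f=0$, which $f=c_0\,e^{-\epsilon\sqrt{\nu}\,t}$ (and $f=c_0$ when $\nu=0$) satisfies. The only delicate point is the sign bookkeeping --- that the eigen-sign of $\tilde{*}d$ on $\omega$ matches the eigen-sign of $\slashd$ on $\mathcal{K}_{g_Y}\omega$, via \eqref{alpha5} and Corollary \ref{slashdconstant} --- but with all the relevant formulas already in hand, everything else is direct substitution.
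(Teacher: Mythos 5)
Your argument is correct, but it reaches the equation governing $f$ by a different route than the paper. The paper substitutes $Z=f\,\mathcal{K}_{g_Y}(\omega)$ into the purely spherical slot of \eqref{adjoint}: after using Lemma \ref{commutator} to compute $\delta_Y Z$, $\Delta_{g_Y}Z$, $\delta_Y^2Z$, and \eqref{domega} to convert $-\tfrac12\slashd\dot Z$ into $+\tfrac{\nu}{2} f\,\mathcal{K}_{g_Y}(\omega)$, the zeroth-order terms cancel exactly as in your final check, leaving the second-order equation $(\ddot f-\nu f)\,\mathcal{K}_{g_Y}(\omega)=0$, whence $f=e^{\pm\sqrt{\nu}\,t}$; the first-order relation \eqref{dotf} is invoked only at the very end, to exclude $f=t$ in the harmonic case $\nu=0$. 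You instead extract the first-order constraint $(\dot f)^2=\nu f^2$ from the mixed component alone (i.e.\ from \eqref{dotf} and \eqref{domega}), integrate it with the connectedness/no-zeros argument, and then verify that the spherical slot collapses to $\ddot f+\epsilon\sqrt{\nu}\,\dot f=0$ and is automatically satisfied. What your route buys is that $f$ comes out directly as a single pure exponential whose sign is tied to the $\tilde{*}d$-eigenvalue of $\omega$, so no separate step is needed to discard linear combinations of $e^{\pm\sqrt{\nu}t}$ for a fixed $\omega$, nor the $t$-linear solution at $\nu=0$; what the paper's route buys is the second-order indicial equation itself, whose characteristic roots $\pm\sqrt{\nu}$ are exactly what feeds into the expansions of Proposition \ref{sec5prop}. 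Both arguments rest on the previously established identities \eqref{dotf} and \eqref{domega}, hence on the nondegeneracy $4\kappa-\nu\neq 0$ supplied by Lemma \ref{lemc} (together with nontriviality of $Z$, which you correctly flag), so your use of them respects the paper's logical order.
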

\begin{proof}
From \eqref{domega} we have 
\begin{align}\label{slashdZdot}
-\frac{1}{2}\slashd\dot{Z}&=-\frac{1}{2}\dot{f}\slashd\mathcal{K}_{g_{Y}}(\omega)=-\frac{1}{2}\dot{f}\mathcal{K}_{g_{Y}}(\tilde{*}d\omega)\nonumber\\
&=\frac{\nu}{2}f\mathcal{K}_{g_{Y}}(\omega),
\end{align}
we also have from Lemma \ref{commutator}
\begin{align}
\delta_{Y}Z&=f(t)\cdot\delta_{Y}\mathcal{K}_{g_Y}(\omega)\nonumber\\
&=f(t)\cdot(-\nu+4\kappa)\cdot\omega,\label{deltaZ}\\
\Delta_{g_{Y}}Z&=f(t)\cdot\Delta_{g_{Y}}\mathcal{K}_{g_{Y}}(\omega)\nonumber\\
&=f(t)\cdot (6\kappa-\nu)\cdot\mathcal{K}_{g_{Y}}(\omega),\label{laplacianZ}\\
\delta_{Y}^{2}Z&=f(t)\cdot\delta^{2}_{Y}\mathcal{K}_{g_Y}(\omega)\nonumber\\
&=f(t)\cdot(-\nu+4\kappa)\cdot \delta_{Y}\omega=0.\label{deltasquareZ}
\end{align}
The equation on the purely spherical component of $\mathcal{D}^{*}(Z)$ is
\begin{align}
0=- \frac{1}{2} \ddot{Z} -\kappa Z - \frac{1}{2} \slashd \dot{Z}
+ \frac{1}{2} \Delta_{g_{Y}} Z
 - \frac{1}{2} \mathcal{L}_{g_Y}(\delta_{Y} Z) +\frac{1}{2}(\delta_{Y}^{2}Z)g_{Y},
\end{align}
which by \eqref{deltaZ}, \eqref{laplacianZ},\eqref{deltasquareZ} and \eqref{slashdZdot} simplifies to
\begin{align}
0&=\left(-\frac{1}{2}\ddot{f}-\kappa f+\frac{1}{2}(6\kappa-\nu)f-\frac{1}{2}(4\kappa-\nu)\right)\mathcal{K}_{g_{Y}}(\omega)-\frac{1}{2}\dot{f}\mathcal{K}_{g_{Y}}(\tilde{*}d\omega)\nonumber\\
&=-\frac{1}{2}\ddot{f}\mathcal{K}_{g_{Y}}(\omega)+\frac{1}{2}\nu f\mathcal{K}_{g_{Y}}(\omega),
\end{align}
which we write as
\begin{align}
\left(\ddot{f}-\nu\right)\mathcal{K}_{g_{Y}}(\omega)=0.
\end{align}
and for 1-forms $\omega$ that are \emph{not} dual to Killing fields we obtain solutions
\begin{align}
f(t)=e^{\pm\sqrt{\nu}t}.
\end{align}
The solutions with $f$ as in \eqref{fttt} correspond to $\nu = 0$ 
which is the case of a harmonic $1$-form. In this case, 
the tensor $Z = t \cdot \mathcal{K}_{g_Y} (\omega)$ is ruled out by \eqref{dotf} above, and 
only the solution $Z = c_0 \cdot \mathcal{K}_{g_Y}( \omega)$ occurs. 
\end{proof}

Let $0=\nu_{0}<\nu_{1}<\ldots,$ be all the eigenvalues of $\Delta_{H}$ on co-closed forms in $\Lambda^{1}(T^{*}Y)$. Note that there are non-trivial eigenforms corresponding to $\nu_{0}$ if and only if $b_{1}(T^{*}Y)\ne 0$ since such eigenforms are harmonic 1-forms. In particular, for $\kappa=1$ there are no nontrivial 1-forms in $\Lambda^{1}(T^{*}Y)$. We close this section with the following.
\begin{proposition}
\label{sec5prop}
Let $Z\in S^{2}_{0}(T^{*}Y)$ be a solution of $\mathcal{D}^{*}Z=0$. Then $Z$ can be written as 
\begin{align*}
Z=\mathcal{K}_{g_{Y}}(\omega)+Z_{0},
\end{align*}
where $Z_{0}$ is divergence-free and has an expansion as in Proposition \ref{adjtype3}. Also for each eigenvalue $\nu_{j}$, $j=0,1,\ldots$, there are eigenforms $\omega^{\pm}_{j}$ such that $\mathcal{K}_{g_{Y}}(\omega)$ can be written uniquely as an infinite sum of the following form
\begin{enumerate}
\item[(a)] If $\kappa=1$ then 
\begin{align*}
\mathcal{K}_{g_Y}(\omega)=\sum_{j=2}^{\infty}(e^{\sqrt{\nu_{j}}t}\mathcal{K}_{g_Y}(\omega^{+}_{j})+e^{-\sqrt{\nu_{j}}t}\mathcal{K}_{g_{Y}}(\omega^{-}_{j})).
\end{align*}
where $c^{\pm}_{j}$ are constants. In the case $Y = S^3$, $\nu_{j}=(j+1)^{2}$.
\item[(b)] If $\kappa=-1$ then 
\begin{align*}
\mathcal{K}_{g_Y}(\omega)= \mathcal{K}_{g_Y}(\omega_{0})
 +\sum_{j=1}^{\infty}\left(e^{\sqrt{\nu_{j}}t}\mathcal{K}_{g_{Y}}(\omega^{+}_{j})+e^{-\sqrt{\nu_{j}}t}\mathcal{K}_{g_{Y}}(\omega^{-}_{j})\right),
\end{align*} 
where $\omega_{0}$ is a harmonic $1$-form. 
\item[(c)] If $\kappa=0$ then 
\begin{align*}
\mathcal{K}_{g_Y}(\omega)= \sum_{j=1}^{\infty}\left(e^{\sqrt{\nu_{j}}t}\mathcal{K}_{g_{Y}}(\omega^{+}_{j})+e^{-\sqrt{\nu_{j}}t}\mathcal{K}_{g_{Y}}(\omega^{-}_{j})\right).
\end{align*} 
\end{enumerate}
\end{proposition}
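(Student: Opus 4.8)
The plan is to combine the structural decomposition from Proposition \ref{kiprop} with the separation-of-variables analysis already carried out for each of the three types in Corollary \ref{3types}. First I would invoke \eqref{s2ts3} to write, for each fixed $t$, $Z(t) = \mathcal{K}_{g_Y}(\omega(t)) + Z_0(t)$ with $\delta_Y Z_0(t) = 0$; since the decomposition is $t$-independent and orthogonal, and $\mathcal{D}^*$ preserves the block structure exhibited in \eqref{imtypeI}--\eqref{imtypeIII}, the equation $\mathcal{D}^* Z = 0$ splits into the equation on the divergence-free part $Z_0$ and the equation on the $\mathcal{K}_{g_Y}$-exact part. The divergence-free part $Z_0$ is handled verbatim by Proposition \ref{adjtype3}, which gives the stated expansions in $\kappa = 1, -1, 0$; this accounts for the ``$Z_0$ has an expansion as in Proposition \ref{adjtype3}'' clause. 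It remains to analyze the piece $\mathcal{K}_{g_Y}(\omega)$.

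For that piece I would further decompose $\omega$ using the Hodge decomposition \eqref{hodge2} together with the eigenspace decomposition of $\Delta_H$ on co-closed $1$-forms. By the discussion preceding Proposition \ref{sec5prop}, solutions of type I (where $\omega = d\phi$ is exact and non-closed) are ruled out by the first component of \eqref{adjoint} combined with Lemma \ref{lemc}: the constant $c = \square_{\mathcal{K}}$-eigenvalue is nonzero, forcing $\delta_Y \omega = 0$. So only type II survives, and we may assume $\omega$ is a $\Delta_H$-eigenform with $\delta_Y \omega = 0$ and eigenvalue $\nu_j \geq 0$. Proposition \ref{odetype2} then pins down the $t$-dependence: $f(t) = e^{\pm \sqrt{\nu_j} t}$ for $\nu_j > 0$, and $f(t) = c_0$ constant when $\nu_j = 0$ (harmonic $1$-form), the growing solution $t\mathcal{K}_{g_Y}(\omega)$ being excluded by \eqref{dotf}. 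Assembling these mode-by-mode over the spectrum $0 = \nu_0 < \nu_1 < \cdots$ gives exactly the three displayed series, with the $\nu_0$-term present precisely when $b_1(Y) \neq 0$; in particular for $\kappa = 1$ there are no harmonic $1$-forms and no $\nu_0$-term, and moreover the would-be lowest eigenvalues $\mu = 3$, $\nu = 4$ on $S^3$ give conformal Killing forms, which is why the sum in (a) starts at $j = 2$. For $S^3$ the eigenvalues of $\Delta_H$ on co-closed $1$-forms are $\nu_j = (j+1)^2$, a standard computation.

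The one subtlety, and the step I expect to require the most care, is justifying the passage from ``solve mode by mode'' to ``the general solution is the infinite sum of these modes.'' This rests on the claim, asserted but not proved in the text right before the proposition, that every element of $S^2_0(T^*M)$ (time-dependent) admits a unique expansion into the building blocks on the right-hand sides of \eqref{imtypeI}--\eqref{imtypeIII}; this is a completeness/orthogonality statement for the simultaneous eigenfunction expansions of $\Delta_H$ on functions, of $\Delta_H$ on co-closed $1$-forms, and of $\Delta_{g_Y}$ on divergence-free traceless symmetric $2$-tensors on the compact manifold $Y^3$, together with the fact that $\mathcal{D}^*$ acts diagonally with respect to this decomposition. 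Once that is granted, $\mathcal{D}^* Z = 0$ is equivalent to each Fourier coefficient satisfying the corresponding ODE, and the proof reduces to collecting the outputs of Propositions \ref{adjtype3} and \ref{odetype2}. I would therefore state the completeness fact as the essential input, cite the standard elliptic spectral theory on the compact cross-section for it, and then the remainder of the proof is the bookkeeping described above.
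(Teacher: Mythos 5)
Your proposal is correct and takes essentially the same route as the paper: split $Z$ via Proposition \ref{kiprop} into a divergence-free piece handled by Proposition \ref{adjtype3} and a $\mathcal{K}_{g_Y}$-piece, rule out type I using the first component of \eqref{adjoint} and Lemma \ref{lemc}, apply Proposition \ref{odetype2} mode by mode (with the mode-completeness statement asserted, as in the paper, from the spectral decomposition on the compact cross-section), and note that for $\kappa=1$ the $\nu_1=4$ eigenforms are dual to Killing fields so the sum starts at $j=2$. The only slip is your remark that the $\nu_0$-term appears precisely when $b_1(Y)\neq 0$: for $\kappa=0$ one has $b_1(T^3)\neq 0$, yet case (c) contains no harmonic term because harmonic $1$-forms on the flat torus are parallel, hence Killing, so $\mathcal{K}_{g_Y}(\omega_0)=0$, which is exactly the paper's closing observation.
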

\begin{proof}
From Proposition \ref{odetype2}, we can write the 1-form $\omega$ as an infinite sum of the form
\begin{align}
\label{omegaexp}
\omega=\omega_{0}+\sum_{j=1}^{\infty}\left(e^{\sqrt{\nu_{j}}t}\omega^{+}_{j}+c_{j}^{-}e^{-\sqrt{\nu_{j}}t}\omega^{-}_{j}\right). 
\end{align}
In case $\kappa = 1$, there are no harmonic $1$-forms, and all eigenforms corresponding to $\nu_{1}=4$ are dual to Killing fields, so the sum starts at
$j = 2$ in this case.
The form of the eigenvalues $\nu_{j}$ in the case $\kappa=1$ follows from \cite{foll}.
The $\kappa = -1$ case follows directly from \eqref{omegaexp}. In the case
$\kappa = 0$, any harmonic $1$-form is parallel. 
\end{proof}

\section{Mixed solutions}
\label{mixedsec}
Returning to the full system 
\begin{align}\label{mix}
\mathcal{D}^{*}Z=\mathcal{K}_{g}(\tilde{\omega}),
\end{align}
we note that since $\mathcal{D}^{*}Z$ is divergence-free, the 1-form $\tilde{\omega}$ automatically satisfies the equation
\begin{align}\label{divconfkill}
\delta_{g}\mathcal{K}_{g}(\tilde{\omega}) \equiv \bkill(\tilde{\omega}) = 0,
\end{align}
so we next analyze solutions of \eqref{divconfkill}
at a cylindrical metric $g=dt^2+g_{Y}$. The conformal Killing operator
on a 1-form $\tilde{\omega}=fdt+\omega$ is
\begin{align}
\label{kform}
\mathcal{K}_{g}(\tilde{\omega})=\left(\frac{3}{2}\dot{f}-\frac{1}{2}\delta_{Y}\omega\right)dt\otimes dt+\left(\dot{\omega}+df\right)\odot dt+\mathcal{L}_{g_Y}(\omega)-\frac{1}{2}\left(\dot{f}+\delta_{Y}\omega\right)g_{Y}.
\end{align}
The divergence of a traceless symmetric $2$-tensor 
\begin{align}
\tilde{h} = h_{00} dt\otimes dt + ( \alpha \otimes dt + dt \otimes \alpha) 
+ h
\end{align}
is given by
\begin{align}
\label{hform}
\delta \tilde{h} 
=  ( \dot{h}_{00} + \delta_{S^3} \alpha) dt +  \dot{\alpha}  + \delta_{S^3} h.
\end{align}
Combining \eqref{kform} and \eqref{hform}, we obtain
\begin{align*}
\bkill\tilde{\omega}&=\left(\frac{3}{2}\ddot{f}+\frac{1}{2}\delta_{Y}\dot{\omega}-\Delta_{H}f\right)dt+\ddot{\omega}+\delta_{Y}\mathcal{L}_{g_Y}(\omega)-\frac{1}{2}d\delta_{Y}\omega.
\end{align*}
Commuting covariant derivatives as in Lemma \ref{commutator}, we have
\begin{align*}
\delta_{Y}\mathcal{L}_{g_Y}(\omega)=-\Delta_{H}\omega+d\delta_{Y}\omega+4\kappa\omega,
\end{align*}
so $\bkill\tilde{\omega}$ takes the form 
\begin{align*}
\bkill(\tilde{\omega})=\left(\frac{3}{2}\ddot{f}+\frac{1}{2}\delta_{Y}\dot{\omega}-\Delta_{H}f\right)dt+\ddot{\omega}-\Delta_{H}\omega+\frac{1}{2}d\delta_{Y}\omega+4\kappa\omega+\frac{1}{2}d\dot{f}.
\end{align*}
Any 1-form $\tilde{\omega}\in\Lambda^{1}(T^{*}M)$ can be written as an infinite sum of 1-forms of two types, namely
\begin{enumerate}
\item[(i)]  Forms of type (a)
\begin{align}\label{typea}
c(t)\phi dt+k(t)d\phi,
\end{align}
where $\phi$ is an eigenfunction of the Hodge laplacian $\tilde{\Delta}_{H}$ on $\Lambda^{0}(T^{*}Y)$ with eigenvalue $\mu$ and $c=c(t)$, $k=k(t)$ are functions of $t$,
\item[(ii)] Forms of type (b)
\begin{align}\label{typeb}
m(t)\eta,
\end{align} 
where $\eta$ is an eigenform of the Hodge Laplacian $\Delta_{H}$ on $\Lambda^{1}(T^{*}Y)$ satisfying
\begin{align*}
\delta_{Y}\eta&=0,
\end{align*}
and $m=m(t)$. 
\end{enumerate}
Let us start by solving $\bkill\tilde{\omega}=0$ assuming that $\tilde{\omega}$ is of type (a).  In this case, from \eqref{typea} we conclude that the functions $l$ and $m$ satisfy the following system of ordinary differential equations
\begin{align}\label{bkilltypea}
\begin{split}
\ddot{l}&=\frac{2}{3}\mu l+\frac{\mu}{3}\dot{m},\\
\ddot{m}&=-\frac{1}{2}\dot{l}+\left(\frac{3}{2}\mu-4\kappa\right)m.
\end{split}
\end{align}
If we let $l_{1}=l$, $l_{2}=\dot{l}_{1}$, $m_{1}=m$ and $m_{2}=\dot{m}_{1}$, the system \eqref{bkilltypea} is equivalent to the first order linear system
\begin{align*}
\dot{X}=AX,
\end{align*}
where $X$ and $A$ are given by
\begin{align*}
X=\left(
\begin{array}{c}
l_{1}\\
l_{2}\\
m_{1}\\
m_{2}
\end{array}
\right),~
A=\left(
\begin{array}{cccc}
0&1&0&0\\
\frac{2}{3}\mu&0&0&\frac{\mu}{3}\\
0&0&0&1\\
0&-\frac{1}{2}&\left(\frac{3}{2}\mu-4\kappa\right)&0 
\end{array}
\right).
\end{align*}
The characteristic roots of the matrix $A$ are $\pm\alpha^{\pm}(\mu)$ where $\alpha^{\pm}(\mu)$ is given by
\begin{align}\label{crtypea}
\alpha^{\pm}=\alpha^{\pm}(\mu)=\sqrt{\mu-2\kappa\pm 2\sqrt{\kappa^{2}-\frac{\mu}{3}\kappa}},
\end{align}
We now consider solutions of $\square_{\mathcal{K}}\tilde{\omega}=0$ with $\tilde{\omega}$ of type (b).  If $\tilde{\omega}$ is as in \eqref{typeb},  the system $\square_{\mathcal{K}}\tilde{\omega}=0$ takes the form
\begin{align}\label{crtypeb}
\ddot{m}-\nu m+4\kappa m=0,
\end{align}
and the characteristic roots of this equation are
\begin{align*} 
\pm\sqrt{\nu-4\kappa}.
\end{align*}

Let $0=\mu_{0}<\mu_{1}<\ldots$ be all the eigenvalues of $\Delta_{H}$ on $\Lambda^{0}(T^{*}Y)$ and let $\nu_{j}$ for $j=0,1,\ldots,$ denote all the eigenvalues of $\Delta_{H}$ on co-closed forms in $\Lambda^{1}(T^{*}Y)$. In particular if $\kappa=1$
and $\Gamma = \{e\}$, $\mu_{j}=j(j+2)$. We have
 
\begin{proposition}
Let $(Z, \tilde{\omega})$ be a solution of \eqref{mix}.  Then up to addition of 1-forms which are dual to conformal Killing fields, the 1-form $\tilde{\omega}$ can be written as follows.
\begin{enumerate}
\item[(a)] If $\kappa=1$ and $\Gamma = \{e\}$,  
$\tilde{\omega}$ is an infinite sum of the form
\begin{align*}
&\sum_{j=2}^{\infty}e^{\pm\beta_{j}t}\left(\left\{\phi_{1j}^{\pm}\cos(\gamma_{j}t)+\phi_{2j}^{\pm}\sin(\gamma_{j}t)\right\}dt+\left\{c^{\pm}_{1j}\cos(\gamma_{j}t)d\phi^{\pm}_{1j}+c_{2j}^{\pm}\sin(\gamma_{j}t)d\phi^{\pm}_{2j}\right\}\right)\\
&+\sum_{j=2}^{\infty}\left(e^{\delta_j t}\omega^{+}_{j}+e^{-\delta_j t}\omega^{-}_{j}\right),
\end{align*} 
where $\phi^{\pm}_{1j}$ and $\phi^{\pm}_{2j}$ are eigenfunctions of $\Delta_{H}$ corresponding to $\mu_{j}=j(j+2)$, and the coefficients $c_{1j}^{\pm}$ and $c^{\pm}_{2j}$ are constants,
for $j \geq 2$. The rates $\beta_{j}$ satisfy $\sqrt{6}<\beta_{j}$ for $j \geq 2$ 
and are given by $\beta_j = Re( \alpha_j^+)$, $\gamma_{j} = Im( \alpha_j^+)$, where 
\begin{align*}
\alpha_{j}^{\pm}= \sqrt{j(j+2)-2 \pm \frac{2}{3}\sqrt{-1} \cdot \sqrt{3(j-1)(j+3)}}.
\end{align*}
Also, $\omega^{\pm}_{j}$ are eigenforms corresponding to the eigenvalues 
$\nu_{j} = (j+1)^2$ of $\Delta_{H}$ on co-closed forms, and 
$\delta_j = \sqrt{ \nu_j -4}$. 

If $\Gamma \neq \{e\}$ then $\pm \alpha_j^{\pm}$ or $\pm \delta_j$ 
will occur as indicial roots if and only if the corresponding eigenfunction or 
eigenform descends to the quotient $S^3/\Gamma$, respectively. 

\item[(b)] If $\kappa=-1$, then 
\begin{align*}
\tilde{\omega}=&\sum_{j=1}^{\infty}\Big( \left\{e^{\pm\sigma^{+}_{j}t}\phi^{\pm}_{1j}+e^{\pm\sigma^{-}_{j}t}\phi^{\pm}_{2j}\right\}dt+c^{\pm}_{1j}e^{\pm\sigma_{j}^{+}t}d\phi^{\pm}_{1j}+c^{\pm}_{2j}e^{\pm\sigma^{-}_{j}t}d\phi^{\pm}_{2j} \Big)\\
&\sum_{j=0}^{\infty}\left(e^{\tau_{j}t}\omega^{+}_{j}+e^{-\tau_{j}t}\omega^{-}_{j}\right),
\end{align*}
where $\omega_{0}^{\pm}$ are harmonic 1-forms in $\Lambda^{1}(T^{*}Y)$.  The numbers $\sigma^{\pm}_{j}$ for $j \geq 1$ are real and are given by 
\begin{align}
\sigma^{\pm}_{j}=\sqrt{\mu_{j}+2\pm2\sqrt{1+\frac{\mu_{j}}{3}}},
\end{align}
where $\mu_j$ are the eigenvalues with respect to the hyperbolic metric. 
The numbers $\tau_{j}$ are also real and are given by $\tau_{j}=\sqrt{\nu_{j}+4}$,
where $\nu_j$ are the eigenvalues with respect to the hyperbolic metric. 
The coefficients $c_{1j}^{\pm}$ and $c^{\pm}_{2j}$ for $j \geq 1$ are constants.
\item[(c)] If $\kappa=0$, 
\begin{align*}
\tilde{\omega}=\sum_{j=1}^{\infty}e^{\pm t\sqrt{\mu_{j}}}\left(\left\{\phi_{1j}^{\pm}+ t\phi_{2j}^{\pm}\right\}dt+\left\{c_{1j}^{\pm} d\phi_{1j}^{\pm} + tc_{2j}^{\pm} d\phi_{2j}^{\pm} \right\}\right)
+\sum_{j=1}^{\infty}\left(e^{t\sqrt{\nu_{j}}} \omega_j^+ +e^{-t\sqrt{\nu_{j}}} \omega_j^-\right),
\end{align*}
where the notation is as above, but with eigenvalues 
$\mu_j$ and $\nu_j$ corresponding to the metric on $T^3$.  
\end{enumerate}
\end{proposition}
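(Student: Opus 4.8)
The plan is to use the observation made at the start of this section: any solution $(Z,\tilde\omega)$ of \eqref{mix} forces $\bkill(\tilde\omega)=\delta_g\mathcal{K}_g(\tilde\omega)=0$ (equation \eqref{divconfkill}), so $\tilde\omega$ is constrained by the explicit formula for $\bkill(\tilde\omega)$ derived just above. The first step is to note that $\bkill$ is block–diagonal with respect to the decomposition of $\Lambda^1(T^*M)$ into forms of type (a), $c(t)\phi\,dt+k(t)\,d\phi$ with $\Delta_H\phi=\mu\phi$, and type (b), $m(t)\eta$ with $\Delta_H\eta=\nu\eta$ and $\delta_Y\eta=0$: the $dt$– and $T^*Y$–components of $\bkill(\tilde\omega)$ involve only $\Delta_H$, $d$, $\delta_Y$ and $t$–derivatives, so they preserve each eigenspace of $\Delta_H$ on $\Lambda^0(Y)$ and each eigenspace of $\Delta_H$ on co-closed $1$-forms, and within type (a) they couple only the coefficient pair $(c,k)$. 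Hence it suffices to solve $\bkill(\tilde\omega)=0$ on each type separately, which is exactly the reduction to the ODE systems \eqref{bkilltypea} and \eqref{crtypeb} already carried out.

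Next I would read off the solutions of those ODEs. For type (a), the first–order system $\dot X=AX$ has characteristic roots $\pm\alpha^{\pm}(\mu)$ with $\alpha^{\pm}(\mu)=\sqrt{\mu-2\kappa\pm 2\sqrt{\kappa^{2}-\tfrac{\mu}{3}\kappa}}$ as in \eqref{crtypea}; for type (b) the roots are $\pm\sqrt{\nu-4\kappa}$. Expanding $\tilde\omega$ in $\Delta_H$–eigenmodes, every solution of $\bkill(\tilde\omega)=0$ is therefore an infinite sum over the eigenvalues $\mu_j$ (on functions) and $\nu_j$ (on co-closed $1$-forms) of exponentials $e^{\pm\alpha^{\pm}(\mu_j)t}$ and $e^{\pm\sqrt{\nu_j-4\kappa}\,t}$ times the corresponding eigenfunctions or eigenforms, with an extra factor of $t$ whenever a root is repeated. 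I would then substitute the spectral data. For $\kappa=1$ with $\Gamma$ trivial one has $\mu_j=j(j+2)$ and, by \cite{foll}, $\nu_j=(j+1)^2$ on co-closed forms; since $\mu_j/3>1$ for $j\ge 2$, a short rewriting puts $\alpha^{\pm}(j(j+2))$ in the form $\sqrt{j(j+2)-2\pm\tfrac{2}{3}\sqrt{-1}\cdot\sqrt{3(j-1)(j+3)}}$, with real part $\beta_j>\sqrt{6}$ and imaginary part $\gamma_j$, producing the oscillatory–exponential modes in (a), while $\delta_j=\sqrt{\nu_j-4}$ gives the type (b) rates. For $\kappa=-1$ the inner radical $\sqrt{1+\mu/3}$ is real and one gets $\sigma_j^{\pm}=\sqrt{\mu_j+2\pm2\sqrt{1+\mu_j/3}}$ and $\tau_j=\sqrt{\nu_j+4}$. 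For $\kappa=0$ the inner radical vanishes, so each type (a) root $\pm\sqrt{\mu_j}$ is double, forcing the $t$–linear factors $\phi_{1j}^{\pm}+t\phi_{2j}^{\pm}$, and the type (b) rates are $\pm\sqrt{\nu_j}$.

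It remains to handle the bottom of the spectrum, which is where ``up to conformal Killing fields'' enters. For $\kappa=1$, the type (a) mode at $\mu_1=3$ (gradients of first eigenfunctions) and the type (b) mode at $\nu_1=4$ (co-closed eigenforms dual to Killing fields of $S^3$) give characteristic roots $\pm 1$ and $0$, and the corresponding $\tilde\omega$ are exactly duals of conformal Killing fields; for nontrivial $\Gamma$ one keeps a mode precisely when the eigenfunction or eigenform descends to $S^3/\Gamma$, which accounts for the stated dependence on $\Gamma$. For $\kappa=-1$ and $\kappa=0$, the eigenvalue $\mu_0=0$ has $d\phi=0$, so the type (a) solution is $\tilde\omega=(at+b)\,dt$; here $b\,dt$ is Killing, and the non-Killing piece $at\,dt$ is ruled out using the full system, since its contribution to $\mathcal{K}_g(\tilde\omega)$ has a nonzero constant $dt\otimes dt$–component, which by the first line of \eqref{adjoint} would equal $-\tfrac12\delta_Y^2Z$, impossible because $\int_Y\delta_Y^2Z\,dV_{g_Y}=0$; the polynomial partner $t\eta$ of a harmonic $1$-form (the $\nu_0=0$ type (b) mode, which in the hyperbolic case itself survives with rate $\pm2=\pm\sqrt{\nu_0+4}$) is excluded in the same spirit. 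Collecting the surviving modes in each of the three cases gives the asserted expansions.

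The part I expect to be most delicate is not the ODE theory but this bottom-of-spectrum bookkeeping: one must identify exactly which low eigenvalue solutions of $\bkill(\tilde\omega)=0$ are conformally Killing — using Corollary \ref{stard} together with eigenvalue facts in the spirit of Lemma \ref{eigIII} — and, for the ones that are not but still grow in $t$, argue from the structure of $\mathcal{D}^*$ in \eqref{adjoint} that they cannot be balanced by any $Z$. The radical manipulations for $\kappa=1$ (checking that $\alpha^{\pm}(j(j+2))$ has exactly the claimed real and imaginary parts and that $\beta_j>\sqrt{6}$) are routine but must be done carefully to pin down the coefficients correctly.
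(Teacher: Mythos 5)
Your overall route is the same as the paper's: reduce to $\bkill(\tilde\omega)=0$ via \eqref{divconfkill}, split $\tilde\omega$ into types (a) and (b), solve the ODE systems \eqref{bkilltypea} and \eqref{crtypeb}, and substitute the spectral data for $\kappa=1,-1,0$. The gap is in the bottom-of-spectrum step for $\kappa=1$, which is exactly where ``up to conformal Killing fields'' and the starting index $j=2$ have to be earned. Your claim that the solutions at $\mu_1=3$ (type (a)) and $\nu_1=4$ (type (b)) ``are exactly duals of conformal Killing fields'' is false: at $\mu=3$ the characteristic roots $\pm1$ of \eqref{bkilltypea} are double roots with only one-dimensional eigenspaces (the eigenvector solutions are the conformal Killing forms $e^{\pm t}(\phi\,dt\mp d\phi)$), so there are genuine Jordan-block solutions growing like $t e^{\pm t}$ which are not conformally Killing; at $\nu=4$ the root $0$ of \eqref{crtypeb} is double and the solution $t\eta$, with $\eta$ dual to a Killing field of $S^3$, satisfies $\mathcal{K}_g(t\eta)=dt\odot\eta\neq 0$. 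The same issue occurs at $\mu_0=0$ for $\kappa=1$ (the solution $t\,dt$), which your $\kappa=1$ discussion omits entirely.

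To prove the proposition as stated one must show that none of these non-Killing low-mode solutions of $\bkill(\tilde\omega)=0$ can occur as the $\tilde\omega$ of a solution of \eqref{mix}, i.e.\ that $\mathcal{K}_g(\tilde\omega)$ is then not in the image of $\mathcal{D}^*$. The paper does this by noting that for such $\tilde\omega$ the tensor $\mathcal{K}_g(\tilde\omega)$ has the special forms \eqref{muzero}, \eqref{mu3} (or $\eta\odot dt$ in the type (b) case), and that by the mode decomposition \eqref{imtypeI}--\eqref{imtypeIII} an element of this form can lie in the image of $\mathcal{D}^*$ only if it arises from $f(t)\,\mathcal{K}_{g_Y}(d\phi)$ or $f(t)\,\mathcal{K}_{g_Y}(\eta)$, which vanish identically because $d\phi$ and $\eta$ are conformally Killing on $S^3$; hence $\mathcal{K}_g(\tilde\omega)=0$, a contradiction. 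You acknowledge in your closing paragraph that the non-Killing growing modes must be ``ruled out from the structure of $\mathcal{D}^*$,'' but you never carry this out; the only exclusion you actually supply — the nice observation that a constant $dt\otimes dt$ component would force $-\tfrac12\delta_Y^2 Z$ to be a nonzero constant, contradicting $\int_Y\delta_Y^2Z\,dV_{g_Y}=0$ — handles only the $t\,dt$ mode, and does not touch the $te^{\pm t}(\phi\,dt\mp d\phi)$-type or $t\eta$ modes. Supplying the image-of-$\mathcal{D}^*$ argument for those modes (or an equivalent) is the missing essential step; the rest of your outline (block-diagonality of $\bkill$, the ODE roots, the radical manipulations giving $\alpha_j^\pm$, $\beta_j>\sqrt6$, $\delta_j$, $\sigma_j^\pm$, $\tau_j$, and the double roots forcing $t$-factors when $\kappa=0$) matches the paper and is fine.
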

\begin{proof}
For the case $\kappa=1$, the only real roots in \eqref{crtypea} correspond to the eigenvalues $\mu=0,3$ of $\Delta_{H}$ on $\Lambda^{0}(T^{*}Y)$, however, we see in either case that for the solution $\tilde{\omega}$ of \eqref{divconfkill} obtained, $\mathcal{K}_{g}(\tilde{\omega})$ is not in the image of $\mathcal{D}^{*}$. To clarify this observation, we note that for $\mu=0$,  $\mathcal{K}_{g}(\tilde{\omega})$ has the form
\begin{align}\label{muzero}
l(t)dt \otimes dt+k(t)g_{Y},
\end{align}
i.e., $l$ and $k$ only depend on $t$ and for $\mu=3$, $\mathcal{K}_{g}(\tilde{\omega})$ has the form
\begin{align}\label{mu3}
 c_{1}(t)\phi dt\otimes dt+c_{2}(t)d\phi\odot dt+c_{3}(t)\phi g_{Y},
 \end{align}
where $\phi$ is a spherical harmonic of order 1 (and hence $d\phi$ is conformally Killing with respect to the metric $g_{Y}$). From \eqref{imtypeI}, \eqref{imtypeII} and \eqref{imtypeIII},  we conclude that elements of the form \eqref{muzero} or \eqref{mu3} in the image of $\mathcal{D}^{*}$ can only arise from evaluating $\mathcal{D}^{*}$ at an element of the form $f(t)\cdot\mathcal{K}_{g_{Y}}(d\psi)$, but in either case $\mathcal{K}_{g_{Y}}(d\psi)=0$ and therefore $\tilde{\omega}$ must be conformally Killing with respect to the metric $g$. Similarly, for forms of type (b) in the case $\kappa=1$, we see that the solution of \eqref{divconfkill} obtained for the least positive eigenvalue of $\Delta_{H}$ on co-closed forms in $\Lambda^{1}(T^{*}Y)$ (which is $\nu=4$), is dual to a Killing field in $Y$, and in this case $\mathcal{K}_{g}(\tilde{\omega})$ is not in the image of $\mathcal{D}^{*}$ either. It is also easy to see from \eqref{crtypea} that for $\mu>3$ all the rates $\alpha^{\pm}(\mu)$ satisfy $|Re(\alpha^{\pm}(\mu))|>\sqrt{6}$. The rest of the Proposition follows from straightforward computations. 

\end{proof}

We are now ready to describe the general solution of \eqref{mix}. If $(Z,\tilde{\omega})$ is a solution of \eqref{mix}, then we can write $Z$ as
\begin{align*}
Z=Z_{0}+\tilde{Z},
\end{align*}
where $Z_{0}$ satisfies $\mathcal{D}^{*}Z_{0}=0$ and $\tilde{Z}$ is a non-zero solution of  \eqref{mix}. We now prove that this solution $\tilde{Z}$ indeed exists.

\begin{proposition}
\label{sec6prop}
Let $\tilde{\omega}\in\Lambda^{1}(T^{*}M)$ be a solution  of \eqref{divconfkill}
of type (a) or type (b)  which is not conformally Killing. 
There exists a nonzero $\tilde{Z}\in S^{2}_{0}(T^{*}M)$ such that 
$\mathcal{D}^{*}\tilde{Z}=\mathcal{K}_g\tilde{\omega}$.
\end{proposition}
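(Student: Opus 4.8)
The plan is to solve the inhomogeneous system \eqref{mix} one mode at a time. Using the decomposition of $\tilde{\omega}$ into forms of type (a) and type (b) introduced in this section, and the fact that \eqref{mix}, the formula \eqref{adjoint} for $\mathcal{D}^{*}$, and the formula \eqref{kform} for $\mathcal{K}_{g}$ all respect this decomposition, it suffices to produce a nonzero $\tilde{Z}$ for a single mode $\tilde{\omega}$; moreover $\tilde{Z}\neq 0$ is then automatic, since $\mathcal{K}_{g}\tilde{\omega}\neq 0$ because $\tilde{\omega}$ is not conformally Killing. Throughout I use that $\mathcal{D}^{*}$ has divergence-free image (equivalently $\delta_{g}\mathcal{D}^{*}=0$, the adjoint statement to $\mathcal{D}\mathcal{K}_{g}=0$ from Proposition \ref{slashdK}), so that both sides of \eqref{mix} are automatically traceless and divergence-free; for the right-hand side, divergence-freeness is precisely the hypothesis \eqref{divconfkill}.

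\textbf{Type (b).} Write $\tilde{\omega}=m(t)\eta$ with $\Delta_{H}\eta=\nu\eta$ and $\delta_{Y}\eta=0$, and suppose first that $\mathcal{K}_{g_{Y}}(\eta)\neq 0$. A direct computation from \eqref{kform}, using $\mathcal{L}_{g_{Y}}(\eta)=\mathcal{K}_{g_{Y}}(\eta)$ (valid since $\delta_{Y}\eta=0$), gives $\mathcal{K}_{g}(\tilde{\omega})=\dot{m}\,(\eta\odot dt)+m\,\mathcal{K}_{g_{Y}}(\eta)$, which is of the shape \eqref{imtypeII}. So I try the ansatz $\tilde{Z}=f(t)\mathcal{K}_{g_{Y}}(\eta)$. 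Evaluating \eqref{adjoint} with the help of $\delta_{Y}\mathcal{K}_{g_{Y}}(\eta)=(4\kappa-\nu)\eta$ and $\Delta_{g_{Y}}\mathcal{K}_{g_{Y}}(\eta)=(6\kappa-\nu)\mathcal{K}_{g_{Y}}(\eta)$ from Lemma \ref{commutator}, of $\delta_{Y}^{2}\mathcal{K}_{g_{Y}}(\eta)=0$, and of $\slashd\mathcal{K}_{g_{Y}}(\eta)=\mathcal{K}_{g_{Y}}(\tilde{*}d\eta)=\pm\sqrt{\nu}\,\mathcal{K}_{g_{Y}}(\eta)$ (from \eqref{alpha5} and Corollary \ref{stard}), the mixed ($dt\odot T^{*}Y$) component of $\mathcal{D}^{*}\tilde{Z}$ reduces to $\tfrac{1}{2}(4\kappa-\nu)(\dot{f}\pm\sqrt{\nu}\,f)\,\eta$. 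Matching it to $\dot{m}\,\eta$ is a first-order linear ODE for $f$, solvable because $4\kappa-\nu\neq 0$ (Lemma \ref{lemc}). It then remains only to check that the purely spherical component of $\mathcal{D}^{*}\tilde{Z}$ equals $m\,\mathcal{K}_{g_{Y}}(\eta)$; this follows because both $\mathcal{D}^{*}\tilde{Z}$ and $\mathcal{K}_{g}\tilde{\omega}$ are divergence-free, a divergence-free tensor $\beta_{1}(t)(\eta\odot dt)+\beta_{2}(t)\mathcal{K}_{g_{Y}}(\eta)$ satisfies $\dot{\beta}_{1}+(4\kappa-\nu)\beta_{2}=0$, and $m$ itself obeys $\ddot{m}=(\nu-4\kappa)m$ by \eqref{crtypeb}.

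\textbf{Type (a).} Write $\tilde{\omega}=c(t)\phi\,dt+k(t)\,d\phi$ with $\Delta_{H}\phi=\mu\phi$, $\mu>0$, and suppose first that $\mathcal{K}_{g_{Y}}(d\phi)\neq 0$. From \eqref{kform}, $\mathcal{K}_{g}(\tilde{\omega})$ has the shape \eqref{imtypeI}, $a_{1}\phi\,dt\otimes dt+a_{2}\,d\phi\odot dt+a_{3}\mathcal{K}_{g_{Y}}(d\phi)+a_{4}\phi\,g_{Y}$ with $a_{1}=\tfrac{3}{2}\dot{c}+\tfrac{1}{2}\mu k$; moreover $a_{1}$ cannot vanish identically, for if it did then tracelessness ($a_{4}=-a_{1}/3$) together with divergence-freeness would force $a_{2}=a_{3}=0$, i.e. $\mathcal{K}_{g}\tilde{\omega}=0$, contrary to hypothesis. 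I take the ansatz $\tilde{Z}=f(t)\mathcal{K}_{g_{Y}}(d\phi)$. The key observation is that the $dt\otimes dt$ component of $\mathcal{D}^{*}\tilde{Z}$ equals $-\tfrac{1}{2}\delta_{Y}^{2}\tilde{Z}=\tfrac{1}{2}\mu(4\kappa-\tfrac{4}{3}\mu)f\,\phi$, which involves no $t$-derivatives, so matching it to $a_{1}\phi$ determines $f$ \emph{algebraically} (using $\mu\neq 0$ and $4\kappa-\tfrac{4}{3}\mu\neq 0$, again from Lemma \ref{lemc}). The other three components of $\mathcal{D}^{*}\tilde{Z}$ then match those of $\mathcal{K}_{g}\tilde{\omega}$ automatically: both tensors are traceless and divergence-free, and a check from \eqref{hform} and Lemma \ref{commutator} shows that a traceless divergence-free tensor of the form $\alpha_{1}\phi\,dt\otimes dt+\alpha_{2}\,d\phi\odot dt+\alpha_{3}\mathcal{K}_{g_{Y}}(d\phi)+\alpha_{4}\phi\,g_{Y}$ is determined completely by its function $\alpha_{1}$.

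The modes with $\mathcal{K}_{g_{Y}}(d\phi)=0$ or $\mathcal{K}_{g_{Y}}(\eta)=0$ are exceptional and must be handled by hand; by the classification of solutions of \eqref{divconfkill} proved just above, up to conformal Killing fields these arise only for $\mu=3$, $\nu=4$ when $\kappa=1$ (whose eigenforms are themselves conformally Killing, hence excluded by hypothesis) and for parallel $1$-forms when $\kappa=0$. I expect the main obstacle to be the closing step of each of the two principal cases: showing that once the single ``driving'' component has been matched, the remaining components of $\mathcal{D}^{*}\tilde{Z}$ and $\mathcal{K}_{g}\tilde{\omega}$ automatically coincide. The mechanism is that the traceless and divergence-free conditions rigidify a mode down to one free function of $t$, and the compatibility relation encoded in \eqref{divconfkill} --- namely \eqref{crtypeb} for type (b), and the analogous relation between $c$, $k$ for type (a) --- is precisely what forces the ansatz to close up; keeping track of which eigenvalues make the denominators $4\kappa-\nu$ and $4\kappa-\tfrac{4}{3}\mu$ vanish is where the exceptional modes get separated off.
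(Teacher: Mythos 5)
Your argument follows essentially the same route as the paper: the same single-mode ansatz $\tilde{Z}=f(t)\,\mathcal{K}_{g_{Y}}(d\phi)$ (type I) and $\tilde{Z}=f(t)\,\mathcal{K}_{g_{Y}}(\eta)$ (type II), with $f$ determined algebraically from the $dt\otimes dt$ component in case (a) and by a first-order ODE from the mixed component in case (b), and the same use of Lemma \ref{lemc} to keep the factors $2\kappa-\tfrac{2}{3}\mu$ and $4\kappa-\nu$ away from zero. Where you differ is only in the closing step: the paper verifies (really, asserts via \eqref{bkilltypea} and \eqref{crtypeb}) that the remaining equations of the systems \eqref{flm} and \eqref{fm} are then satisfied, whereas you argue structurally that both $\mathcal{D}^{*}\tilde{Z}$ and $\mathcal{K}_{g}\tilde{\omega}$ are traceless and divergence-free and that a modal tensor of the relevant shape is determined by a single coefficient (using $\delta_{Y}\mathcal{K}_{g_{Y}}(d\phi)=(4\kappa-\tfrac{4}{3}\mu)d\phi$ and $\delta_{Y}\mathcal{K}_{g_{Y}}(\eta)=(4\kappa-\nu)\eta$); this is correct, makes explicit exactly how the constraint \eqref{divconfkill} closes the system, and is a slightly cleaner justification of the same computation. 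One caveat: your dismissal of the exceptional modes is not right as stated. When $\kappa=1$, $\nu=4$ and $\eta$ is Killing, $\tilde{\omega}=t\eta$ solves \eqref{crtypeb} and is \emph{not} conformally Killing on the cylinder (here $\mathcal{K}_{g}\tilde{\omega}=\eta\odot dt$, which is in fact not in the range of $\mathcal{D}^{*}$), and the analogous phenomenon occurs for the $\mu=3$ generalized ($te^{\pm t}$-type) solutions and for parallel forms when $\kappa=0$; so these modes are not excluded by the hypothesis that $\tilde{\omega}$ is not conformally Killing. The paper's own proof quietly restricts to eigenforms that are not conformally Killing on $Y$, so this reflects an imprecision in the statement shared by the paper rather than a defect peculiar to your argument, but you should impose that restriction explicitly rather than claim it follows from the hypothesis.
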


\begin{proof}
For the proof, we consider a solution of \eqref{mix} with $\tilde{\omega}$ of type 
(a), that is, 
\begin{align*}
\tilde{\omega}=l\phi dt+md\phi,
\end{align*}
where $\phi$ is an eigenfunction of $\Delta_{H}$ on $\Lambda^{0}(T^{*}Y)$ with eigenvalue $\mu$ and $l,m$ are solutions of \eqref{bkilltypea}. On the other hand, for the element of type I, $f(t)\cdot\mathcal{K}_{g_{Y}}(d\phi)$, the operator $\mathcal{D}^{*}$ can be computed explicitly following the results in Section \ref{adjointeq} as
\begin{align*}
\mathcal{D}^{*}\left(f\mathcal{K}_{g_{Y}}(d\phi)\right)=\Big\{\mu(2\kappa-\frac{2}{3}\mu)f\phi,\dot{f}(2\kappa-\frac{2}{3}\mu)d\phi,\\
-\frac{1}{2}\left(\ddot{f}-\frac{\mu}{3}f\right)\mathcal{L}_{g_Y}(d\phi)-\frac{\mu}{3}\left(\ddot{f}-(\mu-2\kappa)f\right)\phi g_{Y}\Big\}.
\end{align*}
Suppose that $d\phi$ is not conformally Killing with respect to $g_{Y}$, then 
$\mathcal{L}_{g_Y}(d\phi)$ and $\phi g_{Y}$ are linearly independent. 
If we write $Z=f\mathcal{K}_{g_{Y}}(d\phi)$, then we can solve for $f$ such that $\mathcal{D}^{*}Z=\mathcal{K}_{g}(\tilde{\omega})$ by considering the system 
\begin{align}\label{flm}
\begin{split}
\mu(2\kappa-\frac{2}{3}\mu)f&=\frac{3}{2}\dot{l}+\frac{\mu}{2}m,\\
(2\kappa-\frac{2}{3}\mu)\dot{f}&=l+\dot{m},\\
-\frac{1}{2}\left(\ddot{f}-\frac{\mu}{3}f\right)&=m,\\
-\frac{\mu}{3}\left(\ddot{f}-(\mu-2\kappa)f\right)&=-\frac{1}{2}(\dot{l}-\mu m).
\end{split}
\end{align} 
Note that from the condition that $d\phi$ is not 
conformally Killing we see that in the cases $\kappa=\pm 1$ we 
have $\mu(2\kappa-\frac{2}{3}\mu)\ne 0$ and
then, from \eqref{bkilltypea} we see that if we set
\begin{align*}
f=\frac{1}{\mu(2\kappa-\frac{2}{3}\mu)}\left(\frac{3}{2}\dot{l}+\frac{\mu}{2}m\right),
\end{align*}
then $f$  is a a nontrivial solution of 
the system \eqref{flm} unless $\tilde{\omega}$ 
is conformally Killing with respect to the 
metric $g=dt^2+g_{Y}$  and 
hence $Z=f\mathcal{K}_{g_{Y}}(d\phi)$ 
is a nontrivial solution of \eqref{mix}. The case $\kappa = 0$ is similar. 

If now $\tilde{\omega}$ is of type (b),
we can write
$\tilde{\omega}=m\eta$
where $\eta\in\Lambda^{1}(T^{*}Y)$ 
is a co-closed eigenform of
$\Delta_{H}$ with eigenvalue 
$\nu$. Let us also consider
an element of type II written as
$Z=f\mathcal{K}_{g_{Y}}(\eta)$ where $f$ is a function. 
 Assuming that $\eta$ 
 is not conformally Killing,
 we have
\begin{align*}
\mathcal{D}^{*}Z=\left\{0,\frac{1}{2}(4\kappa-\nu)\left(\dot{f}\pm\sqrt{\nu}f\right)\eta,\frac{1}{2}\left(-\ddot{f}\mp \sqrt{\nu}\dot{f}\right)\mathcal{L}_{g_Y}(\eta)\right\},
\end{align*}
where the sign of $\pm\sqrt{\nu}$ arises from Corollary \ref{stard}. 
In order to 
solve
 $\mathcal{D}^{*}Z=\mathcal{K}_{g}(\tilde{\omega})$, 
 we consider $\tilde{\omega}=m\eta$, where 
 $m$ solves \eqref{crtypeb} so \eqref{mix} reduces to the system
\begin{align}\label{fm}
\begin{split}
\frac{1}{2}(4\kappa-\nu)\left(\dot{f}\pm\sqrt{\nu}f\right)&=\dot{m},\\
-\frac{1}{2}\left(\ddot{f}\pm \sqrt{\nu}\dot{f}\right)=m.
\end{split}
\end{align}  
and again since $\eta$ is not conformally Killing with respect to $g_{Y}$  it follows that $4\kappa-\nu$ is non-zero and if we find $f$ satisfying
\begin{align*}
\dot{f}\pm\sqrt{\nu}f=\frac{2\dot{m}}{4\kappa-\nu}, 
\end{align*}
 then $f$ is a solution of \eqref{fm} and $f\mathcal{K}_{g_{Y}}(\eta)$
is a solution of \eqref{mix} with $\tilde{\omega}=m\eta$.  We can choose $f$ to be
 \begin{align*}
 f(t)=e^{\pm\sqrt{\nu}t}f_{0}+\frac{2e^{\pm\sqrt{\nu}t}}{4\kappa-\nu}\int_{0}^{t}\dot{m}(s)e^{\mp\sqrt{\nu}s}ds,
 \end{align*}
 where $f_{0}$ is a constant. It is clear that we can 
 choose the constant $f_{0}$
 so that $f$ is a solution of
  \eqref{crtypeb}. The case $\kappa = 0$ is similar. 
\end{proof}

\section{Completion of proofs}
\label{proofsec}

We first state the following which determines all indicial 
roots of $F^*$ in the spherical case:
\begin{theorem}
\label{s3thm2}
Let $M$ be  $\R \times S^{3}/ \Gamma$ with 
product metric $g = dt^2 + g_{S^3/\Gamma}$, where $g_{S^3/\Gamma}$ is a 
metric of constant curvature $1$.  
Let $\mathcal{I}^*$ denote the set of indicial roots  of $F^*$. 

\begin{itemize} 
\item Case (0): $0 \in \mathcal{I}^*$.  

\item Case (1): If $\Gamma = \{e\}$ then $j = \pm 1 \in \mathcal{I}^*$. 
If $\Gamma$ is non-trivial, then  $j = \pm 1 \notin \mathcal{I}^*$.

\end{itemize}
All solutions in Case (0) and Case (1) are of the form $(0,\omega)$,
where $\omega$ is dual to a conformal Killing field 
(that is, $\mathcal{K}_g \omega = 0$). 

\begin{itemize}

\item Case (2): If $B$ is a nontrivial eigentensor of 
$\Delta_{S^3/\Gamma}$ on divergence-free symmetric $2$-tensors,
with eigenvalue $j^2 + 2j -2$ with $j \geq 2$, 
then $\{\pm j, \pm(j+2) \} \in \mathcal{I}^*$. 

\item Case (3): 
If $\omega$ is an eigenform of $\Delta_{S^3/\Gamma}$ 
on divergence-free $1$-forms with eigenvalue $(j+1)^2$, with $j \geq 2$,
then $\pm (j+1) \in \mathcal{I}^*$. 

\end{itemize}
All solutions in Case (2) and Case (3) are of the form $(Z,0)$.

\begin{itemize} 

\item Case (4) If $u$ is an eigenfunction of $\Delta_{S^3/\Gamma}$
with eigenvalue $j (j+2), j \geq 2$ then 
\begin{align}
\label{alphajdef}
\pm \alpha_j^{\pm} = \pm \sqrt{ j (j+2) -2  \pm \frac{2}{9} \sqrt{-1} \cdot \sqrt{(j+3)(j-1)} } \in \mathcal{I}^* 
\end{align}

\item Case (5) If $\omega$ is an eigenform of $\Delta_{S^3/\Gamma}$ 
on divergence-free $1$-forms with eigenvalue $(j+1)^2$, with $j \geq 2$,
then 
\begin{align}
\label{deltajdef}
\pm \delta_j = \pm \sqrt{ (j + 1)^2 - 4} \in \mathcal{I}^*. 
\end{align}
\end{itemize}
All solutions in Case (4) and Case (5) are of the form $(Z,\omega)$
with both $Z$ and $\omega$ nontrivial and $\mathcal{K}_g \omega \neq 0$. 

\end{theorem}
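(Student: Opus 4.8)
The plan is to assemble the mode-by-mode analysis of Sections~\ref{adjointeq} and~\ref{mixedsec} with an explicit list of the conformal Killing fields of the cylinder. A pair $(Z,\omega)$ solves $F^*(Z,\omega)=\mathcal{D}^*Z-\mathcal{K}_g\omega=0$ precisely when $\mathcal{D}^*Z=\mathcal{K}_g\omega$; since $\mathcal{D}\circ\mathcal{K}_g=0$ (Proposition~\ref{slashdK}), dualizing gives $\delta_g\circ\mathcal{D}^*=0$, so as noted at the beginning of Section~\ref{mixedsec} the form $\omega$ is forced to solve $\bkill\omega=\delta_g\mathcal{K}_g\omega=0$. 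I would split according to whether $\mathcal{K}_g\omega=0$, i.e.\ $\omega$ is dual to a conformal Killing field, or not.

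Suppose first $\mathcal{K}_g\omega=0$, so the equation reduces to $\mathcal{D}^*Z=0$. The pairs $(0,\omega)$ with $\omega$ conformal Killing contribute indicial roots, which I would enumerate as follows: $(\R\times S^3/\Gamma,\,dt^2+g_{S^3/\Gamma})$ is conformal to $(\R^4\setminus\{0\})/\Gamma$ via $r=e^t$, and every conformal Killing field of a domain in $\R^4$ extends to one of $S^4$ (by Liouville's theorem), so the conformal Killing fields are the $\Gamma$-invariant elements of $\mathfrak{so}(5,1)$. Rewriting the dilation, rotations, translations, and special conformal fields of $\R^4$ in the coordinates $(t,y)$ and using that $x_i/r$ is a first spherical harmonic and $r=e^t$: the dilation $\partial_t$ and the rotations (Killing fields of $S^3/\Gamma$) give rate $\beta=0$, while translations and special conformal fields give rates $\beta=\mp 1$ and involve a first spherical harmonic. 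Since $\Gamma$ acts freely, no nontrivial element of $\Gamma$ fixes a direction in $\R^4$, so the latter descend to the quotient if and only if $\Gamma=\{e\}$; this is Cases~(0) and~(1). The equation $\mathcal{D}^*Z=0$ with $\omega=0$ is then resolved by Propositions~\ref{adjtype3}(a) and~\ref{sec5prop}(a): the divergence-free (type~III) part contributes rates $\beta_j\pm 1$ with $\beta_j=\sqrt{\lambda_j+3}$, and for the eigenvalue $\lambda_j=j^2+2j-2$ ($j\geq 2$) of $-\Delta_{S^3/\Gamma}$ on divergence-free traceless symmetric $2$-tensors one has $\beta_j=j+1$, yielding $\{\pm j,\pm(j+2)\}$ (Case~(2)); the $\mathcal{K}_{g_{Y}}(\omega)$ (type~II) part contributes rates $\pm\sqrt{\nu_j}=\pm(j+1)$ from co-closed $1$-eigenforms of eigenvalue $\nu_j=(j+1)^2$, $j\geq 2$ (Case~(3)); type~I solutions do not occur, by Section~\ref{adjointeq}. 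The low modes omitted here (constants, first harmonics, Killing $1$-forms, and the range $\lambda<6$ forbidden by Lemma~\ref{eigIII}(a)) are exactly those that make the corresponding tensor vanish or produce a conformal Killing field, which is why these families start at $j\geq 2$.

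Now suppose $\mathcal{K}_g\omega\neq 0$. Here I would invoke the classification in Section~\ref{mixedsec} of the non-conformally-Killing solutions of $\bkill\omega=0$ for $\kappa=1$: type~(a) solutions $\omega=l\phi\,dt+m\,d\phi$, with $\phi$ an eigenfunction of $\Delta_{S^3/\Gamma}$ of eigenvalue $\mu_j=j(j+2)$ (necessarily $j\geq 2$, since $\mu=0,3$ give conformal Killing fields) and $l,m$ solving~\eqref{bkilltypea}, whose characteristic roots~\eqref{crtypea} specialize to $\pm\alpha_j^{\pm}$ as in~\eqref{alphajdef} (with nonzero imaginary part, since $\mu_j>3$); and type~(b) solutions $\omega=m\eta$, with $\eta$ a co-closed $1$-eigenform of eigenvalue $\nu_j=(j+1)^2$, $j\geq 2$, and $m$ solving~\eqref{crtypeb} with roots $\pm\delta_j=\pm\sqrt{\nu_j-4}$ as in~\eqref{deltajdef}. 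For each such $\omega$, Proposition~\ref{sec6prop} supplies a nonzero $Z$ (of the corresponding type~I or type~II form) with $\mathcal{D}^*Z=\mathcal{K}_g\omega$ and the same exponential rate; since $\omega$ is not conformally Killing, both $Z$ and $\omega$ are nontrivial with $\mathcal{K}_g\omega\neq 0$. This is Cases~(4) and~(5). Completeness follows because any $Z\in S^2_0(T^*Y)$ decomposes into types I, II, and III (Corollary~\ref{3types}) and any $1$-form on $Y$ into types (a) and (b); in the spherical case all the characteristic roots above are simple, so no polynomial factors arise, and hence this list exhausts $\mathcal{I}^*$.

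The main obstacle I anticipate is not a single computation but the bookkeeping that makes the cases fit together: matching each ODE characteristic root to the closed forms $\alpha_j^{\pm}$ and $\delta_j$, tracking which eigenfunctions and eigenforms on $S^3$ descend to $S^3/\Gamma$ (and which low modes must be excluded), checking that the conformal Killing fields account for all roots with $|Re(\beta)|<2$ while Cases~(2)--(5) all have $|Re(\beta)|\geq 2$, and verifying that no cancellation occurs between the $\mathcal{D}^*Z$ and the $\mathcal{K}_g\omega$ contributions at a common rate, so that each listed $\beta$ really is an indicial root with a solution of the stated form. Since all of the analytic content is already isolated in Sections~\ref{adjointeq}--\ref{mixedsec}, this final step is organizational rather than requiring new estimates.
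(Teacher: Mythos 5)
Your proposal is correct and follows essentially the same route as the paper, whose proof of this theorem is simply to combine Propositions \ref{sec5prop} and \ref{sec6prop} (together with the classification of solutions of $\bkill\tilde{\omega}=0$ in Section \ref{mixedsec}) in the case $\kappa=1$. The only cosmetic difference is that you enumerate the conformal Killing fields for Cases (0)--(1) via Liouville's theorem on $(\R^4\setminus\{0\})/\Gamma$, whereas the paper reads them off from the low modes $\mu=0,3$ and $\nu=4$ of the same mode analysis in Section \ref{mixedsec}.
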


\begin{remark}{\em 
If $\Gamma = \{e\}$, then all of the above indicial 
roots do in fact occur. For nontrivial $\Gamma$, exactly which 
roots occur depends on which eigentensors descend from $S^3$ to the 
quotient $S^3/\Gamma$. 
}
\end{remark}
\begin{proof}[Proof of Theorem \ref{s3thm2}]
This follows from combining Propositions \ref{sec5prop} 
and \ref{sec6prop} for the case $\kappa = 1$. 
\end{proof}

\begin{proof}[Proof of Theorem \ref{s3thm}]
This follows immediately from Theorem \ref{s3thm2},
since Cases (2) and (3) obviously have real part 
larger than $2$, and it is easy to see that 
$|Re(\alpha_j^{\pm})| > \sqrt{6}$ and $|Re(\delta_j)| \geq \sqrt{5}$ for all 
$j \geq 2$. The determination of the conformal Killing fields 
follows easily from Section \ref{mixedsec}. 
\end{proof}

Next we state the following Theorem, which immediately 
implies Theorem \ref{kershort}. 

\begin{theorem}
\label{kerlong}
Let $M$ be  $\R \times S^{3}/ \Gamma$ with product metric $dt^2 + g_{S^3/\Gamma}$,
where $g_{S^3/\Gamma}$ is a metric of constant curvature $1$.  
Let $\mathcal{I}$ denote the set of indicial roots  of $F$. 

\begin{itemize} 
\item Case (0): $0 \in \mathcal{I}$.  
The corresponding kernel is 
\begin{align}
\label{case0}
span \{  3 dt \otimes dt - g_{S^3}, dt \odot \omega_0  \}
\end{align}
where $\omega_0$ is dual to a Killing field on $S^3/ \Gamma$.

\item Case (1): If $\Gamma = \{e\}$ then $j = \pm 1 \in \mathcal{I}$. 
If $\Gamma$ is non-trivial, then  $j = \pm 1 \notin \mathcal{I}$.
The corresponding kernel elements are given by 
\begin{align}
\label{case1}
h_{\phi} = p(t) \phi ( 3 dt \otimes dt - g_{S^3}) + q(t) (dt \odot d\phi), 
\end{align}
where $p(t) = C_3 e^t - C_4 e^{-t}$ and $q(t) = C_3 e^t + C_4 e^{-t}$, 
for some constants $C_3$ and $C_4$, and $\phi$ is 
a lowest nonconstant eigenfunction of $\Delta_{S^3/\Gamma}$.
In particular, if $\Gamma$ is nontrivial, 
then $j = \pm 1$ are not indicial roots.
\end{itemize}
All solutions in Case (0) and Case (1) are in the image of the 
conformal Killing operator. More precisely, 
\begin{align}
\label{case01}
 3 dt \otimes dt - g_{S^3} = \mathcal{K}_{g}(2tdt) \mbox{ and }  
dt \odot \omega_0 = \mathcal{K}_{g}(t \omega_0),
\end{align}
and 
\begin{align}
\label{case01'}
h_{\phi} = \mathcal{K}_{g} \Big\{  \frac{1}{2} \left( 
C_3(t+3)e^t - C_4(t-3)e^{-t} \right)  \phi dt
+  \frac{1}{2}\left(  - C_3te^t - C_4 te^{-t} \right) d\phi \Big\}.
\end{align}

\begin{itemize}

\item Case (2): If $B$ is a nontrivial eigentensor of 
$\Delta_{S^3/\Gamma}$ on divergence-free symmetric $2$-tensors,
with eigenvalue $j^2 + 2j -2$ with $j \geq 2$, 
then $\{\pm j, \pm(j+2) \} \in \mathcal{I}$. 

\item Case (3): 
If $\omega$ is an eigenform of $\Delta_{S^3/\Gamma}$ 
on divergence-free $1$-forms with eigenvalue $(j+1)^2$, with $j \geq 2$,
then $\pm (j+1) \in \mathcal{I}$. 

\end{itemize}
The kernel elements in Case (2) are of the form $h = f(t) B$, and
in Case (3) are of the form $h = f_{0}(t)\cdot \omega\odot dt+f_{1}(t)\cdot \mathcal{K}_{S^3}(\omega)$. 
Neither of these are in the image of the conformal 
Killing operator $\mathcal{K}_{g}$ of the cylinder. 
\begin{itemize} 

\item Case (4) If $u$ is an eigenfunction of $\Delta_{S^3/\Gamma}$
with eigenvalue $j (j+2), j \geq 2$ then $\pm \alpha_j^{\pm} \in \mathcal{I}$,
where $\alpha_j^{\pm}$ were defined in \eqref{alphajdef}. 

\item Case (5) If $\omega$ is an eigenform of $\Delta_{S^3/\Gamma}$ 
on divergence-free $1$-forms with eigenvalue $(j+1)^2$, with $j \geq 2$,
then $\pm \delta_j \in \mathcal{I}$, where $\delta_j$ were defined in \eqref{deltajdef}. 
\end{itemize}
All solutions in Case (4) and Case (5) are in the image of the 
conformal Killing operator of the cylinder. More precisely, they 
are exactly those solutions of $\square_{\mathcal{K},g} \omega = 0$ which are
not conformally Killing. 
\end{theorem}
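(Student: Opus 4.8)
The plan is to run, for the forward equation $F(h)=(\mathcal{D}h,2\delta h)=0$, the same separation-of-variables analysis that Sections~\ref{adjointeq} and~\ref{mixedsec} carried out for $F^{*}$, and then to read off $\mathcal{I}$ by comparison with Theorem~\ref{s3thm2}. The identity $\mathcal{I}=\mathcal{I}^{*}$ will come from the Lockhart--McOwen index theorem together with the observation that the set $\mathcal{I}^{*}$ of Theorem~\ref{s3thm2} is invariant under the reflection $\lambda\mapsto-\overline{\lambda}$: Cases~(0) and~(1) are real and symmetric under $\lambda\mapsto-\lambda$; Cases~(2),~(3),~(5) consist of $\pm$ a real number; and in Case~(4) one has $\overline{\alpha_{j}^{+}}=\alpha_{j}^{-}$, so $\{\pm\alpha_{j}^{\pm}\}$ is reflection-invariant. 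Since on a cylinder the indicial operator of $F$ at $\lambda$ is, up to conjugation, the adjoint of the indicial operator of $F^{*}$ at $-\overline{\lambda}$, and since crossing an indicial root of $F$ changes the cokernel of $F$ on weighted spaces --- which is the kernel of $F^{*}$ on the dual weight --- the corresponding multiplicities must agree, giving $\mathcal{I}=\mathcal{I}^{*}$.

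For the transverse contributions I would substitute $h=f(t)\cdot B$ with $B$ a divergence-free traceless eigentensor of $\Delta_{S^{3}/\Gamma}$ with eigenvalue $-\lambda$, $\lambda=j^{2}+2j-2$ and $j\ge 2$ by Lemma~\ref{eigIII}(a). Then $\delta h=0$ holds automatically, and by Proposition~\ref{linweyltangential}, Corollary~\ref{slashdconstant} (so $\slashd B=\pm 2\sqrt{\lambda+3}\,B$), together with the explicit form of $E'_{g_{Y}}$ on transverse-traceless tensors, the equation $\mathcal{D}h=0$ reduces to $\ddot f\mp 2(j+1)\dot f+(\lambda+2)f=0$, with characteristic roots $\pm(j+1)\pm 1=\{\pm j,\pm(j+2)\}$; this is Case~(2). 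For Case~(3) I would take $h=f_{0}(t)\cdot\omega\odot dt+f_{1}(t)\cdot\mathcal{K}_{S^{3}}(\omega)$ with $\omega$ a co-closed eigenform of $\Delta_{H}$ of eigenvalue $\nu=(j+1)^{2}$, $j\ge 2$ (the value $\nu=4$ being excluded because those eigenforms are dual to Killing fields and $\mathcal{K}_{S^{3}}(\omega)=0$, so the corresponding solutions belong to Case~(0)). Using \eqref{alpha5}, Corollary~\ref{stard}, the identity $E'_{g_{Y}}(\mathcal{K}_{S^{3}}(\omega))=0$, and \eqref{hform}, the system $\mathcal{D}h=0$, $\delta h=0$ reduces to $\dot f_{0}=(\nu-4)f_{1}$ together with an equation whose characteristic polynomial factors as $(r-\sqrt{\nu})(r^{2}-\nu+4)$, up to the sign of the eigenform; the roots $r=\pm\sqrt{\nu}=\pm(j+1)$ give the Case~(3) solutions, and the roots $r=\pm\sqrt{\nu-4}=\pm\delta_{j}$ satisfy $f_{0}=\dot f_{1}$, so that $h=\mathcal{K}_{g}(f_{1}\omega)$ and one recovers Case~(5).

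The remaining statements concern the image of the conformal Killing operator. By Proposition~\ref{slashdK} (equation~\eqref{alpha6}) one has $\mathcal{D}(\mathcal{K}_{g}\tilde{\omega})=0$ for every $\tilde{\omega}$, while $\delta(\mathcal{K}_{g}\tilde{\omega})=\bkill\tilde{\omega}$; hence $\mathcal{K}_{g}\tilde{\omega}\in\ker F$ exactly when $\bkill\tilde{\omega}=0$. Section~\ref{mixedsec} classifies all solutions of $\bkill\tilde{\omega}=0$, and those giving a \emph{nonzero} $\mathcal{K}_{g}\tilde{\omega}$ occur only at the weights $0$, $\pm 1$, $\pm\alpha_{j}^{\pm}$, and $\pm\delta_{j}$: at weight $0$ the non-conformally-Killing solutions are $\tilde{\omega}=2t\,dt$ and $\tilde{\omega}=t\,\omega_{0}$ with $\omega_{0}$ dual to a Killing field on $S^{3}/\Gamma$, which plug into \eqref{kform} to give the span~\eqref{case0} and to verify~\eqref{case01}; at weights $\pm 1$, occurring only when $\Gamma=\{e\}$, the type-(a) solution with $\mu=3$ (governed by~\eqref{bkilltypea}) produces $h_{\phi}$ as in~\eqref{case1}, and a direct computation with~\eqref{kform} yields~\eqref{case01'}; and at the higher weights $\pm\alpha_{j}^{\pm}$ (type~(a), $\mu=j(j+2)$, $j\ge2$) and $\pm\delta_{j}$ (type~(b), $\nu=(j+1)^{2}$, $j\ge2$) one obtains Cases~(4) and~(5), which are therefore in the image of $\mathcal{K}_{g}$ and are precisely the non-conformally-Killing solutions of $\bkill\tilde{\omega}=0$. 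Conversely, since $\bkill\tilde{\omega}=0$ has no nonzero-$\mathcal{K}_{g}$ solutions at the weights $\pm j$ or $\pm(j+2)$, and since along the $\pm(j+1)$ branch of Case~(3) one has $f_{0}\ne\dot f_{1}$, the Case~(2) and~(3) kernel elements are not in the image of $\mathcal{K}_{g}$.

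The main obstacle is not any single computation but the global bookkeeping: ensuring that $\ker F$ contains nothing beyond the families produced by this separation of variables, and that the split into transverse pieces (Cases~(2),~(3)) and conformal-Killing pieces (Cases~(0),~(1),~(4),~(5)) is exhaustive and matches the image of $\mathcal{K}_{g}$ exactly. This is where the index theorem is indispensable: it pins the total dimension of the indicial solution space of $F$ at each $\lambda\in\mathcal{I}^{*}$ equal to that of $F^{*}$, which is known explicitly from Theorem~\ref{s3thm2}, so that once the explicit families above are exhibited a dimension count shows they account for everything, and in particular that $\mathcal{I}=\mathcal{I}^{*}$ with the stated kernels.
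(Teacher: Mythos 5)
Your proposal is correct and follows essentially the same route as the paper: the Lockhart--McOwen index theorem transfers the indicial data of $F^{*}$ (Theorem \ref{s3thm2}) to $F$, the separated ans\"atze $f(t)B$ and $f_{0}(t)\,\omega\odot dt+f_{1}(t)\mathcal{K}_{g_{Y}}(\omega)$ give the ODEs for Cases (2)--(3), and Cases (0), (1), (4), (5) come from the classification of $\Box_{\mathcal{K},g}\tilde{\omega}=0$ in Section \ref{mixedsec} together with a dimension count. Your extra details (the coupled system in Case (3) exhibiting the Case (5) branch as $f_{0}=\dot f_{1}$, and the reflection-symmetry justification of $\mathcal{I}=\mathcal{I}^{*}$) are consistent with, and slightly more explicit than, the paper's argument.
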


\begin{remark}{\em 
As before, if $\Gamma = \{e\}$, then all of the above indicial 
roots do in fact occur. For nontrivial $\Gamma$, exactly which 
roots occur depends on which eigentensors descend to the 
quotient.
}
\end{remark}

\begin{proof}[Proof of Theorem \ref{kerlong}]
From the index theorem of Lockhart-McOwen, it follows that the 
real parts of indicial roots of $F$ are the same as those of $F^*$ and 
the dimensions of the space of solutions of the form 
$e^{\lambda t}p(y,t)$  where $p$ is a polynomial in $t$ with 
coefficients in $C^{\infty}(Y)$ are the same for all 
indicial roots with the same real part. We consider Cases (0) -- (5) in order. 

For Case (0), the corresponding kernel of $F^*$ is of the 
form $(0, \omega)$, where $\omega$ is dual to a 
bounded conformal Killing field on the cylinder. 
By direct calculation, elements in \eqref{case0} 
form the corresponding space of kernel elements. 

For Case (1), the corresponding  kernel of $F^*$ is of the 
form $(0, \omega)$, where $\omega$ is dual to a 
conformal Killing field which grows like $e^t$ on one end. 
For $S^3$, this is an $8$-dimensional space, while 
if $\Gamma$ is nontrivial, this space is empty. 
Again, by direct calculation, elements in \eqref{case1} 
form the corresponding $8$-dimensional space of kernel elements
in the case of the sphere. 
The formulas in \eqref{case01} and \eqref{case01'} can also 
easily be verified by direct calculation, which we omit. 

For Case (2), we consider solutions of the form
\begin{align*}
f(t)\cdot B,
\end{align*} 
where $f$ is a function, and $B\in S^{2}_{0}(T^{*}Y)$ is an eigentensor of $\Delta_{g_{Y}}$ with eigenvalue $-\lambda$  satisfying $\tr_{Y}(B)=0$ and $\delta_{Y}B=0$. 
In this case the equation $F=0$ takes the form
\begin{align*}
-\frac{1}{2} \ddot{f}\cdot B-f\cdot B-\frac{\lambda}{2}f\cdot B +\dot{f}\cdot \slashd B=0.
\end{align*} 
Case (2) follows as in the proof of Proposition \ref{adjtype3}, and the index theorem. 

For Case (3), we consider solutions of the form
\begin{align*}
\tilde{h}=f_0(t)\cdot\omega\odot dt+f_{1}(t)\cdot\mathcal{K}_{g_{Y}}(\omega),
\end{align*} 
which are \emph{not} in the image of $\mathcal{K}_{g}$, where $\omega\in\Lambda^{1}(T^{*}Y)$ is an eigenform of $\Delta_{H}$ with eigenvalue $\nu>0$ satisfying $\delta_{Y}\omega=0$. 
Case (3) then follows as in Proposition \ref{sec5prop}, and the index theorem. 

For Cases (4) and (5), we consider $\tilde{h}$ of the form
\begin{align*}
\tilde{h}=\mathcal{K}_{g}(\tilde{\omega}),
\end{align*} 
where $\tilde{\omega}\in\Lambda^{1}(T^{*}M)$.
The equation $\delta\tilde{h}=0$ says that $\omega$ 
is a solution of $\square_{\mathcal{K},g}\tilde{\omega}=0$, and
the solutions of this equations were completely classified in 
Section \ref{mixedsec} into those of 
types (a) and (b). 
Cases (4) and (5) then follow
from Proposition \ref{sec6prop} and the index theorem.
\end{proof}

\begin{proof}[Proof of Corollaries \ref{t1s3} and \ref{t1ac}]
Corollary \ref{t1s3} follows immediately from 
Theorem \ref{s3thm}. Corollary \ref{t1ac} then follows 
using a standard argument that solutions of elliptic equations in weighted spaces 
admit asymptotic expansions with leading terms solutions 
on the cylinder corresponding to indicial roots \cite{LockhartMcOwen}.
\end{proof}

\begin{proof}[Proof of Theorem \ref{t1a}]
Applying the divergence operator to the equation 
$\mathcal{D}^* Z = \mathcal{K}_g \omega$, we see that 
$\omega$ satisfies $\square_{\mathcal{K}} \omega = 0$.
An integration by parts shows that $\mathcal{K}_g \omega = 0$,
which implies that $\omega = 0$ since there are no nontrivial 
decaying conformal Killing fields. We next convert $(M,g)$ into 
a manifold with a cylindrical end, using the conformal 
factor $u^{-2}$ which is smooth and positive and equal to 
$r^{-2}$ outside of some compact set, and let $\hat{g} = r^{-2} g$. 
From conformal invariance of $\mathcal{D}^*$, we have that 
$\mathcal{D}_{\hat{g}}^* Z = 0$. Using Corollary \ref{t1ac}, 
we conclude that $ |Z|_{\hat{g}} = O(e^{-2t})$, where $t = \log(u)$
as $t \rightarrow \infty$. This implies that $|Z|_{g} = O(r^{-4})$
as $r \rightarrow \infty$. 
 
Next, if $h$ solves $\mathcal{D}h = 0$ and $\delta h = 0$, then 
$B'(h) = \mathcal{D}^* \mathcal{D} h = 0$, where $B'$ is 
is the linearized Bach tensor \cite{Itoh}. 
Since $B'$ is asymptotic to $\Delta^2$ as $r \rightarrow \infty$, 
\cite[Proposition 2.2]{AcheViaclovsky}, implies that there is no $O(r^{-1})$ term 
in the asymptotic expansion of $h$ and therefore 
$h = O(r^{-2})$ as $r \rightarrow \infty$.
\end{proof}

\begin{proof}[Proof of Theorem \ref{hypthm}]
The cokernel statements follow from combining Propositions \ref{sec5prop} 
and \ref{sec6prop} for the case $\kappa = -1$. 
The kernel statements follow from an analysis similar to the one outlined in the 
proof of Theorem \ref{kerlong}, using the index theorem. 
For the indicial root of $0$, the corresponding kernel of $F^*$ is 
of dimension $1 + b_1(Y) + 2 \dim(H^1_C(Y))$. 
From Theorem~\ref{mainprop}, we see that $3 dt\otimes dt - g_Y$  
is in the kernel of $F$. 
For a harmonic $1$-form $\omega$, from Theorem~\ref{mainprop} we also 
see that $\omega \odot dt$ is also in the kernel of $F$. 
For a traceless Codazzi tensor $B$ on $Y^3$,  from Theorem~\ref{mainprop}
it follows that 
\begin{align}
(c_1 \cos(t) + c_2 \sin(t) ) B,
\end{align}
is in the kernel of $F$ for any constants $c_1$ and $c_2$. 
By counting dimensions and using the index
theorem, this accounts for all kernel elements of $F$ corresponding 
to the indicial root $0$. 

\end{proof}

\begin{proof}[Proof of Theorem \ref{hrex}]
A compact hyperbolic $3$-manifold $(Y,g_Y)$ corresponds to a
discrete cocompact subgroup $\Gamma \subset O_{\circ}(3,1)$ without 
torsion. The space of locally conformally flat deformations of $Y$ is 
given by $H^1(\Gamma, \mathfrak{g})$,
where $\mathfrak{g}$ is the lie
algebra to $O(4,1)$ viewed as a $\Gamma$--module under the adjoint
representation.
If $Y^3$ is a hyperbolic rational homology $3$-sphere, then 
by assumption $b^1(Y) = 0$, so Theorem \ref{hypthm} implies 
that $H^2_+(\R \times Y^3) = \{0\}$ if and only if $H^1_C( Y) = \{0\}$.

In \cite{Kapovich} it 
was shown that infinitely many $(p,q)$-surgeries on a hyperbolic $2$-bridge knot 
satisfy $H^1(\Gamma, \mathfrak{g} )= \{0\}$ (a $2$-bridge knot is any knot 
that my be embedded in $\R^3$ with only $2$ local maxima, and the figure 
$8$ knot is an example of a hyperbolic $2$-bridge knot). These have $p \geq 2$ and are
therefore rational homology $3$-spheres, and all but finitely many 
are hyperbolic by Thurston's hyperbolic Dehn surgery Theorem (see, for example,
\cite{HodgsonKerckhoff} or \cite{PetronioPorti}).
By \cite[Lemma 6]{Lafontaine}, there is an 
injection $H^1_C(Y) \hookrightarrow H^1(\Gamma, \mathfrak{g})$,
so these examples are therefore an infinite family of hyperbolic 
rational homology $3$-spheres satisfying $H^2_+ (\R \times Y^3) = \{0\}$. 

Next, it was shown by DeBlois that there are infinitely many 
hyperbolic rational homology $3$-spheres containing closed 
embedded totally geodesic surfaces \cite{DeBlois} (these examples 
are $n$-fold cyclic branched covers of $S^3$ branched 
along a certain $2$-component link). By 
\cite[Theorem 2]{Lafontaine}, such a surface yields
a non-trivial traceless Codazzi tensor field on $Y$. Thus by 
Theorem \ref{hypthm}, the examples of DeBlois are an infinite family of 
examples of hyperbolic rational homology $3$-spheres 
satisfying $H^2_+(\R \times Y^3) \neq \{ 0 \}$. 

\end{proof}

\begin{proof}[Proof of Corollary \ref{hypthm2}]
This follows from Theorem \ref{hypthm}, again using a standard argument 
that solutions of elliptic equations in weighted spaces 
admit asymptotic expansions with leading terms solutions 
on the cylinder corresponding to indicial roots \cite{LockhartMcOwen}.
\end{proof}

\begin{proof}[Proof of Theorem \ref{flatthm}] 
The cokernel statements follow from combining Propositions \ref{sec5prop} 
and \ref{sec6prop} for the case $\kappa = 0$. 
The kernel statements follow from an analysis similar to the one outlined in the 
proof of Theorem \ref{kerlong}, using the index theorem. 
\end{proof}

\section{The gluing problem}
\label{gluesec}

We will next describe the setup to the gluing theorem of 
Kovalev-Singer.  A brief statement of the theorem is 
as follows. 
\begin{theorem}[Floer, Kovalev-Singer , Donaldson-Friedman
\cite{Floer, KovalevSinger, DonaldsonFriedman}] Let
$(X_1,[g_1])$ and $(X_2, [g_2])$ be self-dual conformal 
structures on compact $4$-manifolds $X_i$ 
satisfying $H^2_c(X_i, [g_i]) = 0$ for $i = 1,2$. 
Then the connect sum $X_1 \# X_2$ admits self-dual conformal 
structures. 
\end{theorem}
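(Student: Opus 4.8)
The plan is to recall the analytic gluing construction of \cite{Floer, KovalevSinger} (the twistor-theoretic argument of \cite{DonaldsonFriedman} is an alternative route), organized so that the role of the cylinder analysis of Section \ref{proofsec} is transparent; indeed the point of including this here is that Corollary \ref{t1ac} repairs a gap in one of the linear estimates of \cite{KovalevSinger}. First I would use the conformal invariance of $\mathcal{W}^-$ to replace each compact piece by one carrying a cylindrical end: choose points $p_i \in X_i$ and representatives $g_i \in [g_i]$ agreeing to high order with the Euclidean metric near $p_i$; since $(\R^4 \setminus \{0\}, \delta_{ij})$ is conformal to the round cylinder $(\R \times S^3, dt^2 + g_{S^3})$, a further conformal change produces complete self-dual metrics $\hat g_i$ on $\hat X_i := X_i \setminus \{p_i\}$ that are asymptotically cylindrical of every order with cross-section the unit $S^3$.

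Next I would build an approximately self-dual metric on $X_1 \# X_2$. Truncating each cylindrical end at parameter $t = T$, reparametrizing so the two ends match, and interpolating with cutoff functions gives a metric $g_T$ with a neck of length comparable to $2T$ that coincides with $\hat g_i$ away from the neck. Because $\hat g_i$ is exactly self-dual outside the neck and the tails $\hat g_i - g_C$ can be taken of arbitrarily high order, the error $\mathcal{W}^-(g_T)$ is supported in the neck and satisfies $\| \mathcal{W}^-(g_T) \| = O(e^{-\mu T})$ for some $\mu > 0$ in a weighted norm with weight $e^{\delta|t|}$ on the neck, where $\delta$ is a fixed exponent avoiding the indicial roots of $F$ on the cylinder.

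The heart of the matter is the linear analysis, and this is where Corollary \ref{t1ac} (equivalently Theorem \ref{s3thm}) enters. Rather than $\mathcal{D}$ alone I would work with the gauge-fixed elliptic operator $F(h) = (\mathcal{D} h, 2\delta h)$, whose divergence term kills infinitesimal diffeomorphisms. One needs the linearization $F_{g_T}$ to be surjective with a right inverse $P_T$ bounded uniformly in $T$ on the weighted spaces above. On each end this reduces to the mapping properties of $F$ on $\R \times S^3$, which are governed by its indicial roots; by Theorem \ref{s3thm} there is no indicial root $\beta$ of $F$ or $F^*$ with $0 < |\mathrm{Re}(\beta)| < 2$ other than the conformal-Killing values $\pm 1$, so a weight $\delta$ with $|\delta|$ in $(1,2)$ (say) is admissible and the only cokernel contributions from $\hat X_i$ are the finite-dimensional spaces $\mathrm{coker}(F_{\hat g_i})$, which vanish by the hypothesis $H^2_c(X_i,[g_i]) = 0$. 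A standard neck-stretching parametrix construction then splices the two right inverses into a uniformly bounded $P_T$, with no eigenvalue of $F_{g_T}^* F_{g_T}$ collapsing to $0$ as $T \to \infty$ precisely because $\delta$ misses every nonzero indicial root of the cylinder.

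Finally I would solve the self-dual equation in Bianchi gauge by Newton iteration. Writing $\mathcal{W}^-(g_T + h) = \mathcal{W}^-(g_T) + \mathcal{D}_{g_T} h + Q_{g_T}(h)$ with $Q_{g_T}$ obeying a quadratic estimate $\| Q_{g_T}(h_1) - Q_{g_T}(h_2) \| \leq C(\|h_1\| + \|h_2\|)\|h_1 - h_2\|$ for a constant $C$ independent of $T$, the map $h \mapsto -P_T\big( \mathcal{W}^-(g_T) + Q_{g_T}(h) \big)$ is a contraction on a small ball once $T$ is large, since $\|\mathcal{W}^-(g_T)\|$ is exponentially small while $\|P_T\|$ is bounded; its fixed point yields an exactly self-dual metric on $X_1 \# X_2$, and elliptic regularity makes it smooth. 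I expect the only genuine obstacle to be the $T$-uniform bound on $P_T$ — the absence of small eigenvalues of $F_{g_T}^* F_{g_T}$ as the neck lengthens — and this is exactly where the precise location of the indicial roots on $\R \times S^3$ from Theorem \ref{s3thm} is indispensable, and where the gap in \cite{KovalevSinger} occurred.
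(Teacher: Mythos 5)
Your overall architecture (conformal cylindrification, an approximately self-dual metric with a long neck, weighted linear theory, contraction mapping) is the same analytic route of Floer and Kovalev--Singer that Section \ref{gluesec} follows. But the step you dispose of in one clause --- that ``the only cokernel contributions from $\hat X_i$ are $\mathrm{coker}(F_{\hat g_i})$, which vanish by the hypothesis $H^2_c(X_i,[g_i])=0$'' --- is precisely the nontrivial point, and precisely where the gap in \cite{KovalevSinger} repaired by this paper is located. The hypothesis $H^2_c(X_i,[g_i])=0$ concerns smooth solutions of $\mathcal{D}^*Z=0$ on the compact $X_i$, whereas the cokernel of the weighted operator on the cylindrical-end space $\tilde X_i$ consists of solutions $(Z,\omega)$ of $\mathcal{D}^*Z=\mathcal{K}_{\tilde g_i}\omega$ which, on the piece whose forward operator carries the weight $e^{-\delta t_2}$, are a priori allowed to \emph{grow} like $e^{\delta t_2}$ along the end. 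To invoke the hypothesis one must: (i) show that $\omega$ is conformal Killing and upgrade the a priori growth $o(e^{2t})$ to decay $Z=O(e^{-2t})$ --- this is exactly Corollary \ref{t1ac}, i.e.\ the sharp indicial-root analysis; (ii) use the conformal covariance $\mathcal{D}^*_{\hat g}Z=r^2\,\mathcal{D}^*_{g}Z$ to transfer $Z$ to $X_i\setminus\{p_i\}$ with $Z=O(1)$ near $p_i$; (iii) remove the point singularity for the fourth-order operator $\mathcal{D}\mathcal{D}^*$, whose leading term is $\Delta^2$; only then does $H^2_c(X_i,[g_i])=0$ force $Z=0$. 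Moreover the weighted cokernel does not literally vanish: conformal Killing fields of the factors do appear in it (see the closing remark of Section \ref{gluesec}), and one must argue separately that they do not obstruct solving the first component of $F$. Your proposal instead attributes the role of Theorem \ref{s3thm} solely to the uniform bound on $P_T$, so this identification --- the actual content of the gap and its fix --- is missing.

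A second genuine problem is the weight. You take $e^{\delta|t|}$ on the neck with $|\delta|\in(1,2)$ and assert that no eigenvalue of $F_{g_T}^*F_{g_T}$ collapses ``because $\delta$ misses every nonzero indicial root.'' But $0$ \emph{is} an indicial root of the gauge-fixed operator $F$, and by Theorems \ref{kershort} and \ref{kerlong} the corresponding solutions (for instance $3\,dt\otimes dt-g_{S^3}=\mathcal{K}_g(2t\,dt)$, and for trivial $\Gamma$ also the $e^{\pm t}$ solutions at the roots $\pm1$) satisfy both $\mathcal{D}h=0$ and $\delta h=0$. With a symmetric weight permitting bounded (indeed $e^{|t|}$) growth on the neck, these produce approximate kernel elements of $F_{g_T}$ concentrated on the neck, hence small eigenvalues, and the claimed $T$-uniform right inverse fails as stated. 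Floer and Kovalev--Singer avoid this by choosing a weight monotone across the neck ($e^{\delta t}$, growing on $\tilde X_1$ and decaying on $\tilde X_2$), which excludes the root-$0$ and root-$\pm1$ solutions from the model kernel and cokernel on the cylinder; alternatively one must explicitly work modulo these pure-gauge directions. One of these repairs is needed before your contraction-mapping step can be run.
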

Donaldson-Friedman proved this using twistor theory, using methods 
from the deformation theory of singular complex $3$-folds.  
The proofs of Floer and Kovalev-Singer are analytic,
and thus generalize more easily to the 
setting of orbifolds. Consequently, the gluing can be 
performed at isolated orbifold points $p_i \in X_i, i = 1,2$,  
provided they are compatible. This means that there is an 
orientation-reversing intertwining map between the actions
of the respective orbifold groups $\Gamma_i \subset SO(4)$ 
at the gluing points. 

We will next outline the idea of the analytic proof. 
Let $r_i(x) = d(p_i, x)$ in sufficiently 
small neighborhoods of $p_i$, and extend to smooth positive 
functions on each $X_i$. Consider the conformal cylindrification of 
$X_i$, which is $\tilde{X}_i = X_i \setminus \{p_i\}$ with metric 
$\tilde{g}_i = r_i^{-2} g_i$, and let $t_i = - \log r_i$. 
These metrics are then ``glued'' together with a 
cylindrical region in between using cutoff functions, we refer the reader to 
\cite[Section 2.3]{KovalevSinger} for the exact formulas. 
We only need to remark here that the main argument of \cite{KovalevSinger} 
is to reduce the gluing problem 
to the study of the deformation complex on the component cylindrified 
spaces. A weight $\delta > 0$ and a weight function are chosen so that in the limit, the 
weight function is $e^{\delta t}$ in the middle cylindrical 
region, and $e^{\delta t_1}$ on $\tilde{X}_1$ and 
$e^{- \delta t_2} $ on $\tilde{X}_2$.
One next considers the operators 
\begin{align}
F_1 &: e^{\delta t_1} C^{k, \alpha}(A_1)   
\xrightarrow{  \mathcal{D} \oplus 2 \delta_{\tilde{g}_1}}
e^{\delta t_1} C^{k-2, \alpha}(B_1)
\oplus e^{\delta t_1} C^{k-1, \alpha}(C_1), \\
F_0 &: e^{\delta t} C^{k, \alpha}(A_0)   
\xrightarrow{  \mathcal{D} \oplus 2 \delta_{\tilde{g}_0}}
e^{\delta t} C^{k-2, \alpha}(B_0)
\oplus e^{\delta t} C^{k-1, \alpha}(C_0),\\
F_2 &: e^{-\delta t_2} C^{k, \alpha}(A_2)   
\xrightarrow{  \mathcal{D} \oplus 2 \delta_{\tilde{g}_2}}
e^{-\delta t_2} C^{k-2, \alpha}(B_2)
\oplus e^{-\delta t_2} C^{k-1, \alpha}(C_2),
\end{align}
where $A_i = T^*\tilde{X}_i, B_i = S^2_0(T^*\tilde{X}_i)$, and 
$C_i = S^2_0(\Lambda^2_-)(T^*\tilde{X}_i)$. 
The adjoints of these operators are maps
\begin{align}
F_1^* &: e^{-\delta t_1} C^{k, \alpha}(B_1)
\oplus e^{-\delta t_1} C^{k-1, \alpha}(C_1) 
\rightarrow  e^{-\delta t_1} C^{k-2, \alpha}(A_1)   \\
F_0^* &: e^{-\delta t} C^{k, \alpha}(B_0)
\oplus e^{-\delta t} C^{k-1, \alpha}(C_0)
\rightarrow e^{-\delta t} C^{k-2, \alpha}(A_0)   \\
F_2^* &: e^{\delta t_2} C^{k, \alpha}(B_2)
\oplus e^{\delta t_2} C^{k-1, \alpha}(C_2)
\rightarrow
e^{\delta t_2} C^{k-2, \alpha}(A_2),
\end{align}
given by 
\begin{align}
F_i^* (Z, \omega) = \mathcal{D}^* Z - \mathcal{K}_{\tilde{g}_i} \omega. 
\end{align}
Note the duals of the H\"older spaces are not H\"older spaces, 
but we are only interested in the kernel and cokernel, which 
will be smooth by elliptic regularity, so this slight 
abuse of notation does not matter. 

On the middle cylindrical region, Corollary \ref{t1s3} 
shows that $F_0$ is an isomorphism for $0< \delta < 2$. 
On $\tilde{X_1}$,
we consider solutions of $F_1^*(Z,\omega) =0$
with both $Z = O(e^{-\delta t_1})$ and $\omega = O(e^{-\delta t_1})$ as 
$t_1 \rightarrow \infty$. 
Corollary \ref{t1ac} implies 
that $\omega$ is a conformal Killing field, and $Z =  O(e^{-2 t_1})$ as 
$t_1 \rightarrow \infty$. 
The conformal transformation formula 
$ \mathcal{D}^*_{\hat{g}} Z = r_1^2 \mathcal{D}^*_{g} Z$ shows 
that $Z$ is a solution of $\mathcal{D}^*_g Z = 0$ on $X_2 \setminus \{p_2\}$
satisfying $Z = O(1)$ as $r_1 \rightarrow 0$. The operator  
$\mathcal{D} \mathcal{D}^*$ is an elliptic operator with leading term 
$\Delta^2$ (\cite{AcheViaclovsky, Streets}). Consequently, 
the singularity is removable, so $Z$ extends to a smooth 
solution of $\mathcal{D}^* Z = 0$ on $X_1$, and $Z$ then 
vanishes by the assumption that $H^2_c(X_1, [g_1]) =0$.

On $\tilde{X_2}$, we consider solutions of $F_2^*(Z, \omega)= 0$ 
with both $Z = O(e^{\delta t_2})$ and $\omega = O(e^{\delta t_2})$ as 
$t_2 \rightarrow \infty$. 
Vanishing of $Z$ again follows from Corollary \ref{t1ac} and the assumption 
that $H^2_c(X_2, [g_2]) =0$.

\begin{remark}{\em
The argument given on \cite[page 1259-1260]{KovalevSinger} to handle 
the case of $\tilde{X_2}$ is incorrect, because there was a mistake in the order of 
growth given there. Namely, the growth rate given on the 
bottom on page 1258 for $H^{2, \pm}$ should be 
$|\Psi|_0 = O(r^{-2 \pm \delta})$, and not
$|\Psi|_0 = O(r^{-2 \mp \delta})$ as written there and 
then applied incorrectly in the subsequent argument.
Indeed, on $X_2$ the weight function 
is  $ e^{- \delta t}$, while the argument 
given there to remove the singularity (quoting Biquard's Theorem from \cite{Biquard}) 
requires $\delta > 0$.  
The above argument fixes this gap.}
\end{remark}
The remainder of the proof then proceeds as in \cite{KovalevSinger}.
\begin{remark} {\em
We note that there can be asymptotic cokernel arising from conformal 
Killing fields on the factors. Namely, on $\tilde{X}_1$ there 
are conformal 
Killing fields in the cokernel satisfying 
$\omega = O(e^{- \delta t_1})$ as $t_1 \rightarrow \infty$. 
The conformal transformation formula 
$\mathcal{K}_{\hat{g}}(\omega) = r_1^{-2} \mathcal{K}_g(r_1^2 \omega)$
shows that $r_1^2 \omega$ is 
a conformal Killing field on $X_1$ satisfying 
$ | r_1^2 \omega|_{g_1} = O(r_1^{1 + \delta})$ as $r_1 \rightarrow 0$. 
Thus the asymptotic cokernel contains conformal Killing fields on 
$X_1$ which vanish at $p_1$ and whose first derivatives vanish at $p$.   
Similarly, on $X_2$ there are conformal Killing fields in the 
cokernel satisfying $\omega = O(e^{ \delta t_2})$ as  $t_2 \rightarrow \infty$.
Then $r_2^2 \omega$ is a conformal Killing field on $X_2$ satisfying 
$ | r^2 \omega|_{g_2} = O(r_2^{1 - \delta})$ as $r_2 \rightarrow 0$. 
Thus the asymptotic cokernel also contains conformal Killing fields on 
$X_2$ which vanish at $p_2$.  However, the existence of this cokernel 
does not affect finding a self-dual metric, since we only need 
to find a zero of the first component of $F$ (and do not 
necessarily need to find a zero of the divergence map). 
}
\end{remark}

\appendix
\section{The square of $\slashd$} 
\label{appendix}
In this appendix, we give the proof of Proposition \ref{squaredprop}. 
\begin{proof}[Proof of Proposition \ref{squaredprop}]
In a local orthonormal basis we have
\begin{align*}
\slashd^{2} h_{ij}&=\sum_{a,b}\epsilon_{iab}\nabla_{a}(\slashd h)_{bj}+\sum_{c,d}\epsilon_{jcd}\nabla_{c}(\slashd h)_{di}.
\end{align*}
Expanding the right hand side, we obtain 
\begin{align*}
\slashd^{2} h_{ij} &=\sum_{a,b}\sum_{k,l}\epsilon_{iab}\epsilon_{bkl}\nabla_{a}\nabla_{k}h_{lj}+\sum_{a,b}\sum_{m,n}\epsilon_{iab}\epsilon_{jmn}\nabla_{a}\nabla_{m}h_{nb}\\
&+\sum_{c,d}\sum_{p,q}\epsilon_{jcd}\epsilon_{dpq}\nabla_{c}\nabla_{p}h_{qi}+\sum_{c,d}\sum_{u,v}\epsilon_{jcd}\epsilon_{iuv}\nabla_{c}\nabla_{u}h_{vd}\\
&=I+II+III+IV.
\end{align*}
Note that $I+III$ is twice the symmetrization of $I$ and $II+IV$ is twice the symmetrization of $II$, so it will suffice to compute $I$ and $II$. A straightforward computation shows that if we let $a,b$ be indices such that $\{i,a,b\}=\{1,2,3\}$ then
\begin{align*}
I=\nabla_{a}\nabla_{i}h_{aj}-\nabla_{a}\nabla_{a}h_{ij}+\nabla_{b}\nabla_{i}h_{bj}-\nabla_{b}\nabla_{b}h_{ij}.
\end{align*}
Commuting covariant derivatives we have
\begin{align*}
\nabla_{a}\nabla_{i}h_{aj}&=\nabla_{i}\nabla_{a}h_{aj}-R^{p}_{aia}h_{pj}-R^{p}_{aij}h_{ap}\\
&=\nabla_{i}\nabla_{a}h_{aj}-\kappa \left(\delta_{a}^{p}(g_{Y})_{ia}-\delta_{i}^{p}(g_{Y})_{aa}\right)h_{pj}\\
&- \kappa \left(\delta_{a}^{p}(g_{Y})_{ij}-\delta_{i}^{p}(g_{Y})_{aj}\right)h_{ap}\\
&=\nabla_{i}\nabla_{a}h_{aj}
+ \kappa \big(h_{ij}-h_{aa}(g_{Y})_{ij}-(g_{Y})_{aj}h_{ai}\big).
\end{align*}
Similarly,
\begin{align*}
\nabla_{b}\nabla_{i}h_{bj}=\nabla_{i}\nabla_{b}h_{bj}+ \kappa \big( h_{ij}-h_{bb}(g_{Y})_{ij}-(g_{Y})_{bj}h_{bi} \big).
\end{align*}
It follows that
\begin{align*}
\nabla_{a}\nabla_{i}h_{aj}+\nabla_{b}\nabla_{i}h_{bj}&=\nabla_{i}\left(\delta_{Y}\right)_{j}-\nabla_{i}\nabla_{i}h_{ij}\\
&+2 \kappa h_{ij}+ \kappa \left(h_{aa}+h_{bb}+h_{ii}\right)(g_{Y})_{ij}\\
&-\kappa \left(h_{ii}(g_{Y})_{ij}+h_{ai}(g_{Y})_{aj}+h_{bi}(g_{Y})_{bj}\right)\\
&=\nabla_{i}\left(\delta_{Y}\right)_{j}-\nabla_{i}\nabla_{i}h_{ij}+3 \kappa \tf(h),
\end{align*}
and clearly
\begin{align*}
I=\nabla_{i}\left(\delta_{Y}h\right)_{j}-\Delta h_{ij}+3 \kappa \tf(h).
\end{align*}
We conclude that
\begin{align*}
I+III=\mathcal{L}_{g_Y}(\delta_{Y}h) -2\Delta h+6 \kappa \tf(h).
\end{align*}
For $II$ we consider two cases. If $i\ne j$ we let $l$ be a an index such that $\{i,j,l\}$ such that $\{i,j,l\}=\{1,2,3\}$, then it is easy to see that $II$ equals
\begin{align}\label{IIinej}
II&=-\nabla_{j}\nabla_{i}h_{ll}+\nabla_{j}\nabla_{l}h_{il}+\nabla_{l}\nabla_{i}h_{lj}-\nabla_{l}\nabla_{l}h_{ij}\\
&=A_{1}+A_{2}+A_{3}+A_{4}.
\end{align}
For the terms in \eqref{IIinej} we have 
\begin{align*}
A_{1}&=-\nabla_{j}\nabla_{i}h_{ll}=-\nabla_{j}\nabla_{i}\tr_{Y}(h)+\nabla_{j}\nabla_{i}h_{ii}+\nabla_{j}\nabla_{i}h_{jj},\\
A_{2}&=\nabla_{j}\nabla_{l}h_{il}=\nabla_{j}(\delta_{Y}h)_{i}-\nabla_{j}\nabla_{i}h_{ii}-\nabla_{j}\nabla_{j}h_{ij},
\end{align*}
For $A_{3}$ we commute covariant derivatives
\begin{align*}
A_{3}&=\nabla_{l}\nabla_{i}h_{lj}=\nabla_{i}\nabla_{l}h_{lj}-R^{p}_{lil}h_{pj}-R^{p}_{lij}h_{lp}\\
&=\nabla_{i}\nabla_{l}h_{lj}- \kappa \left(\delta_{l}^{p}g_{il}-\delta_{i}^{p}g_{ll}\right)h_{pj}- \kappa \left(\delta_{l}^{p}g_{ij}-\delta_{i}^{p}g_{lj}\right)h_{lp}\\
&=\nabla_{i}\nabla_{l}h_{lj}+ \kappa h_{ij}\\
&=\nabla_{i}\left(\delta_{Y} h\right)_{j}-\nabla_{i}\nabla_{i}h_{ij}-\nabla_{i}\nabla_{j}h_{ij}
+ \kappa h_{ij},
\end{align*}
and finally
\begin{align*}
A_{4}=-\nabla_{l}\nabla_{l}h_{ij}=-\Delta_{g_{Y}}h_{ij}+\nabla_{i}\nabla_{i}h_{ij}+\nabla_{j}\nabla_{j}h_{ij}.
\end{align*}
We then have
\begin{align*}
A_{1}+A_{2}=-\nabla_{j}\nabla_{i}\tr_{Y}(h)+\nabla_{j}(\delta_{Y}h)_{i}+\nabla_{j}\nabla_{i}h_{jj}-\nabla_{j}\nabla_{j}h_{ij},
\end{align*}
and 
\begin{align*}
A_{3}+A_{4}&=\nabla_{i}\left(\delta_{Y} h\right)_{j}-\nabla_{i}\nabla_{i}h_{ij}-\nabla_{i}\nabla_{j}h_{ij} + \kappa h_{ij}\\
&-\Delta_{g_{Y}}h_{ij} +\nabla_{i}\nabla_{i}h_{ij}+\nabla_{j}\nabla_{j}h_{ij}\\
&=\nabla_{i}\left(\delta_{Y} h\right)_{j} + \kappa h_{ij}-\Delta_{g_{Y}}h_{ij}\\
&-\nabla_{i}\nabla_{j}h_{jj}+\nabla_{j}\nabla_{j}h_{ij},
\end{align*}
so then
\begin{align*}
II&=A_{1}+A_{2}+A_{3}+A_{4}\\
&=-\nabla_{j}\nabla_{i}\tr_{Y}(h)-\Delta_{g_{Y}} h_{ij}\\
&+\mathcal{L}_{g_Y}(\delta_{Y}h)
+ \kappa h_{ij}+\nabla_{j}\nabla_{i}h_{jj}-\nabla_{i}\nabla_{j}h_{jj}.
\end{align*}
Commuting covariant derivatives we have
\begin{align*}
\nabla_{j}\nabla_{i}h_{jj}-\nabla_{i}\nabla_{j}h_{jj}&=-R^{p}_{jij}h_{pj}-R^{p}_{jij}h_{jp}=-2R^{p}_{jij}h_{pj}\\
&=-2 \kappa \left(\delta_{j}^{p}g_{ij}-\delta_{i}^{p}g_{jj}\right)h_{jp}\\
&=2 \kappa h_{ij},
\end{align*}
so we have shown
\begin{align*}
II=-\Delta h_{ij}-\nabla_{i}\nabla_{j}\tr_{Y}(h)+\mathcal{L}_{g_Y}(\delta_{Y}h)
+3 \kappa h_{ii},
\end{align*}
so then $II+IV$ is given by
\begin{align}\label{IIinejfinal}
II+IV=-2\Delta_{g_{Y}}h-2\nabla_{i}\nabla_{j}\tr_{Y}(h)+2
\mathcal{L}_{g_Y}(\delta_{Y}h) +6 \kappa h_{ij}.
\end{align}
If now $i=j$ we let $a,b$ be indices such that $\{i,a,b\}=\{1,2,3\}$ so that we have
\begin{align*}
II=\nabla_{a}\nabla_{a}h_{bb}-\nabla_{a}\nabla_{b}h_{ab}-\nabla_{b}\nabla_{a}h_{ab}+\nabla_{b}\nabla_{b}h_{aa},
\end{align*}
which simplifies to
\begin{align*}
II&=\Delta_{g_{Y}}\tr_{Y}(h)-\nabla_{i}\nabla_{i}\tr_{Y}(h)-\left(\delta_{Y}\delta_{Y} h\right)+\nabla_{i}\left(\delta h\right)_{i}-\Delta_{g_{Y}} h_{ii}\\
&+\nabla_{i}\nabla_{i}h_{ii}+\nabla_{a}\nabla_{i}h_{ai}+\nabla_{b}\nabla_{i}h_{ib}.
\end{align*}
Commuting covariant derivatives we obtain
\begin{align*}
II&=\Delta_{g_{Y}}\tr_{Y}(h)-\nabla_{i}\nabla_{i}\tr_{Y}(h)-\left(\delta_{Y}\delta_{Y} h\right)-\Delta_{g_{Y}} h_{ii}\\
&+\nabla_{i}(\delta_{Y} h)_{i}-R^{p}_{aia}h_{pi}-R^{p}_{aii}h_{ap}-R^{p}_{bii}h_{pb}-R^{p}_{bib}h_{ip},
\end{align*}
and it is easy to see from this expression that
\begin{align}\label{IIieqj}
II=\Delta_{g_{Y}}\tr_{Y}(h)-\nabla_{i}\nabla_{i}\tr_{Y}(h)-\left(\delta_{Y}\delta_{Y} h\right)-\Delta_{g_{Y}}h_{ii}+2\nabla_{i}\left(\delta_{Y}h\right)_{i}+3 \kappa \tf(h),
\end{align}
so then 
\begin{align*}
II+IV&=2\Delta_{g_{Y}}\tr_{Y}(h)-2\nabla_{i}\nabla_{i}\tr_{Y}(h)-2\left(\delta_{Y}\delta_{Y} h\right)\\
&-2\Delta_{g_{Y}}h_{ii}+4\nabla_{i}\left(\delta_{Y}h\right)_{i}+6 \kappa \tf(h).
\end{align*}
Now, since we are using a local orthonormal basis to compute $\slashd^{2} h$,  it is clear that  in either case $i=j$ or $i\ne j$  the results in \eqref{IIinejfinal} and \eqref{IIieqj} are equivalent to
\begin{align*}
II+IV&=-2\Delta_{g_{Y}}\tf(h)-2{\stackrel{\circ}{\nabla^{2}}}h
+2\mathcal{L}_{g_Y}(\delta_{Y}h)
-2\left(\delta_{Y}\delta_{Y}\right)g_{Y}\\
&+\frac{2}{3}\left(\Delta_{g_{Y}}\tr_{Y}(h)\right)g_{Y}+6 \kappa \tf(h).
\end{align*}
Expressing $I+III$ as 
\begin{align*}
I+III=-2\Delta_{g_{Y}}\tf(h)-\frac{2}{3}\Delta_{g_{Y}}\tr_{Y}(h)g_{Y}
+\mathcal{L}_{g_Y}(\delta_{Y}h)
+6 \kappa \tf(h),
\end{align*}
we conclude that
\begin{align*}
\slashd^{2}h=-4\Delta_{g_{Y}}\tf(h)-2\stackrel{\circ}{\nabla^{2}}\tr_{Y}(h)+3\mathcal{K}_{g_{Y}}(\delta_{Y}h)+12 \kappa \tf(h),
\end{align*}
as needed.
\end{proof}
The following corollary should be compared with \cite[Lemma 5.1]{Floer}:
\begin{corollary}
\label{Floercor}
For any $h\in S^{2}(T^{*}Y)$ we have
\begin{align}
 E^{\prime}_{g}(h)=\frac{1}{8}\slashd^{2}h-\frac{1}{4}\stl{\circ}{\nabla^{2}}\tr_{Y}(h)+\frac{1}{8}\mathcal{K}_{g_{Y}}(\delta_{Y} h)-\frac{\kappa}{2}\tf(h).
\end{align}
\end{corollary}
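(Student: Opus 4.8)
The plan is to derive the identity by eliminating the rough Laplacian term between two formulas already established in the text: the formula for $\slashd^{2}h$ in Proposition \ref{squaredprop}, and the explicit expression \eqref{lintracelessriccisect1} for the linearized traceless Ricci tensor $E^{\prime}_{g_{Y}}(h)$ obtained in the course of proving Theorem \ref{mainprop}. The point is that both of these formulas are built out of exactly the same four terms, namely $\Delta_{g_{Y}}\tf(h)$, $\stl{\circ}{\nabla^{2}}\tr_{Y}(h)$, $\mathcal{K}_{g_{Y}}(\delta_{Y}h)$, and the zeroth-order term $\kappa\cdot\tf(h)$, so once $\slashd^{2}h$ is available there is a single linear relation to exploit.

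The concrete steps are as follows. First I would rewrite Proposition \ref{squaredprop} as an expression for the rough Laplacian of the traceless part,
\begin{align*}
\Delta_{g_{Y}}\tf(h)=-\frac{1}{4}\slashd^{2}h-\frac{1}{2}\stl{\circ}{\nabla^{2}}\tr_{Y}(h)+\frac{3}{4}\mathcal{K}_{g_{Y}}(\delta_{Y}h)+3\kappa\cdot\tf(h).
\end{align*}
Next I would substitute this directly into \eqref{lintracelessriccisect1},
\begin{align*}
E^{\prime}_{g_{Y}}(h)=-\frac{1}{2}\Big(\Delta_{g_{Y}}\tf(h)+\stl{\circ}{\nabla^{2}}\tr_{Y}(h)\Big)+\frac{1}{2}\mathcal{K}_{g_{Y}}(\delta_{Y}h)+\kappa\cdot\tf(h),
\end{align*}
and collect terms. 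Grouping the coefficients of $\slashd^{2}h$, $\stl{\circ}{\nabla^{2}}\tr_{Y}(h)$, $\mathcal{K}_{g_{Y}}(\delta_{Y}h)$, and $\tf(h)$ separately yields $\tfrac18$, $-\tfrac14$, $\tfrac18$, and $-\tfrac{\kappa}{2}$ respectively, which is exactly the claimed formula.

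I do not expect any genuine obstacle here: beyond Proposition \ref{squaredprop} and Theorem \ref{mainprop} the proof is elementary bookkeeping with rational coefficients. The only points worth double-checking are consistency of conventions, namely that $\Delta_{g_{Y}}$ denotes the rough Laplacian in both source formulas, that $\slashd^{2}$ and $E^{\prime}_{g_{Y}}$ are both being viewed as operators into $S^{2}_{0}(T^{*}Y)$ (so that taking trace-free parts throughout is harmless), and that the sign of the $\kappa$-dependent term is handled uniformly across the cases $\kappa=\pm1,0$.
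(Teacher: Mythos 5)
Your proposal is correct: solving Proposition \ref{squaredprop} for $\Delta_{g_Y}\tf(h)$ and substituting into \eqref{lintracelessriccisect1} gives exactly the stated coefficients $\tfrac18,-\tfrac14,\tfrac18,-\tfrac{\kappa}{2}$, and this is precisely the (implicit) derivation the paper intends, since the corollary is stated immediately after Proposition \ref{squaredprop} and uses only it together with the expression for $E^{\prime}_{g_Y}(h)$ already obtained in the proof of Theorem \ref{mainprop}. No further comment is needed.
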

 \bibliography{Adjoint_ASD}
\end{document}